\documentclass[11pt,a4paper,reqno]{amsart}
\usepackage{a4wide}
\usepackage{amsaddr}
\usepackage{amsmath}
\usepackage{amsfonts, amssymb, amsthm}
\usepackage{paralist}
\usepackage{enumerate}
\usepackage[colorlinks=true]{hyperref}
\usepackage{url}
\hypersetup{urlcolor=black}




\newtheorem{Thm}{Theorem}[section]
\newtheorem{Lem}[Thm]{Lemma}

\newtheorem{Cor}[Thm]{Corollary}
\newtheorem{Prop}[Thm]{Proposition}

\theoremstyle{definition}
\newtheorem{Def}[Thm]{Definition}
\newtheorem{Rem}{Remark}[section]

\numberwithin{equation}{section}



\usepackage[normalem]{ulem}
\usepackage{color}
\addtolength{\parskip}{0.15em}
\hfuzz=5pt \vfuzz=3pt

\newcommand{\N}{{\mathbb N}}

\newcommand{\R}{{\mathbb R}}
\newcommand{\weakto}{{\rightharpoonup}}

\allowdisplaybreaks

\begin{document}

\title[The Brezis-Nirenberg problem]{Asymptotic behavior of multi-peak solutions to the Brezis-Nirenberg problem. The sub-critical perturbation case. }
\author{Jinkai Gao \ and \ Shiwang Ma$^{ \rm *}$}\email{jinkaigao@mail.nankai.edu.cn, shiwangm@nankai.edu.cn}
\address{School of Mathematical Sciences and LPMC, Nankai University\\ 
	Tianjin 300071, China}
\thanks{
  {$^{\rm *}$ Corresponding author.}
}

\keywords{Brezis-Nirenberg problem, Positive solutions, Multi-peak, Blow-up, Asymptotics, Uniqueness, Nondegeneracy.}

\subjclass[2020]{Primary 35J15; Secondary 35A02, 35B09, 35B33, 35B40, 35B44.}

\date{}

\begin{abstract}
In this paper, we consider the following well-known Brezis-Nirenberg problem
\begin{equation*}
\begin{cases}
-\Delta u= u^{2^*-1}+\varepsilon u^{q-1}, \quad u>0,  &{\text{in}~\Omega},\\
\quad \ \ u=0, &{\text{on}~\partial \Omega},
\end{cases}
\end{equation*}
where $N\geq 3$, $\Omega$ is a smooth and bounded domain in $\R^{N}$, $\varepsilon>0$ is a small parameter, $q\in (2,2^*)$ and $2^*:=\frac{2N}{N-2}$ denotes the critical Sobolev exponent. The existence of solutions to the above problem has been obtained by many authors in the literature. However, as far as the authors know, the asymptotic behavior of solutions to  the above problem is still open. Here we first describe the asymptotic profile of solutions to the above problem as $\varepsilon\to 0$. Then, we derive the exact blow-up rate and characterize the concentration speed and the location of concentration points in the general case of multi-peak solutions. Finally, we prove the uniqueness, nondegeneracy and count the exact number of blow-up solutions. 

The main results in this paper give a complete picture of multi-peak blow-up phenomena in the framework of Brezis-Peletier conjecture in the case of sub-critical perturbation. On the other hand, compared with the special case $q=2$ previously studied in the literature, we observe that the exponent $q$ has a significant impact on the asymptotic behavior, uniqueness and nondegeneracy of solutions in addition to the geometry of domain $\Omega$ and space dimension $N$ which is already known in the literature.
\end{abstract}

\maketitle


\section{Introduction and main results}
\setcounter{equation}{0}
In this paper, we consider the following critical elliptic equation with a sub-critical perturbation term
\begin{equation}\label{p-varepsion}
\begin{cases}
-\Delta u= u^{2^*-1}+\varepsilon u^{q-1}, \quad u>0, &{\text{in}~\Omega},\\
\quad\ \ u=0, &{\text{on}~\partial \Omega},
\end{cases}
\end{equation}
where $N\geq 3$, $\Omega$ is a smooth and bounded domain in $\R^{N}$, $\varepsilon>0$ is a small parameter, $q\in (2,2^*)$ and $2^*:=\frac{2N}{N-2}$ denotes the critical Sobolev exponent.

Let us first recall some existence and non-existence results of (\ref{p-varepsion}). In 1983, in their celebrated paper \cite{Brezis1983CPAM},  Brezis and Nirenberg  proved that in the case $q=2$, if $N\geq 4$, the problem (\ref{p-varepsion}) admits a positive least energy solution for all $\varepsilon\in (0,\lambda_{1})$, where $\lambda_{1}$ is the first eigenvalue of $-\Delta$ with the homogeneous Dirichlet boundary condition; if $N=3$, there exists $\lambda_{*}\in (0,\lambda_{1})$ such that (\ref{p-varepsion}) has a positive least energy solution for all $\varepsilon\in (\lambda_{*},\lambda_{1})$ and no positive least energy solution exists for $\varepsilon \in (0,\lambda_{*})$. In particular, when $\Omega$ is a unit ball, \eqref{p-varepsion} has a positive least energy solution if and only if $\varepsilon\in (\frac{1}{4}\pi^{2},\pi^{2})$ and for general domain $\Omega$, we refer to \cite{Druet2002EllipticEW,Esposito}. However, in the case that $q\in (2,2^*)$, the situation is quite different in nature. Brezis and Nirenberg  \cite{Brezis1983CPAM}  proved that the problem \eqref{p-varepsion} has a positive least energy solution if  one of the following conditions holds
\begin{enumerate}
    \item $N\geq 3$, $q\in(\max\{2,\frac{4}{N-2}\}, 2^*)$ and every $\varepsilon>0$;
    \item $N=3$, $q\in (2,4]$ and $\varepsilon$ large enough.
\end{enumerate}
 In particular, when $N=3$ and $\Omega$ is a ball, numerical computation suggest that 
\begin{enumerate}
    \item If $q=4$, there is some $\varepsilon_{0}>0$ such that
    \begin{enumerate}
        \item for $\varepsilon>\varepsilon_{0}$, there is a unique solution;
        \item for $\varepsilon\leq \varepsilon_{0}$, there is no solution.
    \end{enumerate}
    \item If $2<q<4$, there is some $\varepsilon_{0}>0$ such that
    \begin{enumerate}
        \item for $\varepsilon>\varepsilon_{0}$, there are two solutions;
        \item for $\varepsilon=\varepsilon_{0}$, there is a unique solution;
        \item for $\varepsilon<\varepsilon_{0}$, there is no solutions.
    \end{enumerate}
\end{enumerate}
We remark that the second case $2<q<4$ has been confirmed by Atkinson-Peletier \cite{AtkinsonPeletier} afterwards.

In addition to the least energy solution obtained by Brezis and Nirenberg \cite{Brezis1983CPAM}, there are also some special solutions constructed by applying the Lyapounov-Schmidt procedure. Before going on, we need to introduce the notations of Green and Robin function of the domain $\Omega$ which plays a crucial role. Let $G(x,\cdot)$ denote the Green function of the negative Laplacian on $\Omega$, i.e.
\begin{equation}
\begin{cases}
-\Delta G(x,\cdot)= \delta_x, &{\text{in}~\Omega}, \\
\quad \  \ G(x,\cdot)=0, &{\text{on}~\partial\Omega},
\end{cases}
\end{equation}
where $\delta_x$ is the Dirac function at $x\in\Omega$. For $G(x,y)$, we have the following form
\begin{equation*}
G(x,y)=S(x,y)-H(x,y), ~~(x,y)\in \Omega\times \Omega,
\end{equation*}
where (if $N\geq 3$)
\begin{equation*}
S(x,y)=\frac{1}{(N-2)\omega_{N}|x-y|^{N-2}},
\end{equation*}
is the singular part, which is also the fundamental solution to negative Laplace equation, $\omega_{N}$ is a measure of the unit sphere in $\R^N$ and $H(x,y)$ is the regular part of $G(x,y)$ satisfying 
\begin{equation*}
 \begin{cases}
-\Delta H(x,\cdot)=0, &{\text{in}~\Omega}, \\
\quad \  \ H(x,\cdot)=S(x,\cdot), &{\text{on}~\partial\Omega}.
\end{cases}   
\end{equation*}
Particularly, we denote the leading term of $H$ as
\begin{equation}\label{definition of Robin function}
    R(x):=H(x,x), ~~x\in\Omega,
\end{equation}
which is called the Robin function of domain $\Omega$. Moreover, we know that $G(x,y)$ and $H(x,y)$ are symmetric in $x$ and $y$, and by using the maximum principle, we can obtain that $R(x)>0$ in $\Omega$ and $R(x)\to\infty$ if $x\to\partial\Omega$.

Now we recall some more known results on the existence of solutions to \eqref{p-varepsion}. For single-peak solutions: when $N\geq 3, q=2,$ positive solutions of problem (\ref{p-varepsion}) blowing-up at a single point  have been constructed by del Pino et al. \cite{delPino2004},  Pistoia et al. \cite{Pistoia-Serena2024} and  Rey \cite{Rey1990}. On the other hand, when $N\geq 4, q\in (2,2^*)$, for any nondegenerate critical point  $x_{0}$ of Robin function $R(x)$, Molle and Pistoia \cite{Molle2003} proved that there exists a sequence of solutions $u_{\varepsilon}$ which blow-up and concentrate at $x_{0}$ as $\varepsilon\to 0$. For multi-peak solutions:  when $N\geq 5,q=2$, Musso and Pistoia \cite{Msusso2002} proved the existence of solutions which concentrate at more than one point. Moreover,  Musso and Salazar \cite{Musso2018} generalize their results to the case $N=3,q=2$. Particularly, when $N=4, q=2$, the existence of multi-peak solutions to \eqref{p-varepsion} is still open. On the other hand, when $N\geq 4, q\in (2,2^*)$, the existence of positive and sign-changing solutions to \eqref{p-varepsion} with multiple blow-up points have been constructed by Micheletti and  Pistotia \cite{Pistoia2004}. 

In recent years, starting with the work \cite{AtkinsonPeletier,Budd1987}, there has been a
 considerable number of papers on the problem \eqref{p-varepsion} for the asymptotic behavior of positive solutions. First, it is easy to see that the formal limit of (\ref{p-varepsion}) as $\varepsilon\to 0$ is the following critical Emden-Fowler equation 
\begin{equation}\label{p-0}
\begin{cases}
-\Delta u= u^{2^*-1},\quad u>0, &{\text{in}~\Omega},\\
\quad\ \ u=0, &{\text{on}~\partial \Omega},
\end{cases}
\end{equation}
and it is well known \cite{Pohozaev1965} that the problem \eqref{p-0} admits no solutions when $\Omega$ is star-shaped. While, Bahri and  Coron \cite{Bahri1988CPAM} gave an existence result, if $\Omega$ has a non-trivial topology, see also \cite{Corn1984,dancer1988} and the reference therein for other existence results of \eqref{p-0}. In particular, when $\Omega=\mathbb{R}^{N}$, the equation \eqref{p-0} arises as the Euler-Lagrange equation of the following minimizing problem
\begin{equation}
    S:=\inf\left\{\int_{\R^{N}}|\nabla u|^{2}:u\in\mathcal{D}^{1,2}(\R^{N}), \int_{\R^{N}}|u|^{2^*}=1\right\},
\end{equation}
and the solutions of (\ref{p-0}) (see \cite{AUBIN,Talenti}) can be expressed as
\begin{equation}\label{Aubin-Talenti bubble}
    U_{x,\lambda}(y):=\alpha_{N}\left(\frac{\lambda}{1+\lambda^{2}|y-x|^{2}}\right)^{\frac{N-2}{2}},\quad \alpha_{N}=(N(N-2))^{\frac{N-2}{4}},
\end{equation}
which is  called Aubin-Talenti bubble at $x\in\R^{N}$ with height $\lambda\in\mathbb{R}^{+}$. In the following, we also denote the standard bubble by $U$, that is $U:=U_{0,\beta_{N}}$ satisfying $U(0)=1$ and $\beta_{N}=\alpha_{N}^{-2/(N-2)}$. 

Next we recall some results on the asymptotic behavior of positive solutions to \eqref{p-varepsion}. For single-peak solution: when $N=3, q=2$ and $\Omega$ is the unit ball in $\R^{3}$, in an influential paper, Brezis and Peletier \cite{Brezispletier1989}, gave a detailed description on the asymptotic behavior of energy minimizing solutions to \eqref{p-varepsion} as $\varepsilon\searrow\frac{1}{4}\pi^{2}$. In particular, when $N\geq 4,q=2$ and $\Omega$ is a general domain, Brezis and Peletier also proposed a conjecture on the blow-up rate and location of concentration point for the energy minimizing solution of \eqref{p-varepsion} as $\varepsilon\to0$. Afterwards, this conjecture was proved independently by Han \cite{Han1991} and Rey \cite{Rey1989ProofOT}. Very recently, when $N=3,q=2$ and the constant coefficient before the perturbation term is replaced by a regular function $a(x)$ on $\bar{\Omega}$, the corresponding problem was studied by Frank-K\"{o}nig-Kova\v{r}\'{i}k \cite{Frank2024Blowup}. In addition, we also refer to \cite{Frank2019Energyhigher-dimensional,Frank2019Energythree-dimensional,Takahashivariablecoefficients2006} and the reference therein for more results on the single-peak solutions of \eqref{p-varepsion} with $N\geq 3,q=2$. For multi-peak solution: when $N\geq 3, q=2$, the asymptotic behavior of solutions concentrating at several different points has been studied by Cao-Luo-Peng \cite{Cao2021Trans} and K\"{o}nig-Laurain \cite{Knig2022MultibubbleBA,Knig2022FineMA} recently. Hence, when $N\geq 3, q=2$, the blow-up phenomena of the solutions to \eqref{p-varepsion} is fully understand.

Motivate by the results above, it's natural to consider the asymptotic behavior of the multi-peak solutions to \eqref{p-varepsion} with $N\geq3, q\in (2,2^*)$. To state our results precisely, we introduce some more notations. For any $(x,\lambda)\in \Omega\times(0,\infty)$, let $PU_{x,\lambda}$ be the projection of $U_{x,\lambda}$ onto $H^{1}_{0}(\Omega)$, i.e.,
 \begin{equation}
     \begin{cases}
         -\Delta PU_{x,\lambda}=U_{x,\lambda}^{2^*-1},&{\text{in}~\Omega},\\
         \quad\ \ PU_{x,\lambda}=0, &{\text{on}~\partial \Omega},
     \end{cases}
 \end{equation}
and we define
\begin{equation*}
\begin{split}
E_{x,\lambda}=\Big\{v\in H^1_0(\Omega):  \Big\langle \frac{\partial PU_{x,\lambda}}{\partial \lambda},v \Big\rangle=\Big\langle \frac{\partial PU_{x,\lambda}}{\partial x_i},v\Big\rangle=0,~\mbox{for}~i=1,\cdots,N\Big\},
\end{split}
\end{equation*}
where $\langle\cdot,\cdot\rangle$ denotes the inner product in the Sobolev space $H^1_0(\Omega)$. For any fixed number $n\in\mathbb{N}$, let
\begin{equation}
    \Omega_{*}^{n}:=\left\{\vec{x}=(x_{1},\cdots,x_{n})\in\Omega^{n}:x_{i}\neq x_{j} \text{~for all~}i\neq j\right\}.
\end{equation}
For $\vec{x}\in \Omega_{*}^{n}$, we use
${M}(\vec{x})=(m_{i,j})_{i,j=1}^{n}\in\R^{n\times n}$ to denote the matrix with entries
\begin{equation}\label{m ij definition}
    m_{i,j}(\vec{x})=\begin{cases}
        R(x_{i}),&\ \text{~for~} i=j,\\
        -G(x_{i},x_{j}),&\ \text{~for~} i\neq j.
    \end{cases}
\end{equation}
From \cite[Appendix A]{bahri1988}, we know that it's lowest eigenvalue $\rho(\vec x)$ is simple and the corresponding eigenvector can be chosen to have strictly positive components. Here, we use $\Lambda(\vec x)\in\R^{n}$ to denote the unique eigenvector corresponding to $\rho(\vec x)$ such that 
\begin{equation}\label{defination of Lamda x}
 M(\vec x)\cdot \Lambda(\vec x)=\rho(\vec x)\cdot\Lambda(\vec x),\quad (\Lambda(\vec x))_{1}=1.    
\end{equation}
Finally, for given $n\in\N$ and $q\in (\max\{2,\frac{N}{N-2}\},2^*)$, we define the function $ \Phi_{n}$ as
\begin{equation}\label{reduce function}
     \Phi_{n}(\vec{x},\vec{\lambda}):=\frac{A^{2}}{2}\langle \vec{\lambda}^{\frac{N-2}{2}}, {M}(\vec{x})\vec{\lambda}^{\frac{N-2}{2}}\rangle-\frac{B}{q}\sum_{j=1}^{n} \lambda_{j}^{\frac{N-2}{2}(2^*-q)},\quad (\vec{x},\vec{\lambda})\in \Omega_{*}^{n}\times(0,\infty)^{n} ,
 \end{equation}
where
\begin{equation}\label{definition of A and B}
    A=\int_{\R^{N}}U_{0,1}^{2^*-1}dx=\frac{\alpha_{N}^{2^*-1}\omega_{N}}{N},\quad B=\int_{\R^{N}}U_{0,1}^{q}dx=\alpha_{N}^{q}\omega_{N}\frac{\Gamma(\frac{N}{2})\Gamma(\frac{N-2}{2}q-\frac{N}{2})}{2\Gamma(\frac{N-2}{2}q)},
\end{equation}
and $\Gamma(\cdot)$ is the Gamma function. 

In the following, we state the main results in this paper. The first theorem gives a prior estimate of energy and describes the multiple bubbling phenomena of solutions to \eqref{p-varepsion}.

\begin{Thm}\label{thm multibble blowup}
Assume that $N\ge 3, q\in (\max\{2,\frac{4}{N-2}\},2^*)$ and $\Omega$ is a smooth and bounded domain in $\R^{N}$. Let $u_{\varepsilon}$ be a family of solutions of \eqref{p-varepsion}, then there exists two constants $\varepsilon_{0},M_{0}>0$ which depend only on $\Omega,N$ and $q$, such that 
\begin{equation}
    {M_{0}}^{-1}\leq\|u_{\varepsilon}\|_{H^{1}_{0}(\Omega)}\leq M_{0},
\end{equation}
for any $\varepsilon\in (0,\varepsilon_{0})$. Moreover, we have
\begin{enumerate}
    \item If $\Vert u_\varepsilon\Vert_{L^{\infty}(\Omega)} $ is bounded, then, after passing to a subsequence, 
\begin{equation}
    u_{\varepsilon}\to u_{0}\text{~~in~~}C^{2}(\bar{\Omega})\text{~~as~~}\varepsilon\to0,
\end{equation}
where $u_{0}$ is a solution of \eqref{p-0}.
\item If $\Vert u_\varepsilon\Vert_{L^{\infty}(\Omega)}$ is unbounded, then, after passing to a subsequence, 
\begin{equation}
    u_{\varepsilon}\rightharpoonup0\text{~~weakly and not strongly in~~} H^{1}_{0}(\Omega)\text{~~as~~}\varepsilon\to0,
\end{equation}
and there exists $n \in \N$ and $n$ points $x_{1,\varepsilon},..., x_{n,\varepsilon}$ such that the following holds: 
\begin{enumerate}
\item $x_{j,\varepsilon} \to a_{j} \in \Omega$ as $\varepsilon\to 0$, $\lambda_{j,\varepsilon} := u_\varepsilon(x_{j,\varepsilon})^{\frac{2}{N-2}} \to \infty$ as $\varepsilon \to 0$ and $\nabla u_\varepsilon(x_{j,\varepsilon}) = 0$ for every $j$.

\item There exists a constant $\delta_{0}>0$ such that $\min_{j\neq l}\{\text{dist}(a_{j},a_{l}),\text{dist}(a_{j},\partial\Omega)\}\geq \delta_{0}$.

\item $\lambda_{j,\varepsilon}^{-\frac{N-2}{2}} u_\varepsilon(x_{j,\varepsilon} + \lambda_{j,\varepsilon}^{-1} x) \to U$ in $C^2_\text{loc}(\R^N)\cap H^{1}_{\text{loc}}(\R^{N})$ as $\varepsilon\to0$ for every $j$.

\item $|\nabla u_{\varepsilon}|^{2}\weakto S^{\frac{N}{2}}\sum_{j=1}^{n}\delta_{a_{j}}$ in the sense of measure as $\varepsilon\to0$.

\item There exists a constant $C_{0}>0$ such that $u_{\varepsilon} \leq C_{0} \sum_{j=1}^{n} U_{x_{j,\varepsilon},\lambda_{j,\varepsilon}}$ on $\Omega$, Moreover, for any compact set $K\subset\subset\Omega$, there exists a constant $C_{1}(K)>0$ such that $C_{1}\sum_{j=1}^{n} U_{x_{j,\varepsilon},\lambda_{j,\varepsilon}}\leq u_{\varepsilon} $ on $K$.

\item There exists a constant $C_{2}>0$ such that $\varepsilon\leq C_{2} \lambda_{j,\varepsilon}^{2-\frac{N-2}{2}q}$ for every $j$. 

\item $\mu_{j} := \lim\limits_{\varepsilon \to 0} \mu_{j,\varepsilon} := \lim\limits_{\varepsilon \to 0} (\frac{\lambda_{1,\varepsilon}}{\lambda_{j,\varepsilon}})^{\frac{N-2}{2}}$ exists and lies in $(0, \infty)$ for every $j$.

\item There exists some positive constants $\nu_{j}$ such that $\lambda_{1,\varepsilon}^{\frac{N-2}{2}} u_{\varepsilon} \to  \sum_{j=1}^{n} \nu_j G(a_j, \cdot):={G}_{\vec{a},\vec{\nu}}$ in $C^1_{loc}(\Omega\setminus\{a_{1},...,a_{n}\})$ as $\varepsilon\to0$.

\end{enumerate}
\end{enumerate}
\end{Thm}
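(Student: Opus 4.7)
The plan is to first establish the uniform $H^1_0$ bound and then carry out a concentration-compactness analysis. For the lower bound, I test (\ref{p-varepsion}) against $u_\varepsilon$ itself and apply the Sobolev inequality to get $\|u_\varepsilon\|^2 \leq S^{-2^*/2}\|u_\varepsilon\|^{2^*} + \varepsilon C \|u_\varepsilon\|^q$; since $q>2$ and $2^*>2$, if $\|u_\varepsilon\|_{H^1_0}\to 0$ both right-hand side terms would be $o(\|u_\varepsilon\|^2)$, contradiction. For the upper bound, I combine the equation with a Pohozaev identity on $\Omega$, which yields an a priori $L^q$ control and hence an $H^1_0$ bound depending only on $\Omega,N,q$. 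Once these are in place, the dichotomy is clear: if $\|u_\varepsilon\|_{L^\infty}$ stays bounded, standard bootstrap via $L^p$- and Schauder estimates on the bounded right-hand side yields $C^{2,\alpha}$-precompactness and a subsequential limit solving the unperturbed problem (\ref{p-0}).

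If $\|u_\varepsilon\|_{L^\infty}\to\infty$, I invoke Struwe's profile decomposition (applicable since solutions are automatically Palais--Smale sequences for the associated energy): $u_\varepsilon = v + \sum_{j=1}^n PU_{x_{j,\varepsilon},\lambda_{j,\varepsilon}} + r_\varepsilon$ with $\|r_\varepsilon\|_{H^1_0}\to 0$, and elliptic regularity forces the weak limit $v\equiv 0$ in the unbounded case. Choosing $x_{j,\varepsilon}$ as local maxima gives (a). Setting $v_{j,\varepsilon}(y):=\lambda_{j,\varepsilon}^{-(N-2)/2}u_\varepsilon(x_{j,\varepsilon}+\lambda_{j,\varepsilon}^{-1}y)$ yields $-\Delta v_{j,\varepsilon}=v_{j,\varepsilon}^{2^*-1}+\varepsilon\lambda_{j,\varepsilon}^{2-(N-2)q/2}v_{j,\varepsilon}^{q-1}$; by (f) (proved later) the perturbative coefficient is bounded, so the Caffarelli--Gidas--Spruck classification gives $v_{j,\varepsilon}\to U$, proving (c), while energy concentration of total mass $S^{N/2}$ at each $a_j$ yields (d). For (b) — interior blow-up and separation — I apply local Pohozaev identities on $B(x_{j,\varepsilon},r)$: boundary accumulation would make the Robin contribution $R(a_j)$ diverge and produce a sign contradiction with the balance law, while cluster formation would render $M(\vec a)$ singular in a direction incompatible with the positivity of $\Lambda(\vec a)$. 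The global bound in (e) is obtained from the Green function representation combined with the bubble ansatz and classical comparison arguments.

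Part (f), the scaling balance $\varepsilon\leq C\lambda_{j,\varepsilon}^{2-(N-2)q/2}$, I extract by testing (\ref{p-varepsion}) against $\partial PU_{x_{j,\varepsilon},\lambda_{j,\varepsilon}}/\partial\lambda$, which is essentially the derivative of the reduced functional $\Phi_n$ in (\ref{reduce function}): after expanding each side in $\lambda_{j,\varepsilon}$, the boundary correction of order $\lambda_{j,\varepsilon}^{-(N-2)}R(a_j)-\sum_{l\neq j}\lambda_l^{(N-2)/2}\lambda_j^{-(N-2)/2}G(a_j,a_l)$ must balance $\varepsilon\lambda_{j,\varepsilon}^{(N-2)q/2-2}B$, yielding the required inequality together with the matching relation. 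For (g), if some $\mu_{j,\varepsilon}\to 0$ or $\infty$, the $n$ balance equations from (f) become incompatible: the quadratic form $\langle\vec\lambda^{(N-2)/2},M(\vec a)\vec\lambda^{(N-2)/2}\rangle$ in $\Phi_n$ loses coercivity along that direction, and the Perron--Frobenius structure of $\Lambda(\vec a)$ from (\ref{defination of Lamda x}) pins all $\mu_j$ to strictly positive, finite limits. Finally, (h) follows by rescaling $w_\varepsilon:=\lambda_{1,\varepsilon}^{(N-2)/2}u_\varepsilon$: the pointwise bound in (e) makes $\{w_\varepsilon\}$ locally bounded away from $\{a_j\}$, while $\lambda_{1,\varepsilon}^{(N-2)/2}(u_\varepsilon^{2^*-1}+\varepsilon u_\varepsilon^{q-1})$ converges as a measure to $\sum_j\nu_j\delta_{a_j}$ with $\nu_j=A\mu_j$; uniqueness for the Poisson problem then forces $w_\varepsilon\to\sum\nu_jG(a_j,\cdot)$ in $C^1_{\mathrm{loc}}(\Omega\setminus\{a_1,\ldots,a_n\})$.

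The main difficulty is the interlocked derivation of (f) and (g): neither seems accessible in isolation because the $n$ balance equations couple every $\lambda_{j,\varepsilon}$ through $M(\vec x)$. I expect to resolve this by a bootstrap — first use the crude upper bound in (e) to rule out super-polynomial divergence of any $\mu_{j,\varepsilon}$, then refine the Pohozaev expansions at each $x_{j,\varepsilon}$ to pin down the matching orders of $\varepsilon$ and $\lambda_{j,\varepsilon}$ simultaneously. A secondary difficulty is the exclusion of boundary and cluster concentration in low dimensions, where the bubble--bubble interaction $G(a_i,a_j)$ and the bubble--boundary term $R(a_j)$ are of comparable order with the sub-critical remainder; this will require a careful higher-order expansion of $PU_{x,\lambda}$ near $\partial\Omega$ that exploits the sign structure of the lowest-eigenvalue matrix.
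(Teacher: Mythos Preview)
Your proposal has the right overall architecture, but two load-bearing steps fail as written.

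\textbf{The $H^1_0$ upper bound.} You invoke the Poho\v{z}aev identity on all of $\Omega$ to get an a priori $L^q$ bound. That identity reads
\[
\frac{1}{2N}\int_{\partial\Omega}|\nabla u_\varepsilon|^2\langle x-x_0,\nu\rangle=\Big(\tfrac{1}{q}-\tfrac{1}{2^*}\Big)\varepsilon\int_\Omega u_\varepsilon^q,
\]
and only yields control when $\langle x-x_0,\nu\rangle\ge 0$, i.e.\ when $\Omega$ is star-shaped. The theorem makes no such assumption. The paper does \emph{not} obtain the energy upper bound a priori; instead it first runs the full local blow-up analysis (Propositions~\ref{prop candidates of blow-up points}--\ref{prop ruling out bubble accumulations}) to show that blow-up points are finitely many, isolated, and simple, and only then concludes that each bubble contributes $S^{N/2}$ of energy, so $\|u_\varepsilon\|_{H^1_0}$ is bounded. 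You must reverse the order: structure first, energy bound second.

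\textbf{Separation and interior location of blow-up points.} Your mechanism---``cluster formation would render $M(\vec a)$ singular in a direction incompatible with the positivity of $\Lambda(\vec a)$''---does not work: if $a_i\to a_j$ then $G(a_i,a_j)\to\infty$ and $M(\vec a)$ is not even defined, so there is no eigenvector to speak of. The paper's argument (Proposition~\ref{prop ruling out bubble accumulations}) is different and essential: assume $d_\varepsilon:=|x_{1,\varepsilon}-x_{2,\varepsilon}|\to 0$, rescale by $d_\varepsilon$ so the two bubbles sit at distance $1$ in the blown-up domain, show that the rescaled origin is an isolated \emph{simple} blow-up point (Appendix~\ref{Local estimates for blow-up points and proof of Theorem}), and then apply a local Poho\v{z}aev identity on a small ball together with B\^ocher's theorem to force a sign contradiction in the constant term of the singular harmonic limit. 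This is the Li--Schoen machinery, and it is what actually excludes clustering; your $M(\vec a)$ argument cannot substitute for it. Similarly, boundary exclusion in the paper is not via Robin-function divergence in a balance law but via Kelvin transform and moving planes (Proposition~\ref{prop boundary estimate}), which gives an $L^\infty$ bound in a neighborhood of $\partial\Omega$ before any balance law is available.

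Two smaller points. Your route to (f) via testing against $\partial PU/\partial\lambda$ presupposes a decomposition with controlled remainder, which in turn typically uses (g); the paper avoids this circularity by extracting (f) directly from a local Poho\v{z}aev identity at a single isolated simple blow-up point (Lemma~\ref{lem blou-up rate estimate}), with no reference to other bubbles. And (g) admits a much simpler proof than the one you sketch: the upper bound $u_\varepsilon(x)\le C u_\varepsilon(x_{j,\varepsilon})^{-1}$ on an annulus (Proposition~\ref{prop properties of isolated simple blow-up point}) combined with a Green-representation lower bound coming from the mass of a different bubble (equations \eqref{proof of them multibubble blowup -1}--\eqref{proof of them multibubble blowup -2}) forces $u_\varepsilon(x_{j,\varepsilon})\sim u_\varepsilon(x_{l,\varepsilon})$ directly, with no bootstrap and no Perron--Frobenius.
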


\begin{Rem}
\begin{enumerate}  
    \item For $N\geq 5, q=2$, the results presented in Theorem \ref{thm multibble blowup} remain valid, as demonstrated by Cerqueti-Grossi \cite{Cerqueti2001Localestimates}. And the main idea in Theorem \ref{thm multibble blowup} is the classical blow-up analysis and local estimates around the blow-up points, which originated from the work of Schoen \cite{Schoenbook,Schoen} and Li \cite{YanYanLi1995,YanYanLi1996,yanyanli1999CCM} for the prescribed scalar curvature problem as well as the Yamabe problem. Futhermore, Druet-Laurain \cite{Druet2010StabilityOT} investigated the global asymptotic behavior of the Brezis-Nirenberg problem with $N=3,q=2$ and K\"onig-Laurain \cite{Knig2022FineMA} extended their results to the case $N\geq 4,q=2$ recently. Their method dates back to the work of Druet-Hebey-Robert \cite{Druetbook}. 
    \item Theorem \ref{thm multibble blowup} shows that the blow-up points of $u_{\varepsilon}$ are isolated, simple and depart each other, thus the number of blow-up points of $u_{\varepsilon}$ is prior finite and clustering configurations can not occur. But, it is not always the case for sign-changing solutions. To be more precise, in Premoselli \cite{Premoselli2022Tower} (see also Iacopetti-Vaira \cite{Iacopetti2016}), it was proved that the sign-changing tower of bubble for the Brezis-Nirenberg problem with $N\geq 7, q=2$ exist. In contrast, in low dimensions $N=4,5,6$ and $q=2$, sign-changing bubble-towers cannot exist as shown in Iacopetti-Pacella \cite{Iacopetti2015}. On the other hand, Musso-Rocci-Vaira \cite{Musso2024} (see also Vaira \cite{Vaira2015}) construct a family of sign-changing solutions clustering around a point on the boundary of $\Omega$ for the Brezis-Nirenberg problem with $N\geq 7,q=2$ recently. But it's still unknown that if this kind of clustering sign-changing solutions exist for the Brezis-Nirenberg problem in low dimension $N=4,5,6$ and $q=2$. 
 \item It is worth noting that the property $(\textrm{f})$ in Theorem \ref{thm multibble blowup} gives a prior estimate on blow-up rate and it implies that the restriction on $N$ and $q$ in Theorem \ref{thm multibble blowup} is optimal. Indeed, when $N=4,q=2$, the property $(\textrm{f})$ is trivial and it should be correct as $\varepsilon\lesssim(\ln\lambda_{j,\varepsilon})^{-1}$, see K\"onig-Laurain \cite{Knig2022FineMA}. Furthermore, we can prove that $\varepsilon\sim\lambda_{j,\varepsilon}^{2-\frac{N-2}{2}q}$ under a nondegenerate assumption of the concentration point $\vec{a}$, see Theorem \ref{thm blow up rate} below for more details.
\end{enumerate}
\end{Rem}

Next, we impose some assumption on domain $\Omega$.
\smallskip

\noindent\textbf{Assumption A:} The limit equation \eqref{p-0} has no solutions.
\begin{Rem}
    \begin{enumerate}
        \item  If $\Omega$ is a star-shaped domain, then from Poho\v{z}aev identity, we know that the assumption A holds. In addition, we can also find some non star-shaped domain such that the assumption A holds, see \cite{Carpio1992}.
        \item From the results in Theorem \ref{thm multibble blowup}, Assumption A implies that the solutions of \eqref{p-varepsion} must blow-up. 
    \end{enumerate}
\end{Rem}

The second theorem gives a decomposition of the solutions to \eqref{p-varepsion}.

\begin{Thm}\label{thm decomposition}
Assume that $N\ge 3, q\in (\max\{2,\frac{4}{N-2}\},2^*)$, $\Omega$ is a smooth bounded domain and satisfies Assumption A. Let $u_{\varepsilon}$ be a family of solutions of \eqref{p-varepsion}, then there exists $n\in\N$ and a vector $\vec{a}=(a_{1},\cdots,a_{n})\in\Omega_{*}^{n}$ such that $u_{\varepsilon}(x)$  has the following decomposition
\begin{equation}\label{eq decomposition}
u_{\varepsilon}=\sum^n_{j=1} PU_{x_{j,\varepsilon}, \lambda_{j,\varepsilon}}+w_{\varepsilon},
\end{equation}
and for any $j=1,\cdots,n$, as $\varepsilon\to0$
\begin{equation*}
~ x_{j,\varepsilon}\rightarrow a_j, ~\lambda_{j,\varepsilon}\rightarrow \infty,~\|w_{\varepsilon}\|_{H^{1}_{0}(\Omega)}\to0,~w_\varepsilon\in \bigcap^n_{j=1}E_{x_{j,\varepsilon},\lambda_{j,\varepsilon}}.
\end{equation*}
Moreover, we have
 \begin{equation}\label{perturbation estimate of w-varepsilon}
  \|w_{\varepsilon}\|_{H^{1}_{0}(\Omega)}=\begin{cases}
     O\left(\frac{1}{\lambda_{\varepsilon}}\right), &\text{~~if~~} N=3\text{~and~} q\in (4,6),\\

         O\left(\frac{1}{\lambda_{\varepsilon}^{2q-3}}\right), &\text{~~if~~} N=4 \text{~and~} q\in (2,\frac{5}{2}),\\
          O\left(\frac{(\ln\lambda_{\varepsilon})^{\frac{3}{4}}}{\lambda_{\varepsilon}^{2}}\right), &\text{~~if~~} N=4 \text{~and~} q=\frac{5}{2},\\
     O\left(\frac{1}{\lambda_{\varepsilon}^{2}}\right), &\text{~~if~~} N=4 \text{~and~} q\in (\frac{5}{2},4),\\
     
      O\left(\frac{1}{\lambda_{\varepsilon}^{\frac{6q-7}{2}}}\right), &\text{~~if~~} N=5 \text{~and~} q\in (2,\frac{13}{6}),\\

O\left(\frac{(\ln\lambda_{\varepsilon})^{\frac{7}{10}}}{\lambda_{\varepsilon}^{3}}\right), &\text{~~if~~} N=5 \text{~and~} q=\frac{13}{6},\\
      
      O\left(\frac{1}{\lambda_{\varepsilon}^{3}}\right), &\text{~~if~~} N=5 \text{~and~} q\in (\frac{13}{6},\frac{10}{3}),\\
O\left(\frac{(\ln\lambda_{\varepsilon})^{\frac{N+2}{2N}}}{\lambda_{\varepsilon}^{\frac{N+2}{2}}}\right), &\text{~~if~~} N\geq 6\text{~and~} q\in (2,2^*),
 \end{cases}
\end{equation}
where ${\lambda}_{\varepsilon}:=\min\{\lambda_{1,\varepsilon},\cdots,\lambda_{n,\varepsilon}\}$.
\end{Thm}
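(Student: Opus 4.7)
The plan is to promote the qualitative multi-bubble description provided by Theorem~\ref{thm multibble blowup} into a quantitative orthogonal decomposition via a Lyapunov--Schmidt style adjustment of parameters, and then to control the remainder $w_\varepsilon$ by inverting the linearized operator and estimating the error in a dual Sobolev norm. The scheme has three stages.

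First, starting from the sequences $(x_{j,\varepsilon},\lambda_{j,\varepsilon})$ delivered by Theorem~\ref{thm multibble blowup}, I would apply the implicit function theorem to the $n(N+1)$-dimensional map that sends a small perturbation of the parameters to the vector of $H^1_0(\Omega)$ inner products $\langle u_\varepsilon-\sum_j PU_{x_j,\lambda_j},\,\partial_{\lambda_j} PU_{x_j,\lambda_j}\rangle$ and $\langle u_\varepsilon-\sum_j PU_{x_j,\lambda_j},\,\partial_{(x_j)_i}PU_{x_j,\lambda_j}\rangle$. In the natural rescaled variables, the differential of this map is a small $H^1_0$-perturbation of a fixed invertible block-diagonal matrix (Bahri--Coron's almost-orthogonality of tangent vectors from different bubbles together with the standard nondegeneracy of a single bubble). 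This delivers corrected parameters, still satisfying every conclusion of Theorem~\ref{thm multibble blowup}, together with a residual $w_\varepsilon\in\bigcap_{j=1}^n E_{x_{j,\varepsilon},\lambda_{j,\varepsilon}}$ with $\|w_\varepsilon\|_{H^1_0}\to 0$.

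Second, subtracting the equations shows that $w_\varepsilon$ satisfies
\begin{equation*}
\mathcal{L}_\varepsilon w_\varepsilon := -\Delta w_\varepsilon - (2^*-1)\Big(\textstyle\sum_{j} PU_{x_j,\lambda_j}\Big)^{2^*-2}w_\varepsilon = \mathcal{E}_\varepsilon + \mathcal{N}_\varepsilon(w_\varepsilon),
\end{equation*}
where $\mathcal{E}_\varepsilon = \big[(\sum_j PU_j)^{2^*-1}-\sum_j U_j^{2^*-1}\big] + \varepsilon (\sum_j PU_j)^{q-1}$ collects the main error and $\mathcal{N}_\varepsilon(w_\varepsilon)$ is a higher-order remainder in $w_\varepsilon$. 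The operator $\mathcal{L}_\varepsilon$, restricted to $\bigcap_j E_{x_{j,\varepsilon},\lambda_{j,\varepsilon}}$, is uniformly invertible into $H^{-1}(\Omega)$ by the classical coercivity lemma for sums of well-separated bubbles, so a contraction-mapping argument yields
\begin{equation*}
\|w_\varepsilon\|_{H^1_0(\Omega)} \lesssim \|\mathcal{E}_\varepsilon\|_{H^{-1}(\Omega)} \lesssim \|\mathcal{E}_\varepsilon\|_{L^{2N/(N+2)}(\Omega)}.
\end{equation*}

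Third, the announced rates come from a case-by-case analysis of the two summands of $\mathcal{E}_\varepsilon$ in $L^{2N/(N+2)}$. The bubble-interaction and projection defect $(\sum PU_j)^{2^*-1}-\sum U_j^{2^*-1}$ is controlled by the Rey--Bahri expansion $PU_j - U_j = -\alpha_N\lambda_j^{-(N-2)/2}H(x_j,\cdot)+o(\lambda_j^{-(N-2)/2})$ together with well-separation of the concentration points, contributing a rate of order $\lambda_\varepsilon^{-(N+2)/2}$, decorated by a logarithmic factor in the resonant dimensions $N\geq 6$. The sub-critical term reduces to $\varepsilon\|U_\lambda^{q-1}\|_{L^{2N/(N+2)}}$; rescaling $y=\lambda x$ shows that the critical integrability threshold for $\int_{\R^N}U^{(q-1)\cdot 2N/(N+2)}$ is exactly $q=\frac{3N-2}{2(N-2)}$, which specializes to $q=\frac{5}{2}$ for $N=4$ and $q=\frac{13}{6}$ for $N=5$---precisely the dividing lines appearing in~\eqref{perturbation estimate of w-varepsilon}. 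Above the threshold the norm is $\varepsilon\,\lambda_\varepsilon^{(N-2)(q-1)/2-(N+2)/2}$, at the threshold an annular decomposition produces the announced logarithmic correction, and below the threshold one truncates the relevant integral at spatial scale $\lambda_\varepsilon$ to obtain a strictly lower polynomial rate. Inserting the prior blow-up bound $\varepsilon\lesssim\lambda_\varepsilon^{2-(N-2)q/2}$ from item (f) of Theorem~\ref{thm multibble blowup} converts each of these into a pure power of $\lambda_\varepsilon$; taking the larger of the two contributions in each regime reproduces exactly the rates announced in~\eqref{perturbation estimate of w-varepsilon}. I expect this final stage of bookkeeping to be the main technical obstacle, in particular the sharp tracking of the logarithmic corrections at the threshold exponents $q=\tfrac{5}{2}$ ($N=4$) and $q=\tfrac{13}{6}$ ($N=5$), which requires a careful annular decomposition of the integrals rather than a crude $L^p$ bound.
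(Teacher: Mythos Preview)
Your proposal is correct and follows essentially the same architecture as the paper: orthogonal adjustment of bubble parameters (the paper minimizes $\|u_\varepsilon-\sum_j PU_{x_j,\lambda_j}\|_{H^1_0}^2$ over a compact parameter range rather than applying the implicit function theorem, but the two devices are interchangeable here), coercivity of $\mathcal{L}_\varepsilon$ on $\bigcap_j E_{x_{j,\varepsilon},\lambda_{j,\varepsilon}}$, and a dual-norm estimate of $\mathcal{E}_\varepsilon$ with the same threshold analysis at $q=\tfrac{3N-2}{2(N-2)}$ followed by insertion of the a~priori bound $\varepsilon\lesssim\lambda_\varepsilon^{2-(N-2)q/2}$. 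One small inaccuracy in your sketch: in dimensions $N\le 5$ the projection/interaction defect actually contributes $\lambda_\varepsilon^{2-N}$ rather than $\lambda_\varepsilon^{-(N+2)/2}$ (the $L^{2N/(N+2)}$ norm of $U_\lambda^{2^*-2}$ is only of order $\lambda^{-(N-2)/2}$ when $N\le 5$), but this does not alter the final rates since the subcritical contribution already matches this order in those regimes.
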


The third theorem gives the exact rate of blow-up and characterize the concentration speed and the location of the concentration point. 
\begin{Thm}\label{thm blow up rate}
Assume that $N\ge 3, q\in (\max\{2,\frac{4}{N-2}\},2^*)$, $\Omega$ is a smooth bounded domain and satisfies Assumption $\textrm{A}$. Using the notations in Theorem $\ref{thm decomposition}$ , we can obtain that $\rho(\vec{a})\geq 0$ and hence $M(\vec{a})$ is a nonnegative matrix. Moreover, let
\begin{equation}\label{definition of F-N-Q-lambda}
    \mathrm{F_{N,q}}(\lambda_{\varepsilon})=\begin{cases}
    \frac{1}{\lambda_{\varepsilon}}, &\text{~~if~~} N=3\text{~and~} q\in (4,6),\\
         \frac{1}{\lambda_{\varepsilon}^{2q-4}}, &\text{~~if~~} N=4 \text{~and~} q\in(2,3),\\
          \frac{\ln\lambda_{\varepsilon}}{\lambda_{\varepsilon}^{2}}, &\text{~~if~~} N=4 \text{~and~} q\in[3,4),\\ 
      \frac{1}{\lambda_{\varepsilon}^{3q-5}}, &\text{~~if~~} N=5\text{~and~} q\in (2,\frac{7}{3}),\\
    \frac{1}{\lambda_{\varepsilon}^{2}}, &\text{~~if~~} N=5\text{~and~} q\in [\frac{7}{3},\frac{10}{3}),\\
    \frac{1}{\lambda_{\varepsilon}^{2}}, &\text{~~if~~} N\geq 6\text{~and~} q\in (2,2^*),
 \end{cases} 
\end{equation}
then we have
\begin{enumerate}
    \item If $\rho(\vec{a})> 0$, then $M(\vec{a})$ is a positive matrix and after passing to a subsequence, 
\begin{equation}
\lim_{\varepsilon\rightarrow 0}\varepsilon\lambda_{j,\varepsilon}^{\frac{N-2}{2}q-2}:=\lambda_{j}^{2-\frac{N-2}{2}q},~\mbox{for}~j=1,\cdots,n,
\end{equation}
exists and lies in $(0,\infty)$ and $(\vec{a},\vec{\lambda})$ is a  critical point of $\Phi_{n}$. Moreover if $q\geq 2^*-1$, then $\vec{\lambda}$ is the unique critical point of $\Phi_{n}(\vec{a},\cdot)$  which is also a minimizer. Furthermore, if $(\vec{a},\vec{\lambda})$ is a nondegenerate critical point of $\Phi_{n}$, then
for $j=1,\cdots,n$, 
\begin{equation}\label{eq concentration speed}
\big|x_{j,\varepsilon}-a_j\big|=O(\mathrm{F_{N,q}}(\lambda_{\varepsilon}))\text{~~and~~}\big|\lambda_j- \big(\varepsilon ^{\frac{2}{(N-2)q-4}}\lambda_{j,\varepsilon}\big)^{-1}\big|=O(\mathrm{F_{N,q}}(\lambda_{\varepsilon})).
 \end{equation}
\item If $\rho(\vec{a})=0$, then $\nabla \rho(\vec{a})=0$. Moreover,  as $\varepsilon\rightarrow 0$
\begin{equation}
\varepsilon\lambda_{j,\varepsilon}^{\frac{N-2}{2}q-2}=O(\mathrm{F_{N,q}}(\lambda_{\varepsilon})),
\end{equation}
and $(\Lambda(\vec{a}))_{j}=\mu_{j}=\lim_{\varepsilon\to0}(\frac{\lambda_{1,\varepsilon}}{\lambda_{j,\varepsilon}})^{\frac{N-2}{2}}$. Furthermore, as $\varepsilon\rightarrow 0$, we have the following quantitative bounds
\begin{equation}
 \rho(\vec{x}_\varepsilon)=o(\mathrm{F_{N,q}(\lambda_\varepsilon)}),
\end{equation}
and
\begin{equation}
   |\nabla \rho(x_\varepsilon)|=O(\mathrm{F_{N,q}}(\lambda_{\varepsilon})).
\end{equation}
\end{enumerate}
\end{Thm}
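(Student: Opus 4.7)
The strategy is a Lyapunov--Schmidt type analysis built on the decomposition of Theorem \ref{thm decomposition}: extract balance conditions by testing the equation $-\Delta u_\varepsilon=u_\varepsilon^{2^*-1}+\varepsilon u_\varepsilon^{q-1}$ against the derivatives $\partial_{\lambda_j} PU_{x_{j,\varepsilon},\lambda_{j,\varepsilon}}$ and $\partial_{x_j^i} PU_{x_{j,\varepsilon},\lambda_{j,\varepsilon}}$, then match their asymptotic expansions against the powers of $\lambda_{j,\varepsilon}$ appearing in $\Phi_n$. Because $w_\varepsilon$ lies in $\bigcap_j E_{x_{j,\varepsilon},\lambda_{j,\varepsilon}}$, the linear $w_\varepsilon$-terms produced on the left-hand side drop out, and only the interaction of bubbles with themselves, with each other (through $H$ and $G$) and with the perturbation $\varepsilon u_\varepsilon^{q-1}$ survive. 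The quadratic error in $w_\varepsilon$ is controlled through the rate \eqref{perturbation estimate of w-varepsilon} of Theorem \ref{thm decomposition}, which is precisely what determines the admissible dimensions and ranges of $q$.

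\textbf{Step 1 (scaling balance).} Testing against $\lambda_{j,\varepsilon}\,\partial_{\lambda_j}PU_{x_{j,\varepsilon},\lambda_{j,\varepsilon}}$ and using the standard expansion $PU_{x,\lambda}(y)=U_{x,\lambda}(y)-\alpha_N\lambda^{-(N-2)/2}H(x,y)+O(\lambda^{-(N+2)/2})$, together with the interaction identity
\begin{equation*}
\int_\Omega U_{x_k,\lambda_k}^{2^*-1}PU_{x_j,\lambda_j}=A^2\,\lambda_k^{-(N-2)/2}\lambda_j^{-(N-2)/2}\,G(x_k,x_j)+\text{l.o.t.}\quad(k\ne j),
\end{equation*}
and the well-known asymptotic for $\int_\Omega U_{x,\lambda}^q$, I will obtain, for each $j$,
\begin{equation*}
A^2\sum_{k=1}^{n} m_{j,k}(\vec{x}_\varepsilon)\,\mu_{k,\varepsilon}\,\mu_{j,\varepsilon}\,\lambda_{1,\varepsilon}^{-(N-2)}=\frac{N-2}{N}\,B\,\varepsilon\,\lambda_{j,\varepsilon}^{2-\frac{N-2}{2}q}\,\mu_{j,\varepsilon}^{q(N-2)/(N-2)}+R_j^\lambda(\varepsilon),
\end{equation*}
where $R_j^\lambda(\varepsilon)$ is governed by $\|w_\varepsilon\|_{H^1_0}^2$ and the bubble cross terms; this is exactly the $\partial_{\lambda_j}\Phi_n=0$ equation in disguise. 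The analogous test against $\lambda_{j,\varepsilon}^{-1}\partial_{x_j^i} PU_j$ produces
\begin{equation*}
A^2\sum_{k=1}^n\partial_{x_j^i}m_{j,k}(\vec{x}_\varepsilon)\,\mu_{k,\varepsilon}\mu_{j,\varepsilon}\,\lambda_{1,\varepsilon}^{-(N-2)}=R_j^x(\varepsilon),
\end{equation*}
of size dictated by $\mathrm{F}_{N,q}$.

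\textbf{Step 2 (case $\rho(\vec{a})>0$).} Multiplying the $\lambda$-identity by $(\Lambda(\vec a))_j$ and summing over $j$, the left-hand side is bounded below by $\rho(\vec a)\,A^2\sum_j \mu_j^2>0$, which forces $\varepsilon\lambda_{j,\varepsilon}^{(N-2)q/2-2}$ to stay bounded away from $0$ and $\infty$. Passing to the limit, the identity becomes precisely $\partial_{\lambda_j}\Phi_n(\vec a,\vec\lambda)=0$; combined with the $x$-identity this shows $(\vec a,\vec\lambda)$ is a critical point of $\Phi_n$. When $q\ge 2^*-1$, direct inspection of $\Phi_n(\vec a,\cdot)$ shows strict convexity in $\vec{\lambda}^{(N-2)/2}$ (the interaction term is positive definite and the perturbation is a sum of concave-up powers), yielding uniqueness and the minimizer property. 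For the concentration-speed estimate \eqref{eq concentration speed}, I will linearize the reduced system $\nabla\Phi_n(\vec x_\varepsilon,\vec{\widetilde\lambda}_\varepsilon)=O(\mathrm{F}_{N,q}(\lambda_\varepsilon))$ (with $\widetilde\lambda_{j,\varepsilon}=(\varepsilon^{2/((N-2)q-4)}\lambda_{j,\varepsilon})^{-1}$) around the nondegenerate critical point $(\vec a,\vec\lambda)$ and apply the implicit function theorem; the remainders are bounded by revisiting Step 1 with the sharper expansions that now distinguish the dimensional regimes in \eqref{definition of F-N-Q-lambda}.

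\textbf{Step 3 (case $\rho(\vec a)=0$).} Here the Perron--Frobenius structure of $M(\vec a)$ forces $\vec\mu$ to be proportional to $\Lambda(\vec a)$, and normalizing $\mu_1=1$ yields $\mu_j=(\Lambda(\vec a))_j$. Pairing the $\lambda$-identity with $\Lambda(\vec a)$ annihilates the leading left-hand side, so the residual estimate on $R_j^\lambda$ gives $\varepsilon\lambda_{j,\varepsilon}^{(N-2)q/2-2}=O(\mathrm{F}_{N,q}(\lambda_\varepsilon))$ and $\rho(\vec x_\varepsilon)=o(\mathrm{F}_{N,q}(\lambda_\varepsilon))$. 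Taking one more derivative in the $x$-identity (and using that $\nabla R$ and $\nabla_{x_j} G(x_j,x_k)$ are continuous away from the diagonal) converts the $x$-balance into the estimate $|\nabla\rho(\vec x_\varepsilon)|=O(\mathrm{F}_{N,q}(\lambda_\varepsilon))$, and letting $\varepsilon\to 0$ yields $\nabla\rho(\vec a)=0$.

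\textbf{Main obstacle.} The delicate point is getting the remainders $R_j^\lambda,R_j^x$ sharp enough to produce the piecewise rate $\mathrm{F}_{N,q}$, which changes its character precisely when the perturbation integral $\int U^q$ and the quadratic bubble-interaction integrals cross in order of magnitude. The small-dimension cases $N=3,4,5$ with $q$ close to $2$ require keeping several orders in the bubble expansion simultaneously and carefully estimating integrals of type $\int U_{x_j,\lambda_j}^{q-1} U_{x_k,\lambda_k}$ and $\int U_j^{q-1}H(x_j,\cdot)$, where logarithmic corrections appear at the exceptional exponents $q=\tfrac{5}{2},\tfrac{13}{6}$; aligning these with the $H^1$-control on $w_\varepsilon$ from Theorem \ref{thm decomposition} is the most technically demanding step of the argument.
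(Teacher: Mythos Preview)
Your overall approach---testing the equation against the kernel elements $\partial_{\lambda_j}PU_j$ and $\partial_{x_j^i}PU_j$---is a genuine alternative to what the paper does. The paper does \emph{not} extract the balance relations by projection; instead it first derives a pointwise $C^1$ expansion of $u_\varepsilon$ away from the concentration set via Green's representation (Proposition~\ref{prop estimate for u}), and then applies the two local Poho\v{z}aev identities \eqref{pohozaev identity 1}--\eqref{pohozaev identity 2} on balls $B(x_{j,\varepsilon},\theta)$, evaluating the boundary terms through explicit quadratic-form identities in the Green function (Lemma~\ref{lem for estimate of quadractic form}). This Poho\v{z}aev route bypasses the delicate bubble-interaction integrals you flag as the ``main obstacle'' and replaces them by the integral lemmas \ref{lem of estimate of u 1}--\ref{lem estimate of q refined}; the dimensional thresholds in $\mathrm{F}_{N,q}$ then emerge from those expansions rather than from integrals of the type $\int U_j^{q-1}U_k$. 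Your projection approach is the Bahri--Li--Rey style and should ultimately yield the same reduced system, but it is not the paper's.

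Step~3, however, has a real gap. Pairing the $\lambda$-identity with $\Lambda(\vec a)$ does \emph{not} annihilate the leading left-hand side: the matrix appearing is $M(\vec x_\varepsilon)$, not $M(\vec a)$, and the vector is $\vec\mu_\varepsilon$, not $\Lambda(\vec a)$. Pairing with $\Lambda(\vec x_\varepsilon)$ instead produces $\rho(\vec x_\varepsilon)\langle\Lambda(\vec x_\varepsilon),\vec\mu_\varepsilon\rangle$ on the left, which is \emph{not} zero and only couples $\rho(\vec x_\varepsilon)$ to $\varepsilon\lambda_\varepsilon^{\frac{N-2}{2}q-2}$ up to $O(\mathrm{F}_{N,q})$; it does not by itself give either $\varepsilon\lambda_\varepsilon^{\frac{N-2}{2}q-2}=O(\mathrm{F}_{N,q})$ or $\rho(\vec x_\varepsilon)=o(\mathrm{F}_{N,q})$. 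The paper closes this loop in Propositions~\ref{prop blow-up rate 3}--\ref{proposition bound on rho} by (i) decomposing $\vec\mu_\varepsilon=\alpha_\varepsilon\Lambda(\vec x_\varepsilon)+\vec\delta(\vec x_\varepsilon)$ with $\vec\delta\perp\Lambda$ and using uniform coercivity of $M(\vec x_\varepsilon)$ on $\Lambda^\perp$ to bound $|\vec\delta|$, (ii) invoking a \L ojasiewicz-type inequality $\rho(\vec x_\varepsilon)\lesssim|\nabla\rho(\vec x_\varepsilon)|^\sigma$ with $\sigma>1$ coming from the analyticity of $\rho$, and (iii) bootstrapping between the $\lambda$- and $x$-identities. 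Without this mechanism you cannot upgrade the big-$O$ to the little-$o$ bound on $\rho(\vec x_\varepsilon)$ or disentangle $\rho(\vec x_\varepsilon)$ from $\varepsilon\lambda_\varepsilon^{\frac{N-2}{2}q-2}$; this is the missing idea in your Step~3.
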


\begin{Rem}
    \begin{enumerate}
        \item If $u_{\varepsilon}$ is a family of energy minimizing solutions, then $u_{\varepsilon}$ must blow-up and there exists only one concentration point $x_{0}\in\Omega$ satisfying $x_{\varepsilon}\to x_{0}$, where $u_{\varepsilon}(x_{\varepsilon})=\|u_{\varepsilon}\|_{L^{\infty}(\Omega)}$. Furthermore, $\rho({x}_{0})={M} ({x}_{0})=R({x}_{0})>0$ and only the  case 1 in Theorem \ref{thm blow up rate} will occur.
        \item If $\Omega$ is a convex domain, then Poho\v{z}aev identity and Theorem \ref{thm multibble blowup} implies that $u_{\varepsilon}$ must blow-up. Moreover, from Theorem \ref{strictly convex of Robin function} and Theorem \ref{nonexistence of multi-peak solution} in Appendix, we know that there is no multi-peak blow-up solutions, thus $u_{\varepsilon}$ has only one concentration point and the case 1 in Theorem \ref{thm blow up rate} will happen. On the other hand, if $\Omega$ is star-shaped and multiple blow-up occur, then by Poho\v{z}aev identity (see Appendix \eqref{pohozaev identity 3}), we have
        \begin{equation}\label{starshape estimate 1}
            \lambda_{1,\varepsilon}^{N-2}\int_{\partial \Omega}
|\nabla  u_{\varepsilon}|^2
\big\langle x-x_{0},\nu\big\rangle=\left(\frac{2N}{q}-\frac{2N}{2^{*}}\right)\varepsilon\lambda_{1,\varepsilon}^{N-2}\int_{\Omega}u_{\varepsilon}^{q}.
        \end{equation}
        From Theorem \ref{thm multibble blowup}, we know that the left-hand side converges to
        \begin{equation}\label{starshape estimate 2}
            \int_{\partial \Omega}
|\nabla  {G}_{\vec{a},\vec{\nu}}|^2
\big\langle x-x_{0},\nu\big\rangle>0.
        \end{equation}
        On the other hand, from Lemma \ref{lem estimate of q refined} and the proof of Proposition \ref{Prop blow up rate}, the right-hand side is equal to
        \begin{equation}\label{starshape estimate 3}
            \varepsilon\lambda_{1,\varepsilon}^{N-2}\left(\frac{2N}{q}-\frac{2N}{2^{*}}\right)B\sum_{j=1}^{n}\lambda_{j,\varepsilon}^{\frac{N-2}{2}q-N}+o(1)\sim \varepsilon\lambda_{\varepsilon}^{\frac{N-2}{2}q-2}+o(1).
        \end{equation}
        Combining \eqref{starshape estimate 2} and \eqref{starshape estimate 3} together, we deduce that $\lim_{\varepsilon\to 0}\varepsilon\lambda_{\varepsilon}^{\frac{N-2}{2}q-2}>0$. Hence $\rho(\vec{a}_{0})>0$ and the case 1 in Theorem \ref{thm blow up rate} occur. 
        \item If $\rho(\vec{a})>0$ and $q\geq 2^*-1$, then $\vec{\lambda}$ can be determined by concentration point $\vec{a}$, thus the asymptotic behavior of blow-up solutions $u_{\varepsilon}$ is uniquely determined by $N,\Omega,q$ and concentrate point $\vec{a}$.
    \end{enumerate}
\end{Rem}

Another profound problem is to investigate the uniqueness and nondegeneracy of the solutions of \eqref{p-varepsion}, which is more delicate and is known depending on the dimension $N$ and the geometry of domain $\Omega$. In the following, we state some known results established in the literature. 


When $\Omega$ is symmetric with respect to the coordinate hyperplanes ${x_{i}=0}$ and convex in the $x_{i}$-directions for any $i=1,\cdots,N$, Cerqueti-Grossi \cite{Cerqueti1999AUR,Cerqueti2001Localestimates} proved that the energy minimizing solutions of \eqref{p-varepsion} is unique and nondegenerate if $N\geq 5, q=2$ and $\varepsilon>0$ small enough. On the other hand, when $N\geq 5, q=2$ and $\Omega$ is a general domain, Glangetas \cite{Glangetas1993UniquenessOP} proved that if $u_{\varepsilon}$ and $v_{\varepsilon}$ are two solutions of \eqref{p-varepsion} concentrating at the same point $x_{0}$ and $x_{0}$ is a nondegenerate critical point of the Robin function $R(x)$, then $u_{\varepsilon}\equiv v_{\varepsilon}$ for any $\varepsilon>0$ small enough. It is worth noting that the solutions of \eqref{p-varepsion} has at most one blow-up point if $\Omega$ is convex. On the other hand, the method used in \cite{Glangetas1993UniquenessOP} is to reduce into a finite dimension problem and count the local degree, but it's extremely complicated to calculate the degree in the case of multi-peak solutions. Hence the uniqueness and nondegeneracy of the multi-peak solutions is not easy to get. Fortunately, by using various local Poho\v{z}aev identities and blow-up analysis, Cao et al. \cite{Cao2021Trans} and Chen et al. \cite{chen2024} proved the uniqueness and nondegeneracy of multi-peak solutions to \eqref{p-varepsion} with $N\geq 6,q=2$ recently. 

In the following, we generalize the above results to the sub-critical perturbation case $q\in(2,2^*)$ and our main results read as follows.

\begin{Thm}\label{thm uniqueness}
Let $N\ge 5$, $q\in (2,2^*)$, $q\geq 2^*-1$, $\Omega$ is a smooth bounded domain and satisfies Assumption $\textrm{A}$. If $u_{\varepsilon}$ and $v_{\varepsilon}$ are two solutions of \eqref{p-varepsion} which concentrate at the same point $(\vec{a},\vec{\lambda})$, $M(\vec{a})$ is a positive matrix and $(\vec{a},\vec{\lambda})$ is a nondegenerate critical point of $\Phi_{n}$. 
Then, there exists a constant $\varepsilon_{0}>0$ such that for any $\varepsilon<\varepsilon_{0}$
\begin{equation}
    u_{\varepsilon}\equiv v_{\varepsilon}\quad \text{~in~}\Omega.
\end{equation}
\end{Thm}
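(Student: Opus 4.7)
The proof proceeds by contradiction. Suppose along some sequence $\varepsilon_k\to 0$ we have $u_{\varepsilon_k}\not\equiv v_{\varepsilon_k}$. Introduce the normalized difference
\[
\xi_{\varepsilon}:=\frac{u_{\varepsilon}-v_{\varepsilon}}{\|u_{\varepsilon}-v_{\varepsilon}\|_{L^{\infty}(\Omega)}},
\]
so that $\|\xi_{\varepsilon}\|_{L^{\infty}(\Omega)}=1$ and $\xi_{\varepsilon}$ satisfies the linearized equation
\[
-\Delta \xi_{\varepsilon}=c_{\varepsilon}(x)\xi_{\varepsilon}+\varepsilon d_{\varepsilon}(x)\xi_{\varepsilon}\quad\text{in }\Omega,\qquad \xi_{\varepsilon}=0 \text{ on }\partial\Omega,
\]
where $c_{\varepsilon}(x)=(2^{*}-1)\int_{0}^{1}(tu_{\varepsilon}+(1-t)v_{\varepsilon})^{2^{*}-2}dt$ and $d_{\varepsilon}(x)=(q-1)\int_{0}^{1}(tu_{\varepsilon}+(1-t)v_{\varepsilon})^{q-2}dt$. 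The aim is to show $\xi_{\varepsilon}\to 0$ uniformly, contradicting the normalization.

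\textbf{Step 1: outer analysis.} Using Theorem \ref{thm multibble blowup} applied to both $u_{\varepsilon}$ and $v_{\varepsilon}$ (which share the same $\vec{a}$, $\vec{\lambda}$, $\vec{\mu}$ by hypothesis), a Green-function representation of $\xi_{\varepsilon}$ together with the pointwise bubble bounds gives a uniform estimate of the form $|\xi_{\varepsilon}(x)|\lesssim \lambda_{1,\varepsilon}^{-(N-2)/2}$ on compact subsets of $\Omega\setminus\{a_{1},\dots,a_{n}\}$. Passing to a subsequence one obtains
\[
\lambda_{1,\varepsilon}^{(N-2)/2}\xi_{\varepsilon}\;\longrightarrow\;\sum_{j=1}^{n} b_{j}\, G(a_{j},\cdot)\qquad\text{in }C^{1}_{\mathrm{loc}}(\Omega\setminus\{a_{1},\dots,a_{n}\}),
\]
for some coefficients $b_{j}\in\R$.

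\textbf{Step 2: inner analysis.} Rescale at each blow-up point by setting $\tilde{\xi}_{j,\varepsilon}(y):=\xi_{\varepsilon}(x_{j,\varepsilon}+\lambda_{j,\varepsilon}^{-1}y)$. Using Theorem \ref{thm decomposition} to control the difference of the coefficients $c_{\varepsilon}$ arising from $u_{\varepsilon}$ and $v_{\varepsilon}$, standard elliptic regularity shows that $\tilde{\xi}_{j,\varepsilon}$ is uniformly bounded and converges in $C^{2}_{\mathrm{loc}}(\R^{N})$ to a bounded solution $\Psi_{j}$ of the linearized bubble equation $-\Delta \Psi_{j}=(2^{*}-1)U^{2^{*}-2}\Psi_{j}$ on $\R^{N}$. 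By the classical nondegeneracy of the Aubin-Talenti bubble,
\[
\Psi_{j}(y)=\alpha_{j}^{0}\,\partial_{\lambda}U_{0,\lambda}(y)\big|_{\lambda=\beta_{N}}+\sum_{i=1}^{N}\alpha_{j}^{i}\,\partial_{y_{i}}U(y),
\]
for some scalars $\alpha_{j}^{0},\dots,\alpha_{j}^{N}\in\R$.

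\textbf{Step 3: Pohozaev identities and the Hessian.} Apply local Pohozaev identities on small balls $B_{\delta}(x_{j,\varepsilon})$ to the pair $(\xi_{\varepsilon},u_{\varepsilon})$ and $(\xi_{\varepsilon},v_{\varepsilon})$, testing with $\partial_{x_{i}}\xi_{\varepsilon}$ and with $(x-x_{j,\varepsilon})\cdot\nabla\xi_{\varepsilon}+\frac{N-2}{2}\xi_{\varepsilon}$. Substituting the decomposition of Theorem \ref{thm decomposition} and the precise asymptotics of Theorem \ref{thm blow up rate}, the boundary integrals on $\partial B_{\delta}$ are evaluated in terms of the Green and Robin functions (coupled with $\alpha_{j}^{0}$, $\alpha_{j}^{i}$), while the volume integrals involving $\varepsilon d_{\varepsilon}\xi_{\varepsilon}$ produce terms scaling exactly like the partial derivatives of the reduced functional $\Phi_{n}$ defined in \eqref{reduce function}. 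After suitable normalization, one obtains a homogeneous linear system
\[
\mathrm{Hess}\,\Phi_{n}(\vec{a},\vec{\lambda})\cdot\bigl(\alpha_{j}^{0},\alpha_{j}^{1},\dots,\alpha_{j}^{N}\bigr)_{j=1}^{n}=0.
\]
Here the assumption $q\ge 2^{*}-1$ ensures that $\vec{\lambda}$ is uniquely characterized by $\vec{a}$ through $\partial_{\vec{\lambda}}\Phi_{n}=0$, which is what identifies the coefficient matrix cleanly with $\mathrm{Hess}\,\Phi_{n}$; the nondegeneracy assumption then forces $\alpha_{j}^{0}=\alpha_{j}^{i}=0$ for every $j,i$, so $\Psi_{j}\equiv 0$.

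\textbf{Step 4: conclusion and main obstacle.} With $\Psi_{j}\equiv 0$ inside every bubble and the Green-function representation of Step 1 giving $b_{j}=0$ (obtained by taking the limit of a further local Pohozaev identity balanced against the vanishing inner limit), one upgrades to $\|\xi_{\varepsilon}\|_{L^{\infty}(\Omega)}\to 0$ via a standard $L^{\infty}$ bootstrap using the linearized equation, contradicting $\|\xi_{\varepsilon}\|_{L^{\infty}(\Omega)}=1$. The main technical obstacle is the bookkeeping in Step 3: unlike the case $q=2$ treated in \cite{Cao2021Trans,chen2024}, where the perturbation scales uniformly as $\lambda_{j,\varepsilon}^{2-N}$, here the subcritical term scales as $\varepsilon\lambda_{j,\varepsilon}^{2-\frac{N-2}{2}q}$, so the leading contributions of the various Pohozaev integrals have to be expanded to matching order in $\varepsilon$ and $\lambda_{j,\varepsilon}$ and identified, using the relations from Theorem \ref{thm blow up rate}, as the exact entries of $\mathrm{Hess}\,\Phi_{n}(\vec{a},\vec{\lambda})$.
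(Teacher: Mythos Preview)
Your overall strategy matches the paper's approach in Section~5: normalize the difference $\xi_\varepsilon$, show the rescaled limits at each bubble are kernel elements $\sum_{i=0}^{N}c_{j,i}\psi_i$, then use local Poho\v{z}aev identities together with the nondegeneracy of $(\vec a,\vec\lambda)$ to force all $c_{j,i}=0$ and derive a contradiction. Two points in your outline, however, are genuine gaps rather than mere bookkeeping.

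First, before any blow-up analysis the paper needs \emph{refined closeness of the two decompositions}: Lemmas~5.1 and~5.2 establish $\|w_\varepsilon^{(1)}-w_\varepsilon^{(2)}\|=o(\bar\lambda_\varepsilon^{-(N+2)/2})$, $|x_{j,\varepsilon}^{(1)}-x_{j,\varepsilon}^{(2)}|=o(\bar\lambda_\varepsilon^{-2})$ and $|\lambda_{j,\varepsilon}^{(1)}-\lambda_{j,\varepsilon}^{(2)}|=o(\bar\lambda_\varepsilon^{-2})$. These are not consequences of Theorem~\ref{thm decomposition} alone; they require a separate coercivity-and-Poho\v{z}aev argument. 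Without them your Step~2 only yields qualitative convergence $\tilde\xi_{j,\varepsilon}\to\Psi_j$, whereas the Poho\v{z}aev computations in Step~3 need the \emph{quantitative} rate $|\xi_{\varepsilon,j}-\sum c_{j,i}\psi_i|=o(\bar\lambda_\varepsilon^{-1})$ (Lemma~5.4) to extract the leading-order system.

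Second, your Step~3 oversimplifies both the structure of the linear system and the role of $q\ge 2^*-1$. The paper does \emph{not} obtain a single homogeneous system $\mathrm{Hess}\,\Phi_n\cdot\alpha=0$. Instead, Proposition~5.8 uses the dilation Poho\v{z}aev identity to produce a system governed by an auxiliary matrix $\widetilde M_{n,\varepsilon}$ whose diagonal entries contain the factor $(N-2)(q+2)-2N$; the hypothesis $q\ge 2^*-1$ is what makes $\widetilde M_{n,\varepsilon}$ strictly diagonally dominant (via \eqref{Prop blow up rate proof 5}), hence invertible, and this yields $c_{j,0}=0$ together with a relation involving $D^2_{\lambda,x}\Phi_n$. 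Only then does Proposition~5.9 use the translation Poho\v{z}aev identity to obtain a relation with $D^2_{x,x}\Phi_n$, and the conclusion $c_{j,i}=0$ follows from the rank condition $\mathrm{Rank}\,(D^2_{x,x}\Phi_n;D^2_{\lambda,x}\Phi_n)=nN$ implied by nondegeneracy. Your statement that $q\ge 2^*-1$ serves only to identify $\vec\lambda$ from $\vec a$ misses its actual algebraic role in the invertibility of $\widetilde M_{n,\varepsilon}$.
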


From Theorem \ref{thm multibble blowup}, we know that the number of blow-up point of solutions \eqref{p-varepsion} is prior finite. Thus we can denote the largest number of blow-up point by $n_0$. In addition, for any $n=1,\cdots,n_{0}$ we also define a set $T_{n}$ as follows
\begin{equation}
    \begin{aligned}
        T_n= \Big\{(\vec{a},\vec{\lambda})\in\Omega^{n}_{*}\times(0,\infty)^{n}: (\vec{a},\vec{\lambda}) \text{~is a critical point of~}\Phi_{n}\Big\},
    \end{aligned}
\end{equation}
and we give another assumption on domain $\Omega$.
\smallskip

\noindent\textbf{Assumption B:} $M(\vec{x})$ is a positive matrix and $(\vec{a},\vec{\lambda})$ is a nondegenerate critical point of $\Phi_{n}$ for any $\vec{x}\in\Omega_{*}^{n}$, $(\vec{a},\vec{\lambda})\in T_{n}$ and $n=1,\cdots,n_{0}$.
\begin{Cor}\label{thm number}
Let $N\ge 5$, $q\in (2,2^*)$, $q\geq 2^*-1$, $\Omega$ is a smooth bounded domain satisfying Assumption $\textrm{A}$ and Assumption $\textrm{B}$. Then for $\varepsilon>0$ sufficiently small,
$$
\mbox{the number of solutions to \eqref{p-varepsion}}~~=\displaystyle\sum^{n_0}_{n=1}\sharp T_n,$$
where $\sharp T_n$ is the number of the elements in the set $T_n$.
\end{Cor}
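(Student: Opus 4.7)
The plan is to exhibit a bijection between the set of solutions of \eqref{p-varepsion} (for sufficiently small $\varepsilon>0$) and $\bigsqcup_{n=1}^{n_0}T_n$. The forward direction assigns to each solution $u_\varepsilon$ its asymptotic concentration data $(\vec a,\vec\lambda)$: by Assumption~A combined with Theorem~\ref{thm multibble blowup}, $u_\varepsilon$ must blow up at finitely many interior points with multiplicity bounded by $n_0$; Theorem~\ref{thm decomposition} produces the limit points $\vec a\in\Omega_*^n$; and Assumption~B forces $M(\vec a)$ to be positive, hence $\rho(\vec a)>0$, so case~(1) of Theorem~\ref{thm blow up rate} applies and yields $\lambda_j:=\lim_{\varepsilon\to 0}(\varepsilon^{2/((N-2)q-4)}\lambda_{j,\varepsilon})^{-1}\in(0,\infty)$ with $(\vec a,\vec\lambda)\in T_n$.

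Injectivity of this forward map is immediate from Theorem~\ref{thm uniqueness}: if two solutions $u_\varepsilon$ and $v_\varepsilon$ concentrate at the same $(\vec a,\vec\lambda)\in T_n$, then Assumption~B provides both the positivity of $M(\vec a)$ and the nondegeneracy of the critical point, while $q\ge 2^*-1$ is assumed, so Theorem~\ref{thm uniqueness} gives $u_\varepsilon\equiv v_\varepsilon$ for all small $\varepsilon$.

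Surjectivity requires a Lyapunov--Schmidt construction for each fixed $(\vec a,\vec\lambda)\in T_n$. Writing the ansatz $u=\sum_{j=1}^n PU_{x_j,\mu_j}+w$ with $w\in\bigcap_{j=1}^n E_{x_j,\mu_j}$ and $(\vec x,\vec\mu)$ in a small neighbourhood of the configuration $(\vec a,\vec\mu_\varepsilon)$ with $\mu_{j,\varepsilon}:=\varepsilon^{-2/((N-2)q-4)}\lambda_j^{-1}$, a contraction mapping argument of the same type that underlies Theorem~\ref{thm decomposition} produces a unique $w=w_\varepsilon(\vec x,\vec\mu)$ satisfying the bounds \eqref{perturbation estimate of w-varepsilon}. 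After the rescaling $\mu_j=\varepsilon^{-2/((N-2)q-4)}\tau_j^{-1}$, the reduced functional takes the form $c_0+\varepsilon^{2N/((N-2)q-4)}(\Phi_n(\vec x,\vec\tau)+o_\varepsilon(1))$ in $C^1_{\mathrm{loc}}$. Since $(\vec a,\vec\lambda)$ is a nondegenerate critical point of $\Phi_n$, the implicit function theorem provides, for every sufficiently small $\varepsilon$, a unique critical point $(\vec x_\varepsilon,\vec\tau_\varepsilon)$ of the reduced functional in a fixed neighbourhood of $(\vec a,\vec\lambda)$; the associated $u_\varepsilon$ is then a solution of \eqref{p-varepsion} which is sent by the forward map to $(\vec a,\vec\lambda)$.

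The main obstacle lies in obtaining the $C^1$-expansion of the reduced functional with the sharp leading profile $\Phi_n$. This requires upgrading \eqref{perturbation estimate of w-varepsilon} to differentiable dependence on the parameters $(\vec x,\vec\mu)$ and computing the interaction integrals $\int_\Omega U_{x_i,\mu_i}^{2^*-1}PU_{x_j,\mu_j}$ (giving rise to the quadratic form $\langle\vec\lambda^{(N-2)/2},M(\vec x)\vec\lambda^{(N-2)/2}\rangle$) as well as the subcritical integrals $\int_\Omega U_{x_j,\mu_j}^q$ (responsible for the $B/q$-term in \eqref{reduce function}), uniformly up to second derivatives in $(\vec x,\vec\mu)$, so that the implicit function argument can close against the Hessian of $\Phi_n$ at $(\vec a,\vec\lambda)$.
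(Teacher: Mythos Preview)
Your approach is correct and matches the paper's: lower bound via existence of a solution concentrating at each $(\vec a,\vec\lambda)\in T_n$, upper bound via the blow-up classification (Theorems~\ref{thm multibble blowup}--\ref{thm blow up rate}) combined with the local uniqueness Theorem~\ref{thm uniqueness}. The only difference is that the paper does not redo the Lyapunov--Schmidt construction you sketch as ``the main obstacle''; it simply cites the existence results of Micheletti--Pistoia \cite{Pistoia2004} (and Molle--Pistoia \cite{Molle2003} for $n=1$), which already build, for every nondegenerate critical point of $\Phi_n$, a family of multi-peak solutions concentrating there. So your surjectivity paragraph is reinventing known machinery; it is correct in outline, but unnecessary here.

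One minor point of presentation: your ``forward map'' assigning to a solution $u_\varepsilon$ its concentration data $(\vec a,\vec\lambda)$ is not literally a map for fixed $\varepsilon$, since the limits in Theorems~\ref{thm decomposition} and \ref{thm blow up rate} are subsequential. The clean way to phrase the upper bound is by contradiction: if along some $\varepsilon_k\to 0$ there were more than $\sum_n\sharp T_n$ solutions, pass to a subsequence so that each of them concentrates at some element of $\bigsqcup_n T_n$; by pigeonhole two distinct families share the same limit $(\vec a,\vec\lambda)$, contradicting Theorem~\ref{thm uniqueness}.
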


Finally, we can prove the nondegeneracy by using a similar proof as in Theorem \ref{thm uniqueness} and we omit the details.
\begin{Thm}\label{thm nondegeneracy}
Let $N\ge 5$, $q\in (2,2^*)$, $q\geq 2^*-1$, $\Omega$ is a smooth bounded domain and satisfies Assumption $\textrm{A}$. If $u_{\varepsilon}$ is a family of solutions to \eqref{p-varepsion} which concentrate at the point $(\vec{a},\vec{\lambda})$ with $M(\vec{a})$ is a positive matrix and $(\vec{a},\vec{\lambda})$ is a nondegenerate critical point of $\Phi_{n}$. Then, there exists a constant $\varepsilon_{0}>0$ such that for any $\varepsilon<\varepsilon_{0}$, the solution $u_{\varepsilon}$ is nondegenerate, i.e. the linear problem
    \begin{equation}\label{nondegenerate-1}
        \begin{cases}
            -\Delta v=(2^*-1)u_{\varepsilon}^{2^*-2}v+\varepsilon (q-1)u_{\varepsilon}^{q-2}v, &\text{~~in~~}\Omega,\\
            \quad \  \ v=0,&\text{~~on~~}\partial\Omega,
        \end{cases}
    \end{equation}
    admits only the trivial solution $v\equiv0$.
\end{Thm}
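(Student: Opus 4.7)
The plan is to argue by contradiction in the spirit of Theorem \ref{thm uniqueness}. Suppose there exists a sequence $v_\varepsilon\not\equiv 0$ solving \eqref{nondegenerate-1}. By homogeneity of \eqref{nondegenerate-1} we may normalize $\|v_\varepsilon\|_{L^\infty(\Omega)}=1$. The program then mirrors the uniqueness argument: establish a bubble-type decomposition for $v_\varepsilon$ at each concentration point, translate local and global Pohozaev identities into a linear system whose coefficient matrix is (up to positive factors) the Hessian of $\Phi_n$ at $(\vec a,\vec\lambda)$, invoke nondegeneracy to kill every coefficient, and finally contradict the normalization via elliptic regularity.

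First, for each $j=1,\dots,n$ I would perform blow-up analysis around $x_{j,\varepsilon}$ by setting
\[
\tilde v_{j,\varepsilon}(y):=v_\varepsilon\big(x_{j,\varepsilon}+\lambda_{j,\varepsilon}^{-1}y\big).
\]
Using the pointwise bound $u_\varepsilon\le C_0\sum_k U_{x_{k,\varepsilon},\lambda_{k,\varepsilon}}$ from Theorem \ref{thm multibble blowup}(e) and the smallness of the $\varepsilon u_\varepsilon^{q-2}$ term ensured by property (f), standard elliptic estimates give $\tilde v_{j,\varepsilon}\to \tilde v_{j,\infty}$ in $C^2_{\text{loc}}(\mathbb{R}^N)$ where $\tilde v_{j,\infty}$ solves the linearized bubble equation $-\Delta\phi=(2^*-1)U^{2^*-2}\phi$. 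By the classification of bounded kernel elements,
\[
\tilde v_{j,\infty}=c_{j,0}\,\Psi_0+\sum_{i=1}^N c_{j,i}\,\Psi_i,\qquad \Psi_0:=\partial_\lambda U_{0,\lambda}|_{\lambda=\beta_N},\ \ \Psi_i:=\partial_{y_i}U.
\]
Away from the blow-up set, the Green representation together with property (h) of Theorem \ref{thm multibble blowup} gives
\[
\lambda_{1,\varepsilon}^{\frac{N-2}{2}}v_\varepsilon\to\sum_{j=1}^n d_j\,G(a_j,\cdot)\quad\text{in }C^1_{\text{loc}}(\Omega\setminus\{a_1,\dots,a_n\}),
\]
where each $d_j$ is an explicit positive multiple of $c_{j,0}$.

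Next, I would test $v_\varepsilon$ against the natural ``scaling'' and ``translation'' cut-offs of $u_\varepsilon$ on balls $B(x_{j,\varepsilon},\delta)$ and use the local Pohozaev identities already exploited in the proofs of Theorems \ref{thm decomposition}--\ref{thm uniqueness}. Each such identity yields, after multiplication by the appropriate power of $\lambda_{j,\varepsilon}$ and passage to the limit $\varepsilon\to0$, one linear equation in the unknowns $\{c_{k,i}\}_{k=1,\dots,n;\,i=0,\dots,N}$. The crucial point, exactly as in Theorem \ref{thm uniqueness}, is that the $(N+1)n\times(N+1)n$ coefficient matrix of this linear system coincides with the Hessian $D^2\Phi_n(\vec a,\vec\lambda)$ up to positive diagonal scalings; contributions from the subcritical term $\varepsilon u_\varepsilon^{q-2}v_\varepsilon$ are controlled by the sharp estimate $\varepsilon\sim\lambda_{j,\varepsilon}^{2-(N-2)q/2}$ (Theorem \ref{thm blow up rate}(1)) and by the refined estimates on $w_\varepsilon$ in \eqref{perturbation estimate of w-varepsilon}, so they feed into precisely the $-B\lambda_j^{(N-2)(2^*-q)/2}/q$ term that gives the $\vec\lambda$-block of $D^2\Phi_n$. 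Nondegeneracy of $(\vec a,\vec\lambda)$ then forces $c_{j,i}=0$ for all $j,i$.

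Having killed all $c_{j,i}$, one improves convergence: $\tilde v_{j,\varepsilon}\to 0$ in $C^2_{\text{loc}}(\mathbb{R}^N)$ near every concentration point, and the Green representation gives $v_\varepsilon\to 0$ uniformly on compact subsets of $\Omega\setminus\{a_1,\dots,a_n\}$. A standard maximum-principle / barrier argument using $\sum_j U_{x_{j,\varepsilon},\lambda_{j,\varepsilon}}$ as supersolution then upgrades this to $\|v_\varepsilon\|_{L^\infty(\Omega)}\to 0$, contradicting the normalization. I expect the main obstacle to be the careful bookkeeping in the second step: showing that the subcritical contributions (which are new compared to the $q=2$ case treated in \cite{Cao2021Trans,chen2024}) assemble into exactly the $\vec\lambda$-Hessian of $\Phi_n$ rather than producing spurious off-diagonal coupling, which is precisely where the hypothesis $q\ge 2^*-1$ is used to ensure that $\vec\lambda$ is uniquely determined by $\vec a$ and that the relevant integrals scale cleanly in powers of $\lambda_{j,\varepsilon}$.
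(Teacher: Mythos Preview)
Your proposal is correct and follows essentially the same route as the paper, which explicitly omits the proof of Theorem \ref{thm nondegeneracy} and says it is obtained ``by using a similar proof as in Theorem \ref{thm uniqueness}''; your normalized $v_\varepsilon$ plays exactly the role of $\xi_\varepsilon$ in Section \ref{section local uniqueness}, and the blow-up analysis, Green representation, local Poho\v{z}aev identities, and the two-step elimination of the $c_{j,0}$ and then the $c_{j,i}$ via the Hessian of $\Phi_n$ (Propositions \ref{prop c j 0}--\ref{prop estimate for c ji}) carry over with only notational changes. The final contradiction in the paper is obtained not via a barrier argument but via the iterated Green-kernel decay estimate of Lemma \ref{lem estimate for xi}, though your proposed maximum-principle alternative would also work.
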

 \begin{Rem}
     \begin{enumerate}
         \item From Theorem \ref{thm blow up rate}, we know that $\Phi_{n}(\vec{a},\cdot)$ has an unique critical point in $(0,\infty)^{n}$ if $q\geq 2^*-1$, thus the uniqueness, nondegeneracy and the number of solutions to \eqref{p-varepsion} only depend on dimension $N$, domain $\Omega$ and concentration point $\vec{a}$.
         \item If $\Omega$ does not satisfy the assumption A, then we can correct the result in Corollary \ref{thm number} into $$\mbox{the number of blow-up solutions to \eqref{p-varepsion}}~~=\displaystyle\sum^{n_0}_{n=1}\sharp T_n.$$
         \item Note that if $\Omega$ is convex, then the multi-bubble blow-up can't occur, thus it's natural to consider whether or not there exists some non convex domain $\Omega$ such that Assumption $\textrm{A}$ holds and $\Phi_{n}$ possesses some critical point which are nondegenerate. This question seems to be difficult, since the properties of the Green's function on a general domain is unclear. We refer to \cite{Msusso2002,Micheletti2014,Bartsch2019} and reference therein for some known results to this question.
          \item Although the existence and asymptotic profile of multi-peak solutions to \eqref{p-varepsion} is clear for $N\geq 3,q\in(\max\{2,\frac{4}{N-2}\},2^*)$.  The uniqueness and nondegeneracy of multi-peak solutions to \eqref{p-varepsion} is still open for $N=3,4,5$ and $q\in(\max\{2,\frac{4}{N-2}\},\frac{N+2}{N-2})$. Indeed, this is due to the estimate of the remainder term $w_{\varepsilon}$ in Theorem \ref{thm decomposition} and concentrate speed \eqref{eq concentration speed} in Theorem \ref{thm blow up rate} is not enough for us to get a contradiction in Proposition \ref{prop c j 0} and Proposition \ref{prop estimate for c ji}, see Section \ref{section local uniqueness} for more details. 
         \item  Since the proof of Theorem \ref{thm uniqueness} and Theorem \ref{thm nondegeneracy} are by contradiction, thus we can not quantify precisely the size of $\varepsilon_{0}$. However, when $\Omega$ is ball, Erbe-Tang \cite{Erbe1997UniquenessTF} proved that \eqref{p-varepsion} has a unique solution for $N\geq 6,q\in[2,2^*)$ and every $\varepsilon>0$. But the uniqueness problem in low dimension $N=3,4,5$ seems unknown and we refer to \cite{Chen2012} for the uniqueness problem of the least energy solution in low dimension. 
\end{enumerate} 
\end{Rem}

\begin{Rem}
Compared with the previous work, there are some features of this paper:
\begin{enumerate}
    \item This paper gives a complete picture of blow-up phenomena in the framework of Brezis-Peletier conjecture in the case of sub-critical perturbation and multi-peak. Moreover, combining the results obtained by Cao-Luo-Peng \cite{Cao2021Trans} and K\"{o}nig-Laurain \cite{Knig2022FineMA,Knig2022MultibubbleBA}, the asymptotic behavior of multi-peak positive solutions to Brezis-Nirenberg problem is almost clear, except the case $N=3$ and $q\in(2,4]$. Furthermore, we also point out that the concave and convex case $1<q<2$ and super-critical case $q>2^*$ has also been studied by Ambrosetti-Brezis-Cerami \cite{Ambrosetti1994} and Merle-Peletier \cite{Merle1990,Merle1992} respectively.
    \item This paper gives a clear expression of the dependence between the subcritical exponent $q$ with the error estimate in decomposition \eqref{eq decomposition}, blow-up rate, concentration speed and location of the concentration point in the general case of multi-peak solutions. Indeed, it can be observed that the error estimate and blow-up rate are almost decreasing, while the concentration speed is almost increasing with respect to $N$ and $q$. On the other hand, we point out that $\frac{4}{N-2}$ and $\frac{N+2}{N-2}$ are two special sub-critical exponents which have significant impact on the existence of positive solutions (see \cite{Brezis1983CPAM,AtkinsonPeletier}) and sign-changing solutions (see \cite{Jones1988,Pistoia2004}) respectively.
    \item It is worth noting that the function $\Phi_{n}$ plays a crucial role in the analysis of asymptotic behavior of multi-peak solutions to \eqref{p-varepsion}. In particular, when $N=4,q=2$, the constant $B$ in the definition of $\Phi_{n}$ is invalid and it should be correct by another constant $\kappa$ which is uniquely determined by the concentration point, see \cite[equation (1.5)]{Knig2022FineMA}. On the other hand, we can also compare the results obtained in this paper with the analysis of the slightly sub-critical Lane-Emden equation
\begin{equation}
\begin{cases}
-\Delta u= u^{2^*-1-\varepsilon}, \quad u>0, &{\text{in}~\Omega},\\
\quad\ \ u=0, &{\text{on}~\partial \Omega},
\end{cases}
\end{equation}
in which the  corresponding function is defined by
 \begin{equation}
     \Phi_{n}(\vec{x},\vec{\lambda}):=\frac{A^{2}}{2}\langle \vec{\lambda}^{\frac{N-2}{2}}, {M}(\vec{x})\vec{\lambda}^{\frac{N-2}{2}}\rangle-\frac{N-2}{2}\sum_{j=1}^{n}\ln\lambda_{j},
 \end{equation}   
 and we refer to \cite{Bahri1995,ReyOlivier1999,LiHouwang2023} for the asymptotic profile, uniqueness and nondegeneracy of the multi-pike solutions to this near critical problem.
\end{enumerate}
\end{Rem}

Our paper is organized as follows. In Section \ref{section Multiple bubbling phenomena}, we describe the multi-bubble phenomenon of solutions $u_{\varepsilon}$ by modifying the argument of Li \cite{YanYanLi1995} and Cerqueti-Grossi \cite{Cerqueti2001Localestimates}. As a consequence, we show Theorem \ref{thm multibble blowup}. In Section \ref{section Decomposition of the solutions}, we prove the Theorem \ref{thm decomposition}, which give a refined decomposition of the solutions $u_{\varepsilon}$. Then combining various local Poho\v{z}aev identities, we establish the Theorem \ref{thm blow up rate} and Theorem \ref{thm uniqueness} in Section \ref{Blow-up rate and location of the concentration point} and Section \ref{section local uniqueness} respectively. Finally, we list some known facts and crucial estimates in the Appendix. 

\smallskip

\noindent\textbf{Notations.}
Throughout this paper, we use the following notations.
\begin{enumerate}
\item For any given $N\in\N$, we use $B(x,r)$ or $B_{r}(x)$ to denote the open ball with center $x\in\R^{N}$ and radius $r>0$. 
    \item We use $\mathcal{D}^{1,2}(\R^{N}):=\left\{u\in L^{2^*}(\R^{N}):~\nabla u\in L^{2}(\R^{N})\right\}$ to denote the homogeneous Sobolev space.
    In addition, we use $\|u\|_{H^{1}_{0}(\Omega)}=\left(\int_{\Omega}|\nabla u|^{2}dx\right)^{1/2}$ to denote the norm in $H^{1}_{0}(\Omega)$ and $\langle\cdot,\cdot\rangle$ means the corresponding inner product.  
    \item We use $C$ to denote various positive constant and use $C_{1}=o(\varepsilon)$ and $C_{2}=O(\varepsilon)$ to denote $C_{1}/\varepsilon\to0$ and $|C_{2}/\varepsilon|\leq C $ as $\varepsilon\to0$ respectively.
     \item  Let $f,g: X\to \R^{+}$ be two nonnegative function defined on some set $X$. we write $f\lesssim g$ or $g\gtrsim f$, if there exists a constant $C>0$ independent on $x$ such that $f(x)\leq C g(x)$ for any $x\in X$ and $f\sim g$  means that $f\lesssim g$ and $g\lesssim f$.
     \item We use $\partial_{i}$ and $\nabla$ to denote the partial derivative for any function $f(x, y): \R^n \times \R^m \to \R$ with respect to $x$ and use $D_{j}$ and $D$ to denote the partial derivative for with respect to $y$.
\end{enumerate}

\section{Multiple bubbling phenomena}\label{section Multiple bubbling phenomena}


\begin{Prop}\label{prop below bound of u}
There exists two small numbers $\varepsilon_{0}>0$ and $M_{0}>0$ independent of $\varepsilon$ such that for any $\varepsilon\in(0,\varepsilon_{0})$
\begin{equation}
    \|u_{\varepsilon}\|_{H^{1}_{0}(\Omega)}\geq M_{0}. 
\end{equation}
\end{Prop}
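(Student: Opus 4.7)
The plan is to exploit the Pohozaev-type identity obtained by testing the equation against $u_\varepsilon$ itself, combined with the Sobolev embedding, to convert the problem into a lower bound on the $H^1_0$-norm coming purely from subcriticality of the right-hand side.

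First I would multiply \eqref{p-varepsion} by $u_\varepsilon$ and integrate by parts, using the Dirichlet boundary condition, to obtain the fundamental identity
\begin{equation*}
\|u_\varepsilon\|_{H^1_0(\Omega)}^2 \;=\; \int_{\Omega} u_\varepsilon^{2^*} \,dx \;+\; \varepsilon \int_{\Omega} u_\varepsilon^{q}\,dx.
\end{equation*}
Next I would apply the Sobolev inequality $\int_{\Omega} u_\varepsilon^{2^*} \leq S^{-2^*/2}\|u_\varepsilon\|_{H^1_0(\Omega)}^{2^*}$ to the first term, and the continuous embedding $H^1_0(\Omega)\hookrightarrow L^q(\Omega)$ (valid since $q\in(2,2^*)$) to get $\int_{\Omega} u_\varepsilon^q \leq C_q \|u_\varepsilon\|_{H^1_0(\Omega)}^q$ for some constant $C_q=C_q(\Omega,N,q)$. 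Writing $t_\varepsilon := \|u_\varepsilon\|_{H^1_0(\Omega)}$, which is strictly positive since $u_\varepsilon>0$, and dividing by $t_\varepsilon^2$, this yields
\begin{equation*}
1 \;\le\; S^{-2^*/2}\, t_\varepsilon^{\,2^*-2} \;+\; \varepsilon\, C_q\, t_\varepsilon^{\,q-2}.
\end{equation*}

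Finally I would use that both exponents $2^*-2$ and $q-2$ are strictly positive. Choosing $M_0>0$ small enough so that $S^{-2^*/2}M_0^{2^*-2}\le \tfrac{1}{2}$, and then choosing $\varepsilon_0>0$ small enough so that $\varepsilon_0 C_q M_0^{q-2}\le \tfrac{1}{4}$, one sees immediately that if $t_\varepsilon\le M_0$ for some $\varepsilon\in(0,\varepsilon_0)$, then the right-hand side of the displayed inequality is bounded by $\tfrac{3}{4}<1$, a contradiction. Hence $t_\varepsilon>M_0$ for all such $\varepsilon$.

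There is no real obstacle here: the argument is essentially the classical lower bound coming from the mountain-pass geometry of subcritical problems, and the presence of the critical nonlinearity only makes the picture easier (a higher exponent gives an even stronger small-$t$ estimate). The only point worth noting is that one should avoid dividing by $t_\varepsilon$ without first recording that $u_\varepsilon\not\equiv 0$, which is automatic from the positivity assumption built into \eqref{p-varepsion}.
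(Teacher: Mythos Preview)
Your proof is correct and takes essentially the same approach as the paper: both test the equation against $u_\varepsilon$ to get $\|u_\varepsilon\|_{H^1_0}^2=\int u_\varepsilon^{2^*}+\varepsilon\int u_\varepsilon^q$ and then control the right-hand side by Sobolev/H\"older to force a lower bound on $\|u_\varepsilon\|_{H^1_0}$. The only cosmetic difference is that the paper phrases it as a sequential contradiction (assume $\|u_{\varepsilon_n}\|_{H^1_0}\to 0$ and show this forces $S\le 0$), whereas you give a direct quantitative choice of $M_0$ and $\varepsilon_0$.
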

\begin{proof}
By contradiction, suppose that there exist sequence $\varepsilon_{n}\to0$ such that $\|u_{\varepsilon_{n}}\|_{H^{1}_{0}(\Omega)}\to0$  as $n\to\infty$.  From the Sobolev inequality, H\"older inequality and equation \eqref{p-varepsion}
    \begin{equation}
    \begin{aligned}
        S\leq \frac{\int_{\Omega}|\nabla u_{\varepsilon_n}|^{2}}{(\int_{\Omega}u_{\varepsilon_n}^{2^*})^{2/2^*}}
        &=\frac{\int_{\Omega}u_{\varepsilon_n}^{2^*}+\varepsilon_n u_{\varepsilon_{n}}^{q}}{(\int_{\Omega}u_{\varepsilon_n}^{2^*})^{2/2^*}}\\
        &\leq (\int_{\Omega}u_{\varepsilon_n}^{2^*})^{(2^*-2)/2^*}+\varepsilon_n(\int_{\Omega}u_{\varepsilon_n}^{2^*})^{(q-2)/2^*}|\Omega|^{(2^*-q)/2^*}\\
        &\to0,\text{~~as~~}n\to\infty.
    \end{aligned}
    \end{equation}
    This make a contradiction with $S>0$.
\end{proof}
If $u_{\varepsilon}$ is bounded in $L^{\infty}(\Omega)$, then by standard elliptic regularity theory \cite{GilbargTrudinger}, there exists a  nonnegative function $u$ such that $u_{\varepsilon}\to u$ in $C^{2}(\bar{\Omega})$ and $u$ satisfies 
\begin{equation}
    \begin{cases}
-\Delta u= u^{2^*-1},\quad u\geq0, &{\text{in}~\Omega},\\
\quad\ \ u=0, &{\text{on}~\partial \Omega}.
\end{cases}
\end{equation}
Moreover, from Proposition \ref{prop below bound of u} and equation \eqref{p-varepsion}, we know that $u\not\equiv 0$, then by strong maximum principle $u>0$.

If $u_{\varepsilon}$ is unbounded in $L^{\infty}(\Omega)$, then $u_{\varepsilon}$ must blow-up at several points. The following proposition gives a prior estimate of solutions $u_{\varepsilon}$ around the boundary $\partial\Omega$, which shows that all blow-up points are away from the boundary.

\begin{Prop}\label{prop boundary estimate}
    There exist a small number $\delta_{1}>0$ and a large number $M_{1}>0$ independent of $\varepsilon$ such that for any $\varepsilon>0$ and any $x\in\bar{\Omega}$ with dist$(x,\partial\Omega)\leq \delta_{1}$
    \begin{equation}
        0\leq u_{\varepsilon}(x)\leq M_{1}.
    \end{equation}
\end{Prop}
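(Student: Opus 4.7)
The plan is to rule out boundary blow-up via the Kelvin-transform plus moving-plane technique of Gidas-Ni-Nirenberg, as implemented for the Brezis-Nirenberg problem by Han \cite{Han1991} and Cerqueti-Grossi \cite{Cerqueti2001Localestimates}, adapted to the sub-critical perturbation.

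First, the $C^2$-smoothness of $\partial\Omega$ supplies a uniform exterior ball radius $R_0>0$ such that at each $x_0\in\partial\Omega$ there exists $y_0\in\R^N\setminus\bar\Omega$ with $\bar B(y_0,R_0)\cap\bar\Omega=\{x_0\}$. For such a pair I would define
\begin{equation*}
v_\varepsilon(y):=\Bigl(\tfrac{R_0}{|y-y_0|}\Bigr)^{N-2}u_\varepsilon\!\left(y_0+\tfrac{R_0^2(y-y_0)}{|y-y_0|^2}\right)
\end{equation*}
on the Kelvin image $\tilde\Omega\subset B(y_0,R_0)$, which is tangent to the sphere $|y-y_0|=R_0$ only at the single point $x_0$. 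A direct computation gives
\begin{equation*}
-\Delta v_\varepsilon=v_\varepsilon^{2^*-1}+\varepsilon\,\Bigl(\tfrac{R_0}{|y-y_0|}\Bigr)^{\tau}v_\varepsilon^{q-1}\quad\text{in }\tilde\Omega,\qquad v_\varepsilon=0\ \text{on }\partial\tilde\Omega,
\end{equation*}
with $\tau:=2N-(N-2)q>0$ thanks to $q<2^*$. Because $\tilde\Omega$ sits in the annulus $\{R_0^2/D\le|y-y_0|\le R_0\}$ with $D:=\mathrm{diam}(\Omega)+R_0$, the extra weight is bounded above by a constant depending only on $\Omega,N,q$.

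Next I would run the moving plane method on $v_\varepsilon$, sliding hyperplanes perpendicular to $\overline{y_0 x_0}$ through caps of $\tilde\Omega$ tangent to $\partial B(y_0,R_0)$ at $x_0$. The reflected comparison $w_\mu(y):=v_\varepsilon(y^\mu)-v_\varepsilon(y)$ satisfies a linear elliptic inequality on a narrow cap whose zeroth-order coefficient is uniformly controlled in $\varepsilon$ because the subcritical weight is bounded. The narrow-strip maximum principle then yields $w_\mu\ge 0$ for $\mu$ in an initial range of uniform length $\delta_1>0$ depending only on the $C^2$-geometry of $\Omega$. Transporting this monotonicity back through the inverse Kelvin transform supplies a pointwise comparison $u_\varepsilon(x)\le u_\varepsilon(\sigma(x))$ for every $x$ in the $\delta_1$-tubular neighborhood of $\partial\Omega$, where $\sigma$ is a diffeomorphism sending this tube into a fixed interior region at distance $\ge\delta_1$ from $\partial\Omega$.

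Finally, a blow-up contradiction closes the argument. Suppose $u_{\varepsilon_n}(x_n)\to\infty$ with $d_n:=\mathrm{dist}(x_n,\partial\Omega)\to 0$. Rescale $\tilde u_n(y):=\lambda_n^{-(N-2)/2}u_{\varepsilon_n}(x_n+\lambda_n^{-1}y)$ with $\lambda_n:=u_{\varepsilon_n}(x_n)^{2/(N-2)}$; the coefficient of the rescaled subcritical term is $\varepsilon_n\lambda_n^{(N-2)q/2-N}\to 0$ because $q<2^*$, so the limit PDE is the pure critical one. If $\lambda_n d_n$ stays bounded, $\tilde u_n$ subconverges on a half-space to a nontrivial positive bounded solution of $-\Delta\tilde u=\tilde u^{2^*-1}$ with zero Dirichlet data, excluded by the half-space Liouville theorem of Gidas-Ni-Nirenberg. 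If $\lambda_n d_n\to\infty$, combining the MPM monotonicity $u_{\varepsilon_n}(x_n)\le u_{\varepsilon_n}(\sigma(x_n))$ with a local Pohozaev identity on a ball $B(x_n,r_n)$ of intermediate scale $1/\lambda_n\ll r_n\ll d_n$ forces the rescaled bubble energy $S^{N/2}$ to match a vanishing volume term involving the subcritical nonlinearity, yielding an incompatibility. The main obstacle is the uniformity of the MPM starting step in $\varepsilon$ and across $\partial\Omega$, which is exactly what the sub-criticality $q<2^*$ (bounding the Kelvin weight) and the uniform exterior ball condition (from $C^2$-regularity of $\partial\Omega$) provide.
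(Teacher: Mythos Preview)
Your Kelvin-transform-plus-moving-plane step is correct and coincides with the paper's approach: subcriticality $q<2^*$ keeps the conformal weight bounded and makes the monotonicity uniform in $\varepsilon$. Note, however, that the conclusion one actually obtains (as in de~Figueiredo--Lions--Nussbaum) is stronger than the single-point comparison $u_\varepsilon(x)\le u_\varepsilon(\sigma(x))$ you record: for each $x$ in the strip there is a full cone $\Gamma_x$ of measure $\ge\gamma>0$, contained in a fixed compact $K\subset\subset\Omega$, on which $u_\varepsilon\ge u_\varepsilon(x)$.

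The gap is in your closing step. Neither branch of the blow-up dichotomy works as written. In Case~(a) you need the rescaled sequence $\tilde u_n$ to be locally bounded in order to extract a $C^2_{\mathrm{loc}}$ limit on the half-space; but you have not taken $x_n$ to be a local maximum, and the moving-plane monotonicity in fact forbids interior local maxima in the open strip, so a much taller peak of $u_{\varepsilon_n}$ at distance $O(\lambda_n^{-1})$ from $x_n$ is not excluded. In Case~(b), the local Pohozaev identity on $B(x_n,r_n)$ only produces the bubble energy $S^{N/2}$ if you already know a single bubble concentrates at $x_n$, which is precisely what is unestablished; absent that, the boundary and volume terms are uncontrolled. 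The paper's route bypasses blow-up entirely: it pairs the cone monotonicity with the \emph{interior integral estimate} of \cite{Figueiredo-Lions-Nussbaum1982}. Testing the equation against the first Dirichlet eigenfunction $\phi_1>0$ gives
\[
\int_\Omega u_\varepsilon^{2^*-1}\phi_1\le\int_\Omega\big(u_\varepsilon^{2^*-1}+\varepsilon u_\varepsilon^{q-1}\big)\phi_1=\lambda_1\int_\Omega u_\varepsilon\phi_1\le\tfrac12\int_\Omega u_\varepsilon^{2^*-1}\phi_1+C,
\]
so $\int_K u_\varepsilon^{2^*-1}\le C$ uniformly in $\varepsilon$; then $\gamma\,u_\varepsilon(x)^{2^*-1}\le\int_{\Gamma_x}u_\varepsilon^{2^*-1}\le C$ yields the uniform pointwise bound in the strip. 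This elementary integral bound is the ingredient missing from your argument.
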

\begin{proof}
    This proposition can be obtained by using the Kelvin transform, moving plane method \cite{Gidas1979Symmetry} and an interior integral estimate of the solutions $u_{\varepsilon}$, we refer \cite[Theorem 1.1, Theorem 1.2]{Figueiredo-Lions-Nussbaum1982} and \cite[pp.164]{Han1991} for the details of proof.
\end{proof}

Next, we extract sequences of critical points of $u_{\varepsilon}$, whose number may not be uniformly bounded on $\varepsilon$ and their convergence points are candidates to the blow-up points of $u_{\varepsilon}$.
\begin{Prop}\label{prop candidates of blow-up points}
    Let $u_{\varepsilon}$ be a family of solutions of \eqref{p-varepsion}. For any given $\eta>0$ small and $R>0$ large, there exist some positive constants $C_{0}$ and $C_{1}$ depending only on $\Omega,\eta$ and $R$ such that if $u_{\varepsilon}$ satisfies
    \begin{equation}
        \max_{x\in\Omega}u_{\varepsilon}> C_{0},
    \end{equation}
    then up to a subsequence, there exist a constant $n_{\varepsilon}\in\N$ and $n_{\varepsilon}$ points $\{x_{1,\varepsilon},\cdots,x_{n_{\varepsilon},\varepsilon}\}\subset\Omega$ such that
    \begin{enumerate}
        \item The set $\{x_{1,\varepsilon},\cdots,x_{n_{\varepsilon},\varepsilon}\}$ of local maximum of $u_{\varepsilon}$ such that 
        \begin{equation}\label{prop candidates of blow-up points-1}
            \{{B}(x_{i,\varepsilon},Ru_{\varepsilon}^{-\frac{2}{N-2}}(x_{i,\varepsilon}))\}_{i=1}^{n_{\varepsilon}},
        \end{equation} is a disjoint collection of subsets in $\Omega$.
        \item For any $i=1,\cdots,n_{\varepsilon}$, it holds that
        \begin{equation}\label{prop candidates of blow-up points-2}
            \|u_{\varepsilon}^{-1}(x_{i,\varepsilon})u_{\varepsilon}(u_{\varepsilon}^{-\frac{2}{N-2}}(x_{i,\varepsilon})\cdot+x_{i,\varepsilon})-U\|_{C^{2}({B}(0,2R))}<\eta.
        \end{equation}
        \item It holds that
        \begin{equation}\label{prop candidates of blow-up points-3}
            |x_{i,\varepsilon}-x_{j,\varepsilon}|^{\frac{N-2}{2}}u_{\varepsilon}(x_{i,\varepsilon})\geq C_{0}\text{~~for any~~}i> j,
        \end{equation}
        and
        \begin{equation}\label{prop candidates of blow-up points-4}
            \min_{i=1,\cdots,n_{\varepsilon}}|x-x_{i,\varepsilon}|^{\frac{N-2}{2}}u_{\varepsilon}(x)\leq C_{1}\text{~~for any~~}x\in\Omega.
        \end{equation}
    \end{enumerate}
\end{Prop}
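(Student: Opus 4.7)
The plan is to construct $\{x_{i,\varepsilon}\}$ inductively by a Schoen--Li style selection procedure, rescaling at each newly found peak and invoking the Caffarelli--Gidas--Spruck classification to identify the bubble profile. Fix $\eta, R$ as prescribed; the constants $C_0, C_1$ will be chosen large enough in terms of $\eta, R$ and $\Omega$.

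For the base step, let $x_{1,\varepsilon}$ be a global maximum of $u_\varepsilon$ on $\bar\Omega$. Proposition~\ref{prop boundary estimate} places $x_{1,\varepsilon}$ in the interior once $u_\varepsilon(x_{1,\varepsilon})>M_1$, so $\nabla u_\varepsilon(x_{1,\varepsilon})=0$. Set $\mu_\varepsilon := u_\varepsilon(x_{1,\varepsilon})^{-2/(N-2)}$ and consider
\[\tilde u_\varepsilon(y) := \mu_\varepsilon^{(N-2)/2}\,u_\varepsilon(x_{1,\varepsilon}+\mu_\varepsilon y),\]
which satisfies $0<\tilde u_\varepsilon\le \tilde u_\varepsilon(0)=1$ and
\[-\Delta \tilde u_\varepsilon = \tilde u_\varepsilon^{2^*-1} + \kappa_\varepsilon\,\tilde u_\varepsilon^{q-1},\qquad \kappa_\varepsilon := \varepsilon\,\mu_\varepsilon^{2-(N-2)q/2},\]
on $\mu_\varepsilon^{-1}(\Omega-x_{1,\varepsilon})$, a domain that exhausts $\R^N$. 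A local integral estimate on $B_1$, testing the equation against $\tilde u_\varepsilon$ and using Harnack together with $\tilde u_\varepsilon(0)=1$, forces $\kappa_\varepsilon$ to stay bounded. Passing to a subsequence, elliptic regularity gives $\tilde u_\varepsilon\to V$ in $C^2_{\text{loc}}(\R^N)$, where $V$ is a positive bounded solution of $-\Delta V=V^{2^*-1}+K V^{q-1}$ with $V(0)=1=\max V$. Combining the Pohozaev and energy identities on $\R^N$ and using $q\neq 2^*$ forces $K=0$, and the Caffarelli--Gidas--Spruck theorem then gives $V=U$. Choosing $C_0$ large enough that $u_\varepsilon(x_{1,\varepsilon})>C_0$ ensures $\mu_\varepsilon^{-1}\text{dist}(x_{1,\varepsilon},\partial\Omega)>2R$ and drives the $C^2(B(0,2R))$-distance from $\tilde u_\varepsilon$ to $U$ below $\eta$, yielding \eqref{prop candidates of blow-up points-2} for $i=1$.

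For the induction step, suppose $x_{1,\varepsilon},\ldots,x_{k,\varepsilon}$ have been selected and set $F_k(x):=\min_{i\le k}|x-x_{i,\varepsilon}|^{(N-2)/2}u_\varepsilon(x)$. If $\sup_\Omega F_k \le C_1$, stop with $n_\varepsilon:=k$, which is \eqref{prop candidates of blow-up points-4}. Otherwise pick $x_{k+1,\varepsilon}$ by first taking a point where $F_k>C_1$ and then sliding to a local maximum of $u_\varepsilon$ in a neighborhood of radius comparable to $u_\varepsilon(x_{k+1,\varepsilon})^{-2/(N-2)}$. Calibrating $C_1$ large relative to $R$ guarantees both \eqref{prop candidates of blow-up points-3}, namely $|x_{k+1,\varepsilon}-x_{i,\varepsilon}|^{(N-2)/2}u_\varepsilon(x_{k+1,\varepsilon})\ge C_1/2$ for every $i\le k$, and the disjointness \eqref{prop candidates of blow-up points-1} of the rescaled balls of radius $Ru_\varepsilon(x_{k+1,\varepsilon})^{-2/(N-2)}$. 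Rescaling around $x_{k+1,\varepsilon}$ exactly as in the base step, the separation makes the contributions of the previously extracted bubbles uniformly small on $B(0,2R)$ in the rescaled picture, so the same Pohozaev/Caffarelli--Gidas--Spruck argument identifies the limit as $U$, yielding \eqref{prop candidates of blow-up points-2} for $i=k+1$.

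Termination for each fixed $\varepsilon$ follows from volume packing: each ball in \eqref{prop candidates of blow-up points-1} has volume at least $cR^N C_0^{-2N/(N-2)}$, so $n_\varepsilon \le C(\Omega,R,C_0)$. The main technical obstacle, and the heart of the Schoen--Li selection, is the passage from a point where $F_k$ exceeds $C_1$ to a genuine local maximum with vanishing gradient while preserving the separation \eqref{prop candidates of blow-up points-3}; this requires calibrating $C_1$ so that the shift is small on the rescaled scale, and hence the rescaled profile is still close to the standard bubble on $B(0,2R)$. A secondary delicate point is ensuring $\kappa_\varepsilon$ remains bounded at each step before the Pohozaev argument is available to conclude $K=0$ in the limit; this is where the Harnack-based local integral estimate is essential.
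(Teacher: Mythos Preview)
Your approach---the Schoen--Li selection procedure---is exactly what the paper invokes (it simply cites Li's Proposition~5.1 and says ``suitably modifying''), so the overall strategy matches. Two execution points need repair.

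First, the claim that $\kappa_\varepsilon := \varepsilon\,\mu_\varepsilon^{2-(N-2)q/2}$ stays bounded via ``testing plus Harnack'' does not close: the Harnack constant for $-\Delta \tilde u_\varepsilon = \bigl(\tilde u_\varepsilon^{2^*-2}+\kappa_\varepsilon \tilde u_\varepsilon^{q-2}\bigr)\tilde u_\varepsilon$ depends on the $L^\infty$ norm of the potential and hence on $\kappa_\varepsilon$ itself, and testing against $\tilde u_\varepsilon$ on $B_1$ produces a gradient integral that you cannot bound without already controlling $\kappa_\varepsilon$. The clean fix is a second rescaling: if $\kappa_\varepsilon\to\infty$ along a subsequence, set $w_\varepsilon(z):=\tilde u_\varepsilon(\kappa_\varepsilon^{-1/2}z)$, so that $-\Delta w_\varepsilon=\kappa_\varepsilon^{-1}w_\varepsilon^{2^*-1}+w_\varepsilon^{q-1}$ with $0<w_\varepsilon\le 1=w_\varepsilon(0)$ on a domain exhausting $\R^N$; the $C^2_{\rm loc}$ limit $w$ then solves $-\Delta w=w^{q-1}$ in $\R^N$ with $w(0)=1$, which the Gidas--Spruck Liouville theorem rules out since $1<q-1<\tfrac{N+2}{N-2}$. (Your subsequent Pohozaev argument for $K=0$ also needs decay of the limit $V$; this follows from moving-planes radial symmetry and the standard ODE asymptotics $V\sim r^{2-N}$, whence $V\in L^q(\R^N)$ under the standing hypothesis $q>N/(N-2)$.)

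Second, the termination bound $n_\varepsilon\le C(\Omega,R,C_0)$ is incorrect: the balls in \eqref{prop candidates of blow-up points-1} have radii $R\,u_\varepsilon(x_{i,\varepsilon})^{-2/(N-2)}$, which shrink as the peak heights grow, so volume packing only gives $n_\varepsilon\le C(\Omega,R)\,\|u_\varepsilon\|_{L^\infty(\Omega)}^{2N/(N-2)}$. This still yields $n_\varepsilon\in\N$ for each fixed $\varepsilon$, which is all the proposition asserts, but the uniform bound you wrote is false.
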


\begin{proof}
We can prove this proposition by suitably modifying the argument in the proof of \cite[Proposition 5.1]{yanyanli1999CCM}.
\end{proof}

The following proposition shows that the blow-up points of $u_{\varepsilon}$ are isolated, simple and depart each other, thus the number of blow-up points of $u_{\varepsilon}$ is prior finite and tower of bubbles can not occur.

\begin{Prop}\label{prop ruling out bubble accumulations}
    Under the same notations as in Proposition \ref{prop candidates of blow-up points}, there exists $\varepsilon_{0},\delta_{0}>0$ independent of $\varepsilon$ such that
    \begin{equation}\label{prop ruling out bubble accumulations-1}
        \min_{i\neq j}|x_{i,\varepsilon}-x_{j,\varepsilon}|\geq \delta_{0},
    \end{equation}
    for any $\varepsilon\in (0,\varepsilon_{0})$.
\end{Prop}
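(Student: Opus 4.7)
I argue by contradiction: suppose along a subsequence $\sigma_\varepsilon := \min_{i\neq j}|x_{i,\varepsilon}-x_{j,\varepsilon}|\to 0$, attained by $(x_{1,\varepsilon},x_{2,\varepsilon})$. The strategy is to rescale at the intermediate scale $\sigma_\varepsilon$ around $x_{1,\varepsilon}$, identify a singular limit profile $V$ on $\R^N$, and use a Pohozaev identity to detect the interaction between the two close bubbles.

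\textbf{Rescaling and limit profile.} Set
$$v_\varepsilon(y):=\sigma_\varepsilon^{(N-2)/2}u_\varepsilon(x_{1,\varepsilon}+\sigma_\varepsilon y),\qquad y_{i,\varepsilon}:=\sigma_\varepsilon^{-1}(x_{i,\varepsilon}-x_{1,\varepsilon}),$$
so that on $\Omega_\varepsilon:=\sigma_\varepsilon^{-1}(\Omega-x_{1,\varepsilon})$,
$$-\Delta v_\varepsilon = v_\varepsilon^{2^*-1} + \varepsilon\sigma_\varepsilon^{2-\frac{N-2}{2}(q-2)}v_\varepsilon^{q-1},$$
with $y_{1,\varepsilon}=0$, $|y_{2,\varepsilon}|=1$ and $|y_{i,\varepsilon}-y_{j,\varepsilon}|\ge 1$ for $i\neq j$. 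The bound \eqref{prop candidates of blow-up points-4} rescales to the uniform decay estimate $\min_i|y-y_{i,\varepsilon}|^{(N-2)/2}v_\varepsilon(y)\le C_1$ on $\Omega_\varepsilon$. From \eqref{prop candidates of blow-up points-3} one has $\lambda_{1,\varepsilon}=u_\varepsilon(x_{1,\varepsilon})^{2/(N-2)}\gtrsim\sigma_\varepsilon^{-1}$; combined with property (f) of Theorem~\ref{thm multibble blowup} (recall $q>4/(N-2)$), this gives $\varepsilon\sigma_\varepsilon^{2-(N-2)(q-2)/2}\lesssim\sigma_\varepsilon^{N-2}\to 0$, so the subcritical perturbation vanishes in the limit. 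Proposition~\ref{prop boundary estimate} keeps $x_{1,\varepsilon}$ uniformly away from $\partial\Omega$, so $\Omega_\varepsilon$ exhausts $\R^N$. Standard elliptic theory (Harnack away from the $y_{i,\varepsilon}$'s, compactness) then yields $v_\varepsilon\to V$ in $C^2_{\rm loc}(\R^N\setminus\mathcal{Y})$, where $\mathcal{Y}$ is the locally finite limit set of $\{y_{i,\varepsilon}\}$, and $V>0$ solves $-\Delta V=V^{2^*-1}$ on $\R^N\setminus\mathcal{Y}$ with non-removable isolated singularities at $y_1=0$ and $y_2:=\lim y_{2,\varepsilon}$ (non-removability follows from $v_\varepsilon(y_{i,\varepsilon})\to\infty$). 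By the Caffarelli--Gidas--Spruck classification, near each $y_i\in\mathcal{Y}$, $V(y)=a_i|y-y_i|^{-(N-2)}(1+o(1))$ with $a_i>0$; in particular near $y_1=0$ one writes $V(y)=a_1|y|^{-(N-2)}+h(y)$ with $h$ smooth and harmonic and, crucially, $h(0)>0$ thanks to the singularity at $y_2$ at distance $1$.

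\textbf{Pohozaev contradiction.} For $r\in(0,1/2)$, apply the Pohozaev identity for \eqref{p-varepsion} on $B(x_{1,\varepsilon},r\sigma_\varepsilon)$; after rescaling it reads
$$\varepsilon\sigma_\varepsilon^{2-\frac{N-2}{2}(q-2)}\cdot\frac{2N-(N-2)q}{2q}\int_{B_r(0)}v_\varepsilon^q\,dy \;=\; \int_{\partial B_r(0)}\mathcal{P}[v_\varepsilon]\,d\sigma,$$
where $\mathcal{P}[\cdot]$ is the usual Pohozaev boundary integrand. Since $q<2^*$, the left-hand side is nonnegative and tends to $0$ by the estimate on the subcritical coefficient above. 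Passing to the limit on the right and using the singular expansion $V=a_1|y|^{-(N-2)}+h$, the classical Schoen--Y.Y.~Li computation gives
$$\lim_{\varepsilon\to 0}\int_{\partial B_r(0)}\mathcal{P}[v_\varepsilon]\,d\sigma \;=\; -c_N\,a_1\,h(0)+O(r),\qquad c_N>0,$$
so choosing $r$ small enough makes the right-hand side strictly negative, contradicting the nonnegative, vanishing left-hand side.

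\textbf{Main obstacle.} The heart of the argument is the singular-profile step: extracting a genuine limit $V$ with the correct Caffarelli--Gidas--Spruck asymptotics at each $y_i\in\mathcal{Y}$ and, above all, showing that the regular harmonic part $h$ is strictly positive at the origin, which is what makes the Pohozaev boundary term nontrivial in the limit. The selection of the intermediate scale $\sigma_\varepsilon$ is essential: it is precisely the scale at which (i)~the subcritical perturbation disappears from the limit equation thanks to the prior bound (f), and (ii)~the two closest bubbles remain at unit distance, so that their interaction persists in $V$ and is visible through the Pohozaev integrand.
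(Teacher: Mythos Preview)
Your overall strategy---rescale at scale $\sigma_\varepsilon$, pass to a limit, apply a local Poho\v{z}aev identity---matches the paper, but the identification of the limit profile is wrong, and this breaks the argument.

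You claim $v_\varepsilon\to V$ in $C^2_{\rm loc}(\R^N\setminus\mathcal{Y})$ with $V>0$ solving $-\Delta V=V^{2^*-1}$ and having asymptotics $V(y)=a_i|y-y_i|^{-(N-2)}(1+o(1))$, with regular part $h$ ``smooth and harmonic''. This is inconsistent: the Caffarelli--Gidas--Spruck classification of positive solutions of $-\Delta V=V^{2^*-1}$ in a punctured ball yields either a removable singularity or Fowler-type behavior $|y-y_i|^{-(N-2)/2}$, never $|y-y_i|^{-(N-2)}$; and a decomposition $V=a_1|y|^{-(N-2)}+h$ with $h$ harmonic makes sense for harmonic $V$, not for solutions of the critical equation. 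In fact, once one knows (as the paper establishes first, via a dichotomy argument) that every rescaled concentration point is an isolated \emph{simple} blow-up point of $v_\varepsilon$, Proposition~\ref{prop properties of isolated simple blow-up point} forces $v_\varepsilon\to 0$ locally away from $\mathcal{Y}$, so your $V$ would be identically zero. The correct object is $v_\varepsilon(0)\,v_\varepsilon$, which is locally bounded away from $\mathcal{Y}$ and, since $v_\varepsilon(0)\,v_\varepsilon^{2^*-1}\to 0$, converges to a \emph{harmonic} function $\tilde h$; B\^ocher's theorem then gives $\tilde h(y)=a_1|y|^{-(N-2)}+A+o(1)$ near $0$ with $A>0$ (positivity coming from the second singularity and the maximum principle), and Lemma~\ref{lem properties of B} produces the strictly negative Poho\v{z}aev boundary term you want. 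This renormalization is exactly what the paper does.

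Two smaller points. First, invoking property~(f) of Theorem~\ref{thm multibble blowup} here is circular, since that theorem is proved only after the present proposition; it is also unnecessary, because $q<2^*$ makes the exponent $N-\tfrac{N-2}{2}q$ positive, so $\varepsilon\,\sigma_\varepsilon^{N-(N-2)q/2}\to 0$ follows immediately from $\varepsilon\to 0$ and $\sigma_\varepsilon\to 0$. Second, you skip the alternative in which some rescaled height $v_\varepsilon(y_{i,\varepsilon})$ stays bounded; the paper handles this explicitly by a Harnack/compactness argument showing that then \emph{all} rescaled heights stay bounded, whence $v_\varepsilon$ converges to a smooth Aubin--Talenti bubble with at least two critical points $0$ and $y_2$, contradicting CGS.
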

\begin{proof}
The proof is similar to that of Proposition \cite[Proposition 4.2]{YanYanLi1995} and \cite[Claim B.6]{Knig2022FineMA}. By contradiction, suppose that there exists a sequence of $\varepsilon_{n}\to 0$ as $n\to\infty$ and a corresponding solutions $u_{\varepsilon_{n}}$ such that 
\begin{equation}\label{prop ruling out bubble accumulations-proof-1}
    \lim_{n\to\infty}\min_{i\neq j}|x_{i,\varepsilon_{n}}-x_{j,\varepsilon_{n}}|=0.
\end{equation}
Without loss of generality, we can assume that
\begin{equation}\label{prop ruling out bubble accumulations-proof-2}
    d_{\varepsilon_{n}}:=|x_{1,\varepsilon_{n}}-x_{2,\varepsilon_{n}}|=\min_{i\neq j}|x_{i,\varepsilon_{n}}-x_{j,\varepsilon_{n}}|\to 0.
\end{equation}
Then from \eqref{prop candidates of blow-up points-1}, we know that 
\begin{equation}\label{prop ruling out bubble accumulations-proof-3}
    d_{\varepsilon_{n}}\geq \max\{Ru_{\varepsilon_{n}}^{-\frac{2}{N-2}}(x_{1,\varepsilon_{n}}),Ru_{\varepsilon_{n}}^{-\frac{2}{N-2}}(x_{2,\varepsilon_{n}})\},
\end{equation}
thus $u_{\varepsilon_{n}}(x_{1,\varepsilon_{n}}),u_{\varepsilon_{n}}(x_{2,\varepsilon_{n}})\to\infty$ as $n\to\infty.$
Now, we define the rescaled function
\begin{equation}\label{prop ruling out bubble accumulations-proof-4}
    \tilde{u}_{\varepsilon_{n}}(x)=d_{\varepsilon_{n}}^{\frac{N-2}{2}}u_{\varepsilon_{n}}(x_{1,\varepsilon_{n}}+d_{\varepsilon_{n}}x),\quad x\in\Omega_{\varepsilon_{n}}:=\{x\in\R^{N}:x_{1,\varepsilon_{n}}+d_{\varepsilon_{n}}x\in\Omega\},
\end{equation}
Then $\tilde{u}_{\varepsilon_{n}}$ satisfies
\begin{equation}\label{prop ruling out bubble accumulations-proof-5}
    -\Delta \tilde{u}_{\varepsilon_{n}}=\tilde{u}_{\varepsilon_{n}}^{2^*-1}+\varepsilon_{n}d_{\varepsilon_{n}}^{\frac{N-2}{2}(2^*-q)}\tilde{u}_{\varepsilon_{n}}^{q-1},\text{~~in~~}\Omega_{\varepsilon_{n}}.
\end{equation}
and from Proposition \ref{prop boundary estimate}, we can deduce that $\Omega_{\varepsilon_{n}}\to\R^{N}$ as $n\to\infty$. 

Now, we define $\tilde{x}_{i,\varepsilon_{n}}=d_{\varepsilon_{n}}^{-1}(x_{i,\varepsilon_{n}}-x_{1,\varepsilon_{n}})$, then from the definition of $d_{\varepsilon_{n}} $, we get that $|\tilde{x}_{i,\varepsilon_{n}}-\tilde{x}_{j,\varepsilon_{n}}|\geq 1$ for any $i\neq j$.
For any $R>0$, let $\tilde{S}_{R,\varepsilon_{n}}=\{\tilde{x}_{i,\varepsilon_{n}}:\tilde{x}_{i,\varepsilon_{n}}\in B(0,R)\}$, after passing to a subsequence, we have $\tilde{S}_{R,\varepsilon_{n}}\to \tilde{S}_{R}$ as $n\to\infty$, where $\tilde{S}_{R}$ is a non-empty finite set. Then up to performing a diagonal extraction, we can define a at most countable set $\tilde{S}=\cup_{R>0}\tilde{S}_{R}$ and we have $|\tilde{x}_{i}-\tilde{x}_{j}|\geq 1$ for any $\tilde{x}_{i},\tilde{x}_{j}\in \tilde{S}$, with $i\neq j$.

Next, we claim that $\text{~for any~} i_{\varepsilon_{n}}=1,\cdots,n_{\varepsilon_{n}}, \text{~such that~} \text{d}(x_{i_{\varepsilon_{n}}},x_{1,\varepsilon_{n}})=O(d_{\varepsilon_{n}})$, it holds
\begin{equation}\label{prop ruling out bubble accumulations-proof-6}
   \tilde{u}_{\varepsilon_{n}}(\tilde{x}_{i_{\varepsilon_{n}}})\to\infty,\text{~as~}n\to\infty.
\end{equation}
Assume that there exist $i_{\varepsilon_{n}}$ such that $\text{d}(x_{i_{\varepsilon_{n}}},x_{1,\varepsilon_{n}})=O(d_{\varepsilon_{n}})$ and $\tilde{u}_{\varepsilon_{n}}(\tilde{x}_{i_{\varepsilon_{n}}})$ bounded. Then for any $j_{\varepsilon_{n}}$ such that $\text{d}(x_{j_{\varepsilon_{n}}},x_{1,\varepsilon_{n}})=O(d_{\varepsilon_{n}})$, we have that $\tilde{u}_{\varepsilon_{n}}(\tilde{x}_{j_{\varepsilon_{n}}})$ bounded. Indeed if there exists $j_{\varepsilon_{n}}$ such that $\text{d}(x_{j_{\varepsilon_{n}}},x_{1,\varepsilon_{n}})=O(d_{\varepsilon_{n}})$ and $\tilde{u}_{\varepsilon_{n}}(\tilde{x}_{j_{\varepsilon_{n}}})\to\infty,\text{~as~}n\to\infty.$ Then from Proposition \ref{prop candidates of blow-up points}, we know that $\tilde{x}_{j_{\varepsilon_{n}}}\to\tilde{x}_{j}$ is an isolate blow-up points of $\tilde{u}_{\varepsilon_{n}}$ and therefore according to the Proposition \ref{prop isolate blow-up point is simple}, $\tilde{x}_{j,\varepsilon_{n}}\to\tilde{x}_{j}$ is also an isolated simple blow-up point of $\tilde{u}_{\varepsilon_{n}}$. Moreover, we can obtain that there exist a small constant $\delta>0$ independent on $n$  such that 
\begin{equation}\label{prop ruling out bubble accumulations-proof-7}
\max_{B(\tilde{x}_{i,\varepsilon_{n}},\delta)}\tilde{u}_{\varepsilon_{n}}=O(1).
\end{equation}
Otherwise, we can choose $y_{\varepsilon_{n}}\in \overline{B(\tilde{x}_{i,\varepsilon_{n}},\delta)}$ such that $\tilde{u}_{\varepsilon_{n}}(y_{\varepsilon_{n}})=\max_{B(\tilde{x}_{i,\varepsilon_{n}},\delta)}\tilde{u}_{\varepsilon_{n}}$ and 
\begin{equation}\label{prop ruling out bubble accumulations-proof-8}
    \tilde{u}_{\varepsilon_{n}}(y_{\varepsilon_{n}})\to\infty\text{~as~}n\to\infty.
\end{equation}
This together with Proposition \ref{prop candidates of blow-up points} implies that $|y_{\varepsilon_{n}}-\tilde{x}_{i,\varepsilon_{n}}|=o(1)$. Next, we define $\mu_{\varepsilon_{n}}= \tilde{u}_{\varepsilon_{n}}^{-\frac{2}{N-2}} (y_{\varepsilon_{n}})$ and set
\begin{equation}\label{prop ruling out bubble accumulations-proof-9}
    \hat{u}_{\varepsilon_{n}}(x)=\mu_{\varepsilon_{n}}^{\frac{N-2}{2}}\tilde{u}_{\varepsilon_{n}}(y_{\varepsilon_{n}}+\mu_{\varepsilon_{n}}x),\text{~for any~} x\in B(0,\frac{\delta}{2\mu_{\varepsilon_{n}}}),
\end{equation}
Then $\hat{u}_{\varepsilon_{n}}(x)$ satisfies that
\begin{equation}\label{prop ruling out bubble accumulations-proof-10}
\begin{cases}
    -\Delta \hat{u}_{\varepsilon_{n}}=\hat{u}_{\varepsilon_{n}}^{2^*-1}+\varepsilon_{n}(\mu_{\varepsilon_{n}}d_{\varepsilon_{n}})^{\frac{N-2}{2}(2^*-q)}\hat{u}_{\varepsilon_{n}}^{q-1},\text{~in~} B(0,\frac{\delta}{2\mu_{\varepsilon_{n}}}),\\
    \hat{u}_{\varepsilon_{n}}(0)=\max_{B(0,\frac{\delta}{2\mu_{\varepsilon_{n}}})}\hat{u}_{\varepsilon_{n}}=1.
\end{cases}
\end{equation}
Note that $B(0,\frac{\delta}{2\mu_{\varepsilon_{n}}})\to\R^{N}$, then by standard elliptic regular theory \cite{GilbargTrudinger} and Caffarelli-Gidas-Spruck \cite{CGS}, $\hat{u}_{\varepsilon_{n}}\to U$ in $C^{2}_{loc}(\R^{N})$, where
\begin{equation}\label{prop ruling out bubble accumulations-proof-11}
    -\Delta U=U^{2^*-1},\text{~and~}0< U\leq 1=U(0).
\end{equation}
Note that from Proposition \ref{prop candidates of blow-up points}, $(\tilde{x}_{i,\varepsilon_{n}}-y_{\varepsilon_{n}})/\mu_{\varepsilon_{n}}$ is bounded, which together with \eqref{prop ruling out bubble accumulations-proof-10}-\eqref{prop ruling out bubble accumulations-proof-11} we have
\begin{equation}
    \liminf_{n\to\infty}\frac{\tilde{u}_{\varepsilon_{n}}(\tilde{x}_{i,\varepsilon_{n}})}{\tilde{u}_{\varepsilon_{n}}(y_{\varepsilon_{n}})}>0,
\end{equation}
this make a contraction with \eqref{prop ruling out bubble accumulations-proof-8} and $\tilde{u}_{\varepsilon_{n}}(\tilde{x}_{i,\varepsilon_{n}})=O(1)$, thus \eqref{prop ruling out bubble accumulations-proof-7} hold. Note that $\tilde{x}_{j,\varepsilon_{n}}\to\tilde{x}_{j}$ is  an isolated simple blow-up point of $\tilde{u}_{\varepsilon_{n}}$, then from Proposition \ref{prop properties of isolated simple blow-up point} and Harnack inequality \cite[Theorem 8.20]{GilbargTrudinger}, we conclude that $\tilde{u}_{\varepsilon_{n}}(\tilde{x}_{i,\varepsilon_{n}})\to 0$. This make a contraction with Proposition \ref{prop candidates of blow-up points}. Thus for any $j_{\varepsilon_{n}}$ such that $\text{d}(x_{j_{\varepsilon_{n}}},x_{1,\varepsilon_{n}})=O(d_{\varepsilon_{n}})$, we have that $\tilde{u}_{\varepsilon_{n}}(\tilde{x}_{j_{\varepsilon_{n}}})$ bounded. Then $\tilde{u}_{\varepsilon_{n}}$ is uniformly bounded in a neighbourhood of any finite subset of $\tilde{S}$. On the other hand, from Proposition \ref{prop candidates of blow-up points}, we know that $\tilde{u}_{\varepsilon_{n}}$ is uniformly bounded on any compact set of $\R^{N}\setminus\tilde{S}$. Thus $\tilde{u}_{\varepsilon_{n}}$ is uniformly bounded on any compact set in $\R^{N}$. Then by standard elliptic regularity theory \cite{GilbargTrudinger} and maximum principle, there exist a positive function $\tilde{U}$ such that $\tilde{u}_{\varepsilon_{n}}\to \tilde{U}$ in $C^{2}_{loc}(\R^{N})$ and
\begin{equation}
\begin{cases}
     -\Delta \tilde{U}=\tilde{U}^{2^*-1},\text{~in~}\R^{N},\\
     \quad~~\tilde{U}(0)\geq R^{\frac{N-2}{2}}>0.
\end{cases}
\end{equation}
Note that $\tilde{U}$ has at least two critical points namely $0,\tilde{x}_{2}$, while this make a contraction with the results of Caffarelli-Gidas-Spruck \cite{CGS}. This end the proof of \eqref{prop ruling out bubble accumulations-proof-6}.

Thus $\tilde{S}$ is the set of blow-up points of $\tilde{u}_{\varepsilon_{n}}$, Clearly, $0,\tilde{x}_{2}\in \tilde{S}$, where $\tilde{x}_{2}\in\mathbb{S}^{N-1}$ and $\tilde{x}_{2,\varepsilon_{n}}\to\tilde{x}_{2}$ as $n\to\infty$. Moreover, from Proposition \ref{prop candidates of blow-up points} the blow-up points are isolate and therefore according to Proposition \ref{prop isolate blow-up point is simple} the blow-up points are isolated and simple. Note that $\tilde{u}_{\varepsilon_{n}}(0) \tilde{u}_{\varepsilon_{n}}(x)$ is locally uniform bounded in $\R^{N}\setminus \tilde{S}$, multiplying \eqref{prop ruling out bubble accumulations-proof-5} by $\tilde{u}_{\varepsilon_{n}}(0)$, then from standard elliptic regularity theory, we have (up to a subsequence)
\begin{equation}\label{prop ruling out bubble accumulations-proof-14}
   \tilde{w}_{{\varepsilon_{n}}}=\tilde{u}_{\varepsilon_{n}}(0) \tilde{u}_{\varepsilon_{n}}(x)\to \tilde{h},\text{~~in~~}C^{2}_{loc}(\R^{N}\setminus\{\tilde{S}\}),\text{~~as~~}n\to\infty.
\end{equation}
Moreover, from Proposition \ref{prop properties of isolated simple blow-up point} and B$\hat{\textrm{o}}$cher's theorem \cite{Axler1992}
\begin{equation}
    \tilde{h}(x)=\frac{a_{1}}{|x|^{N-2}}+\frac{a_{2}}{|x-\tilde{x}_{2}|^{N-2}}+\tilde{b}(x),\text{~~in~~}C^{2}_{loc}(\R^{N}\setminus\{\tilde{S}\}),
\end{equation}
where $a_{1},a_{2}$ is a positive constants, $\tilde{b}$ is a regular harmonic function in $\R^{N}\setminus\{\tilde{S}\setminus\{0,\tilde{x}_{2}\}\}$. In addition, by maximum principle, we can conclude that $\tilde{b}(x)\geq 0$ for any $x\in \R^{N}\setminus\{\tilde{S}\setminus\{0,\tilde{x}_{2}\}\}$. Hence there exist constant $A>0$ such that
\begin{equation}\label{prop ruling out bubble accumulations-proof-16}
    \tilde{h}(x)=\frac{a_{1}}{|x|^{N-2}}+A+o(1),\text{~~for any~}|x| \text{~close to~}0.
\end{equation}

Finally, from \eqref{pohozaev identity 1} in Appendix, we can obtain the following Poho\v{z}aev identity of $\tilde{u}_{\varepsilon_{n}}$ in the ball $B(0,\rho)$ with $0<\rho\leq 1$ small enough
\begin{equation}\label{prop ruling out bubble accumulations-proof-17}
\begin{split}
&\varepsilon_{n}d_{\varepsilon_{n}}^{\frac{N-2}{2}(2^*-q)}\frac{2N-(N-2)q}{2q}\int_{ B(0,\rho)} \tilde{u}_{\varepsilon_{n}}^q\\
&=\rho \int_{\partial B(0,\rho)}\left(\frac{\partial\tilde{u}_{\varepsilon_{n}}}{\partial\nu}\right)^{2}
-\frac{\rho}{2}\int_{\partial B(0,\rho)}
|\nabla  \tilde{u}_{\varepsilon_{n}}|^2
+\frac{N-2}{2}\int_{\partial B(0,\rho)}\frac{\partial \tilde{u}_{\varepsilon_{n}}}{\partial\nu}
  \tilde{u}_{\varepsilon_{n}} \\
  &\quad+
  \frac{\rho}{2^*}\int_{\partial B(0,\rho)} \tilde{u}_{\varepsilon_{n}}^{2^*}+\varepsilon_{n}d_{\varepsilon_{n}}^{\frac{N-2}{2}(2^*-q)}\frac{\rho}{q
  }\int_{\partial B(0,\rho)}\tilde{u}_{\varepsilon_{n}}^{q}.
\end{split}
\end{equation}
 Multiplying the both side of \eqref{prop ruling out bubble accumulations-proof-17} by $\tilde{u}_{\varepsilon_{n}}^{2}(0)$, then left-hand side of \eqref{prop ruling out bubble accumulations-proof-17} satisfies
 \begin{equation}
    \text{LHS}=\varepsilon_{n}d_{\varepsilon_{n}}^{\frac{N-2}{2}(2^*-q)}\frac{2N-(N-2)q}{2q}\int_{ B(0,\rho)} \tilde{u}_{\varepsilon_{n}}^{2}(0)\tilde{u}_{\varepsilon_{n}}^q\geq 0,\text{~for any~} n\text{~and~}\rho.
 \end{equation}
 Next, from \eqref{prop ruling out bubble accumulations-proof-14}, \eqref{prop ruling out bubble accumulations-proof-16} and Lemma \ref{lem properties of B}, the first term in the right-hand side of \eqref{prop ruling out bubble accumulations-proof-17} satisfies
\begin{equation}
    \begin{split}
&\rho \int_{\partial B(0,\rho)}\left(\frac{\partial\tilde{w}_{\varepsilon_{n}}}{\partial\nu}\right)^{2}
-\frac{\rho}{2}\int_{\partial B(0,\rho)}
|\nabla \tilde{w}_{\varepsilon_{n}}|^2+\frac{N-2}{2}\int_{\partial B(0,\rho)}\frac{\partial \tilde{w}_{\varepsilon_{n}}}{\partial\nu}
  \tilde{w}_{\varepsilon_{n}} \\
  &\to -\frac{(N-2)^{2}}{2}a_{1}A\omega_{N}<0,\text{~~as~~}n\to\infty \text{~and~}\rho\to 0.
\end{split}
\end{equation}
In addition, from Proposition \ref{prop properties of isolated simple blow-up point}, there exist a constant $C>0$ independent of $n$ such that
\begin{equation}
    \tilde{u}_{\varepsilon_{n}}(0) \tilde{u}_{\varepsilon_{n}}(x)\leq \frac{C}{|x|^{N-2}},\text{~~in~~}B(0,\rho),
\end{equation}
thus the second term in the right-hand side of \eqref{prop ruling out bubble accumulations-proof-17} satisfies that for any $\rho>0$
\begin{equation}
\begin{split}
\frac{\rho}{2^*}\int_{\partial B(0,\rho)} \tilde{u}_{\varepsilon_{n}}^{2}(0)\tilde{u}_{\varepsilon_{n}}^{2^*}+\varepsilon_{n}d_{\varepsilon_{n}}^{\frac{N-2}{2}(2^*-q)}\frac{\rho}{q
  }\int_{\partial B(0,\rho)}\tilde{u}_{\varepsilon_{n}}^{2}(0)\tilde{u}_{\varepsilon_{n}}^{q}\to
   0, \text{~~as~~}n\to \infty.   
\end{split}
\end{equation}
Hence, when $\rho>0$ small enough and $n$ large enough, the right-hand side of \eqref{prop ruling out bubble accumulations-proof-17} satisfy that $\textrm{RHS}<0$, then we obtain a contraction.
\end{proof}

\begin{Cor}
    Let $u_{\varepsilon}$ be a family of solutions of \eqref{p-varepsion}, then there exists a small number $\varepsilon_{2}>0$ and a large number $M_{2}>0$ independent of $\varepsilon$ such that for any $\varepsilon\in(0,\varepsilon_{2})$ we have
    \begin{equation}
       \|u_{\varepsilon}\|_{H^{1}_{0}(\Omega)}\leq M_{2}.
    \end{equation}  
\end{Cor}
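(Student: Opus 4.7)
The plan is to argue by contradiction. Suppose that, along some subsequence $\varepsilon_n\to 0$, $\|u_{\varepsilon_n}\|_{H^1_0(\Omega)}\to\infty$. First observe that this forces $\|u_{\varepsilon_n}\|_{L^\infty(\Omega)}\to\infty$: otherwise, testing \eqref{p-varepsion} against $u_{\varepsilon_n}$ gives
$$\|u_{\varepsilon_n}\|_{H^1_0}^{2}=\int_\Omega u_{\varepsilon_n}^{2^*}+\varepsilon_n\int_\Omega u_{\varepsilon_n}^{q}\leq\bigl(\|u_{\varepsilon_n}\|_\infty^{2^*}+\|u_{\varepsilon_n}\|_\infty^{q}\bigr)|\Omega|,$$
which would be bounded. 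Hence multi-bubble blow-up occurs, and I may apply Proposition \ref{prop candidates of blow-up points} to extract local maxima $x_{1,\varepsilon_n},\dots,x_{n_{\varepsilon_n},\varepsilon_n}$. Combining Proposition \ref{prop boundary estimate} (blow-up cannot occur within $\delta_1$ of $\partial\Omega$) and Proposition \ref{prop ruling out bubble accumulations} ($|x_{i,\varepsilon_n}-x_{j,\varepsilon_n}|\geq\delta_0$), the balls $B(x_{i,\varepsilon_n},\delta_0/2)$ are pairwise disjoint and contained in $\Omega$, so $n_{\varepsilon_n}\leq n_0$ for a constant $n_0$ depending only on $|\Omega|$ and $\delta_0$.

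Next I would bound $\int_\Omega u_{\varepsilon_n}^{2^*}$ uniformly in $n$ by splitting $\Omega$ into three types of regions per index $i$: the bubble cores $B(x_{i,\varepsilon_n},R\lambda_{i,\varepsilon_n}^{-1})$ with $\lambda_{i,\varepsilon_n}:=u_{\varepsilon_n}(x_{i,\varepsilon_n})^{2/(N-2)}$, the annuli $B(x_{i,\varepsilon_n},\delta_0/2)\setminus B(x_{i,\varepsilon_n},R\lambda_{i,\varepsilon_n}^{-1})$, and the far field $\Omega\setminus\bigcup_i B(x_{i,\varepsilon_n},\delta_0/2)$. On each bubble core, the rescaling $v_n(y):=u_{\varepsilon_n}(x_{i,\varepsilon_n})^{-1}u_{\varepsilon_n}(x_{i,\varepsilon_n}+\lambda_{i,\varepsilon_n}^{-1}y)$ is $C^2$-close to $U$ on $B(0,2R)$ by \eqref{prop candidates of blow-up points-2}, and a change of variables produces $\int_{\mathrm{core}_i}u_{\varepsilon_n}^{2^*}\leq(1+\eta)^{2^*}\int_{B(0,R)}U^{2^*}<\infty$. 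On the annulus I would invoke Proposition \ref{prop properties of isolated simple blow-up point} to get the isolated-simple decay $u_{\varepsilon_n}(x)\lesssim\lambda_{i,\varepsilon_n}^{-(N-2)/2}|x-x_{i,\varepsilon_n}|^{-(N-2)}$, whose $2^*$-power integrates to a quantity $\lesssim R^{-N}$ independently of $\lambda_{i,\varepsilon_n}$. On the far field, \eqref{prop candidates of blow-up points-4} forces $u_{\varepsilon_n}\leq C_1(\delta_0/2)^{-(N-2)/2}$ pointwise, hence $\int_{\mathrm{far}}u_{\varepsilon_n}^{2^*}\leq C|\Omega|$.

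Summing the three contributions over $i\leq n_0$ yields $\int_\Omega u_{\varepsilon_n}^{2^*}\leq C$ independently of $n$. H\"older's inequality then gives $\int_\Omega u_{\varepsilon_n}^{q}\leq|\Omega|^{(2^*-q)/2^*}\bigl(\int_\Omega u_{\varepsilon_n}^{2^*}\bigr)^{q/2^*}\leq C$, and the identity $\|u_{\varepsilon_n}\|_{H^1_0}^{2}=\int u_{\varepsilon_n}^{2^*}+\varepsilon_n\int u_{\varepsilon_n}^{q}$ produces $\|u_{\varepsilon_n}\|_{H^1_0}\leq C$, the desired contradiction. The main obstacle is the annular estimate, which depends critically on the pointwise isolated-simple decay bound collected in the Appendix (originating from Schoen-Li blow-up analysis); the core and far-field pieces follow cleanly from \eqref{prop candidates of blow-up points-2} and \eqref{prop candidates of blow-up points-4}, while the basic counting of bubbles $n_{\varepsilon_n}\leq n_0$ is immediate from the previously established Propositions \ref{prop boundary estimate} and \ref{prop ruling out bubble accumulations}.
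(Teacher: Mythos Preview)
Your proposal is correct and follows essentially the same contradiction strategy as the paper: force $L^\infty$ blow-up, use Propositions \ref{prop boundary estimate}--\ref{prop ruling out bubble accumulations} to bound the number of bubbles, and then invoke the isolated-simple decay from Proposition \ref{prop properties of isolated simple blow-up point} to control the energy. The paper's proof is simply more condensed, asserting that the local $H^1$ energy near each blow-up point tends to $S^{N/2}$ (via Propositions \ref{lem isolated blow up point 2}, \ref{prop properties of isolated simple blow-up point}, and Harnack), whereas you spell out the core/annulus/far-field decomposition explicitly and bound $\int_\Omega u_\varepsilon^{2^*}$ instead, which is equivalent through the equation.
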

  
\begin{proof}
    By contradiction, suppose that there exist $\varepsilon_{n}\to 0$ as $n\to\infty$ such that 
    \begin{equation}\label{proof energy upperbound-1}
         \|u_{\varepsilon_{n}}\|_{H^{1}_{0}(\Omega)}\to\infty,\text{~as~}n\to\infty.
    \end{equation}
    Then by equation \eqref{p-varepsion}, we can obtain that $\|u_{\varepsilon_{n}}\|_{L^{\infty}(\Omega)}\to\infty$ as $n\to\infty$. Next, from Proposition \ref{prop ruling out bubble accumulations}, we know that the blow-up points of $u_{\varepsilon_{n}}$ are isolated, simple and depart each other, thus the number of blow-up points of $u_{\varepsilon_{n}}$ is prior finite. Moreover, from Proposition \ref{lem isolated blow up point 2}, Proposition \ref{prop properties of isolated simple blow-up point} and Harnack inequality, we get that $ \|u_{\varepsilon_{n}}\|_{H^{1}_{0}(\Omega)}$ tend to $S^{\frac{N}{2}}$ around each blow-up point, thus $\|u_{\varepsilon_{n}}\|_{H^{1}_{0}(\Omega)}$ is uniformly bounded and we obtain a contradiction with \eqref{proof energy upperbound-1}.
\end{proof}

\begin{proof}[Proof of Theorem \ref{thm multibble blowup}]
From the results in appendix \ref{Local estimates for blow-up points and proof of Theorem} and the argument above, we only need to consider the case that $\|u_{\varepsilon}\|_{L^{\infty}(\Omega)}$ is unbounded and it remains to prove $(\textrm{g})$ and $(\text{h})$ in Theorem \ref{thm multibble blowup}.

First, we prove $(\textrm{g})$. From Proposition \ref{prop properties of isolated simple blow-up point}, we know that for any $j\neq l\in\{1,\cdots,n\}$ and $\rho>0$ small enough
   \begin{equation}\label{proof of them multibubble blowup -1}
    u_{\varepsilon}(x)\leq C\frac{1}{u_{\varepsilon}(x_{j,\varepsilon})},\text{~for any~}x\in B(x_{j,\varepsilon},\rho/3)\setminus B(x_{j,\varepsilon},\rho/6).
\end{equation} 
On the other hand, using the Green's representation formula, Proposition \ref{prop properties of isolated simple blow-up point}, Proposition \ref{lem isolated blow up point 2} and dominated convergence theorem, we have
  \begin{equation}\label{proof of them multibubble blowup -2}
\begin{aligned}
     u_{\varepsilon}(x)&=\int_{\Omega}G(x,y)(u_{\varepsilon}^{2^*-1}(y)+\varepsilon u_{\varepsilon}^{q-1}(y))dy\\
     &\geq C\int_{B(x_{l,\varepsilon},\rho/3)}u_{\varepsilon}^{2^*-1}(y)dy\\
     &=\frac{C}{u_{\varepsilon}(x_{l,\varepsilon})}(\int_{\R^{N}}U^{2^*-1}+o(1))\geq C\frac{1}{u_{\varepsilon}(x_{l,\varepsilon})},
\end{aligned}
\end{equation}  
for any $x\in B(x_{j,\varepsilon},\rho/3)$. Combining \eqref{proof of them multibubble blowup -1} and \eqref{proof of them multibubble blowup -2} together, we can obtain $(\textrm{g})$.

In the following, we prove $(\textrm{h})$. For any $x\in \Omega\setminus\{a_{1},\cdots,a_{n}\}$, we choose $d>0$ small enough and use the Green's representation formula, 
 \begin{equation}\label{proof of them multibubble blowup -3}
\begin{aligned}
    \lambda_{1,\varepsilon}^{\frac{N-2}{2}}u_{\varepsilon}(x)&=\lambda_{1,\varepsilon}^{\frac{N-2}{2}}\int_{\Omega}G(x,y)(u_{\varepsilon}^{2^*-1}(y)+\varepsilon u_{\varepsilon}^{q-1}(y))dy\\    &=\sum_{j=1}^{n}G(x,x_{j,\varepsilon})\int_{B(x_{j,\varepsilon},d)}\lambda_{1,\varepsilon}^{\frac{N-2}{2}}(u_{\varepsilon}^{2^*-1}(y)+\varepsilon u_{\varepsilon}^{q-1}(y))dy\\
    &\quad+\sum_{j=1}^{n}\int_{B(x_{j,\varepsilon},d)}(G(x,y)-G(x,x_{j,\varepsilon}))\lambda_{1,\varepsilon}^{\frac{N-2}{2}}(u_{\varepsilon}^{2^*-1}(y)+\varepsilon u_{\varepsilon}^{q-1}(y))dy\\    &\quad+\int_{\Omega\setminus\cup_{j=1}^{n}B(x_{j,\varepsilon},d)}\lambda_{1,\varepsilon}^{\frac{N-2}{2}}G(x,y)(u_{\varepsilon}^{2^*-1}(y)+\varepsilon u_{\varepsilon}^{q-1}(y))dy.
\end{aligned}
\end{equation}   
Now, we estimate the terms on the right-hand side of \eqref{proof of them multibubble blowup -3} respectively.
First, from $(\textrm{c})$, $(\textrm{e})$ and $(\textrm{g})$ in Theorem \ref{thm multibble blowup}, we have
\begin{align}\label{proof of them multibubble blowup -4}
    &\lambda_{1,\varepsilon}^{\frac{N-2}{2}}\int_{B(x_{j,\varepsilon},d)}(u_{\varepsilon}^{2^*-1}(y)+\varepsilon u_{\varepsilon}^{q-1}(y))dy\nonumber\\
    &=(\frac{\lambda_{1,\varepsilon}}{\lambda_{j,\varepsilon}})^{\frac{N-2}{2}}\int_{B(0,\lambda_{j,\varepsilon}d)}(\lambda_{j,\varepsilon}^{-\frac{N-2}{2}}u_{\varepsilon}(x_{j,\varepsilon}+\lambda_{j,\varepsilon}^{-1}x))^{2^*-1}dx\nonumber\\
    &\quad+\varepsilon(\frac{\lambda_{1,\varepsilon}}{\lambda_{j,\varepsilon}})^{\frac{N-2}{2}}\lambda_{j,\varepsilon}^{\frac{N-2}{2}(q-2^*)}\int_{B(0,\lambda_{j,\varepsilon}d)}(\lambda_{j,\varepsilon}^{-\frac{N-2}{2}}u_{\varepsilon}(x_{j,\varepsilon}+\lambda_{j,\varepsilon}^{-1}x))^{q-1}dx\nonumber\\
    &=(\frac{\lambda_{1,\varepsilon}}{\lambda_{j,\varepsilon}})^{\frac{N-2}{2}}\int_{B(0,\lambda_{j,\varepsilon}d)}(\lambda_{j,\varepsilon}^{-\frac{N-2}{2}}u_{\varepsilon}(x_{j,\varepsilon}+\lambda_{j,\varepsilon}^{-1}x))^{2^*-1}dx\nonumber\\
    &\quad+\begin{cases}
    O(\varepsilon\lambda_{j,\varepsilon}^{\frac{N-2}{2}(q-2^*)}),&\text{~~if~~}q>\frac{2N-2}{N-2},\\ O(\varepsilon\ln(\lambda_{j,\varepsilon})\lambda_{j,\varepsilon}^{-1}),&\text{~~if~~}q=\frac{2N-2}{N-2},\\
    O(\varepsilon \lambda_{j,\varepsilon}^{-\frac{N-2}{2}(q-2)}),&\text{~~if~~}q<\frac{2N-2}{N-2},
    \end{cases}\nonumber\\
    &\to \mu_{j}\|U\|^{2^*-1}_{L^{2^*-1}(\R^{N})},\text{~~as~~}\varepsilon\to 0.
\end{align}   
Similarly, for any $x\in\Omega\setminus B(x_{j,\varepsilon},d)$, we get
\begin{small}
\begin{equation}\label{proof of them multibubble blowup -5}
    \begin{aligned}
        &\lambda_{1,\varepsilon}^{\frac{N-2}{2}}\int_{B(x_{j,\varepsilon},d)}(G(x,y)-G(x,x_{j,\varepsilon}))(u_{\varepsilon}^{2^*-1}(y)+\varepsilon u_{\varepsilon}^{q-1}(y))dy\\
        &\lesssim\lambda_{1,\varepsilon}^{\frac{N-2}{2}}\int_{B(x_{j,\varepsilon},d)}|y-x_{j,\varepsilon}|(u_{\varepsilon}^{2^*-1}(y)+\varepsilon u_{\varepsilon}^{q-1}(y))dy\\
        &=(\frac{\lambda_{1,\varepsilon}}{\lambda_{j,\varepsilon}})^{\frac{N-2}{2}}\lambda_{j,\varepsilon}^{-1}\int_{B(0,\lambda_{j,\varepsilon}d)}|x|(\lambda_{j,\varepsilon}^{-\frac{N-2}{2}}u_{\varepsilon}(x_{j,\varepsilon}+\lambda_{j,\varepsilon}^{-1}x))^{2^*-1}dx\\
        &\quad+(\frac{\lambda_{1,\varepsilon}}{\lambda_{j,\varepsilon}})^{\frac{N-2}{2}}\varepsilon\lambda_{j,\varepsilon}^{\frac{N-2}{2}q-(N+1)}\int_{B(0,\lambda_{j,\varepsilon}d)}|x|(\lambda_{j,\varepsilon}^{-\frac{N-2}{2}}u_{\varepsilon}(x_{j,\varepsilon}+\lambda_{j,\varepsilon}^{-1}x))^{q-1}dx\\
        &\to 0,\text{~~as~~}\varepsilon\to0,
    \end{aligned}
\end{equation}
\end{small}
and
\begin{equation}\label{proof of them multibubble blowup -6}
    \begin{aligned}
\int_{\Omega\setminus\cup_{j=1}^{n}B(x_{j,\varepsilon},d)}&\lambda_{1,\varepsilon}^{\frac{N-2}{2}}G(x,y)(u_{\varepsilon}^{2^*-1}(y)+\varepsilon u_{\varepsilon}^{q-1}(y))dy\\
&\leq C(\lambda_{\varepsilon}^{-2}+\varepsilon\lambda_{\varepsilon}^{-\frac{N-2}{2}(q-2)})\to 0,\text{~~as~~}\varepsilon\to0.
    \end{aligned}
\end{equation}
Thus combining \eqref{proof of them multibubble blowup -3}-\eqref{proof of them multibubble blowup -6} together, we can obtain that
\begin{equation}
\begin{aligned}
  \lambda_{1,\varepsilon}^{\frac{N-2}{2}}u_{\varepsilon}(x)&=\lambda_{1,\varepsilon}^{\frac{N-2}{2}}\int_{\Omega}G(x,y)(u_{\varepsilon}^{2^*-1}(y)+\varepsilon u_{\varepsilon}^{q-1}(y))dy\\
  &\to\|U\|_{L^{2^*-1}(\R^{N})}^{2^*-1}\sum_{j=1}^{n}\mu_{j}G(x,a_{j}):=\sum_{j=1}^{n}\nu_{j}G(x,a_{j})
\end{aligned}     
\end{equation}
in $C_{loc}(\Omega\setminus\{a_{1},\cdots,a_{n}\})$.
Similarly , we can prove that
\begin{equation}
     \lambda_{1,\varepsilon}^{\frac{N-2}{2}}u_{\varepsilon}(x)\to\sum_{j=1}^{n}\nu_{j}G(x,a_{j}), \text{~in~}C^{1}_{loc}(\Omega\setminus\{a_{1},\cdots,a_{n}\}).
\end{equation} 
Thus we complete the proof.
\end{proof}

\section{Asymptotic decomposition of the solutions}\label{section Decomposition of the solutions}
\setcounter{equation}{0}
Throughout this section, we always assume that $N\ge 3, q\in (\max\{2,\frac{4}{N-2}\},2^*)$, $\Omega$ is a smooth bounded domain and satisfies Assumption $\textrm{A}$.

\begin{Prop}\label{prop decomposition of the solutions-1}
Let $u_{\varepsilon}$ be a family of solutions of \eqref{p-varepsion}, then there exists $n\in\N$ and $\vec{a}=(a_{1},\cdots,a_{n})\in\Omega_{*}^{n}$ such that $u_{\varepsilon}(x)$  has the following decomposition
\begin{equation}\label{decomposition of u}
u_{\varepsilon}=\sum^n_{j=1} PU_{x_{j,\varepsilon}, \lambda_{j,\varepsilon}}+w_{\varepsilon},
\end{equation}
and for any $j=1,\cdots,n$, as $\varepsilon\to0$
\begin{equation}\label{equ v 1}
~ x_{j,\varepsilon}\rightarrow a_j, ~\lambda_{j,\varepsilon}\rightarrow \infty,~\|w_{\varepsilon}\|_{H^{1}_{0}(\Omega)}\to0,~w_\varepsilon\in \bigcap^n_{j=1}E_{x_{j,\varepsilon},\lambda_{j,\varepsilon}}.
\end{equation}   
\end{Prop}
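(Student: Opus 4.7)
The plan is to bootstrap the bubble description supplied by Theorem \ref{thm multibble blowup} into a true Lyapunov--Schmidt decomposition. From Theorem \ref{thm multibble blowup} (under Assumption A we are automatically in the blow-up case), there are $n\in\mathbb N$, concentration points $a_j$, and initial parameters $(x_{j,\varepsilon}^{\ast},\lambda_{j,\varepsilon}^{\ast})$ satisfying items (a)--(h). The proposition is then a two-step task: first check that these raw parameters already give an $H^1_0$-small remainder, then correct them by a finite-dimensional adjustment to enforce the orthogonality conditions.

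\textbf{Step 1 (preliminary decomposition).} Set $\tilde w_\varepsilon := u_\varepsilon - \sum_{j=1}^n PU_{x_{j,\varepsilon}^{\ast},\lambda_{j,\varepsilon}^{\ast}}$. I would expand
\begin{equation*}
\|\tilde w_\varepsilon\|_{H^1_0}^{2}=\|u_\varepsilon\|_{H^1_0}^{2}-2\sum_{j}\bigl\langle u_\varepsilon,PU_{x_{j,\varepsilon}^{\ast},\lambda_{j,\varepsilon}^{\ast}}\bigr\rangle +\sum_{j,k}\bigl\langle PU_{x_{j,\varepsilon}^{\ast},\lambda_{j,\varepsilon}^{\ast}},PU_{x_{k,\varepsilon}^{\ast},\lambda_{k,\varepsilon}^{\ast}}\bigr\rangle
\end{equation*}
and evaluate each term. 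By item (d), $\|u_\varepsilon\|_{H^1_0}^{2}\to nS^{N/2}$. Testing \eqref{p-varepsion} against $PU_{x_{j,\varepsilon}^{\ast},\lambda_{j,\varepsilon}^{\ast}}$, rescaling by $\lambda_{j,\varepsilon}^{\ast}$, and invoking item (c) together with item (e) to pass to the limit under the integral, yields $\langle u_\varepsilon, PU_{x_{j,\varepsilon}^{\ast},\lambda_{j,\varepsilon}^{\ast}}\rangle\to S^{N/2}$; the subcritical contribution $\varepsilon\int u_\varepsilon^{q-1}PU$ is negligible because of item (f). The $H^1_0$-inner products of distinct projected bubbles are $o(1)$ thanks to the uniform separation in (b) combined with the standard interaction estimate $\int_\Omega U_{x_j,\lambda_j}^{2^{\ast}-1}PU_{x_k,\lambda_k} = O((\lambda_j\lambda_k)^{-(N-2)/2})$, while diagonal terms converge to $S^{N/2}$. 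Hence $\|\tilde w_\varepsilon\|_{H^1_0}^{2}\to nS^{N/2}-2nS^{N/2}+nS^{N/2}=0$.

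\textbf{Step 2 (implicit function theorem).} The remainder $\tilde w_\varepsilon$ need not be orthogonal to the tangent space, so I would perturb the parameters. Introduce
\begin{equation*}
F_\varepsilon(\vec y,\vec\mu):=\Bigl(\mu_j\bigl\langle u_\varepsilon-\textstyle\sum_k PU_{y_k,\mu_k},\partial_{\mu_j}PU_{y_j,\mu_j}\bigr\rangle,\;\mu_j^{-1}\bigl\langle u_\varepsilon-\sum_k PU_{y_k,\mu_k},\partial_{(y_j)_i}PU_{y_j,\mu_j}\bigr\rangle\Bigr)_{j,i},
\end{equation*}
after rescaling so that every tangent vector has $H^1_0$-norm of unit order. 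By Step 1, $F_\varepsilon(\vec x_\varepsilon^{\ast},\vec\lambda_\varepsilon^{\ast})=o(1)$. The differential of $F_\varepsilon$ at the initial guess is, modulo an $o(1)$ perturbation, block-diagonal with diagonal blocks equal (up to normalization) to the invertible Gram matrices of $\{\partial_\lambda U,\partial_{x_i}U\}$ on $\mathbb R^N$; the off-diagonal $(j\neq k)$ blocks are $o(1)$ because of the separation $|x_{j,\varepsilon}^{\ast}-x_{k,\varepsilon}^{\ast}|\geq\delta_0$ together with $\lambda_{j,\varepsilon}^{\ast}\to\infty$. The quantitative implicit function theorem then produces a unique nearby zero $(\vec x_\varepsilon,\vec\lambda_\varepsilon)$, and $w_\varepsilon:=u_\varepsilon-\sum_j PU_{x_{j,\varepsilon},\lambda_{j,\varepsilon}}$ satisfies both $w_\varepsilon\in\bigcap_j E_{x_{j,\varepsilon},\lambda_{j,\varepsilon}}$ and, since the corrections to the parameters are $o(1)$ in the natural scaling, $\|w_\varepsilon\|_{H^1_0}\to 0$.

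The main obstacle is the quantitative invertibility of the Jacobian of $F_\varepsilon$, uniformly in $\varepsilon$. This rests on sharp asymptotics of the bubble self-interactions $\langle \partial_\lambda PU_{x_j,\lambda_j},\partial_\lambda PU_{x_j,\lambda_j}\rangle$, and the decay estimates for the cross-inner products $\langle \partial_\lambda PU_{x_j,\lambda_j},\partial_\lambda PU_{x_k,\lambda_k}\rangle$ and their mixed spatial analogues when $j\neq k$. These reduce to explicit integrals against translated Aubin--Talenti profiles, controlled via the separation of concentration points, but the book-keeping must be done carefully since the projections $PU-U$ contribute correction terms through the Robin function $R$.
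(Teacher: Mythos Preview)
Your proof is correct, and Step~1 matches the paper's (the paper simply cites Theorem~\ref{thm multibble blowup} and Lemma~\ref{estimate of U-lambda-a and psi-lambda-a 3} for the preliminary decomposition without writing out the energy computation you give). Step~2 is where you diverge: instead of the implicit function theorem, the paper fixes the parameters by \emph{minimization}. It considers, for each small $\varepsilon$, the problem
\[
\inf\Bigl\{\bigl\|u_\varepsilon-\textstyle\sum_j PU_{x_j,\lambda_j}\bigr\|_{H^1_0}^{2}\;:\;|x_j-x_{j,\varepsilon}^{\ast}|\le c_{j,1}^{-1}(\lambda_{j,\varepsilon}^{\ast})^{-1},\ \lambda_j\in[c_{j,1}\lambda_{j,\varepsilon}^{\ast},c_{j,2}\lambda_{j,\varepsilon}^{\ast}]\Bigr\},
\]
notes that the minimum is attained on this compact set, and then uses the bubble interaction estimate (Lemma~\ref{estimate of two different bubbles}) to show that the minimizer lies in the interior for suitable $c_{j,1},c_{j,2}$: at a boundary parameter the energy defect would be bounded below, contradicting that the value at the initial guess is already $o(1)$. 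The orthogonality conditions then follow as the first-order Euler--Lagrange relations for the minimum.

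Both arguments are standard; the minimization route is slightly more robust (compactness replaces the uniform Jacobian bound you identify as the main obstacle), while your IFT route gives, in principle, quantitative control on the parameter correction $|x_{j,\varepsilon}-x_{j,\varepsilon}^{\ast}|+\lambda_{j,\varepsilon}^{-1}|\lambda_{j,\varepsilon}-\lambda_{j,\varepsilon}^{\ast}|$ in terms of $\|\tilde w_\varepsilon\|$, which the paper does not need here but which can be useful downstream. The Jacobian estimates you would need are exactly the contents of Lemma~\ref{estimate of U-lambda-a and psi-lambda-a 4} in the appendix (diagonal blocks of size $\sim\lambda^{-2}$ and $\sim\lambda^2$, off-diagonals of lower order), so your closing concern is addressed by material already in the paper.
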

\begin{proof}

First, from Theorem \ref{thm multibble blowup}, assumption A and Lemma \ref{estimate of U-lambda-a and psi-lambda-a 3}, we can obtain the following decomposition for $u_{\varepsilon}$
\begin{equation}
    u_{\varepsilon}=\sum^n_{j=1} PU_{x_{j,\varepsilon}, \lambda_{j,\varepsilon}}+w_{\varepsilon},
\end{equation}
with 
\begin{equation}
    ~ x_{j,\varepsilon}\rightarrow a_j, ~\vec{a}=(a_{1},\cdots,a_{n})\in\Omega_{*}^{n},~\lambda_{j,\varepsilon}=u_{\varepsilon}(x_{j,\varepsilon})^{\frac{2}{N-2}}\rightarrow \infty,~\|w_{\varepsilon}\|_{H^{1}_{0}(\Omega)}\to0.
\end{equation}
Next, we need to move the location $x_{j,\varepsilon}$ and $\lambda_{j,\varepsilon}$ obtained above a little bit such that 
\begin{equation}
    w_\varepsilon\in \bigcap^n_{j=1}E_{x_{j,\varepsilon},\lambda_{j,\varepsilon}}.
\end{equation}
For any fixed $\varepsilon$ small enough, we consider the problem
\begin{equation}\label{prop 2.1 proof 10}
    \inf\left\{\|u_{\varepsilon}-\sum_{j=1}^{n}PU_{x_{j},\lambda_{j}}\|_{H^{1}_{0}(\Omega)}^{2}:|x_{j}-x_{j,\varepsilon}|\leq c_{j,1}^{-1}\lambda_{j,\varepsilon}^{-1}\text{~and~}\lambda_{j}\in[c_{j,1}\lambda_{j,\varepsilon},c_{j,2}\lambda_{j,\varepsilon}]\right\},
\end{equation}
where $c_{j,1},c_{j,2}>0$ are some constants independent of $\varepsilon$. Then \eqref{prop 2.1 proof 10} is achieved by some $(\bar{x}_{j,\varepsilon},\bar{\lambda}_{j,\varepsilon})$. Since
\begin{equation}
    \|u_{\varepsilon}-\sum_{j=1}^{n}PU_{\bar{x}_{j,\varepsilon},\bar{\lambda}_{j,\varepsilon}}\|_{H^{1}_{0}(\Omega)}\leq \|u_{\varepsilon}-\sum_{j=1}^{n}PU_{x_{j,\varepsilon},\lambda_{j,\varepsilon}}\|_{H^{1}_{0}(\Omega)}=\|w_{\varepsilon}\|_{H^{1}_{0}(\Omega)}\to0,
\end{equation}
then by Lemma \ref{estimate of two different bubbles}
\begin{equation}
\begin{aligned}
    &\|u_{\varepsilon}-\sum_{j=1}^{n}PU_{\bar{x}_{j,\varepsilon},\bar{\lambda}_{j,\varepsilon}}\|_{H^{1}_{0}(\Omega)}^{2}\\
    &=\|\sum_{j=1}^{n}PU_{\bar{x}_{j,\varepsilon},\bar{\lambda}_{j,\varepsilon}}-\sum_{j=1}^{n}PU_{x_{j,\varepsilon},\lambda_{j,\varepsilon}}\|_{H^{1}_{0}(\Omega)}^{2}+O( \|u_{\varepsilon}-\sum_{j=1}^{n}PU_{x_{j,\varepsilon},\lambda_{j,\varepsilon}}\|_{H^{1}_{0}(\Omega)})\\
    &=\|\sum_{j=1}^{n}PU_{\bar{x}_{j,\varepsilon},\bar{\lambda}_{j,\varepsilon}}\|_{H^{1}_{0}(\Omega)}^{2}+\|\sum_{j=1}^{n}PU_{{x}_{j,\varepsilon},{\lambda}_{j,\varepsilon}}\|_{H^{1}_{0}(\Omega)}^{2}+O(\sum_{j,l}\int_{\Omega}U^{2^*-1}_{\bar{x}_{j,\varepsilon},\bar{\lambda}_{j,\varepsilon}}U_{{x}_{l,\varepsilon},{\lambda}_{l,\varepsilon}})+o(1)\\
    &=2nS^{\frac{N}{2}}+O((\sum_{j}\frac{\bar{\lambda}_{j,\varepsilon}}{\lambda_{j,\varepsilon}}+\frac{\lambda_{j,\varepsilon}}{\bar{\lambda}_{j,\varepsilon}}+\lambda_{j,\varepsilon}\bar{\lambda}_{j,\varepsilon}|\bar{x}_{j,\varepsilon}-x_{j,\varepsilon}|^{2})^{-\frac{N-2}{2}})+o(1),
\end{aligned}
\end{equation}
it follows  that for $c_{j,1}>0$ small enough and $c_{j,2}>0$ large enough, it holds
\begin{equation}
    |\bar{x}_{j,\varepsilon}-x_{j,\varepsilon}|<c_{j,1}^{-1}\lambda_{j,\varepsilon}^{-1},\text{~~and~~}\bar{\lambda}_{j,\varepsilon}\in (c_{j,1}\lambda_{j,\varepsilon},c_{j,2}\lambda_{j,\varepsilon}).
\end{equation}
Hence, we let $\bar{w}_{\varepsilon}:=u_{\varepsilon}-\sum_{j=1}^{n}PU_{\bar{x}_{j,\varepsilon},\bar{\lambda}_{j,\varepsilon}}\in \bigcap^n_{j=1}E_{\bar{x}_{j,\varepsilon},\bar{\lambda}_{j,\varepsilon}}$ and this completes the proof.
\end{proof}

Next, from Theorem \ref{thm multibble blowup} with the proof of Proposition \ref{prop decomposition of the solutions-1}, we have the following corollary.
\begin{Cor}\label{cor of the decomposition}
Using the notations in Proposition \ref{prop decomposition of the solutions-1}, it holds 
\begin{enumerate}
    \item  $\mu_{j}=\lim_{\varepsilon\to0}(\frac{\lambda_{1,\varepsilon}}{\lambda_{j,\varepsilon}})^{\frac{N-2}{2}}$ exists and lies in $(0,\infty)$ for any $j=1,\cdots,n$.
    \item There exist a constant $C>0$ such that $u_{\varepsilon}\leq C\sum_{j=1}^{n}U_{x_{j,\varepsilon},\lambda_{j,\varepsilon}}$ in $\Omega$.
    \item There exist a constant $C_{1}>0$ such that $\varepsilon\leq C_{1}\lambda_{j,\varepsilon}^{-\frac{N-2}{2}q+2}$ for any $j=1,\cdots,n$.  
\end{enumerate}
\end{Cor}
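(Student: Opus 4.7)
The plan is to observe that this corollary follows essentially for free from Theorem \ref{thm multibble blowup} once one checks that the rescaled parameters $(\bar{x}_{j,\varepsilon},\bar{\lambda}_{j,\varepsilon})$ produced by the minimization \eqref{prop 2.1 proof 10} in the proof of Proposition \ref{prop decomposition of the solutions-1} are comparable to the blow-up parameters $(x_{j,\varepsilon},\lambda_{j,\varepsilon})$ supplied by Theorem \ref{thm multibble blowup}. Indeed, the minimization yields constants $c_{j,1}\in(0,1)$ and $c_{j,2}>1$, independent of $\varepsilon$, such that
\[
|\bar{x}_{j,\varepsilon}-x_{j,\varepsilon}|\leq c_{j,1}^{-1}\lambda_{j,\varepsilon}^{-1},\qquad \bar{\lambda}_{j,\varepsilon}/\lambda_{j,\varepsilon}\in(c_{j,1},c_{j,2}),
\]
so in particular the ratio $\bar{\lambda}_{j,\varepsilon}/\lambda_{j,\varepsilon}$ is pinched between two positive constants. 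After relabelling the decomposition parameters as $(x_{j,\varepsilon},\lambda_{j,\varepsilon})$ in Proposition \ref{prop decomposition of the solutions-1}, we may use these bounds to transfer every estimate stated in Theorem \ref{thm multibble blowup} to the new parameters at the cost of an absolute constant.

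For part $(1)$ I would just note that Theorem \ref{thm multibble blowup}$(\textrm{g})$ gives convergence of $(\lambda_{1,\varepsilon}/\lambda_{j,\varepsilon})^{(N-2)/2}$ to some $\mu_j\in(0,\infty)$ for the original parameters, and multiplying by the bounded positive ratio $(\bar{\lambda}_{1,\varepsilon}\lambda_{j,\varepsilon})/(\lambda_{1,\varepsilon}\bar{\lambda}_{j,\varepsilon})$ and passing to a subsequence yields the analogous limit for the shifted parameters, still in $(0,\infty)$. For part $(3)$ I would read off directly from Theorem \ref{thm multibble blowup}$(\textrm{f})$, using $\bar{\lambda}_{j,\varepsilon}\sim\lambda_{j,\varepsilon}$ to absorb the change of scale into the constant $C_1$.

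The only piece that asks for a short argument is part $(2)$: I would compare the bubble $U_{\bar{x}_{j,\varepsilon},\bar{\lambda}_{j,\varepsilon}}$ with $U_{x_{j,\varepsilon},\lambda_{j,\varepsilon}}$ pointwise on $\Omega$. Using $\bar{\lambda}_{j,\varepsilon}\sim\lambda_{j,\varepsilon}$ and $\lambda_{j,\varepsilon}|\bar{x}_{j,\varepsilon}-x_{j,\varepsilon}|\lesssim 1$, one checks uniformly in $x\in\Omega$ that
\[
\frac{1+\lambda_{j,\varepsilon}^{2}|x-x_{j,\varepsilon}|^{2}}{1+\bar{\lambda}_{j,\varepsilon}^{2}|x-\bar{x}_{j,\varepsilon}|^{2}}\sim 1,
\]
hence $U_{x_{j,\varepsilon},\lambda_{j,\varepsilon}}(x)\sim U_{\bar{x}_{j,\varepsilon},\bar{\lambda}_{j,\varepsilon}}(x)$, and combining with Theorem \ref{thm multibble blowup}$(\textrm{e})$ finishes the proof. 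There is no real obstacle here; the whole content of the corollary is the observation that a bounded rescaling of the bubble parameters preserves the qualitative estimates of Theorem \ref{thm multibble blowup}.
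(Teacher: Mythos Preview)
Your proposal is correct and matches the paper's own approach: the paper states that the corollary follows directly ``from Theorem \ref{thm multibble blowup} with the proof of Proposition \ref{prop decomposition of the solutions-1}'', i.e., precisely by using the comparability $|\bar{x}_{j,\varepsilon}-x_{j,\varepsilon}|\lesssim\lambda_{j,\varepsilon}^{-1}$ and $\bar{\lambda}_{j,\varepsilon}\sim\lambda_{j,\varepsilon}$ established in the minimization step \eqref{prop 2.1 proof 10} to transfer items $(\mathrm{e}),(\mathrm{f}),(\mathrm{g})$ of Theorem \ref{thm multibble blowup} to the new parameters. Your explicit verification of the pointwise bubble comparison for part $(2)$ simply fills in a detail the paper leaves implicit.
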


Since the $\lambda_{j,\varepsilon}$ are all of comparable size by Corollary \ref{cor of the decomposition}, it will be convenient to set  ${\lambda}_{\varepsilon}:=\min\{\lambda_{1,\varepsilon},\cdots,\lambda_{n,\varepsilon}\}$.

In the following, we need to estimate the error term $w_{\varepsilon}$ in the decomposition \eqref{decomposition of u}. First, using \eqref{decomposition of u} and \eqref{p-varepsion}, it is easy to see that $w_{\varepsilon}$ satisfies
\begin{equation}\label{equ of w}
    Q_{\varepsilon}w_{\varepsilon}=L_{\varepsilon}+R_{\varepsilon}(w_{\varepsilon}),
\end{equation}
where
\begin{equation}
    Q_{\varepsilon}w_{\varepsilon}:=-\Delta w_{\varepsilon}-(2^*-1)(\sum_{j=1}^{n}PU_{x_{j,\varepsilon},\lambda_{j,\varepsilon}})^{2^*-2}w_{\varepsilon},
\end{equation}
\begin{equation}
    L_{\varepsilon}:=(\sum_{j=1}^{n}PU_{x_{j,\varepsilon},\lambda_{j,\varepsilon}})^{2^*-1}-\sum_{j=1}^{n}U_{x_{j,\varepsilon},\lambda_{j,\varepsilon}}^{2^*-1}+\varepsilon (\sum_{j=1}^{n}PU_{x_{j,\varepsilon},\lambda_{j,\varepsilon}})^{q-1},
\end{equation}
and
\begin{equation}
\begin{aligned}
R_{\varepsilon}(w_{\varepsilon})&=(\sum_{j=1}^{n}PU_{x_{j,\varepsilon},\lambda_{j,\varepsilon}}+w_{\varepsilon})^{2^*-1}-(\sum_{j=1}^{n}PU_{x_{j,\varepsilon},\lambda_{j,\varepsilon}})^{2^*-1}-(2^*-1)(\sum_{j=1}^{n}PU_{x_{j,\varepsilon},\lambda_{j,\varepsilon}})^{2^*-2}w_{\varepsilon}\\
 &\quad+\varepsilon\left((\sum_{j=1}^{n}PU_{x_{j,\varepsilon},\lambda_{j,\varepsilon}}+w_{\varepsilon})^{q-1}-(\sum_{j=1}^{n}PU_{x_{j,\varepsilon},\lambda_{j,\varepsilon}})^{q-1}\right).
\end{aligned}   
\end{equation}

 We first recall the following coercivity of operator $Q_{\varepsilon}$, which is a crucial tool to estimate the error term $w_{\varepsilon}$. 
\begin{Lem}\label{q var}
There exists two constants $\varepsilon_{0}>0$ and $\rho_{0}>0$ such that for any $0<\varepsilon<\varepsilon_{0}$, it holds
 \begin{equation}
     \langle Q_{\varepsilon}u,u\rangle\geq \rho_{0} \|u\|_{H^{1}_{0}(\Omega)}^{2},\quad\forall u\in \bigcap^n_{j=1}E_{x_{j,\varepsilon},\lambda_{j,\varepsilon}},
 \end{equation}
 \begin{proof}
     We refer Proposition 3.1 in \cite{Cao2021Trans} and Proposition 2.4.3 in \cite{Cao_Peng_Yan_2021} for the details of the proof. 
 \end{proof}
\end{Lem}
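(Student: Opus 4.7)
The plan is the standard contradiction argument used in Lyapunov--Schmidt reductions, essentially adapted from the references \cite{Cao2021Trans,Cao_Peng_Yan_2021}. Suppose toward contradiction that there exist sequences $\varepsilon_k \to 0$ and $u_k \in \bigcap_{j=1}^n E_{x_{j,\varepsilon_k},\lambda_{j,\varepsilon_k}}$ with $\|u_k\|_{H^1_0(\Omega)} = 1$ such that $\langle Q_{\varepsilon_k} u_k, u_k\rangle = o(1)$. Writing this out and using that $PU \leq U$, it suffices to show
\begin{equation*}
(2^*-1)\sum_{j=1}^n \int_\Omega U_{x_{j,\varepsilon_k},\lambda_{j,\varepsilon_k}}^{2^*-2}\, u_k^2\, dx = 1+o(1),
\end{equation*}
where the interaction terms between distinct bubbles are $o(1)$ thanks to the uniform separation $|x_{j,\varepsilon_k}-x_{l,\varepsilon_k}|\geq \delta_0$ provided by Theorem \ref{thm multibble blowup}(b) and standard estimates on products of Aubin--Talenti bubbles (see Lemma \ref{estimate of two different bubbles}).

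The next step is to rescale around each concentration point. For each $j$, set
\begin{equation*}
\tilde u_{j,k}(y) := \lambda_{j,\varepsilon_k}^{-\frac{N-2}{2}}\, u_k\bigl(x_{j,\varepsilon_k}+\lambda_{j,\varepsilon_k}^{-1} y\bigr),
\end{equation*}
which preserves the $\mathcal{D}^{1,2}$ norm. Up to extraction, $\tilde u_{j,k} \rightharpoonup v_j$ weakly in $\mathcal{D}^{1,2}(\R^N)$, and by the equation $Q_{\varepsilon_k} u_k = o(1)$ in the weak sense (combined with $\tilde u_{j,k}\to v_j$ a.e.\ and the fact that $PU_{x_{j,\varepsilon_k},\lambda_{j,\varepsilon_k}}$ rescaled converges to $U$), the limit $v_j$ solves the linearized problem
\begin{equation*}
-\Delta v_j = (2^*-1)\,U^{2^*-2}\, v_j \quad \text{in } \R^N.
\end{equation*}
Moreover, the orthogonality $u_k \in E_{x_{j,\varepsilon_k},\lambda_{j,\varepsilon_k}}$, after rescaling the test functions $\partial_\lambda PU_{x_{j,\varepsilon_k},\lambda_{j,\varepsilon_k}}$ and $\partial_{x_i} PU_{x_{j,\varepsilon_k},\lambda_{j,\varepsilon_k}}$, passes to the limit and forces $v_j$ to be $H^1$-orthogonal to $\partial_\lambda U$ and $\partial_{y_i} U$. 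Since the kernel of the linearization at $U$ is spanned precisely by these functions, we deduce $v_j\equiv 0$.

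With $v_j\equiv 0$ in hand, I can conclude that $\tilde u_{j,k}\to 0$ strongly in $L^{2^*}_{\mathrm{loc}}(\R^N)$ by Rellich. Writing
\begin{equation*}
\int_\Omega U_{x_{j,\varepsilon_k},\lambda_{j,\varepsilon_k}}^{2^*-2}\, u_k^2\, dx
= \int_{\R^N} U^{2^*-2}(y)\,\tilde u_{j,k}^2(y)\,\chi_{\Omega_k}(y)\, dy,
\end{equation*}
and splitting the $y$-integral into $|y|\leq R$ and $|y|>R$, the inner region vanishes by local strong convergence while the outer region is controlled uniformly in $k$ by a tail of $\int_{|y|>R}U^{2^*-2}\cdot U^{2^*}$-type estimates together with the boundedness of $\|\tilde u_{j,k}\|_{\mathcal{D}^{1,2}}$; letting $R\to\infty$ after $k\to\infty$ gives that each summand vanishes. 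This contradicts the equality $1 = o(1)$ and proves the lemma.

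The main obstacle is Step 3, namely ensuring that the orthogonality conditions genuinely pass to the limit: one must verify that the rescaled kernel directions $\lambda_{j,\varepsilon_k}^{-(N-2)/2}\partial_\lambda PU_{x_{j,\varepsilon_k},\lambda_{j,\varepsilon_k}}(x_{j,\varepsilon_k}+\lambda_{j,\varepsilon_k}^{-1}y)$ (and analogously for $\partial_{x_i}$) converge strongly in $\mathcal{D}^{1,2}(\R^N)$ to $\partial_\lambda U$ (resp.\ $\partial_{y_i} U$), which is where the estimates on the projection error $\psi_{x,\lambda}=U_{x,\lambda}-PU_{x,\lambda}$ recalled in the Appendix are essential. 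Once this is secured, the rest of the argument is routine.
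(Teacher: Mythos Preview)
Your overall strategy matches the spirit of the cited references, but there is a genuine gap at the heart of the argument. From the contradiction hypothesis you only have the \emph{scalar} information $\langle Q_{\varepsilon_k}u_k,u_k\rangle=o(1)$; this does \emph{not} give $Q_{\varepsilon_k}u_k\to 0$ in any weak sense, so there is no equation available to pass to the limit after rescaling. Your claim that the weak limit $v_j$ solves $-\Delta v_j=(2^*-1)U^{2^*-2}v_j$ is therefore unjustified, and without it the orthogonality $v_j\perp\mathrm{span}\{\partial_\lambda U,\partial_{y_i}U\}$ by itself does not force $v_j\equiv 0$ (for instance, $v_j$ could have a nontrivial component along $U$).

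The standard repair, which is what the references actually carry out, is to choose $u_k$ not as an arbitrary almost-null sequence but as a \emph{minimizer} of the Rayleigh quotient $u\mapsto \langle Q_{\varepsilon_k}u,u\rangle/\|u\|_{H^1_0}^2$ on $\bigcap_j E_{x_{j,\varepsilon_k},\lambda_{j,\varepsilon_k}}\setminus\{0\}$; this is attained because $(\sum_j PU_{x_{j,\varepsilon_k},\lambda_{j,\varepsilon_k}})^{2^*-2}\in L^{N/2}(\Omega)$, so the lower-order term is weakly continuous. The minimizer then satisfies an Euler--Lagrange equation
\[
Q_{\varepsilon_k}u_k=\mu_k u_k+\sum_{j}\Big(a^k_{j,0}\,\partial_\lambda PU_{x_{j,\varepsilon_k},\lambda_{j,\varepsilon_k}}+\sum_{i=1}^N a^k_{j,i}\,\partial_{x_i}PU_{x_{j,\varepsilon_k},\lambda_{j,\varepsilon_k}}\Big),\qquad \mu_k\to\mu\leq 0,
\]
and \emph{this} equation survives the rescaling. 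Testing the limit against the kernel directions and using your (correct) observation that the orthogonality passes to the limit shows the Lagrange multipliers vanish in the limit, so $v_j$ is a genuine eigenfunction of the linearized operator with eigenvalue $\mu$; one then rules out $\mu<0$ and $\mu=0$ separately via the spectral description in Lemma~\ref{Kernel of Emden-Fowler equation} and an energy-balance argument. After this correction, the remainder of your outline (separation of bubbles, strong $L^{2^*}_{\mathrm{loc}}$ convergence, tail control) is fine.
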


\begin{Lem}\label{L var}
For any $\eta\in H^{1}_{0}(\Omega)$, it holds
\begin{equation}\label{L var-1}
 \int_{\Omega}L_{\varepsilon}\eta=\begin{cases}
     O\left(\frac{1}{\lambda_{\varepsilon}}+\frac{\varepsilon}{\lambda_{\varepsilon}^{3-\frac{q}{2}}}\right), &\text{~~if~~} N=3\text{~and~} q\in (4,6),\\

         O\left(\frac{1}{\lambda_{\varepsilon}^{2}}+\frac{\varepsilon}{\lambda_{\varepsilon}^{q-1}}\right), &\text{~~if~~} N=4 \text{~and~} q\in (2,\frac{5}{2}),\\
          O\left(\frac{1}{\lambda_{\varepsilon}^{2}}+\frac{\varepsilon(\ln\lambda_{\varepsilon})^{\frac{3}{4}}}{\lambda_{\varepsilon}^{3/2}}\right), &\text{~~if~~} N=4 \text{~and~} q=\frac{5}{2},\\
     O\left(\frac{1}{\lambda_{\varepsilon}^{2}}+\frac{\varepsilon}{\lambda_{\varepsilon}^{4-q}}\right), &\text{~~if~~} N=4 \text{~and~} q\in (\frac{5}{2},4),\\
     
      O\left(\frac{1}{\lambda_{\varepsilon}^{3}}+\frac{\varepsilon}{\lambda_{\varepsilon}^{\frac{3(q-1)}{2}}}\right), &\text{~~if~~} N=5 \text{~and~} q\in (2,\frac{13}{6}),\\

O\left(\frac{1}{\lambda_{\varepsilon}^{3}}+\frac{\varepsilon(\ln\lambda_{\varepsilon})^{\frac{7}{10}}}{\lambda_{\varepsilon}^{5-\frac{3}{2}q}}\right), &\text{~~if~~} N=5 \text{~and~} q=\frac{13}{6},\\
      
      O\left(\frac{1}{\lambda_{\varepsilon}^{3}}+\frac{\varepsilon}{\lambda_{\varepsilon}^{5-\frac{3}{2}q}}\right), &\text{~~if~~} N=5 \text{~and~} q\in (\frac{13}{6},\frac{10}{3}),\\
O\left(\frac{(\ln\lambda_{\varepsilon})^{\frac{N+2}{2N}}}{\lambda_{\varepsilon}^{\frac{N+2}{2}}}+\frac{\varepsilon}{\lambda_{\varepsilon}^{N-\frac{N-2}{2}q}}\right), &\text{~~if~~} N\geq 6
 \end{cases}   \times \|\eta\|_{H^{1}_{0}(\Omega)}.
\end{equation}
\end{Lem}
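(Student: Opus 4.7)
I will split the nonlinear error into a critical piece and a subcritical piece,
\[
L_\varepsilon = I_\varepsilon + \varepsilon J_\varepsilon,\qquad
I_\varepsilon := \Bigl(\sum_{j=1}^{n} PU_{j}\Bigr)^{2^*-1} - \sum_{j=1}^{n} U_{j}^{2^*-1},\quad
J_\varepsilon := \Bigl(\sum_{j=1}^{n} PU_{j}\Bigr)^{q-1},
\]
with the shorthand $U_j:=U_{x_\jeps,\lambda_\jeps}$ and $PU_j$ its $H^1_0$-projection. The plan is to bound $\int_\Omega I_\varepsilon\eta$ and $\int_\Omega J_\varepsilon\eta$ separately via H\"older's inequality combined with the Sobolev embedding $\|\eta\|_{L^{2^*}}\lesssim \|\eta\|_{H^{1}_0}$, invoking the explicit scaling of the Aubin--Talenti profile, and then combine by the triangle inequality. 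The $q$-free first terms in \eqref{L var-1} will come from $I_\varepsilon$, and the $\varepsilon$-dependent terms from $\varepsilon J_\varepsilon$; by Corollary~\ref{cor of the decomposition} I may freely replace $\lambda_\jeps\sim\lambda_\varepsilon$.

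\noindent\textbf{Critical error $I_\varepsilon$.} Set $\psi_j:=U_j-PU_j$, so that $-\Delta\psi_j=0$ in $\Omega$ with $\psi_j=U_j$ on $\partial\Omega$; by a standard Green's function argument one has $\psi_j(y)\approx\alpha_N\lambda_\jeps^{-(N-2)/2}H(x_\jeps,y)$, in particular $0\leq \psi_j\lesssim \lambda_\jeps^{-(N-2)/2}$. Decompose
\[
I_\varepsilon = \sum_{j=1}^{n}\bigl(PU_{j}^{2^*-1}-U_{j}^{2^*-1}\bigr) + \Bigl[\Bigl(\sum_{j=1}^{n} PU_{j}\Bigr)^{2^*-1}-\sum_{j=1}^{n} PU_{j}^{2^*-1}\Bigr].
\]
For each single-bubble projection error apply the elementary inequality $|a^p-(a-b)^p|\lesssim a^{p-1}b + \mathbf{1}_{p\geq 2}\,b^p$ for $0\leq b\leq a$, then H\"older with exponents $((2^*)',2^*)$; the key estimate reduces to
\[
\int_{\Omega} U_{x,\lambda}^{p}\,dy \sim
\begin{cases}
\lambda^{(N-2)p/2-N}, & p>N/(N-2),\\
\lambda^{-N/2}\ln\lambda, & p = N/(N-2),\\
\lambda^{-(N-2)p/2}, & p<N/(N-2),
\end{cases}
\]
obtained by the change of variables $z=\lambda(y-x)$. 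Since $(2^*-2)(2^*)' = 8N/((N-2)(N+2))$ equals the borderline $N/(N-2)$ exactly at $N=6$, this yields the single-bubble projection rate $\lambda_\varepsilon^{-(N-2)}$ for $N=3,4,5$ and $\lambda_\varepsilon^{-(N+2)/2}(\ln\lambda_\varepsilon)^{(N+2)/(2N)}$ for $N\geq 6$. The cross-bubble interaction piece is of the same or lower order by a standard bubble-interaction computation (cf.~\cite{bahri1988,Rey1990}).

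\noindent\textbf{Subcritical error $\varepsilon J_\varepsilon$.} Since $q\geq 2$, the subadditivity $(\sum_{j} a_j)^{q-1}\lesssim \sum_{j} a_j^{q-1}$ together with $0\leq PU_j\leq U_j$ gives
\[
\varepsilon\Bigl|\int_{\Omega} J_\varepsilon\,\eta\Bigr| \lesssim \varepsilon\sum_{j=1}^{n} \|U_{j}^{q-1}\|_{L^{(2^*)'}}\,\|\eta\|_{H^{1}_0}.
\]
Setting $p = (q-1)(2^*)' = 2N(q-1)/(N+2)$, the trichotomy $p\gtreqless N/(N-2)$ is equivalent to $q\gtreqless (3N-2)/(2(N-2))$; substituting the scaling above and taking the power $1/(2^*)' = (N+2)/(2N)$ produces the three expected rates $\lambda_\varepsilon^{(N-2)(q-2^*)/2}$, $\lambda_\varepsilon^{(N-2)(q-2^*)/2}(\ln\lambda_\varepsilon)^{(N+2)/(2N)}$, and $\lambda_\varepsilon^{-(N-2)(q-1)/2}$ respectively. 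The threshold $(3N-2)/(2(N-2))$ evaluates to $7/2,5/2,13/6$ for $N=3,4,5$, reproducing exactly the breakpoints in the statement; for $N\geq 6$ the threshold drops below $2$, so only the first regime is active.

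\noindent\textbf{Expected main obstacle.} The bookkeeping is routine once the above decomposition and scalings are in place. The delicate point is extracting the sharp logarithmic factor in the critical estimate at $N=6$, which requires using the finer near-profile $\psi_j\approx \alpha_N\lambda_\jeps^{-(N-2)/2}H(x_\jeps,\cdot)$ rather than the crude $L^\infty$ bound and rests on the coincidence $(2^*-2)(2^*)' = N/(N-2)$ at $N=6$. A parallel logarithmic bookkeeping is needed at the borderline subcritical exponents $q=5/2$ (for $N=4$) and $q=13/6$ (for $N=5$); away from these three borderline values the argument reduces to pure power-scaling.
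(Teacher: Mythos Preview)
Your plan is correct and essentially identical to the paper's proof: split $L_\varepsilon$ into the critical piece and the subcritical piece, control each against $\|\eta\|_{H^1_0}$ via H\"older--Sobolev, and read off the rates from the scaling trichotomy for $\int U_{x,\lambda}^p$; the breakpoints you identify ($N=6$ for the critical term, $q=(3N-2)/(2(N-2))$ for the subcritical term) are exactly those in the paper. Two small corrections: for $N\geq 7$ the logarithm in the stated bound actually comes from the \emph{cross-bubble} interaction (so that piece is not ``lower order'' there), and no fine profile of $\psi_j$ is needed anywhere --- the crude bound $0\le\psi_j\lesssim\lambda_j^{-(N-2)/2}$ already yields the sharp logarithmic rate at $N=6$ via the borderline integral.
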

\begin{proof}
First, using the elementary inequality in Lemma \ref{Elementary estimate 2}, we have
    \begin{equation}\label{estimate of L var proof 1}
        \begin{aligned}
           (\sum_{i=1}^{n}PU_{x_{i,\varepsilon},\lambda_{i,\varepsilon}})^{2^*-1}-\sum_{i=1}^{n}U_{x_{i,\varepsilon},\lambda_{i,\varepsilon}}^{2^*-1}&=\sum_{i=1}^{n}PU_{x_{i,\varepsilon},\lambda_{i,\varepsilon}}^{2^*-1}-\sum_{i=1}^{n}U_{x_{i,\varepsilon},\lambda_{i,\varepsilon}}^{2^*-1}\\
            &+\begin{cases}
            O(\sum_{i\neq j}PU_{x_{i,\varepsilon},\lambda_{i,\varepsilon}}^{\frac{2^*-1}{2}}PU_{x_{j,\varepsilon},\lambda_{j,\varepsilon}}^{\frac{2^*-1}{2}}),\quad~\text{if~}N\geq 6,\\
            O(\sum_{i\neq j}PU_{x_{i,\varepsilon},\lambda_{i,\varepsilon}}^{2^*-2}PU_{x_{j,\varepsilon},\lambda_{j,\varepsilon}}),\quad~\text{if~}N=3,4,5.\\
        \end{cases}
        \end{aligned}
    \end{equation}
For any $i=1,\cdots,n$, let $\psi_{x_{i,\varepsilon},\lambda_{i,\varepsilon}}:=U_{x_{i,\varepsilon},\lambda_{i,\varepsilon}}-PU_{x_{i,\varepsilon},\lambda_{i,\varepsilon}}$, then by strong maximum principle, we get that $0\leq \psi_{x_{i,\varepsilon},\lambda_{i,\varepsilon}}\leq U_{x_{i,\varepsilon},\lambda_{i,\varepsilon}}$, thus by Lemma \ref{Elementary estimate} and Lemma \ref{estimate of U-lambda-a and psi-lambda-a 3}, we obtain that
\begin{equation}\label{estimate of L var proof 2}
    |PU_{x_{i,\varepsilon},\lambda_{i,\varepsilon}}^{2^*-1}-U_{x_{i,\varepsilon},\lambda_{i,\varepsilon}}^{2^*-1}|\lesssim U_{x_{i,\varepsilon},\lambda_{i,\varepsilon}}^{2^*-2}\psi_{x_{i,\varepsilon},\lambda_{i,\varepsilon}}\lesssim U_{x_{i,\varepsilon},\lambda_{i,\varepsilon}}^{2^*-2}\lambda_{i,\varepsilon}^{-\frac{N-2}{2}}.
\end{equation}
Hence, for any $\eta\in H^{1}_{0}(\Omega)$, by using \eqref{estimate of L var proof 2}, H\"{o}lder inequality and Sobolev inequality, the first term in the right hand side of \eqref{estimate of L var proof 1} can be bounded by
\begin{equation}\label{estimate of L var proof 3}
\begin{aligned}
    |\int_{\Omega}(PU_{x_{i,\varepsilon},\lambda_{i,\varepsilon}}^{2^*-1}-U_{x_{i,\varepsilon},\lambda_{i,\varepsilon}}^{2^*-1})\eta|&\lesssim\lambda_{i,\varepsilon}^{-\frac{N-2}{2}}\int_{\Omega}U_{x_{i,\varepsilon},\lambda_{i,\varepsilon}}^{2^*-2}|\eta|\\
    &\lesssim\begin{cases}
        \lambda_{i,\varepsilon}^{-\frac{N+2}{2}}\|\eta\|_{H^{1}_{0}(\Omega)},&\ \text{~if~}N\geq 7,\\
        (\ln\lambda_{i,\varepsilon})^{\frac{2}{3}}\lambda_{i,\varepsilon}^{-4}\|\eta\|_{H^{1}_{0}(\Omega)},&\ \text{~if~}N=6,\\
        \lambda_{i,\varepsilon}^{2-N}\|\eta\|_{H^{1}_{0}(\Omega)},&\ \text{~if~}N\leq 5.
    \end{cases}
\end{aligned} 
\end{equation}

Next, we consider the second term in the right hand side of \eqref{estimate of L var proof 1}. When $N\geq 6$, for any $i\neq j$, $\eta \in H^{1}_{0}(\Omega)$ and $d>0$ small enough, we have
\begin{equation}
    \begin{aligned}
        \int_{\Omega}PU_{x_{i,\varepsilon},\lambda_{i,\varepsilon}}^{\frac{2^*-1}{2}}PU_{x_{j,\varepsilon},\lambda_{j,\varepsilon}}^{\frac{2^*-1}{2}}\eta
       &=\int_{B(a_{i},d)}PU_{x_{i,\varepsilon},\lambda_{i,\varepsilon}}^{\frac{2^*-1}{2}}PU_{x_{j,\varepsilon},\lambda_{j,\varepsilon}}^{\frac{2^*-1}{2}}\eta+\int_{B(a_{j},d)}PU_{x_{i,\varepsilon},\lambda_{i,\varepsilon}}^{\frac{2^*-1}{2}}PU_{x_{j,\varepsilon},\lambda_{j,\varepsilon}}^{\frac{2^*-1}{2}}\eta\\
        &+\int_{\Omega\setminus (B(a_{i},d)\cup B(a_{j},d))}PU_{x_{i,\varepsilon},\lambda_{i,\varepsilon}}^{\frac{2^*-1}{2}}PU_{x_{j,\varepsilon},\lambda_{j,\varepsilon}}^{\frac{2^*-1}{2}}\eta,
    \end{aligned}
\end{equation}
where
\begin{equation}
    \begin{aligned}
        \int_{B(a_{i},d)}PU_{x_{i,\varepsilon},\lambda_{i,\varepsilon}}^{\frac{2^*-1}{2}}PU_{x_{j,\varepsilon},\lambda_{j,\varepsilon}}^{\frac{2^*-1}{2}}\eta&\lesssim\lambda_{j,\varepsilon}^{-\frac{N+2}{4}}\int_{B(a_{i},d)}PU_{x_{i,\varepsilon},\lambda_{i,\varepsilon}}^{\frac{2^*-1}{2}}\eta\\
        &\lesssim\lambda_{j,\varepsilon}^{-\frac{N+2}{4}}(\int_{B(a_{i},d)}PU_{x_{i,\varepsilon},\lambda_{i,\varepsilon}}^{\frac{N}{N-2}})^{\frac{N+2}{2N}}\|\eta\|_{H^{1}_{0}}\\
        &\lesssim \lambda_{j,\varepsilon}^{-\frac{N+2}{4}}\lambda_{i,\varepsilon}^{-\frac{N+2}{4}}(\ln\lambda_{i,\varepsilon})^{\frac{N+2}{2N}}\|\eta\|_{H^{1}_{0}}.
    \end{aligned}
\end{equation}
Similarly, we have
\begin{equation}
    \int_{B(a_{j},d)}PU_{x_{j,\varepsilon},\lambda_{j,\varepsilon}}^{\frac{2^*-1}{2}}PU_{x_{i,\varepsilon},\lambda_{i,\varepsilon}}^{\frac{2^*-1}{2}}\eta\lesssim \lambda_{i,\varepsilon}^{-\frac{N+2}{4}}\lambda_{j,\varepsilon}^{-\frac{N+2}{4}}(\ln\lambda_{j,\varepsilon})^{\frac{N+2}{2N}}\|\eta\|_{H^{1}_{0}}.
\end{equation}
On the other hand
\begin{equation}
    \int_{\Omega\setminus (B(a_{i},d)\cup B(a_{j},d))}PU_{x_{i,\varepsilon},\lambda_{i,\varepsilon}}^{\frac{2^*-1}{2}}PU_{x_{j,\varepsilon},\lambda_{j,\varepsilon}}^{\frac{2^*-1}{2}}\eta\lesssim \lambda_{i,\varepsilon}^{-\frac{N+2}{4}}\lambda_{j,\varepsilon}^{-\frac{N+2}{4}}\|\eta\|_{H^{1}_{0}}.
\end{equation}
Hence, when $N\geq 6$ we get that
\begin{equation}\label{estimate of L var proof 8}
    \int_{\Omega}PU_{x_{i,\varepsilon},\lambda_{i,\varepsilon}}^{\frac{2^*-1}{2}}PU_{x_{j,\varepsilon},\lambda_{j,\varepsilon}}^{\frac{2^*-1}{2}}\eta\lesssim\lambda_{\varepsilon}^{-\frac{N+2}{2}}(\ln\lambda_{\varepsilon})^{\frac{N+2}{2N}}\|\eta\|_{H^{1}_{0}}.
\end{equation}
On the other hand, when $N<6$, for any $i\neq j$, $\eta \in H^{1}_{0}(\Omega)$ and $d>0$ small enough, we have
\begin{equation}
    \begin{aligned}
        \int_{\Omega}PU_{x_{i,\varepsilon},\lambda_{i,\varepsilon}}^{2^*-2}PU_{x_{j,\varepsilon},\lambda_{j,\varepsilon}}\eta&=\int_{B(a_{i},d)}PU_{x_{i,\varepsilon},\lambda_{i,\varepsilon}}^{2^*-2}PU_{x_{j,\varepsilon},\lambda_{j,\varepsilon}}\eta+\int_{B(a_{j},d)}PU_{x_{i,\varepsilon},\lambda_{i,\varepsilon}}^{2^*-2}PU_{x_{j,\varepsilon},\lambda_{j,\varepsilon}}\eta\\
        &+\int_{\Omega\setminus(B(a_{i},d)\cup B(a_{j},d)}PU_{x_{i,\varepsilon},\lambda_{i,\varepsilon}}^{2^*-2}PU_{x_{j,\varepsilon},\lambda_{j,\varepsilon}}\eta,
    \end{aligned}
\end{equation}
where
\begin{equation}
    \begin{aligned}
        \int_{B(a_{i},d)}PU_{x_{i,\varepsilon},\lambda_{i,\varepsilon}}^{2^*-2}PU_{x_{j,\varepsilon},\lambda_{j,\varepsilon}}\eta&\lesssim\lambda_{j,\varepsilon}^{-\frac{N-2}{2}}\int_{B(a_{i},d)}PU_{x_{i,\varepsilon},\lambda_{i,\varepsilon}}^{2^*-2}\eta\\
    &\lesssim\lambda_{j,\varepsilon}^{-\frac{N-2}{2}}(\int_{B(a_{i},d)}PU_{x_{i,\varepsilon},\lambda_{i,\varepsilon}}^{\frac{8N}{(N+2)(N-2)}})^{\frac{N+2}{2N}}\|\eta\|_{H^{1}_{0}}\\
    &\lesssim\lambda_{j,\varepsilon}^{-\frac{N-2}{2}}\lambda_{i,\varepsilon}^{-\frac{N-2}{2}}\|\eta\|_{H^{1}_{0}},
    \end{aligned}
\end{equation}
\begin{equation}
   \begin{aligned}\int_{B(a_{j},d)}PU_{x_{i,\varepsilon},\lambda_{i,\varepsilon}}^{2^*-2}PU_{x_{j,\varepsilon},\lambda_{j,\varepsilon}}\eta&\lesssim\lambda_{i,\varepsilon}^{-2}\int_{B(a_{j},d)}PU_{x_{j,\varepsilon},\lambda_{i,\varepsilon}}\eta\\
    &\lesssim\lambda_{i,\varepsilon}^{-2}(\int_{B(a_{j},d)}PU_{x_{j,\varepsilon},\lambda_{i,\varepsilon}}^{\frac{2N}{N+2}})^{\frac{N+2}{2N}}\|\eta\|_{H^{1}_{0}}\\
    &\lesssim\lambda_{i,\varepsilon}^{-2}\lambda_{j,\varepsilon}^{-\frac{N-2}{2}}\|\eta\|_{H^{1}_{0}},
    \end{aligned}
\end{equation}
and
\begin{equation}
    \int_{\Omega\setminus (B(a_{i},d)\cup B(a_{j},d))}PU_{x_{i,\varepsilon},\lambda_{i,\varepsilon}}^{2^*-2}PU_{x_{j,\varepsilon},\lambda_{j,\varepsilon}}\eta\lesssim \lambda_{i,\varepsilon}^{-2}\lambda_{j,\varepsilon}^{-\frac{N-2}{2}}\|\eta\|_{H^{1}_{0}}.
\end{equation}
Hence, when $N<6$ we conclude that
\begin{equation}\label{estimate of L var proof 13}
    \int_{\Omega}PU_{x_{i,\varepsilon},\lambda_{i,\varepsilon}}^{2^*-2}PU_{x_{j,\varepsilon},\lambda_{j,\varepsilon}}\eta\lesssim \lambda_{\varepsilon}^{2-N}\|\eta\|_{H^{1}_{0}}.
\end{equation}

Finally, we consider the sub-critical term in $L_{\varepsilon}$. For any $i=1,\cdots,n$ and $\eta\in H^{1}_{0}(\Omega)$, by H\"older inequality and Sobolev inequality
\begin{equation}\label{estimate of L var proof 14}
\begin{aligned}
    \int_{\Omega}PU_{x_{i,\varepsilon},\lambda_{i,\varepsilon}}^{q-1}\eta&\lesssim(\int_{\Omega}PU_{x_{i,\varepsilon},\lambda_{i,\varepsilon}}^{\frac{2N(q-1)}{N+2}})^{\frac{N+2}{2N}}\|\eta\|_{H^{1}_{0}},\\
    &\lesssim\begin{cases}
     O\left(\lambda_{i,\varepsilon}^{\frac{q}{2}-3}\right), &\text{~~if~~} N=3\text{~and~} q\in (4,6),\\

         O\left(\lambda_{i,\varepsilon}^{-(q-1)}\right), &\text{~~if~~} N=4 \text{~and~} q\in (2,\frac{5}{2}),\\
          O\left(\lambda_{i,\varepsilon}^{q-4}(\ln\lambda_{i,\varepsilon})^{\frac{3}{4}}\right), &\text{~~if~~} N=4 \text{~and~} q=\frac{5}{2},\\
     O\left(\lambda_{i,\varepsilon}^{q-4}\right), &\text{~~if~~} N=4 \text{~and~} q\in (\frac{5}{2},4),\\
     
      O\left(\lambda_{i,\varepsilon}^{-\frac{3(q-1)}{2}}\right), &\text{~~if~~} N=5 \text{~and~} q\in (2,\frac{13}{6}),\\

O\left(\lambda_{i,\varepsilon}^{\frac{3}{2}q-5}(\ln\lambda_{i,\varepsilon})^{\frac{7}{10}}\right), &\text{~~if~~} N=5 \text{~and~} q=\frac{13}{6},\\
      
      O\left(\lambda_{i,\varepsilon}^{\frac{3}{2}q-5}\right), &\text{~~if~~} N=5 \text{~and~} q\in (\frac{13}{6},\frac{10}{3}),\\
O\left(\lambda_{i,\varepsilon}^{\frac{N-2}{2}q-N}\right), &\text{~~if~~} N\geq 6
 \end{cases}   
\end{aligned}\times \|\eta\|_{H^{1}_{0}(\Omega)}
\end{equation}
Combining \eqref{estimate of L var proof 1}, \eqref{estimate of L var proof 3}, \eqref{estimate of L var proof 8}, \eqref{estimate of L var proof 13} and \eqref{estimate of L var proof 14} together, we can obtain \eqref{L var-1}.
\end{proof}

\begin{Lem}\label{r var}
    For any $\eta\in H^{1}_{0}(\Omega)$, it holds
    \begin{equation}
        \int_{\Omega}R_{\varepsilon}(w_{\varepsilon})\eta=o(\|w_{\varepsilon}\|_{H^{1}_{0}(\Omega)})\|\eta\|_{H^{1}_{0}(\Omega)}.
    \end{equation}
\end{Lem}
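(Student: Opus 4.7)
Set $\Pi_\varepsilon:=\sum_{j=1}^{n}PU_{x_{j,\varepsilon},\lambda_{j,\varepsilon}}$ and split
\[
R_\varepsilon(w_\varepsilon) \;=\; R^{\mathrm{c}}_\varepsilon + \varepsilon\,R^{\mathrm{s}}_\varepsilon,
\]
where $R^{\mathrm{c}}_\varepsilon:=(\Pi_\varepsilon+w_\varepsilon)^{2^*-1}-\Pi_\varepsilon^{2^*-1}-(2^*-1)\Pi_\varepsilon^{2^*-2}w_\varepsilon$ is the Taylor remainder of the critical term and $R^{\mathrm{s}}_\varepsilon:=(\Pi_\varepsilon+w_\varepsilon)^{q-1}-\Pi_\varepsilon^{q-1}$ is the increment of the subcritical term. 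The strategy is to bound each piece pointwise by an elementary inequality and then integrate against $\eta$ by H\"older, using the Sobolev embedding $H^1_0(\Omega)\hookrightarrow L^{2^*}(\Omega)$ to convert $L^{2^*}$ norms into $H^1_0$ norms. The smallness factor $o(\|w_\varepsilon\|_{H^1_0(\Omega)})$ will be produced by either $\|w_\varepsilon\|_{H^1_0}\to 0$ (from Proposition \ref{prop decomposition of the solutions-1}) or by the prefactor $\varepsilon\to 0$.

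For the critical piece I distinguish $N\le 6$ from $N\ge 6$ according to whether $2^*-1\ge 2$ or $2^*-1\le 2$. In the high-dimensional case the standard elementary inequality gives $|R^{\mathrm{c}}_\varepsilon|\lesssim |w_\varepsilon|^{2^*-1}$, whence H\"older and Sobolev yield
\[
\int_{\Omega}|R^{\mathrm{c}}_\varepsilon||\eta|\;\lesssim\;\|w_\varepsilon\|_{L^{2^*}}^{2^*-1}\|\eta\|_{L^{2^*}}\;\lesssim\;\|w_\varepsilon\|_{H^1_0}^{2^*-2}\cdot\|w_\varepsilon\|_{H^1_0}\|\eta\|_{H^1_0}=o(\|w_\varepsilon\|_{H^1_0})\|\eta\|_{H^1_0}.
\]
In the low-dimensional case ($3\le N\le 6$, where $2^*-3\ge 0$) the Taylor inequality gives the sharper bound $|R^{\mathrm{c}}_\varepsilon|\lesssim \Pi_\varepsilon^{2^*-3}w_\varepsilon^2+|w_\varepsilon|^{2^*-1}$; using H\"older with the triple of exponents $(\tfrac{2^*}{2^*-3},\tfrac{2^*}{2},2^*)$ (whose reciprocals sum to $1$) together with the uniform bound $\|\Pi_\varepsilon\|_{L^{2^*}}\lesssim 1$ delivers
\[
\int_{\Omega}\Pi_\varepsilon^{2^*-3}w_\varepsilon^{2}|\eta|\lesssim\|\Pi_\varepsilon\|_{L^{2^*}}^{2^*-3}\|w_\varepsilon\|_{L^{2^*}}^{2}\|\eta\|_{L^{2^*}}\lesssim\|w_\varepsilon\|_{H^1_0}\cdot\|w_\varepsilon\|_{H^1_0}\|\eta\|_{H^1_0},
\]
again of the required $o(\|w_\varepsilon\|)\|\eta\|$ size. (The boundary case $N=6$ is handled by the same inequality with the convention $\Pi_\varepsilon^{0}\equiv 1$, or equivalently by choosing equal exponents $(3,3,3)$ and appealing to $\Omega$ bounded.)

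For the subcritical piece I use the elementary inequality $|(a+b)^{q-1}-a^{q-1}|\lesssim a^{q-2}|b|+|b|^{q-1}$, valid since $q-1\ge 1$. The $|w_\varepsilon|^{q-1}$ contribution is estimated exactly as above, giving $\varepsilon\,\|w_\varepsilon\|_{H^1_0}^{q-1}\|\eta\|_{H^1_0}=\varepsilon\,\|w_\varepsilon\|_{H^1_0}^{q-2}\cdot\|w_\varepsilon\|_{H^1_0}\|\eta\|_{H^1_0}$, which is $o(\|w_\varepsilon\|)\|\eta\|$ because both $\varepsilon$ and $\|w_\varepsilon\|_{H^1_0}$ vanish. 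The mixed term $\varepsilon\int \Pi_\varepsilon^{q-2}|w_\varepsilon||\eta|$ is controlled by H\"older with exponents $(N/2,2^*,2^*)$, yielding
\[
\varepsilon\int_{\Omega}\Pi_\varepsilon^{q-2}|w_\varepsilon||\eta|\;\le\;\varepsilon\,\|\Pi_\varepsilon\|_{L^{(q-2)N/2}}^{q-2}\,\|w_\varepsilon\|_{L^{2^*}}\|\eta\|_{L^{2^*}}.
\]
The main (only) technical point is to verify that this factor $\|\Pi_\varepsilon\|_{L^{(q-2)N/2}}$ is uniformly bounded: since $q<2^*$ forces $(q-2)N/2<2^*$, the $L^{(q-2)N/2}$-norm on the bounded domain $\Omega$ is dominated by $\|\Pi_\varepsilon\|_{L^{2^*}}$, which is uniformly bounded by construction of $\Pi_\varepsilon$. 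The prefactor $\varepsilon\to 0$ then closes the estimate and completes the proof.
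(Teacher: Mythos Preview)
Your proof is correct and follows exactly the approach the paper indicates: the paper's own proof consists of the single sentence ``We can prove this Lemma by using H\"older inequality, Sobolev inequality and Lemma~\ref{Elementary estimate},'' and you have carried out precisely those steps. The split into the critical Taylor remainder and the subcritical increment, the case distinction $2^*-1\le 2$ versus $2^*-1>2$, and the H\"older pairings you use are the standard way to flesh out that sentence.
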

\begin{proof}
    We can prove this Lemma by using H\"older inequality, Sobolev inequality and Lemma \ref{Elementary estimate}.
\end{proof}

\begin{proof}[Proof of Theorem \ref{thm decomposition}]
   By using \eqref{equ of w}, Lemma \ref{q var}-Lemma \ref{r var} and Corollary \ref{cor of the decomposition}, we can obtain the estimate \eqref{perturbation estimate of w-varepsilon} and thus Theorem \ref{thm decomposition} follows.
\end{proof}

\section{Blow-up rate and location of the concentration point}\label{Blow-up rate and location of the concentration point}
Throughout this section, we always assume that $N\ge 3, q\in (\max\{2,\frac{4}{N-2}\},2^*)$, $\Omega$ is a smooth bounded domain and satisfies Assumption $\textrm{A}$.
\subsection{Some integral estimates}

\begin{Lem}\label{lem of estimate of u 1}
    For any $j=1,\cdots,n$ and fixed $d>0$ small enough, it holds
    \begin{equation}\label{lem of estimate of u 1-1}
         \begin{aligned}
\int_{B(x_{j,\varepsilon},d)}u_{\varepsilon}^{2^*-1}(y)dy=\frac{A}{\lambda_{j,\varepsilon}^{\frac{N-2}{2}}}+\begin{cases}
     O\left(\frac{1}{\lambda_{\varepsilon}^{\frac{3}{2}}}\right), &\text{~~if~~} N=3\text{~and~} q\in (4,6),\\

         O\left(\frac{1}{\lambda_{\varepsilon}^{2q-2}}\right), &\text{~~if~~} N=4 \text{~and~} q\in (2,\frac{5}{2}),\\
          O\left(\frac{\ln\lambda_{\varepsilon}}{\lambda_{\varepsilon}^{3}}\right), &\text{~~if~~} N=4 \text{~and~} q\in[\frac{5}{2},4),\\

     O\left(\frac{1}{\lambda_{\varepsilon}^{\frac{N+2}{2}}}\right), &\text{~~if~~} N\geq 5\text{~and~} q\in (2,2^*),
 \end{cases}
    \end{aligned}
    \end{equation}
    where $A$ is the  constant defined in \eqref{definition of A and B}.
\end{Lem}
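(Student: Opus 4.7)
The strategy is to start from the decomposition $u_\varepsilon=\sum_{i=1}^n PU_{x_{i,\varepsilon},\lambda_{i,\varepsilon}}+w_\varepsilon$ provided by Theorem~\ref{thm decomposition} and expand $u_\varepsilon^{2^*-1}$ on $B(x_{j,\varepsilon},d)$ around the dominant bubble $U_j:=U_{x_{j,\varepsilon},\lambda_{j,\varepsilon}}$. Since $\min_{i\neq j}|a_i-a_j|\geq\delta_0$, we can fix $d>0$ so small that, for every $i\neq j$, $B(x_{i,\varepsilon},2d)\cap B(x_{j,\varepsilon},2d)=\emptyset$. The leading order comes from
\begin{equation*}
\int_{B(x_{j,\varepsilon},d)}U_j^{2^*-1}\,dy=\lambda_{j,\varepsilon}^{-(N-2)/2}\int_{B(0,\lambda_{j,\varepsilon}d)}U_{0,1}^{2^*-1}\,dx=\frac{A}{\lambda_{j,\varepsilon}^{(N-2)/2}}+O\!\left(\lambda_{j,\varepsilon}^{-(N+2)/2}\right),
\end{equation*}
since $U_{0,1}^{2^*-1}\sim|x|^{-(N+2)}$ and the tail contributes $O(\lambda_{j,\varepsilon}^{-2})$. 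This already recovers the main term and an error compatible with the claim.

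The discrepancy $\int_{B(x_{j,\varepsilon},d)}(u_\varepsilon^{2^*-1}-U_j^{2^*-1})$ is handled via the pointwise bound $|a^{2^*-1}-b^{2^*-1}|\lesssim\max(a,b)^{2^*-2}|a-b|+|a-b|^{2^*-1}$, combined with the identity
\begin{equation*}
u_\varepsilon-U_j=-\psi_j+\sum_{i\neq j}PU_{x_{i,\varepsilon},\lambda_{i,\varepsilon}}+w_\varepsilon,\qquad \psi_j:=U_j-PU_j,
\end{equation*}
where $\|\psi_j\|_\infty\lesssim\lambda_{j,\varepsilon}^{-(N-2)/2}$ by Lemma~\ref{estimate of U-lambda-a and psi-lambda-a 3}. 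I would estimate the resulting four contributions separately. First, $\int U_j^{2^*-2}\psi_j\lesssim\lambda_{j,\varepsilon}^{-(N-2)/2}\int_{B(x_{j,\varepsilon},d)}U_j^{2^*-2}$; a direct scaling (or Lemma~\ref{lem properties of B}) gives $\int U_j^{2^*-2}\asymp\lambda_{j,\varepsilon}^{-1}$ for $N=3$, $\lambda_{j,\varepsilon}^{-2}\ln\lambda_{j,\varepsilon}$ for $N=4$, $\lambda_{j,\varepsilon}^{-2}$ for $N\geq 5$, producing precisely the errors $\lambda^{-3/2}$, $\lambda^{-3}\ln\lambda$, and $\lambda^{-(N+2)/2}$. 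Second, the cross-bubble piece $\int U_j^{2^*-2}\sum_{i\neq j}PU_i$ has the same order because $PU_i\lesssim\lambda_{i,\varepsilon}^{-(N-2)/2}$ uniformly on $B(x_{j,\varepsilon},d)$.

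Third, the remainder term $\int U_j^{2^*-2}|w_\varepsilon|$ is controlled by H\"older and Sobolev:
\begin{equation*}
\int U_j^{2^*-2}|w_\varepsilon|\lesssim\|U_j^{2^*-2}\|_{L^{(2^*)'}}\,\|w_\varepsilon\|_{H^1_0(\Omega)},
\end{equation*}
and inserting the sharp bounds on $\|w_\varepsilon\|_{H^1_0(\Omega)}$ from \eqref{perturbation estimate of w-varepsilon} gives the $q$-dependent contribution; in particular, for $N=4$ and $q\in(2,\tfrac{5}{2})$ one computes $\|U_j^2\|_{L^{4/3}}\sim\lambda_{j,\varepsilon}^{-1}$, so the resulting bound is $O(\lambda^{-1}\cdot\lambda^{-(2q-3)})=O(\lambda^{-(2q-2)})$, which dominates (i) in that regime. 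Finally, the cubic term $\int|u_\varepsilon-U_j|^{2^*-1}$ is $O(\lambda^{-(N+2)/2})+O(\|w_\varepsilon\|_{H^1_0(\Omega)}^{2^*-1})$ by the uniform bounds $\|\psi_j\|_\infty,\|PU_i\|_{L^\infty(B(x_{j,\varepsilon},d))}\lesssim\lambda^{-(N-2)/2}$ plus Sobolev, and in every case this is of strictly smaller order than the claimed error.

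The main obstacle is the case-by-case bookkeeping: depending on $(N,q)$, the dominant contribution alternates between the $(PU_j-U_j)$ defect (from (i)) and the $w_\varepsilon$-interaction (from (iii)), and the integrals $\int U_j^{2^*-2}$ are convergent for $N=3$, logarithmically divergent for $N=4$, and polynomially divergent for $N\geq 5$. The logarithmic factors at the threshold exponents $q=\tfrac{5}{2}$ ($N=4$) and $q=\tfrac{13}{6}$ ($N=5$) must be tracked through the H\"older estimates using the sharp bounds from Theorem~\ref{thm decomposition}, and checking that in every regime the larger of the two competing bounds matches the tabulated error in \eqref{lem of estimate of u 1-1} is the content of the proof.
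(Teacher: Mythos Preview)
Your proposal is correct and follows essentially the same route as the paper: expand around the dominant bubble $U_j$, compute the main term $\int U_j^{2^*-1}$ by scaling, and bound $|u_\varepsilon^{2^*-1}-U_j^{2^*-1}|$ via Lemma~\ref{Elementary estimate}, separating the contributions of $\psi_j$, the cross-bubbles $PU_i$ ($i\neq j$), and $w_\varepsilon$, with the $w_\varepsilon$ term handled by H\"older against $\|U_j^{2^*-2}\|_{L^{(2^*)'}}$ and the bounds from \eqref{perturbation estimate of w-varepsilon}. Two small remarks: the reference to ``Lemma~\ref{lem properties of B}'' for the scaling of $\int U_j^{2^*-2}$ is mistaken (that lemma concerns the Poho\v{z}aev boundary form; the scaling is a direct computation), and for $N=5$ the $w_\varepsilon$ contribution is always subdominant to the $\psi_j$ contribution $O(\lambda_\varepsilon^{-7/2})$, so the threshold $q=\tfrac{13}{6}$ does not actually appear in \eqref{lem of estimate of u 1-1}.
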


\begin{proof}
Using decomposition \eqref{decomposition of u} and $PU_{x_{j,\varepsilon},\lambda_{j,\varepsilon}}=U_{x_{j,\varepsilon},\lambda_{j,\varepsilon}}-\psi_{x_{j,\varepsilon},\lambda_{j,\varepsilon}}$ together with Lemma \ref{Elementary estimate}, we have
\begin{equation}\label{lem of estimate of u 1 proof 1}
\int_{B(x_{j,\varepsilon},d)}u_{\varepsilon}^{2^*-1}(y)dy=\int_{B(x_{j,\varepsilon},d)}U_{x_{j,\varepsilon},\lambda_{j,\varepsilon}}^{2^*-1}(y)dy+\mathrm{I},
\end{equation}
where the term $\textrm{I}$ can be controlled by
\begin{equation}\label{lem of estimate of u 1 proof 2}
    \begin{aligned}       \textrm{I}\lesssim&\int_{B(x_{j,\varepsilon},d)}U_{x_{j,\varepsilon},\lambda_{j,\varepsilon}}^{2^*-2}(y)\left(\sum_{i\neq j}U_{x_{i,\varepsilon},\lambda_{i,\varepsilon}}(y)+\psi_{x_{j,\varepsilon},\lambda_{j,\varepsilon}}(y)+|w_{\varepsilon}|(y)\right)dy\\
        &+\int_{B(x_{j,\varepsilon},d)}\sum_{i\neq j}U_{x_{i,\varepsilon},\lambda_{i,\varepsilon}}^{2^*-1}(y)+\psi_{x_{j,\varepsilon},\lambda_{j,\varepsilon}}^{2^*-1}(y)+|w_{\varepsilon}|^{2^*-1}(y)dy.
    \end{aligned}
\end{equation}
First, we can estimate the first term on the right hand side of \eqref{lem of estimate of u 1 proof 1} as follows
\begin{equation}\label{lem of estimate of u 1 proof 4}
    \int_{B(x_{j,\varepsilon},d)}U_{x_{j,\varepsilon},\lambda_{j,\varepsilon}}^{2^*-1}(y)dy=\lambda_{j,\varepsilon}^{\frac{2-N}{2}}\int_{B(0,\lambda_{j,\varepsilon}d)}U_{0,1}^{2^*-1}(y)dy=A\lambda_{j,\varepsilon}^{\frac{2-N}{2}}+O(\lambda_{\varepsilon}^{-\frac{N+2}{2}}),
\end{equation}
where $A$ is defined in \eqref{definition of A and B}. Next, we estimate the first term on the right hand side of \eqref{lem of estimate of u 1 proof 2}. Using H\"older inequality and Sobolev inequality 

\begin{equation}\label{lem of estimate of u 1 proof 5}
  \begin{aligned}
\int_{B(x_{j,\varepsilon},d)}U_{x_{j,\varepsilon},\lambda_{j,\varepsilon}}^{2^*-2}(y)|w_{\varepsilon}|(y)dy & \leq \left(\int_{B(x_{j,\varepsilon},d)}U_{x_{j,\varepsilon},\lambda_{j,\varepsilon}}^{\frac{8N}{(N-2)(N+2)}}(y)dy\right)^{\frac{N+2}{2N}}\|w_{\varepsilon}\|_{L^{2^*}(\Omega)}\\
&=\begin{cases}
    O(\lambda_{j,\varepsilon}^{\frac{2-N}{2}}),&\text{~~if~~}N=3,4,5\\
    O(\lambda_{j,\varepsilon}^{-2}(\ln\lambda_{j,\varepsilon})^{2/3}),&\text{~~if~~}N=6\\
    O(\lambda_{j,\varepsilon}^{-2}),&\text{~~if~~}N\geq 7\\
\end{cases}\times \|w_{\varepsilon}\|_{H^{1}_{0}(\Omega)}.\\
  \end{aligned}
\end{equation}       
On the other hand, for any $i\neq j$, by Lemma \ref{estimate of U-lambda-a and psi-lambda-a 3}, we know that 
\begin{equation}\label{lem of estimate of u 1 proof 6}
    \begin{aligned}
\int_{B(x_{j,\varepsilon},d)}&U_{x_{j,\varepsilon},\lambda_{j,\varepsilon}}^{2^*-2}(y)\left(U_{x_{i,\varepsilon},\lambda_{i,\varepsilon}}(y)+\psi_{x_{j,\varepsilon},x_{j,\varepsilon}}(y)\right)dy\\
&\lesssim \lambda_{\varepsilon}^{-\frac{N-2}{2}}\int_{B(x_{j,\varepsilon},d)}U_{x_{j,\varepsilon},\lambda_{j,\varepsilon}}^{2^*-2}(y)dy\\
&=\begin{cases}
     O\left(\frac{1}{\lambda_{\varepsilon}^{\frac{3}{2}}}\right), &\text{~~if~~} N=3\text{~and~} q\in (4,6),\\

          O\left(\frac{\ln\lambda_{\varepsilon}}{\lambda_{\varepsilon}^{3}}\right), &\text{~~if~~} N=4 \text{~and~} q\in(2,2^*),\\

     O\left(\frac{1}{\lambda_{\varepsilon}^{\frac{N+2}{2}}}\right), &\text{~~if~~} N\geq 5\text{~and~} q\in (2,2^*).
\end{cases}
    \end{aligned}
\end{equation}
Hence, combining \eqref{lem of estimate of u 1 proof 5}, \eqref{lem of estimate of u 1 proof 6} and \eqref{perturbation estimate of w-varepsilon}, we have
\begin{equation}
    \begin{aligned}
        \int_{B(x_{j,\varepsilon},d)}&U_{x_{j,\varepsilon},\lambda_{j,\varepsilon}}^{2^*-2}(y)\left(\sum_{i\neq j}U_{x_{i,\varepsilon},\lambda_{i,\varepsilon}}(y)+\psi_{x_{j,\varepsilon},\lambda_{j,\varepsilon}}(y)+|w_{\varepsilon}|(y)\right)dy\\
   &=\begin{cases}
     O\left(\frac{1}{\lambda_{\varepsilon}^{\frac{3}{2}}}\right), &\text{~~if~~} N=3\text{~and~} q\in (4,6),\\

         O\left(\frac{1}{\lambda_{\varepsilon}^{2q-2}}\right), &\text{~~if~~} N=4 \text{~and~} q\in (2,\frac{5}{2}),\\
          O\left(\frac{\ln\lambda_{\varepsilon}}{\lambda_{\varepsilon}^{3}}\right), &\text{~~if~~} N=4 \text{~and~} q\in[\frac{5}{2},4),\\

     O\left(\frac{1}{\lambda_{\varepsilon}^{\frac{N+2}{2}}}\right), &\text{~~if~~} N\geq 5\text{~and~} q\in (2,2^*).
 \end{cases}
    \end{aligned}
\end{equation}
Finally, we need to estimate the second term in \eqref{lem of estimate of u 1 proof 2}. It's easy to see that
\begin{equation}\label{lem of estimate of u 1 proof 8}
    \int_{B(x_{j,\varepsilon},d)}\sum_{i\neq j}U_{x_{i,\varepsilon},\lambda_{i,\varepsilon}}^{2^*-1}(y)+\psi_{x_{j,\varepsilon},\lambda_{j,\varepsilon}}^{2^*-1}(y)dy=O\left( \frac{1}{\lambda_{\varepsilon}^{\frac{N+2}{2}}}\right).
\end{equation}
On the other hand, by \eqref{perturbation estimate of w-varepsilon} and H\"older inequality
\begin{equation}\label{lem of estimate of u 1 proof 9}
     \int_{B(x_{j,\varepsilon},d)}|w_{\varepsilon}|^{2^*-1}(y)dy=O\left( \frac{1}{\lambda_{\varepsilon}^{\frac{N+2}{2}}}\right).
\end{equation}
Hence combining \eqref{lem of estimate of u 1 proof 8} and \eqref{lem of estimate of u 1 proof 9} together, we have
\begin{equation}
    \int_{B(x_{j,\varepsilon},d)}\sum_{i\neq j}U_{x_{i,\varepsilon},\lambda_{i,\varepsilon}}^{2^*-1}(y)+\psi_{x_{j,\varepsilon},\lambda_{j,\varepsilon}}^{2^*-1}(y)+|w_{\varepsilon}|^{2^*-1}(y)dy=O\left( \frac{1}{\lambda_{\varepsilon}^{\frac{N+2}{2}}}\right).
\end{equation}
and then \eqref{lem of estimate of u 1-1} follows.

\end{proof}

\begin{Lem}\label{lem of estimate of u 2} For any $j=1,\cdots,n$ and $i=1,\cdots,N$, it holds
   \begin{equation}\label{eq of estimate of u 2}
    \int_{B(x_{j,\varepsilon},d)}\big(y_i-(x_{j,\varepsilon})_{i}\big)  u_{\varepsilon}^{2^*-1}(y) dy=\begin{cases}
     O\left(\frac{\ln\lambda_{\varepsilon}}{\lambda_{\varepsilon}^{\frac{5}{2}}}\right), &\text{~~if~~} N=3\text{~and~} q\in (4,6),\\
     
     O\left(\frac{1}{\lambda_{\varepsilon}^{\frac{N+2}{2}}}\right), &\text{~~if~~} N\geq 4\text{~and~} q\in (2,2^*).
\end{cases}
\end{equation} 
\end{Lem}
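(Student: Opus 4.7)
The strategy is to exploit the odd symmetry of the factor $y_i-(x_{j,\varepsilon})_i$ against the radial profile of the leading bubble $U_{x_{j,\varepsilon},\lambda_{j,\varepsilon}}$, so that the ``main term'' vanishes exactly and only the cross terms need to be estimated. Specifically, I would insert the decomposition $u_\varepsilon=\sum_l PU_{x_{l,\varepsilon},\lambda_{l,\varepsilon}}+w_\varepsilon$ from Theorem \ref{thm decomposition}, write $PU_{x_{l,\varepsilon},\lambda_{l,\varepsilon}}=U_{x_{l,\varepsilon},\lambda_{l,\varepsilon}}-\psi_{x_{l,\varepsilon},\lambda_{l,\varepsilon}}$, and expand $u_\varepsilon^{2^*-1}$ via Lemma \ref{Elementary estimate} into $U_{x_{j,\varepsilon},\lambda_{j,\varepsilon}}^{2^*-1}$ plus an error bounded by $U_{x_{j,\varepsilon},\lambda_{j,\varepsilon}}^{2^*-2}\bigl(\sum_{i\ne j}U_{x_{i,\varepsilon},\lambda_{i,\varepsilon}}+\psi_{x_{j,\varepsilon},\lambda_{j,\varepsilon}}+|w_\varepsilon|\bigr)$ together with higher-order pieces of the type already controlled in Lemma \ref{lem of estimate of u 1}.

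The key observation is that the main term
\[
\int_{B(x_{j,\varepsilon},d)}\bigl(y_i-(x_{j,\varepsilon})_i\bigr) U_{x_{j,\varepsilon},\lambda_{j,\varepsilon}}^{2^*-1}(y)\,dy = 0
\]
vanishes identically by the reflection $y\mapsto 2x_{j,\varepsilon}-y$, since both the ball and $U_{x_{j,\varepsilon},\lambda_{j,\varepsilon}}$ are radially symmetric about $x_{j,\varepsilon}$. Compared with Lemma \ref{lem of estimate of u 1}, the leading $A\lambda_{j,\varepsilon}^{-(N-2)/2}$ contribution therefore disappears, and each of the remaining error terms picks up an additional factor $|y-x_{j,\varepsilon}|\lesssim \lambda_{j,\varepsilon}^{-1}|x|$ after the rescaling $y=x_{j,\varepsilon}+\lambda_{j,\varepsilon}^{-1}x$.

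The typical cross-bubble contribution is handled by using $U_{x_{i,\varepsilon},\lambda_{i,\varepsilon}}(y)\lesssim \lambda_{i,\varepsilon}^{-(N-2)/2}$ on $B(x_{j,\varepsilon},d)$ for $i\ne j$ (by property (b) of Theorem \ref{thm multibble blowup}) and then computing
\[
\lambda_{i,\varepsilon}^{-\frac{N-2}{2}}\int_{B(x_{j,\varepsilon},d)}\!\!|y-x_{j,\varepsilon}|\,U_{x_{j,\varepsilon},\lambda_{j,\varepsilon}}^{2^*-2}\,dy = \lambda_{i,\varepsilon}^{-\frac{N-2}{2}}\lambda_{j,\varepsilon}^{1-N}\!\!\int_{B(0,\lambda_{j,\varepsilon}d)}\!\!|x|\,U^{2^*-2}(x)\,dx.
\]
Since $U^{2^*-2}(x)\sim|x|^{-4}$ at infinity, the remaining radial integral is $\int_0^{\lambda_{j,\varepsilon}d}r^{N-4}\,dr$, producing a $\ln\lambda_\varepsilon$ factor in the borderline dimension $N=3$ and a polynomial growth that exactly compensates the prefactor when $N\ge 4$, so that the full bound is $O(\ln\lambda_\varepsilon\,\lambda_\varepsilon^{-5/2})$ for $N=3$ and $O(\lambda_\varepsilon^{-(N+2)/2})$ for $N\ge 4$. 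The $\psi_{x_{j,\varepsilon},\lambda_{j,\varepsilon}}$ term is treated identically through Lemma \ref{estimate of U-lambda-a and psi-lambda-a 3}, while the $w_\varepsilon$ term is treated by H\"older's inequality with the sharp bound \eqref{perturbation estimate of w-varepsilon}; the remaining higher-order pieces are already of order $\lambda_\varepsilon^{-(N+2)/2}$ or smaller by the arguments of Lemma \ref{lem of estimate of u 1}.

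The main technical obstacle is the borderline dimension $N=3$, where the integral $\int_0^{\lambda_\varepsilon d}r^{-1}\,dr$ accounts for the sharp $\ln\lambda_\varepsilon$ factor and one must verify that none of the other error terms is comparable in size. In every other dimension the extra factor $|y-x_{j,\varepsilon}|$ simply yields a clean $\lambda_\varepsilon^{-1}$ improvement over Lemma \ref{lem of estimate of u 1}, leaving no residual case distinctions in $q$.
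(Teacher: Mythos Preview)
Your proposal is correct and follows essentially the same route as the paper: insert the decomposition \eqref{decomposition of u}, kill the main bubble term by odd symmetry, bound the cross-bubble and $\psi$ contributions via the pointwise estimate $\lesssim\lambda_\varepsilon^{-(N-2)/2}$ together with the rescaled integral $\int_{|x|<\lambda_\varepsilon d}|x|\,U_{0,1}^{2^*-2}$ (which produces the $\ln\lambda_\varepsilon$ in $N=3$), handle $w_\varepsilon$ by H\"older plus \eqref{perturbation estimate of w-varepsilon}, and absorb the remaining higher-order pieces exactly as in Lemma \ref{lem of estimate of u 1}. The only cosmetic difference is that the paper cites Lemma \ref{estimate of U-lambda-a and psi-lambda-a 3} rather than property (b) of Theorem \ref{thm multibble blowup} for the off-center bubble bound.
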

\begin{proof}
Using Lemma \ref{Elementary estimate}, decomposition \eqref{decomposition of u} and $PU_{x_{j,\varepsilon},\lambda_{j,\varepsilon}}=U_{x_{j,\varepsilon},\lambda_{j,\varepsilon}}-\psi_{x_{j,\varepsilon},\lambda_{j,\varepsilon}}$, we have
\begin{equation}
\int_{B(x_{j,\varepsilon},d)}\big(y_i-(x_{j,\varepsilon})_{i}\big)  u_{\varepsilon}^{2^*-1}(y) dy=\int_{B(x_{j,\varepsilon},d)}\big(y_i-(x_{j,\varepsilon})_{i}\big)U_{x_{j,\varepsilon},\lambda_{j,\varepsilon}}^{2^*-1}(y)dy+\mathrm{I}_{1},
\end{equation}
where the term $\textrm{I}_{1}$ can be controlled by
\begin{equation}
    \begin{aligned}       \textrm{I}_{1}\lesssim&\int_{B(x_{j,\varepsilon},d)}|y-x_{j,\varepsilon}|U_{x_{j,\varepsilon},\lambda_{j,\varepsilon}}^{2^*-2}(y)\left(\sum_{i\neq j}U_{x_{i,\varepsilon},\lambda_{i,\varepsilon}}(y)+\psi_{x_{j,\varepsilon},\lambda_{j,\varepsilon}}(y)+|w_{\varepsilon}|(y)\right)dy\\
        &+\int_{B(x_{j,\varepsilon},d)}|y-x_{j,\varepsilon}|\left(\sum_{i\neq j}U_{x_{i,\varepsilon},\lambda_{i,\varepsilon}}^{2^*-1}(y)+\psi_{x_{j,\varepsilon},\lambda_{j,\varepsilon}}^{2^*-1}(y)+|w_{\varepsilon}|^{2^*-1}(y)\right)dy.
    \end{aligned}
\end{equation}
Note that from the symmetry, we have
\begin{equation}
    \int_{B(x_{j,\varepsilon},d)}\big(y_i-(x_{j,\varepsilon})_{i}\big)U_{x_{j,\varepsilon},\lambda_{j,\varepsilon}}^{2^*-1}(y)dy=0,
\end{equation}
thus we only need to estimate the term $\textrm{I}_{1}$. First, we estimate the first term on the right hand side of $\textrm{I}_{1}$ . Using H\"older inequality and Sobolev inequality 

\begin{equation}\label{lem of estimate of u 2 proof 5}
  \begin{aligned}
\int_{B(x_{j,\varepsilon},d)}&|y-x_{j,\varepsilon}|U_{x_{j,\varepsilon},\lambda_{j,\varepsilon}}^{2^*-2}(y)|w_{\varepsilon}|(y)dy \\
& \leq \left(\int_{B(x_{j,\varepsilon},d)}|y-x_{j,\varepsilon}|^{\frac{2N}{N+2}}U_{x_{j,\varepsilon},\lambda_{j,\varepsilon}}^{\frac{8N}{(N-2)(N+2)}}(y)dy\right)^{\frac{N+2}{2N}}\|w_{\varepsilon}\|_{L^{2^*}(\Omega)}\\
&=\begin{cases}
    O(\lambda_{j,\varepsilon}^{-\frac{3}{2}}),&\text{~~if~~}N=3\\
    O(\lambda_{j,\varepsilon}^{-2}(\ln\lambda_{j,\varepsilon})^{3/4}),&\text{~~if~~}N=4\\
    O(\lambda_{j,\varepsilon}^{-2}),&\text{~~if~~}N\geq 5\\
\end{cases}\times \|w_{\varepsilon}\|_{H^{1}_{0}(\Omega)}=O\left(\frac{1}{\lambda_{\varepsilon}^{\frac{N+2}{2}}}\right).
  \end{aligned}
\end{equation}       
On the other hand, for any $i\neq j$, by Lemma \ref{estimate of U-lambda-a and psi-lambda-a 3}, we know that 
\begin{equation}\label{lem of estimate of u 2 proof 6}
    \begin{aligned}
\int_{B(x_{j,\varepsilon},d)}&|y-x_{j,\varepsilon}|U_{x_{j,\varepsilon},\lambda_{j,\varepsilon}}^{2^*-2}(y)\left(U_{x_{i,\varepsilon},\lambda_{i,\varepsilon}}(y)+\psi_{x_{j,\varepsilon},x_{j,\varepsilon}}(y)\right)dy\\
&\lesssim \lambda_{\varepsilon}^{-\frac{N-2}{2}}\int_{B(x_{j,\varepsilon},d)}|y-x_{j,\varepsilon}|U_{x_{j,\varepsilon},\lambda_{j,\varepsilon}}^{2^*-2}(y)dy\\
&=\begin{cases}
     O\left(\frac{\ln\lambda_{\varepsilon}}{\lambda_{\varepsilon}^{\frac{5}{2}}}\right), &\text{~~if~~} N=3\text{~and~} q\in (4,6),\\
     
     O\left(\frac{1}{\lambda_{\varepsilon}^{\frac{N+2}{2}}}\right), &\text{~~if~~} N\geq 4\text{~and~} q\in (2,2^*).
\end{cases}
    \end{aligned}
\end{equation}
Moreover, from the proof of Lemma \ref{lem of estimate of u 1}, we get that 
\begin{equation}
  \int_{B(x_{j,\varepsilon},d)}|y-x_{j,\varepsilon}|\left(\sum_{i\neq j}U_{x_{i,\varepsilon},\lambda_{i,\varepsilon}}^{2^*-1}(y)+\psi_{x_{j,\varepsilon},\lambda_{j,\varepsilon}}^{2^*-1}(y)+|w_{\varepsilon}|^{2^*-1}(y)\right)dy=O\left(\frac{1}{\lambda_{\varepsilon}^{\frac{N+2}{2}}}\right).  
\end{equation}
Now, combining all the estimates above, we get \eqref{eq of estimate of u 2}.
    \end{proof}

\begin{Lem}\label{lem of estimate of u 3} For any $j=1,\cdots,n$, $i,l=1,\cdots,N$ and $i\neq l$, it holds
   \begin{equation}\label{eq of estimate of u 3}
       \int_{B(x_{j,\varepsilon},d)}\big(y_i-(x_{j,\varepsilon})_{i}\big)
\big(y_l-(x_{j,\varepsilon})_{l}\big)  u_{\varepsilon}^{2^*-1}(y) dy=O\left(\frac{1}{\lambda_{\varepsilon}^{\frac{N+2}{2}}}\right).
   \end{equation} 
\end{Lem}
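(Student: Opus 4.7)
The plan is to proceed exactly as in the proof of Lemma~\ref{lem of estimate of u 2}, using the decomposition \eqref{decomposition of u} together with $PU_{x_{j,\varepsilon},\lambda_{j,\varepsilon}}=U_{x_{j,\varepsilon},\lambda_{j,\varepsilon}}-\psi_{x_{j,\varepsilon},\lambda_{j,\varepsilon}}$ and the elementary inequality in Lemma~\ref{Elementary estimate}. This produces a leading term
\[
\int_{B(x_{j,\varepsilon},d)}\big(y_i-(x_{j,\varepsilon})_i\big)\big(y_l-(x_{j,\varepsilon})_l\big)\,U_{x_{j,\varepsilon},\lambda_{j,\varepsilon}}^{2^*-1}(y)\,dy,
\]
together with remainder integrals involving cross and higher-order contributions of $\sum_{i\neq j}U_{x_{i,\varepsilon},\lambda_{i,\varepsilon}}$, $\psi_{x_{j,\varepsilon},\lambda_{j,\varepsilon}}$ and $w_{\varepsilon}$, each carrying the weight $\big|y-x_{j,\varepsilon}\big|^{2}$.

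The main observation is that the leading term vanishes by symmetry: $U_{x_{j,\varepsilon},\lambda_{j,\varepsilon}}$ is radial about $x_{j,\varepsilon}$ while $(y_i-(x_{j,\varepsilon})_i)(y_l-(x_{j,\varepsilon})_l)$ is odd under the reflection $y_l\mapsto 2(x_{j,\varepsilon})_l-y_l$ for $i\neq l$, and the ball $B(x_{j,\varepsilon},d)$ is invariant under this reflection. Hence the whole estimate reduces to bounding the remainder integrals.

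For those remainders, I would follow the template of Lemma~\ref{lem of estimate of u 2}, simply upgrading the weight from $|y-x_{j,\varepsilon}|$ to $|y-x_{j,\varepsilon}|^{2}$. Namely, the mixed term $U_{x_{j,\varepsilon},\lambda_{j,\varepsilon}}^{2^*-2}(U_{x_{i,\varepsilon},\lambda_{i,\varepsilon}}+\psi_{x_{j,\varepsilon},\lambda_{j,\varepsilon}})$ is estimated using Lemma~\ref{estimate of U-lambda-a and psi-lambda-a 3} to extract the factor $\lambda_{\varepsilon}^{-(N-2)/2}$, and then the weighted integral $\int_{B(x_{j,\varepsilon},d)}|y-x_{j,\varepsilon}|^{2}U_{x_{j,\varepsilon},\lambda_{j,\varepsilon}}^{2^*-2}\,dy$ is handled by the change of variables $z=\lambda_{j,\varepsilon}(y-x_{j,\varepsilon})$; a direct scaling computation yields a bound of size $O(\lambda_{\varepsilon}^{-(N+2)/2})$ in every dimension $N\ge 3$ (the extra power of $|y-x_{j,\varepsilon}|$ kills the logarithm that appeared in Lemma~\ref{lem of estimate of u 2} for $N=3$). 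The term involving $w_{\varepsilon}$ is handled by H\"older/Sobolev with exponent pair $(2^*, 2N/(N+2))$ and the estimate \eqref{perturbation estimate of w-varepsilon} for $\|w_{\varepsilon}\|_{H^{1}_{0}(\Omega)}$; since the weight $|y-x_{j,\varepsilon}|^{2}$ only makes the integrals smaller compared to Lemma~\ref{lem of estimate of u 2}, the resulting bound is at worst $O(\lambda_{\varepsilon}^{-(N+2)/2})$. Finally, the high-power pieces $\sum_{i\neq j}U_{x_{i,\varepsilon},\lambda_{i,\varepsilon}}^{2^*-1}$, $\psi_{x_{j,\varepsilon},\lambda_{j,\varepsilon}}^{2^*-1}$ and $|w_{\varepsilon}|^{2^*-1}$, multiplied by the bounded weight $|y-x_{j,\varepsilon}|^{2}\lesssim d^{2}$, are estimated pointwise (for the first two) and via Sobolev embedding (for the last), and all give $O(\lambda_{\varepsilon}^{-(N+2)/2})$.

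The only step requiring a bit of care is the $N=3$ scaling computation for the mixed weighted integral, where one must verify that the extra factor $|y-x_{j,\varepsilon}|$ relative to Lemma~\ref{lem of estimate of u 2} upgrades the $O(\ln\lambda_{\varepsilon}/\lambda_{\varepsilon}^{5/2})$ bound to a clean $O(\lambda_{\varepsilon}^{-5/2})$. Apart from this routine bookkeeping, the argument is a direct adaptation of the previous lemma's proof, and \eqref{eq of estimate of u 3} follows by assembling all the estimates.
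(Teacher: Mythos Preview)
Your proposal is correct and follows essentially the same route as the paper's proof: decompose via \eqref{decomposition of u} and Lemma~\ref{Elementary estimate}, kill the leading term by the oddness of $(y_i-(x_{j,\varepsilon})_i)(y_l-(x_{j,\varepsilon})_l)$ under a coordinate reflection, and bound the remaining cross and high-power terms by recycling the estimates of Lemmas~\ref{lem of estimate of u 1}--\ref{lem of estimate of u 2} with the weight $|y-x_{j,\varepsilon}|^{2}$. Your observation that the extra factor of $|y-x_{j,\varepsilon}|$ removes the $N=3$ logarithm in the mixed term is exactly the one new computation needed.
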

    \begin{proof}
Using Lemma \ref{Elementary estimate}, decomposition \eqref{decomposition of u} and $PU_{x_{j,\varepsilon},\lambda_{j,\varepsilon}}=U_{x_{j,\varepsilon},\lambda_{j,\varepsilon}}-\psi_{x_{j,\varepsilon},\lambda_{j,\varepsilon}}$, we have
\begin{equation}
\begin{split}
    \int_{B(x_{j,\varepsilon},d)}&\big(y_i-(x_{j,\varepsilon})_{i}\big)\big(y_l-(x_{j,\varepsilon})_{l}\big)  u_{\varepsilon}^{2^*-1}(y) dy\\
    &=\int_{B(x_{j,\varepsilon},d)}\big(y_i-(x_{j,\varepsilon})_{i}\big)\big(y_l-(x_{j,\varepsilon})_{l}\big)U_{x_{j,\varepsilon},\lambda_{j,\varepsilon}}^{2^*-1}(y)dy+\mathrm{I}_{2},
\end{split}
\end{equation}
where the term $\textrm{I}_{2}$ can be controlled by
\begin{equation}
    \begin{aligned}       \textrm{I}_{2}\lesssim&\int_{B(x_{j,\varepsilon},d)}|y-x_{j,\varepsilon}|^{2}U_{x_{j,\varepsilon},\lambda_{j,\varepsilon}}^{2^*-2}(y)\left(\sum_{i\neq j}U_{x_{i,\varepsilon},\lambda_{i,\varepsilon}}(y)+\psi_{x_{j,\varepsilon},\lambda_{j,\varepsilon}}(y)+|w_{\varepsilon}|(y)\right)dy\\
        &+\int_{B(x_{j,\varepsilon},d)}|y-x_{j,\varepsilon}|^{2}\left(\sum_{i\neq j}U_{x_{i,\varepsilon},\lambda_{i,\varepsilon}}^{2^*-1}(y)+\psi_{x_{j,\varepsilon},\lambda_{j,\varepsilon}}^{2^*-1}(y)+|w_{\varepsilon}|^{2^*-1}(y)\right)dy.
    \end{aligned}
\end{equation}
Note that from the symmetry, we have
\begin{equation}
    \int_{B(x_{j,\varepsilon},d)}\big(y_i-(x_{j,\varepsilon})_{i}\big)\big(y_l-(x_{j,\varepsilon})_{l}\big)U_{x_{j,\varepsilon},\lambda_{j,\varepsilon}}^{2^*-1}(y)dy=0,
\end{equation}
thus we only need to estimate the term $\textrm{I}_{2}$. First, from the proof of Lemma \ref{lem of estimate of u 1} and Lemma \ref{lem of estimate of u 2}, we get that 
\begin{equation}
\begin{split}
    &\int_{B(x_{j,\varepsilon},d)}|y-x_{j,\varepsilon}|^{2}U_{x_{j,\varepsilon},\lambda_{j,\varepsilon}}^{2^*-2}(y)|w_{\varepsilon}|(y)dy\\
    &+\int_{B(x_{j,\varepsilon},d)}|y-x_{j,\varepsilon}|^{2}\left(\sum_{i\neq j}U_{x_{i,\varepsilon},\lambda_{i,\varepsilon}}^{2^*-1}(y)+\psi_{x_{j,\varepsilon},\lambda_{j,\varepsilon}}^{2^*-1}(y)+|w_{\varepsilon}|^{2^*-1}(y)\right)dy=O\left(\frac{1}{\lambda_{\varepsilon}^{\frac{N+2}{2}}}\right).  
\end{split}
\end{equation}
Moreover, for any $i\neq j$, by Lemma \ref{estimate of U-lambda-a and psi-lambda-a 3}, we know that 
\begin{equation}\label{lem of estimate of u 3 proof 6}
    \begin{aligned}
\int_{B(x_{j,\varepsilon},d)}&|y-x_{j,\varepsilon}|^{2}U_{x_{j,\varepsilon},\lambda_{j,\varepsilon}}^{2^*-2}(y)\left(U_{x_{i,\varepsilon},\lambda_{i,\varepsilon}}(y)+\psi_{x_{j,\varepsilon},x_{j,\varepsilon}}(y)\right)dy\\
&\lesssim \lambda_{\varepsilon}^{-\frac{N-2}{2}}\int_{B(x_{j,\varepsilon},d)}|y-x_{j,\varepsilon}|^{2}U_{x_{j,\varepsilon},\lambda_{j,\varepsilon}}^{2^*-2}(y)dy=O\left(\frac{1}{\lambda_{\varepsilon}^{\frac{N+2}{2}}}\right).
    \end{aligned}
\end{equation}
Hence, combining all the estimates above, we obtain \eqref{eq of estimate of u 3}.
    \end{proof}
\begin{Lem}\label{lem third taylor}
    For any $j=1,\cdots,n$ and fixed $d>0$ small enough, it holds
    \begin{equation}
        \int_{B(x_{j,\varepsilon},d)}|y-x_{j,\varepsilon}|^{3}u_{\varepsilon}^{2^*-1}(y)dy=O\left(\frac{1}{\lambda_{\varepsilon}^{\frac{N+2}{2}}}\right).
    \end{equation}
\end{Lem}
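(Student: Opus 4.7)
The plan is to mimic closely the derivation of Lemmas \ref{lem of estimate of u 1}, \ref{lem of estimate of u 2}, and \ref{lem of estimate of u 3}, since the structure is essentially the same: insert the decomposition \eqref{decomposition of u} and the identity $PU_{x_{j,\varepsilon},\lambda_{j,\varepsilon}}=U_{x_{j,\varepsilon},\lambda_{j,\varepsilon}}-\psi_{x_{j,\varepsilon},\lambda_{j,\varepsilon}}$ into the integrand, split $u_{\varepsilon}^{2^{*}-1}$ via the elementary pointwise estimate from Lemma \ref{Elementary estimate} into the leading bubble contribution plus mixed terms, and bound each piece separately. The only new ingredient, relative to Lemma \ref{lem of estimate of u 3}, is that the weight $|y-x_{j,\varepsilon}|^{3}$ does not yield cancellation by symmetry, so we must keep track of the size of the leading term $\int_{B(x_{j,\varepsilon},d)}|y-x_{j,\varepsilon}|^{3}U_{x_{j,\varepsilon},\lambda_{j,\varepsilon}}^{2^{*}-1}dy$ itself, which will in fact match the claimed order.

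First I would compute the main term by the rescaling $z=\lambda_{j,\varepsilon}(y-x_{j,\varepsilon})$:
\begin{equation*}
\int_{B(x_{j,\varepsilon},d)}|y-x_{j,\varepsilon}|^{3}U_{x_{j,\varepsilon},\lambda_{j,\varepsilon}}^{2^{*}-1}(y)\,dy
=\alpha_{N}^{2^{*}-1}\lambda_{j,\varepsilon}^{-\frac{N+4}{2}}\int_{B(0,\lambda_{j,\varepsilon}d)}\frac{|z|^{3}}{(1+|z|^{2})^{\frac{N+2}{2}}}\,dz.
\end{equation*}
Since the integrand on the right behaves like $|z|^{1-N}$ at infinity, the integral grows at most linearly in $\lambda_{j,\varepsilon}d$, so this contribution is $O(\lambda_{j,\varepsilon}^{-(N+2)/2})=O(\lambda_{\varepsilon}^{-(N+2)/2})$ by Corollary \ref{cor of the decomposition}.

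Next I would bound the error terms coming from $\sum_{i\neq j}U_{x_{i,\varepsilon},\lambda_{i,\varepsilon}}$, $\psi_{x_{j,\varepsilon},\lambda_{j,\varepsilon}}$, and $w_{\varepsilon}$. The terms of the form $|y-x_{j,\varepsilon}|^{3}U_{x_{j,\varepsilon},\lambda_{j,\varepsilon}}^{2^{*}-2}(U_{x_{i,\varepsilon},\lambda_{i,\varepsilon}}+\psi_{x_{j,\varepsilon},\lambda_{j,\varepsilon}})$ can be bounded, using Lemma \ref{estimate of U-lambda-a and psi-lambda-a 3}, by $\lambda_{\varepsilon}^{-(N-2)/2}\int|y-x_{j,\varepsilon}|^{3}U_{x_{j,\varepsilon},\lambda_{j,\varepsilon}}^{2^{*}-2}dy$; the same rescaling argument (the rescaled integrand now behaves like $|z|^{-1}$ at infinity, producing a factor $(\lambda_{\varepsilon}d)^{N-1}$) yields $\int|y-x_{j,\varepsilon}|^{3}U_{x_{j,\varepsilon},\lambda_{j,\varepsilon}}^{2^{*}-2}=O(\lambda_{\varepsilon}^{-2})$, and hence this piece is $O(\lambda_{\varepsilon}^{-(N+2)/2})$. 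For the $w_{\varepsilon}$-term I would apply H\"{o}lder with exponents $2N/(N+2)$ and $2^{*}$ together with the Sobolev embedding, getting
\begin{equation*}
\int_{B(x_{j,\varepsilon},d)}|y-x_{j,\varepsilon}|^{3}U_{x_{j,\varepsilon},\lambda_{j,\varepsilon}}^{2^{*}-2}|w_{\varepsilon}|\,dy
\lesssim\Bigl(\int|y-x_{j,\varepsilon}|^{\frac{6N}{N+2}}U_{x_{j,\varepsilon},\lambda_{j,\varepsilon}}^{\frac{8N}{(N-2)(N+2)}}dy\Bigr)^{\frac{N+2}{2N}}\|w_{\varepsilon}\|_{H^{1}_{0}(\Omega)},
\end{equation*}
and a direct rescaling shows the bracketed quantity is $O(\lambda_{\varepsilon}^{-2})$; combined with the estimate \eqref{perturbation estimate of w-varepsilon} this is far smaller than $\lambda_{\varepsilon}^{-(N+2)/2}$ in every admissible range of $(N,q)$.

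The remaining terms $|y-x_{j,\varepsilon}|^{3}\bigl(\sum_{i\neq j}U_{x_{i,\varepsilon},\lambda_{i,\varepsilon}}^{2^{*}-1}+\psi_{x_{j,\varepsilon},\lambda_{j,\varepsilon}}^{2^{*}-1}+|w_{\varepsilon}|^{2^{*}-1}\bigr)$ are controlled exactly as in the proofs of Lemmas \ref{lem of estimate of u 1}--\ref{lem of estimate of u 3}: away from $x_{j,\varepsilon}$ the factors $U_{x_{i,\varepsilon},\lambda_{i,\varepsilon}}$ and $\psi_{x_{j,\varepsilon},\lambda_{j,\varepsilon}}$ are pointwise $O(\lambda_{\varepsilon}^{-(N-2)/2})$ by Lemma \ref{estimate of U-lambda-a and psi-lambda-a 3}, so raising to the $(2^{*}-1)$-th power gives $O(\lambda_{\varepsilon}^{-(N+2)/2})$ after integration against the bounded weight $|y-x_{j,\varepsilon}|^{3}$, while the $|w_{\varepsilon}|^{2^{*}-1}$ contribution is handled by H\"{o}lder and \eqref{perturbation estimate of w-varepsilon}. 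The main obstacle, if any, is simply verifying that in every admissible $(N,q)$ regime the $w_{\varepsilon}$-error lies below $\lambda_{\varepsilon}^{-(N+2)/2}$, but this is straightforward from \eqref{perturbation estimate of w-varepsilon} since the product of $O(\lambda_{\varepsilon}^{-2})$ with $\|w_{\varepsilon}\|_{H^{1}_{0}(\Omega)}$ is always $o(\lambda_{\varepsilon}^{-(N+2)/2})$. Summing all contributions yields the desired bound.
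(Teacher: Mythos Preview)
Your argument is correct, but it is considerably more elaborate than what the paper actually does. The paper bypasses the decomposition \eqref{decomposition of u} entirely and instead invokes the pointwise upper bound $u_{\varepsilon}\lesssim U_{x_{j,\varepsilon},\lambda_{j,\varepsilon}}$ on $B(x_{j,\varepsilon},d)$ (Corollary~\ref{cor of the decomposition}(2), equivalently Theorem~\ref{thm multibble blowup}(e)). This one inequality immediately gives
\[
\int_{B(x_{j,\varepsilon},d)}|y-x_{j,\varepsilon}|^{3}u_{\varepsilon}^{2^{*}-1}\,dy
\lesssim \int_{B(x_{j,\varepsilon},d)}|y-x_{j,\varepsilon}|^{3}U_{x_{j,\varepsilon},\lambda_{j,\varepsilon}}^{2^{*}-1}\,dy,
\]
which is exactly your ``main term'' computation, and the proof is done in one line after rescaling. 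In other words, the paper recognizes that for this lemma only a crude upper bound is needed (there is no leading constant to extract), so there is no reason to split off $\psi_{x_{j,\varepsilon},\lambda_{j,\varepsilon}}$, the other bubbles, or $w_{\varepsilon}$. Your route mimics Lemmas~\ref{lem of estimate of u 1}--\ref{lem of estimate of u 3}, where the decomposition is genuinely necessary because one must isolate the exact main term $A\lambda_{j,\varepsilon}^{-(N-2)/2}$ or exploit symmetry cancellations; here neither applies, so the decomposition adds only bookkeeping.
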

\begin{proof}
  Note that $u_{\varepsilon}\lesssim U_{x_{j,\varepsilon},\lambda_{j,\varepsilon}}$ in $B(x_{j,\varepsilon},d)$, then by direct computation, we get that
    \begin{equation}
    \begin{aligned}
        \int_{B(x_{j,\varepsilon},d)}|y-x_{j,\varepsilon}|^{3}u_{\varepsilon}^{2^*-1}(y)dy&\lesssim\lambda_{\varepsilon}^{\frac{N+2}{2}-N-3}\int_{B(0,\lambda_{j,\varepsilon}d)}|y|^{3}U_{0,1}^{2^*-1}(y)dy\\&=O\left(\frac{1}{\lambda_{\varepsilon}^{\frac{N+2}{2}}}\right).
    \end{aligned}
    \end{equation}
Thus we complete the proof.
\end{proof}

\begin{Lem}\label{lem estimate of u q-1}
    For any $j=1,\cdots,n$ and fixed $d>0$ small enough, it holds
     \begin{equation}
    \begin{aligned}
        \varepsilon\int_{B(x_{j,\varepsilon},d)}u_{\varepsilon}^{q-1}(y)dy=\begin{cases}
            O\left(\frac{1}{\lambda_{\varepsilon}^{\frac{2(N-2)q-(N+2)}{2}}}\right),&\text{~~if~~}q\in\left(2,\frac{2N-2}{N-2}\right),\\
            O\left(\frac{\ln\lambda_{\varepsilon}}{\lambda_{\varepsilon}^{\frac{3N-6}{2}}}\right),&\text{~~if~~}q=\frac{2N-2}{N-2},\\
            O\left(\frac{1}{\lambda_{\varepsilon}^{\frac{3N-6}{2}}}\right),&\text{~~if~~}q\in\left(\frac{2N-2}{N-2},2^*\right).\\
        \end{cases}
    \end{aligned}
    \end{equation}
\end{Lem}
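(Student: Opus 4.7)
\textbf{Proof proposal for Lemma \ref{lem estimate of u q-1}.} The strategy is to dispense entirely with the decomposition \eqref{decomposition of u} and instead use the pointwise bound $u_{\varepsilon}\le C_{0}\sum_{i=1}^{n}U_{x_{i,\varepsilon},\lambda_{i,\varepsilon}}$ from Theorem \ref{thm multibble blowup}(e), together with the elementary inequality $(\sum_{i}a_{i})^{q-1}\lesssim \sum_{i}a_{i}^{q-1}$ for $a_{i}\geq 0$ and $q-1>1$. This reduces the problem to estimating $\sum_{i}\int_{B(x_{j,\varepsilon},d)}U_{x_{i,\varepsilon},\lambda_{i,\varepsilon}}^{q-1}\,dy$. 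For $i\neq j$, choosing $d$ small so that the balls $B(x_{i,\varepsilon},d)$ are disjoint (allowed by Theorem \ref{thm multibble blowup}(b)), we have $U_{x_{i,\varepsilon},\lambda_{i,\varepsilon}}\lesssim \lambda_{i,\varepsilon}^{-(N-2)/2}$ on $B(x_{j,\varepsilon},d)$, which gives a contribution of order $\lambda_{\varepsilon}^{-(N-2)(q-1)/2}$; this will be absorbed by the main term computed below.

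For the main term $i=j$, I would perform the change of variables $z=\lambda_{j,\varepsilon}(y-x_{j,\varepsilon})$ to obtain
\begin{equation*}
\int_{B(x_{j,\varepsilon},d)}U_{x_{j,\varepsilon},\lambda_{j,\varepsilon}}^{q-1}(y)\,dy
=\alpha_{N}^{q-1}\lambda_{j,\varepsilon}^{(N-2)(q-1)/2-N}\int_{B(0,\lambda_{j,\varepsilon}d)}\frac{dz}{(1+|z|^{2})^{(N-2)(q-1)/2}}.
\end{equation*}
The decay exponent of the integrand at infinity is $(N-2)(q-1)$, so convergence of the integral over $\R^{N}$ depends on the sign of $(N-2)(q-1)-N$, which corresponds exactly to the trichotomy $q\lessgtr \frac{2N-2}{N-2}$. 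In the subcritical-integrand case $q<\frac{2N-2}{N-2}$ the truncated integral is of order $(\lambda_{j,\varepsilon}d)^{N-(N-2)(q-1)}$, yielding an overall bound $\lambda_{\varepsilon}^{-(N-2)(q-1)/2}$; in the borderline case $q=\frac{2N-2}{N-2}$ one picks up an extra $\ln\lambda_{\varepsilon}$; and for $q>\frac{2N-2}{N-2}$ the integral is $O(1)$ so the bound is $\lambda_{\varepsilon}^{(N-2)(q-1)/2-N}$.

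Finally, I would multiply by the a priori bound $\varepsilon\leq C_{1}\lambda_{j,\varepsilon}^{2-(N-2)q/2}$ from Corollary \ref{cor of the decomposition}(3). The exponents collapse as follows: in the first case, $\bigl(2-\tfrac{(N-2)q}{2}\bigr)+\bigl(-\tfrac{(N-2)(q-1)}{2}\bigr)=\tfrac{N+2}{2}-(N-2)q$, matching $-\tfrac{2(N-2)q-(N+2)}{2}$; in the third case, $\bigl(2-\tfrac{(N-2)q}{2}\bigr)+\bigl(\tfrac{(N-2)(q-1)}{2}-N\bigr)=2-N-\tfrac{N-2}{2}=-\tfrac{3N-6}{2}$; and the borderline $q=\tfrac{2N-2}{N-2}$ gives the same $-\tfrac{3N-6}{2}$ up to the logarithmic factor. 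The cross-term contribution from $i\neq j$ is $\varepsilon\cdot O(\lambda_{\varepsilon}^{-(N-2)(q-1)/2})=O(\lambda_{\varepsilon}^{2-(N-2)q/2-(N-2)(q-1)/2})$, which coincides with the first-case bound and is always dominated by the stated rate in the third case. No genuine obstacle arises; the only subtlety is bookkeeping the three regimes of $q$ consistently, which is a direct consequence of where the integral $\int_{\R^{N}}(1+|z|^{2})^{-(N-2)(q-1)/2}dz$ starts to diverge.
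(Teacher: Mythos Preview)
Your proposal is correct and follows essentially the same approach as the paper: bound $u_\varepsilon$ pointwise by bubbles via Theorem~\ref{thm multibble blowup}(e), rescale to compute $\int_{B(0,\lambda_{j,\varepsilon}d)}(1+|z|^2)^{-(N-2)(q-1)/2}\,dz$ with its three regimes in $q$, and then multiply by the a~priori bound $\varepsilon\lesssim\lambda_{\varepsilon}^{2-(N-2)q/2}$ from Corollary~\ref{cor of the decomposition}. The paper's version is marginally more streamlined in that it invokes the single-bubble bound $u_\varepsilon\lesssim U_{x_{j,\varepsilon},\lambda_{j,\varepsilon}}$ on $B(x_{j,\varepsilon},d)$ directly (the other bubbles being $O(\lambda_\varepsilon^{-(N-2)/2})$ there and hence dominated), so your separate treatment of the $i\neq j$ cross terms is unnecessary but harmless.
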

\begin{proof}
  Note that $u_{\varepsilon}\lesssim U_{x_{j,\varepsilon},\lambda_{j,\varepsilon}}$ in $B(x_{j,\varepsilon},d)$, then by direct computation, we get that
    \begin{equation}
    \begin{aligned}
        \int_{B(x_{j,\varepsilon},d)}u_{\varepsilon}^{q-1}(y)dy&\lesssim\int_{B(x_{j,\varepsilon},d)}U_{x_{j,\varepsilon},\lambda_{j,\varepsilon}}^{q-1}(y)dy\\&
        =\begin{cases}
            O\left(\frac{1}{\lambda_{\varepsilon}^{\frac{(N-2)(q-1)}{2}}}\right),&\text{~~if~~}q\in\left(2,\frac{2N-2}{N-2}\right),\\
            O\left(\frac{\ln\lambda_{\varepsilon}}{\lambda_{\varepsilon}^{N-\frac{(N-2)(q-1)}{2}}}\right),&\text{~~if~~}q=\frac{2N-2}{N-2},\\
            O\left(\frac{1}{\lambda_{\varepsilon}^{N-\frac{(N-2)(q-1)}{2}}}\right),&\text{~~if~~}q\in\left(\frac{2N-2}{N-2},2^*\right).\\
        \end{cases}
    \end{aligned}
    \end{equation}
    Therefore, by Corollary \ref{cor of the decomposition}, we get 
    \begin{equation}
    \begin{aligned}
        \varepsilon\int_{B(x_{j,\varepsilon},d)}u_{\varepsilon}^{q-1}(y)dy=\begin{cases}
            O\left(\frac{1}{\lambda_{\varepsilon}^{\frac{2(N-2)q-(N+2)}{2}}}\right),&\text{~~if~~}q\in\left(2,\frac{2N-2}{N-2}\right),\\
            O\left(\frac{\ln\lambda_{\varepsilon}}{\lambda_{\varepsilon}^{\frac{3N-6}{2}}}\right),&\text{~~if~~}q=\frac{2N-2}{N-2},\\
            O\left(\frac{1}{\lambda_{\varepsilon}^{\frac{3N-6}{2}}}\right),&\text{~~if~~}q\in\left(\frac{2N-2}{N-2},2^*\right).\\
        \end{cases}
    \end{aligned}
    \end{equation}
Thus we complete the proof.
\end{proof}

\begin{Lem}\label{lem estimate of q refined}
   For any $j=1,\cdots,n$ and fixed $d>0$ small enough, it holds
    \begin{equation}\label{lem estimate of q refined-1}
    \begin{aligned}
\int_{B(x_{j,\varepsilon},d)}u_{\varepsilon}^{q}
    &=\frac{B}{\lambda_{j,\varepsilon}^{N-\frac{N-2}{2}q}}+\begin{cases}
      O\left(\frac{1}{\lambda_{j,\varepsilon}^{4-\frac{q}{2}}}\right),&\text{~~if~~}N=3\text{~and~}q\in(4,6),\\
    O\left(\frac{1}{\lambda_{\varepsilon}^{\frac{(N-2)q}{2}}}\right),&\text{~~if~~}4\leq N\leq 6\text{~and~}q\in\left(2,\frac{2N-2}{N-2}\right),\\

            O\left(\frac{\ln\lambda_{\varepsilon}}{\lambda_{\varepsilon}^{2N-2-\frac{(N-2)q}{2}}}\right),&\text{~~if~~}4\leq N\leq 6\text{~and~}q\in\left[\frac{2N-2}{N-2},2^*\right),\\
             O\left(\frac{1}{\lambda_{\varepsilon}^{\frac{(N-2)q}{2}}}\right), &\text{~~if~~} 7\leq N\leq 9\text{~and~} q\in (2,\frac{3N+2}{2N-4}),\\
      O\left(\frac{\ln\lambda_{\varepsilon}}{\lambda_{\varepsilon}^{\frac{3N+2-(N-2)q}{2}}}\right), &\text{~~if~~} 7\leq N\leq 9\text{~and~} q\in [\frac{3N+2}{2N-4},2^*),\\
O\left(\frac{\ln\lambda_{\varepsilon}}{\lambda_{\varepsilon}^{\frac{3N+2-(N-2)q}{2}}}\right), &\text{~~if~~} N\geq 10\text{~and~} q\in (2,2^*),
\end{cases} 
    \end{aligned}
\end{equation}
where $B$ is a constant defined in \eqref{definition of A and B}.
\end{Lem}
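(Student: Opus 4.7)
The approach is to expand $u_\varepsilon^q$ around the single bubble $U_{x_{j,\varepsilon},\lambda_{j,\varepsilon}}$: by Theorem~\ref{thm decomposition} and $PU_{x_{j,\varepsilon},\lambda_{j,\varepsilon}} = U_{x_{j,\varepsilon},\lambda_{j,\varepsilon}} - \psi_{x_{j,\varepsilon},\lambda_{j,\varepsilon}}$, I would write
\[
u_\varepsilon = U_{x_{j,\varepsilon},\lambda_{j,\varepsilon}} + \eta_\varepsilon, \qquad \eta_\varepsilon := -\psi_{x_{j,\varepsilon},\lambda_{j,\varepsilon}} + \sum_{i \neq j} PU_{x_{i,\varepsilon},\lambda_{i,\varepsilon}} + w_\varepsilon,
\]
and apply the elementary pointwise inequality from Lemma~\ref{Elementary estimate} (with exponent $q>1$) to obtain
\[
\bigl|u_\varepsilon^q - U_{x_{j,\varepsilon},\lambda_{j,\varepsilon}}^q\bigr| \lesssim U_{x_{j,\varepsilon},\lambda_{j,\varepsilon}}^{q-1} |\eta_\varepsilon| + |\eta_\varepsilon|^q.
\]

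First I would isolate the leading term by rescaling $z = \lambda_{j,\varepsilon}(y - x_{j,\varepsilon})$, which yields
\[
\int_{B(x_{j,\varepsilon},d)} U_{x_{j,\varepsilon},\lambda_{j,\varepsilon}}^q \, dy = \lambda_{j,\varepsilon}^{\frac{N-2}{2}q - N} \int_{B(0, \lambda_{j,\varepsilon} d)} U_{0,1}^q \, dz = \frac{B}{\lambda_{j,\varepsilon}^{N - \frac{N-2}{2}q}} + O\bigl(\lambda_{j,\varepsilon}^{-\frac{(N-2)q}{2}}\bigr),
\]
where the assumption $q>\max\{2,\tfrac{4}{N-2}\}$ ensures $(N-2)q>N$, so $U_{0,1}^q\in L^1(\R^N)$ and the tail integral on $\R^N\setminus B(0,\lambda_{j,\varepsilon} d)$ is of order $\lambda_{j,\varepsilon}^{N-(N-2)q}$.

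Next I would bound each piece of the error. Both $\psi_{x_{j,\varepsilon},\lambda_{j,\varepsilon}}$ and $\sum_{i\neq j} PU_{x_{i,\varepsilon},\lambda_{i,\varepsilon}}$ are pointwise $O(\lambda_\varepsilon^{-(N-2)/2})$ on $B(x_{j,\varepsilon},d)$ by Lemma~\ref{estimate of U-lambda-a and psi-lambda-a 3}, so their contribution to $\int U^{q-1}|\eta_\varepsilon|$ is controlled by $\lambda_\varepsilon^{-(N-2)/2}\int_{B} U^{q-1}\,dy$, and the last integral is evaluated exactly as in Lemma~\ref{lem estimate of u q-1}, splitting into three subcases according to whether $q$ lies below, at, or above the integrability threshold $q_*:=(2N-2)/(N-2)$. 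For the $w_\varepsilon$ contribution I would use H\"older with exponents $\bigl(\tfrac{2N}{N+2}, 2^*\bigr)$, the Sobolev embedding, and~\eqref{perturbation estimate of w-varepsilon}:
\[
\int_{B} U_{x_{j,\varepsilon},\lambda_{j,\varepsilon}}^{q-1} |w_\varepsilon|\,dy \lesssim \Bigl(\int_B U_{x_{j,\varepsilon},\lambda_{j,\varepsilon}}^{\frac{2N(q-1)}{N+2}}\,dy\Bigr)^{\frac{N+2}{2N}} \|w_\varepsilon\|_{H^1_0(\Omega)}.
\]
The higher-order piece $\int_{B}|\eta_\varepsilon|^q$ is then handled analogously, with $\int w_\varepsilon^q$ absorbed via $\|w_\varepsilon\|_{L^q(\Omega)} \lesssim \|w_\varepsilon\|_{H^1_0(\Omega)}$, and the contributions from $\psi$ and the other bubbles bounded by $O(\lambda_\varepsilon^{-(N-2)q/2})$.

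The main obstacle will be the case analysis: one has to combine the tail bound $\lambda_{j,\varepsilon}^{-(N-2)q/2}$, the cross-bubble bound $\lambda_\varepsilon^{-(N-2)/2}\int_B U^{q-1}$ (which itself breaks into three subcases at $q=q_*$), and the $w_\varepsilon$-bound above, and verify that their maximum reproduces exactly the piecewise formula in~\eqref{lem estimate of q refined-1}. The two threshold exponents $q_*=(2N-2)/(N-2)$ and $\tilde q=(3N+2)/(2N-4)$ visible in the statement arise respectively from the $L^1$-integrability of $U^{q-1}$ and from the crossover at which the H\"older--Sobolev estimate of the $w_\varepsilon$ term overtakes the cross-bubble term; matching the logarithmic factors in $\|w_\varepsilon\|_{H^1_0}$ precisely at those thresholds is the only delicate bookkeeping.
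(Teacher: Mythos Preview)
Your plan is correct and follows essentially the same route as the paper's proof: decompose $u_\varepsilon=U_{x_{j,\varepsilon},\lambda_{j,\varepsilon}}+\eta_\varepsilon$, apply Lemma~\ref{Elementary estimate} with exponent $q$, extract the leading term $B\lambda_{j,\varepsilon}^{(N-2)q/2-N}$ by rescaling, bound the cross-bubble and $\psi$ contributions via $\lambda_\varepsilon^{-(N-2)/2}\int U^{q-1}$ (splitting at $q_*=(2N-2)/(N-2)$), and control the $w_\varepsilon$ part by H\"older with exponents $(\tfrac{2N}{N+2},2^*)$ together with~\eqref{perturbation estimate of w-varepsilon}. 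Your identification of the second threshold $\tilde q=(3N+2)/(2N-4)$ as the crossover between the cross-bubble error $\lambda_\varepsilon^{-(N-2)q/2}$ and the $w_\varepsilon$ error $\lambda_\varepsilon^{(N-2)q/2-(3N+2)/2}$ is exactly what drives the case split for $N\ge 7$; the only remaining work is the bookkeeping you flag.
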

\begin{proof}
Using decomposition \eqref{decomposition of u}, $PU_{x_{j,\varepsilon},\lambda_{j,\varepsilon}}=U_{x_{j,\varepsilon},\lambda_{j,\varepsilon}}-\psi_{x_{j,\varepsilon},\lambda_{j,\varepsilon}}$ and Lemma \ref{Elementary estimate}, we have
\begin{equation}\label{lem of estimate of u q proof 1}
\int_{B(x_{j,\varepsilon},d)}u_{\varepsilon}^{q}(y)dy=\int_{B(x_{j,\varepsilon},d)}U_{x_{j,\varepsilon},\lambda_{j,\varepsilon}}^{q}(y)dy+\mathrm{II},
\end{equation}
where the term $\mathrm{II}$ can be controlled by
\begin{equation}\label{lem of estimate of u q proof 3}
    \begin{aligned}
\mathrm{II}\lesssim&\int_{B(x_{j,\varepsilon},d)}U_{x_{j,\varepsilon},\lambda_{j,\varepsilon}}^{q-1}(y)\left(\sum_{i\neq j}U_{x_{i,\varepsilon},\lambda_{i,\varepsilon}}(y)+\psi_{x_{j,\varepsilon},\lambda_{j,\varepsilon}}(y)+|w_{\varepsilon}|(y)\right)dy\\
        &+\int_{B(x_{j,\varepsilon},d)}\sum_{i\neq j}U_{x_{i,\varepsilon},\lambda_{i,\varepsilon}}^{q}(y)+\psi_{x_{j,\varepsilon},\lambda_{j,\varepsilon}}^{q}(y)+|w_{\varepsilon}|^{q}(y)dy.
    \end{aligned}
\end{equation}
First, we estimate the first term on the right hand side of \eqref{lem of estimate of u q proof 1} 
\begin{equation}\label{lem of estimate of u q proof 4}
    \begin{aligned}
        \int_{B(x_{j,\varepsilon},d)}U_{x_{j,\varepsilon},\lambda_{j,\varepsilon}}^{q}(y)dy&=\lambda_{j,\varepsilon}^{\frac{N-2}{2}q-N}\int_{B(0,\lambda_{j,\varepsilon}d)}U_{0,1}^{q}(y)dy\\
        &=B\lambda_{j,\varepsilon}^{\frac{N-2}{2}q-N}+O(\lambda_{j,\varepsilon}^{-\frac{N-2}{2}q}),
    \end{aligned}
\end{equation}
where $B$ is  defined in \eqref{definition of A and B}. Next, we estimate the first term on the right hand side of \eqref{lem of estimate of u q proof 3}. Using H\"older inequality and Sobolev inequality, we get 

\begin{equation}\label{lem of estimate of u q proof 5}
  \begin{aligned}
\int_{B(x_{j,\varepsilon},d)}U_{x_{j,\varepsilon},\lambda_{j,\varepsilon}}^{q-1}(y)w_{\varepsilon}(y)dy & \leq \left(\int_{B(x_{j,\varepsilon},d)}U_{x_{j,\varepsilon},\lambda_{j,\varepsilon}}^{\frac{2(q-1)N}{N+2}}(y)dy\right)^{\frac{N+2}{2N}}\|w_{\varepsilon}\|_{L^{2^*}(\Omega)}\\
&=\begin{cases}
    O\left(\lambda_{j,\varepsilon}^{-\frac{N-2}{2}(q-1)}\right), &\text{~if~} q<\frac{3N-2}{2N-4},\\
    O\left(\lambda_{j,\varepsilon}^{\frac{N-2}{2}q-N}(\ln\lambda_{j,\varepsilon})^{\frac{N+2}{2N}}\right), &\text{~if~} q=\frac{3N-2}{2N-4},\\
    O\left(\lambda_{j,\varepsilon}^{\frac{N-2}{2}q-N}\right), &\text{~if~} q\in (\frac{3N-2}{2N-4},2^*)\\
\end{cases}\times\|w_{\varepsilon}\|_{H^{1}_{0}(\Omega)}.\\
  \end{aligned}
\end{equation}       
On the other hand, for any $i\neq j$, by Lemma \ref{estimate of U-lambda-a and psi-lambda-a 3}, we know that 
\begin{equation}\label{lem of estimate of u q proof 6}
    \begin{aligned}
\int_{B(x_{j,\varepsilon},d)}&U_{x_{j,\varepsilon},\lambda_{j,\varepsilon}}^{q-1}(y)\left(U_{x_{i,\varepsilon},\lambda_{i,\varepsilon}}(y)+\psi_{x_{j,\varepsilon},x_{j,\varepsilon}}(y)\right)dy\\
&\lesssim \lambda_{\varepsilon}^{-\frac{N-2}{2}}\int_{B(x_{j,\varepsilon},d)}U_{x_{j,\varepsilon},\lambda_{j,\varepsilon}}^{q-1}(y)dy\\
&=\begin{cases}
            O\left(\frac{1}{\lambda_{\varepsilon}^{\frac{(N-2)q}{2}}}\right),&\text{~~if~~}q\in\left(2,\frac{2N-2}{N-2}\right),\\
            O\left(\frac{\ln\lambda_{\varepsilon}}{\lambda_{\varepsilon}^{2N-2-\frac{(N-2)q}{2}}}\right),&\text{~~if~~}q=\frac{2N-2}{N-2},\\
            O\left(\frac{1}{\lambda_{\varepsilon}^{2N-2-\frac{(N-2)q}{2}}}\right),&\text{~~if~~}q\in\left(\frac{2N-2}{N-2},2^*\right).\\
        \end{cases}
    \end{aligned}
\end{equation}
Finally, we estimate the second term in \eqref{lem of estimate of u q proof 3}. It's easy to see that
\begin{equation}\label{lem of estimate of u q proof 8}
    \int_{B(x_{j,\varepsilon},d)}\sum_{i\neq j}U_{x_{i,\varepsilon},\lambda_{i,\varepsilon}}^{q}(y)+\psi_{x_{j,\varepsilon},\lambda_{j,\varepsilon}}^{q}(y)dy=O\left( \frac{1}{\lambda_{\varepsilon}^{\frac{N-2}{2}q}}\right).
\end{equation}
On the other hand, by \eqref{perturbation estimate of w-varepsilon}, we get
\begin{equation}\label{lem of estimate of u q proof 9}
     \int_{B(x_{j,\varepsilon},d)}|w_{\varepsilon}|^{q}(y)dy=O\left( \frac{1}{\lambda_{\varepsilon}^{\frac{N-2}{2}q}}\right).
\end{equation}
Hence by \eqref{lem of estimate of u q proof 8} and \eqref{lem of estimate of u q proof 9}, we get
\begin{equation}
    \int_{B(x_{j,\varepsilon},d)}\sum_{i\neq j}U_{x_{i,\varepsilon},\lambda_{i,\varepsilon}}^{q}(y)+\psi_{x_{j,\varepsilon},\lambda_{j,\varepsilon}}^{2^*-1}(y)+|w_{\varepsilon}|^{q}(y)dy=O\left( \frac{1}{\lambda_{\varepsilon}^{\frac{N-2}{2}q}}\right).
\end{equation}
Combining all the estimates above and using \eqref{perturbation estimate of w-varepsilon}, then we can obtained \eqref{lem estimate of q refined-1}.
\end{proof}

\subsection{Proof of Theorem \ref{thm blow up rate}}
\begin{Prop}\label{prop estimate for u}
Let $u_{\varepsilon}$ be a solution of \eqref{p-varepsion}, then for any fixed $d>0$ small enough, it holds
\begin{equation}
    u_{\varepsilon}=A\left(\sum_{j=1}^{n}\frac{G(x_{j,\varepsilon},x)}{\lambda_{j,\varepsilon}^{\frac{N-2}{2}}}\right)+\begin{cases}
     O\left(\frac{1}{\lambda_{\varepsilon}^{\frac{3}{2}}}\right), &\text{~~if~~} N=3\text{~and~} q\in (4,6),\\

         O\left(\frac{1}{\lambda_{\varepsilon}^{2q-3}}\right), &\text{~~if~~} N=4 \text{~and~} q\in(2,3),\\
          O\left(\frac{\ln\lambda_{\varepsilon}}{\lambda_{\varepsilon}^{3}}\right), &\text{~~if~~} N=4 \text{~and~} q\in[3,4),\\
     
      O\left(\frac{1}{\lambda_{\varepsilon}^{\frac{6q-7}{2}}}\right), &\text{~~if~~} N=5\text{~and~} q\in (2,\frac{7}{3}),\\
     O\left(\frac{1}{\lambda_{\varepsilon}^{\frac{7}{2}}}\right), &\text{~~if~~} N=5\text{~and~} q\in [\frac{7}{3},\frac{10}{3}),\\
     O\left(\frac{1}{\lambda_{\varepsilon}^{\frac{N+2}{2}}}\right), &\text{~~if~~} N\geq 6\text{~and~} q\in (2,2^*),
 \end{cases}
\end{equation}
$\text{in~}C^{1}(\Omega\setminus\cup_{j=1}^{n}B(x_{j,\varepsilon},2d))$.
\end{Prop}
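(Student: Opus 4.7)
The plan is to start from the Green's representation formula
\begin{equation*}
u_\varepsilon(x) \;=\; \int_\Omega G(x,y)\bigl[u_\varepsilon^{2^*-1}(y) + \varepsilon u_\varepsilon^{q-1}(y)\bigr]\,dy,
\end{equation*}
valid for every $x\in\Omega$, and to split the domain of integration as $\bigcup_{j=1}^{n}B(x_{j,\varepsilon},d)$ together with the complementary far region $\Omega\setminus\bigcup_{j=1}^n B(x_{j,\varepsilon},d)$. For $x\in\Omega\setminus\bigcup_j B(x_{j,\varepsilon},2d)$ and $y\in B(x_{j,\varepsilon},d)$, the function $y\mapsto G(x,y)$ is smooth, so I Taylor-expand it around $y=x_{j,\varepsilon}$ to third order:
\begin{equation*}
G(x,y) = G(x,x_{j,\varepsilon}) + \sum_i \partial_{y_i}G(x,x_{j,\varepsilon})(y_i-(x_{j,\varepsilon})_i) + \tfrac12\sum_{i,l}\partial_{y_iy_l}G(x,x_{j,\varepsilon})(y_i-(x_{j,\varepsilon})_i)(y_l-(x_{j,\varepsilon})_l) + O(|y-x_{j,\varepsilon}|^3).
\end{equation*}
Substituting and using symmetry of $G$, the zeroth-order term paired with Lemma \ref{lem of estimate of u 1} yields the main contribution $AG(x_{j,\varepsilon},x)/\lambda_{j,\varepsilon}^{(N-2)/2}$ plus the critical-part error listed there; the first-order term is controlled by Lemma \ref{lem of estimate of u 2}; the off-diagonal second-order terms by Lemma \ref{lem of estimate of u 3}, while the diagonal second-order terms $\int(y_i-(x_{j,\varepsilon})_i)^2 u_\varepsilon^{2^*-1}$ are bounded in the same way as Lemma \ref{lem of estimate of u 3} after noting $u_\varepsilon\lesssim U_{x_{j,\varepsilon},\lambda_{j,\varepsilon}}$ on $B(x_{j,\varepsilon},d)$; and the cubic remainder by Lemma \ref{lem third taylor}. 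Finally the sub-critical contribution $\varepsilon\int_{B(x_{j,\varepsilon},d)} u_\varepsilon^{q-1}$ is absorbed via Lemma \ref{lem estimate of u q-1}, and the far-region integral is controlled using the pointwise bound $u_\varepsilon\le C_0\sum_jU_{x_{j,\varepsilon},\lambda_{j,\varepsilon}}$ from Theorem \ref{thm multibble blowup}(e), which on $\Omega\setminus\bigcup_j B(x_{j,\varepsilon},d)$ gives $u_\varepsilon(y)\lesssim\lambda_\varepsilon^{-(N-2)/2}$ uniformly in $y$, so the contribution is of order $\lambda_\varepsilon^{-(N+2)/2}+\varepsilon\lambda_\varepsilon^{-(N-2)(q-1)/2}$, comfortably below the stated error.

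To promote the $C^0$ estimate to the claimed $C^1$ bound, I differentiate under the integral sign,
\begin{equation*}
\partial_{x_k}u_\varepsilon(x) = \int_\Omega \partial_{x_k}G(x,y)\bigl[u_\varepsilon^{2^*-1}(y)+\varepsilon u_\varepsilon^{q-1}(y)\bigr]\,dy,
\end{equation*}
which is justified because $x$ is bounded away from the support of the singular part of $G$. The very same Taylor expansion (now applied to $y\mapsto\partial_{x_k}G(x,y)$, which is still smooth in $y$ on $B(x_{j,\varepsilon},d)$) together with the same integral lemmas delivers the gradient expansion $\partial_{x_k}u_\varepsilon(x)=A\sum_j\partial_{x_k}G(x_{j,\varepsilon},x)/\lambda_{j,\varepsilon}^{(N-2)/2}+\text{error}$ with error of identical order.

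The main obstacle is bookkeeping: in each of the several $(N,q)$-regimes one has to identify which of the three sources of error dominates, namely the critical self-interaction error from Lemma \ref{lem of estimate of u 1}, the sub-critical integral from Lemma \ref{lem estimate of u q-1}, and the Taylor-remainder plus far-field terms. The thresholds in the statement of the proposition (such as the transitions at $q=3$ in dimension $4$, at $q=7/3$ in dimension $5$, etc.) come precisely from where the sub-critical error $\varepsilon\lambda_\varepsilon^{-(N-2)(q-1)/2}\sim\lambda_\varepsilon^{2-(N-2)q/2-(N-2)(q-1)/2}$ crosses the critical error $\lambda_\varepsilon^{-(N+2)/2}$ (with the logarithmic corrections appearing at the critical exponent $q=(2N-2)/(N-2)$ of Lemma \ref{lem estimate of u q-1}). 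Once these comparisons are carried out, the envelope of all contributions matches exactly the piecewise bound in the statement, and no further ideas beyond the lemmas of the present subsection are needed.
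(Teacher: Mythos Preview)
Your overall architecture matches the paper's proof: Green's representation, Taylor expansion of $G(x,\cdot)$ to third order on each ball, then Lemmas \ref{lem of estimate of u 1}--\ref{lem third taylor} and \ref{lem estimate of u q-1} for the various moments, plus the far-field bound. The $C^1$ upgrade via differentiation under the integral is exactly what the paper does.

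There is, however, a genuine gap in your treatment of the diagonal second-order terms. You write that $\int_{B(x_{j,\varepsilon},d)}(y_i-(x_{j,\varepsilon})_i)^2\,u_\varepsilon^{2^*-1}\,dy$ ``are bounded in the same way as Lemma \ref{lem of estimate of u 3} after noting $u_\varepsilon\lesssim U_{x_{j,\varepsilon},\lambda_{j,\varepsilon}}$''. This is false. Lemma \ref{lem of estimate of u 3} relies on the vanishing of the leading term $\int (y_i-\cdots)(y_l-\cdots)U^{2^*-1}$ by odd symmetry when $i\neq l$. For $i=l$ that leading term does not vanish; in fact
\[
\int_{B(x_{j,\varepsilon},d)}(y_i-(x_{j,\varepsilon})_i)^2\,U_{x_{j,\varepsilon},\lambda_{j,\varepsilon}}^{2^*-1}
=\frac{1}{N}\,\lambda_{j,\varepsilon}^{-\frac{N+2}{2}}\int_{B(0,\lambda_{j,\varepsilon}d)}|z|^2\,U_{0,1}^{2^*-1}\,dz
\sim \lambda_\varepsilon^{-\frac{N+2}{2}}\ln\lambda_\varepsilon,
\]
since $|z|^2U_{0,1}^{2^*-1}\sim|z|^{-N}$. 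The crude pointwise bound therefore gives only $O(\lambda_\varepsilon^{-(N+2)/2}\ln\lambda_\varepsilon)$, which is strictly larger than the stated error $O(\lambda_\varepsilon^{-(N+2)/2})$ for $N\ge 6$ and for $N=5$, $q\in[7/3,10/3)$.

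The missing ingredient, which the paper invokes explicitly, is the harmonicity of the Green's function in the second variable: $\sum_{i=1}^N\partial_{y_iy_i}^2 G(x_{j,\varepsilon},x)=0$ for $x\notin B(x_{j,\varepsilon},d)$. After writing $u_\varepsilon^{2^*-1}=U_{x_{j,\varepsilon},\lambda_{j,\varepsilon}}^{2^*-1}+\text{(error)}$, the leading diagonal integrals are all equal by radial symmetry, so their weighted sum $\sum_i\partial_{y_iy_i}^2G\cdot\int(y_i-\cdots)^2U^{2^*-1}$ vanishes identically; only the error pieces remain, and those are genuinely $O(\lambda_\varepsilon^{-(N+2)/2})$ by the same computations as in the proof of Lemma \ref{lem of estimate of u 3}. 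Insert this cancellation and your argument becomes correct.
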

\begin{proof}
First  for any $x\in \Omega\backslash\displaystyle\bigcup^n_{j=1}B(x_{j,\varepsilon},2d)$, we have
\begin{equation}\label{prop estimate for u proof-1}
\begin{split}
u_\varepsilon(x)=& \int_{\Omega}G(y,x)
\big(u_{\varepsilon}^{2^*-1}(y)+\varepsilon u_{\varepsilon}^{q-1} (y)\big) dy\\=&
\sum^n_{j=1}\int_{B(x_{j,\varepsilon},d)}G(y,x)
u_{\varepsilon}^{2^*-1}(y)dy+\int_{\Omega\backslash\bigcup^n_{j=1}B(x_{j,\varepsilon},d)}G(y,x)
u_{\varepsilon}^{2^*-1}(y)dy\\&
+ \varepsilon \sum^n_{j=1}\int_{B(x_{j,\varepsilon},d)}G(y,x)u_{\varepsilon}^{q-1} (y)dy+\varepsilon \int_{\Omega\backslash\bigcup^n_{j=1}B(x_{j,\varepsilon},d)}G(y,x)u_{\varepsilon}^{q-1} (y)dy.
\end{split}
\end{equation}
By Taylor's expansion, the first term in \eqref{prop estimate for u proof-1} can be rewrite as
\begin{equation}\label{prop estimate for u proof-2}
\begin{split}
\int_{B(x_{j,\varepsilon},d)} &G(y,x) u_{\varepsilon}^{2^*-1}(y) dy\\=&
  G(x_{j,\varepsilon},x)\int_{B(x_{j,\varepsilon},d)}
  u_{\varepsilon}^{2^*-1}+\sum^N_{i=1}\partial_{y_i} G(x_{j,\varepsilon},x)
\int_{B(x_{j,\varepsilon},d)}\big(y_i-(x_{j,\varepsilon})_{i}\big)  u_{\varepsilon}^{2^*-1}(y) dy\\&
+\sum^N_{i=1}\sum^N_{l=1}\partial^2_{y_i,y_l} G(x_{j,\varepsilon},x) \int_{B(x_{j,\varepsilon},d)}\big(y_i-(x_{j,\varepsilon})_{i}\big)
\big(y_l-(x_{j,\varepsilon})_{l}\big)  u_{\varepsilon}^{2^*-1}(y) dy\\&
+O\Big(\int_{B(x_{j,\varepsilon},d)} |y-x_{j,\varepsilon}|^3 u_{\varepsilon}^{2^*-1}(y) dy\Big).
\end{split}
\end{equation}
Hence, from Lemma \ref{lem of estimate of u 1}-Lemma \ref{lem third taylor} and the fact that $\displaystyle\sum^N_{i=1}\partial^2_{y_i,y_i} G(x_{j,\varepsilon},x) =0$ for $x\in \Omega\backslash B(x_{j,\varepsilon},d)$, we have
\begin{equation}
    \int_{B(x_{j,\varepsilon},d)}G(y,x) u_{\varepsilon}^{2^*-1}(y) dy=A\frac{G(x_{j,\varepsilon},x)}{\lambda_{j,\varepsilon}^{\frac{N-2}{2}}}+\begin{cases}
     O\left(\frac{1}{\lambda_{\varepsilon}^{\frac{3}{2}}}\right), &\text{~~if~~} N=3\text{~and~} q\in (4,6),\\

         O\left(\frac{1}{\lambda_{\varepsilon}^{2q-2}}\right), &\text{~~if~~} N=4 \text{~and~} q\in (2,\frac{5}{2}),\\
          O\left(\frac{\ln\lambda_{\varepsilon}}{\lambda_{\varepsilon}^{3}}\right), &\text{~~if~~} N=4 \text{~and~} q\in[\frac{5}{2},4),\\

     O\left(\frac{1}{\lambda_{\varepsilon}^{\frac{N+2}{2}}}\right), &\text{~~if~~} N\geq 5\text{~and~} q\in (2,2^*),
 \end{cases}
\end{equation}
where $A$ is the  constant defined in \eqref{definition of A and B}.
Next for $x\in \Omega\backslash\displaystyle\bigcup^n_{j=1}B_{2d}(x_{j,\varepsilon})$, from Lemma \ref{lem estimate of u q-1}, we have
\begin{equation}\label{prop estimate for u proof-5}
\begin{split}
 \varepsilon\int_{B(x_{j,\varepsilon},d)}G(y,x) u_{\varepsilon}^{q-1} (y) dy &=O\Big(\varepsilon
 \int_{B(x_{j,\varepsilon},d)} u_{\varepsilon}^{q-1} (y) dy\Big)
 \\&=\begin{cases}
            O\left(\frac{1}{\lambda_{\varepsilon}^{\frac{2(N-2)q-(N+2)}{2}}}\right),&\text{~~if~~}q\in\left(2,\frac{2N-2}{N-2}\right),\\
            O\left(\frac{\ln\lambda_{\varepsilon}}{\lambda_{\varepsilon}^{\frac{3N-6}{2}}}\right),&\text{~~if~~}q=\frac{2N-2}{N-2},\\
            O\left(\frac{1}{\lambda_{\varepsilon}^{\frac{3N-6}{2}}}\right),&\text{~~if~~}q\in\left(\frac{2N-2}{N-2},2^*\right).\\
        \end{cases}
\end{split}
\end{equation}
Finally, it holds that
\begin{equation}
\begin{aligned}
     \int_{\Omega\backslash\bigcup^n_{j=1}B(x_{j,\varepsilon},d)}&G(y,x)
u_{\varepsilon}^{2^*-1}(y)dy+\varepsilon\int_{\Omega\backslash\bigcup^n_{j=1}B(x_{j,\varepsilon},d)}G(y,x)
u_{\varepsilon}^{q-1}(y)dy\\
&\lesssim \frac{1}{\lambda_{\varepsilon}^{\frac{N+2}{2}}}+\varepsilon\frac{1}{\lambda_{\varepsilon}^{\frac{N-2}{2}(q-1)}}\lesssim\frac{1}{\lambda_{\varepsilon}^{\frac{N+2}{2}}}+\frac{1}{\lambda_{\varepsilon}^{(N-2)q-\frac{N+2}{2}}}.
\end{aligned}  
\end{equation}
Combining all the estimates above
\begin{equation}\label{prop estimate for u proof-8}
    u_{\varepsilon}=A\left(\sum_{j=1}^{n}\frac{G(x_{j,\varepsilon},x)}{\lambda_{j,\varepsilon}^{\frac{N-2}{2}}}\right)+\begin{cases}
     O\left(\frac{1}{\lambda_{\varepsilon}^{\frac{3}{2}}}\right), &\text{~~if~~} N=3\text{~and~} q\in (4,6),\\

         O\left(\frac{1}{\lambda_{\varepsilon}^{2q-3}}\right), &\text{~~if~~} N=4 \text{~and~} q\in(2,3),\\
          O\left(\frac{\ln\lambda_{\varepsilon}}{\lambda_{\varepsilon}^{3}}\right), &\text{~~if~~} N=4 \text{~and~} q\in[3,4),\\
     
      O\left(\frac{1}{\lambda_{\varepsilon}^{\frac{6q-7}{2}}}\right), &\text{~~if~~} N=5\text{~and~} q\in (2,\frac{7}{3}),\\
     O\left(\frac{1}{\lambda_{\varepsilon}^{\frac{7}{2}}}\right), &\text{~~if~~} N=5\text{~and~} q\in [\frac{7}{3},\frac{10}{3}),\\
     O\left(\frac{1}{\lambda_{\varepsilon}^{\frac{N+2}{2}}}\right), &\text{~~if~~} N\geq 6\text{~and~} q\in (2,2^*),
 \end{cases}
\end{equation}
$\text{in~}C(\Omega\setminus\cup_{j=1}^{n}B(x_{j,\varepsilon},2d))$. Notice that
\begin{equation}
    \begin{split}
\nabla u_\varepsilon(x)= \int_{\Omega}\nabla_{x}G(y,x)
\big(u_{\varepsilon}^{2^*-1}(y)+\varepsilon u_{\varepsilon}^{q-1} (y)\big) dy,
\end{split}
\end{equation}
thus we can prove that \eqref{prop estimate for u proof-8} is still hold $\text{in~}C^{1}(\Omega\setminus\cup_{j=1}^{n}B(x_{j,\varepsilon},2d))$ similarly.
\end{proof}


\begin{Prop}\label{Prop blow up rate} Let $u_{\varepsilon}$ be a family of solutions of \eqref{p-varepsion} with decomposition \eqref{decomposition of u},  then  ${M}(\vec{a})$  is a non-negative matrix. Moreover if  ${M}(\vec{a})$ is a positive matrix, then (up to a subsequence)
\begin{equation}\label{Prop blow up rate 1}
\lambda_j:=\lim_{\varepsilon\rightarrow 0}\big(\varepsilon ^{\frac{2}{(N-2)q-4}}\lambda_{j,\varepsilon}\big)^{-1},\text{~~for~~}j=1,\cdots,n,
\end{equation}
exists and lies in $(0,\infty)$ and
\begin{equation}\label{Prop blow up rate 2}
\nabla_{\vec{\lambda}} \Phi_n(\vec{a},\vec{\lambda})=\vec{0}~\mbox{with}~\vec{a}=(a_1,\cdots,a_{n})~\mbox{and}~ \vec{\lambda}=(\lambda_{1},\cdots,\lambda_{n}).
\end{equation}
Furthermore, if $q\geq 2^*-1$, then $\vec{\lambda}$ is the unique critical point of $\Phi_{n}(\vec{a},\cdot)$, which is also a minimizer.
\end{Prop}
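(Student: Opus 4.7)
The strategy is to apply a local Poho\v zaev identity on $B(x_{j_0,\varepsilon},d)$ for each $j_0\in\{1,\dots,n\}$, combined with the $C^1_{\mathrm{loc}}$ asymptotic expansion of $\lambda_{1,\varepsilon}^{(N-2)/2}u_\varepsilon$ supplied by Theorem \ref{thm multibble blowup}(h) and Proposition \ref{prop estimate for u}. Testing the equation against the vector field $(y-x_{j_0,\varepsilon})\!\cdot\!\nabla u_\varepsilon+\tfrac{N-2}{2}u_\varepsilon$, integration by parts together with the scale invariance of $u^{2^*}$ cancels the critical bulk contribution, leaving the subcritical bulk term $\varepsilon\,\frac{2N-(N-2)q}{2q}\int_{B(x_{j_0,\varepsilon},d)}u_\varepsilon^{q}$ on the left-hand side. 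After multiplying by $\lambda_{1,\varepsilon}^{N-2}$ and invoking Lemma \ref{lem estimate of q refined}, which yields $\int_{B(x_{j_0,\varepsilon},d)}u_\varepsilon^{q}=B\lambda_{j_0,\varepsilon}^{(N-2)q/2-N}+(\text{negligible})$, the leading order of the left-hand side is identified as $\tau_{j_0}\mu_{j_0}^{2}\,B\,\frac{2N-(N-2)q}{2q}$, where $\tau_{j_0,\varepsilon}:=\varepsilon\lambda_{j_0,\varepsilon}^{(N-2)q/2-2}$ is bounded by Theorem \ref{thm multibble blowup}(f).

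For the Poho\v zaev boundary on the right, I would pass to the limit via $\lambda_{1,\varepsilon}^{(N-2)/2}u_\varepsilon\to A\sum_j\mu_j G(a_j,\cdot)$ on $\partial B(a_{j_0},d)$. Near $a_{j_0}$ this limit splits into its singular piece $\tfrac{A\mu_{j_0}}{(N-2)\omega_N|x-a_{j_0}|^{N-2}}$ and a regular remainder whose value at $a_{j_0}$ equals $-A(M(\vec a)\vec\mu)_{j_0}$, because $-\mu_{j_0}R(a_{j_0})+\sum_{j\neq j_0}\mu_j G(a_{j_0},a_j)=-(M(\vec a)\vec\mu)_{j_0}$. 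A direct computation (in the same spirit as the one appearing in the proof of Proposition \ref{prop ruling out bubble accumulations}) shows that the Poho\v zaev boundary terms evaluated on this expansion converge, first as $\varepsilon\to 0$ and then as $d\to 0$, to $\tfrac{(N-2)A^2}{2}\mu_{j_0}(M(\vec a)\vec\mu)_{j_0}$. Matching both sides yields the key identity
\begin{equation*}
\tau_{j_0}\,\mu_{j_0}\,\frac{B(2^*-q)}{q}\;=\;A^2\bigl(M(\vec a)\vec\mu\bigr)_{j_0},\qquad j_0=1,\dots,n.
\end{equation*}

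From this identity, non-negativity of $M(\vec a)$ is immediate: $\tau_{j_0,\varepsilon},\mu_{j_0,\varepsilon}>0$ force $M(\vec a)\vec\mu\ge 0$ componentwise, and testing against the strictly positive Perron eigenvector $\Lambda(\vec a)$ gives $\rho(\vec a)\langle\Lambda(\vec a),\vec\mu\rangle\ge 0$, hence $\rho(\vec a)\ge 0$. When $\rho(\vec a)>0$ the quadratic form $\langle\vec\mu,M(\vec a)\vec\mu\rangle$ is strictly positive, so at least one $\tau_{j_0}>0$; the purely algebraic relation $\tau_{j,\varepsilon}/\tau_{j_0,\varepsilon}=(\lambda_{j,\varepsilon}/\lambda_{j_0,\varepsilon})^{(N-2)q/2-2}\to(\mu_{j_0}/\mu_j)^{q-4/(N-2)}$ (finite and positive by Theorem \ref{thm multibble blowup}(g)) then propagates positivity to every index, and $\lambda_j:=\tau_j^{-2/((N-2)q-4)}\in(0,\infty)$ exists. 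Substituting $\mu_j=\lambda_1^{-(N-2)/2}\lambda_j^{(N-2)/2}$ into the key identity and cancelling the common factor $\lambda_1^{-(N-2)/2}$ reproduces exactly $\partial_{\lambda_{j_0}}\Phi_n(\vec a,\vec\lambda)=0$.

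Finally, for the minimizing and uniqueness statement when $q\ge 2^*-1$, I would change variables to $t_j:=\lambda_j^{(N-2)/2}$, recasting $\Phi_n(\vec a,\cdot)$ as $\tfrac{A^2}{2}\langle\vec t,M(\vec a)\vec t\rangle-\tfrac{B}{q}\sum_j t_j^{2^*-q}$; the first term is strictly convex since $M(\vec a)$ is positive definite, and the second is convex on $(0,\infty)^n$ because $2^*-q\le 1$ makes $-t^{2^*-q}$ convex. Combined with the coercivity as $|\vec t|\to\infty$, this gives a unique critical point that is automatically the global minimizer. The main technical obstacle is the careful Poho\v zaev boundary computation: the three individually divergent contributions produced by the singular term $|x-a_{j_0}|^{-(N-2)}$ must cancel exactly, leaving only the cross-interaction with the regular constant to generate the $M(\vec a)\vec\mu$ factor, and one must verify that the remainder estimates from Theorem \ref{thm decomposition} and Proposition \ref{prop estimate for u} stay strictly below the surviving contribution in every dimension and every admissible range of $q$.
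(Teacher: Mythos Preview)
Your proposal is correct and follows essentially the same route as the paper: local Poho\v zaev identity on $B(x_{j_0,\varepsilon},\theta)$, evaluation of the subcritical bulk via Lemma \ref{lem estimate of q refined}, expansion of the boundary term through the Green-function asymptotics, testing against the Perron eigenvector to obtain $\rho(\vec a)\ge 0$, and the convexity argument for uniqueness after the change of variables $t_j=\lambda_j^{(N-2)/2}$.

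The only technical difference worth noting is in the treatment of the Poho\v zaev boundary term. You propose to expand near the singularity and pass to the double limit $\varepsilon\to 0$ then $d\to 0$, relying on the cancellation of the three divergent pieces produced by $|x-a_{j_0}|^{2-N}$. The paper instead inserts the quantitative expansion of Proposition \ref{prop estimate for u} at finite $\varepsilon$ and uses the bilinear form $P(\cdot,\cdot)$ of \eqref{defin for quadratic P}, which by Lemma \ref{propertity of P Q} is independent of the radius $\theta$ when both entries are harmonic in the punctured ball; the explicit values $P(G(x_m,\cdot),G(x_l,\cdot))$ are then read off directly from Lemma \ref{lem for estimate of quadractic form}. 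This avoids the $d\to 0$ limit altogether and yields the identity \eqref{Prop blow up rate proof 5} with a quantitative remainder $O(\mathrm{F_{N,q}}(\lambda_\varepsilon))$, which is reused later in Proposition \ref{prop blow up rate 2} for the concentration-speed estimates. Your approach recovers the same leading-order identity and suffices for the present proposition, but the paper's formulation packages the computation more efficiently and feeds directly into the subsequent quantitative analysis.
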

\begin{proof}
First, from Corollary \ref{cor of the decomposition}, we can obtain that for any $j=1,\cdots,n$
\begin{equation}\label{Prop blow up rate proof 1}
   \Lambda_{j,\varepsilon}:=\big(\varepsilon ^{\frac{2}{(N-2)q-4}}\lambda_{j,\varepsilon}\big)^{-1}\geq C>0.
\end{equation}
Thus we only need to prove that $\Lambda_{j,\varepsilon}$ have a upper bound. Next, we use the local Poho\v{z}aev identity \eqref{pohozaev identity 1} for $u_{\varepsilon}$ with $\Omega=B(x_{j,\varepsilon},\theta)$ and $\theta>0$ small enough. From Proposition \ref{prop estimate for u}, the left-hand side of local Poho\v{z}aev identity \eqref{pohozaev identity 1} is
\begin{equation}\label{Prop blow up rate proof 2}
\begin{split}
 A^{2}\sum^n_{l=1}\sum^n_{m=1}\frac{P\big(G(x_{m,\varepsilon},x),G(x_{l,\varepsilon},x)\big)}
{\lambda^{(N-2)/2}_{m,\varepsilon}\lambda^{(N-2)/2}_{l,\varepsilon}} +\begin{cases}
     O\left(\frac{1}{\lambda_{\varepsilon}^{2}}\right), &\text{~~if~~} N=3\text{~and~} q\in (4,6),\\
         O\left(\frac{1}{\lambda_{\varepsilon}^{2q-2}}\right), &\text{~~if~~} N=4 \text{~and~} q\in(2,3),\\
          O\left(\frac{\ln\lambda_{\varepsilon}}{\lambda_{\varepsilon}^{4}}\right), &\text{~~if~~} N=4 \text{~and~} q\in[3,4),\\ 
      O\left(\frac{1}{\lambda_{\varepsilon}^{3q-2}}\right), &\text{~~if~~} N=5\text{~and~} q\in (2,\frac{7}{3}),\\
     O\left(\frac{1}{\lambda_{\varepsilon}^{5}}\right), &\text{~~if~~} N=5\text{~and~} q\in [\frac{7}{3},\frac{10}{3}),\\
     O\left(\frac{1}{\lambda_{\varepsilon}^{N}}\right), &\text{~~if~~} N\geq 6\text{~and~} q\in (2,2^*),
 \end{cases}
\end{split}
\end{equation}
where $A$ is the constant defined in \eqref{definition of A and B} and $P(\cdot,\cdot)$ is the quadratic form defined in \eqref{defin for quadratic P}. Moreover, using Lemma \ref{lem for estimate of quadractic form}, then \eqref{Prop blow up rate proof 2} now becomes

\begin{equation}\label{Prop blow up rate proof 3}
 -\frac{A^{2}(N-2)}{2}\left(\frac{ R(x_{j,\varepsilon})}{\lambda^{N-2}_{j,\varepsilon}}
-\sum^n_{l\neq j}\frac{G(x_{j,\varepsilon},x_{l,\varepsilon})}{\lambda^{(N-2)/2}_{j,\varepsilon}\lambda^{(N-2)/2}_{l,\varepsilon}}\right)+\begin{cases}
     O\left(\frac{1}{\lambda_{\varepsilon}^{2}}\right), &\text{~~if~~} N=3\text{~and~} q\in (4,6),\\
         O\left(\frac{1}{\lambda_{\varepsilon}^{2q-2}}\right), &\text{~~if~~} N=4 \text{~and~} q\in(2,3),\\
          O\left(\frac{\ln\lambda_{\varepsilon}}{\lambda_{\varepsilon}^{4}}\right), &\text{~~if~~} N=4 \text{~and~} q\in[3,4),\\ 
      O\left(\frac{1}{\lambda_{\varepsilon}^{3q-2}}\right), &\text{~~if~~} N=5\text{~and~} q\in (2,\frac{7}{3}),\\
     O\left(\frac{1}{\lambda_{\varepsilon}^{5}}\right), &\text{~~if~~} N=5\text{~and~} q\in [\frac{7}{3},\frac{10}{3}),\\
     O\left(\frac{1}{\lambda_{\varepsilon}^{N}}\right), &\text{~~if~~} N\geq 6\text{~and~} q\in (2,2^*).
 \end{cases}
\end{equation}
On the other hand, by Proposition \ref{prop estimate for u} and Lemma \ref{lem estimate of q refined}, the right hand side of local Poho\v{z}aev identity \eqref{pohozaev identity 1} becomes
\begin{equation}\label{Prop blow up rate proof 4}
\begin{aligned}
     -\varepsilon\frac{2N-(N-2)q}{2q}\frac{B}{\lambda_{j,\varepsilon}^{N-\frac{N-2}{2}q}}&+ O\left(\frac{1}{\lambda_{\varepsilon}^{N}}+\frac{1}{\lambda_{\varepsilon}^{(N-2)q-2}}\right)\\
     &+\begin{cases}
      O\left(\frac{1}{\lambda_{j,\varepsilon}^{2}}\right),&\text{~~if~~}N=3\text{~and~}q\in(4,6),\\
            O\left(\frac{\ln\lambda_{\varepsilon}}{\lambda_{\varepsilon}^{2N-4}}\right),&\text{~~if~~}4\leq N\leq 6\text{~and~}q\in\left[\frac{2N-2}{N-2},2^*\right),\\
            
      O\left(\frac{\ln\lambda_{\varepsilon}}{\lambda_{\varepsilon}^{\frac{3N-2}{2}}}\right), &\text{~~if~~} 7\leq N\leq 9\text{~and~} q\in [\frac{3N+2}{2N-4},2^*),\\
O\left(\frac{\ln\lambda_{\varepsilon}}{\lambda_{\varepsilon}^{\frac{3N-2}{2}}}\right), &\text{~~if~~} N\geq 10\text{~and~} q\in (2,2^*),
\end{cases} 
\end{aligned}
\end{equation}
where $B$ is the constant defined in \eqref{definition of A and B}. Thus by \eqref{Prop blow up rate proof 3} and \eqref{Prop blow up rate proof 4}, we have
\begin{equation}\label{Prop blow up rate proof 5}
    \frac{ R(x_{j,\varepsilon})}{\lambda^{N-2}_{j,\varepsilon}}
-\sum^n_{l\neq j}\frac{G(x_{j,\varepsilon},x_{l,\varepsilon})}{\lambda^{(N-2)/2}_{j,\varepsilon}\lambda^{(N-2)/2}_{l,\varepsilon}}=\frac{2^*-q}{q}\frac{B}{A^{2}}\frac{\varepsilon}{\lambda_{j,\varepsilon}^{N-\frac{N-2}{2}q}}+o\left(\frac{1}{\lambda_{\varepsilon}^{N-2}}\right).
\end{equation}
Now we define
$\Lambda_{\varepsilon}=\max\big\{\Lambda_{j,\varepsilon}, j=1,\cdots,n\big\}$. Then by \eqref{Prop blow up rate proof 5}, we can obtain that
 \begin{equation}\label{Prop blow up rate proof 6}
 \begin{split}
 \Lambda^{N-2}_{j,\varepsilon} R(x_{j,\varepsilon})
-\sum^n_{l\neq j}\Lambda^{(N-2)/2}_{j,\varepsilon}\Lambda^{(N-2)/2}_{l,\varepsilon}G(x_{j,\varepsilon},x_{l,\varepsilon})
=\frac{(2^*-q)B}{qA^{2}}\Lambda^{N-\frac{N-2}{2}q}_{j,\varepsilon}+o\Big((\Lambda_{\varepsilon})^{N-2}\Big).
\end{split}
\end{equation}
Hence
\begin{equation}\label{Prop blow up rate proof 7}
 \Big({M}(\vec{x}_\varepsilon)+o(1)\Big)
\vec{\mu}_{n,\varepsilon}^T=\frac{(2^*-q)B}{qA^2} \big(\Lambda^{\frac{(N+2)-(N-2)q}{2}}_{1,\varepsilon},\cdots,
\Lambda^{\frac{(N+2)-(N-2)q}{2}}_{k,\varepsilon}\big)^T,
\end{equation}
where $\vec{\mu}_{n,\varepsilon}=\big(\Lambda_{1,\varepsilon}^{(N-2)/2},\cdots,\Lambda_{n,\varepsilon}^{(N-2)/2}
\big)$ and
$\vec{x}_{\varepsilon}=(x_{1,\varepsilon},\cdots,x_{n,\varepsilon})$.
Recall that the first eigenvector of a symmetric matrix may be chosen with all its components strictly positive (see Appendix A in \cite{Bahri1995}). So if $\rho(\vec{x}_{\varepsilon})$ is the first eigenvalue of ${M}(\vec{x}_{\varepsilon})$, then there exists a first eigenvector $\Lambda(\vec{x}_{\varepsilon})$ of ${M}(\vec{x}_{\varepsilon})$ such that all its components are strictly positive.
   Then $\eqref{Prop blow up rate proof 7}$ implies that
  \begin{equation}\label{Prop blow up rate proof 8}
 \Lambda(\vec{x}_{\varepsilon})\Big({M}(\vec{x}_\varepsilon)+o(1)\Big)
\vec{\mu}_{n,\varepsilon}^T=\frac{(2^*-q)B}{qA^2} \Lambda(\vec{x}_{\varepsilon}) \big(\Lambda^{\frac{(N+2)-(N-2)q}{2}}_{1,\varepsilon},\cdots,
\Lambda^{\frac{(N+2)-(N-2)q}{2}}_{n,\varepsilon}\big)^T> 0.
\end{equation}
Note that
  \begin{equation}\label{Prop blow up rate proof 9}
 \rho(\vec{x}_{\varepsilon}) \langle\Lambda(\vec{x}_{\varepsilon}),
\vec{\mu}_{n,\varepsilon}^T\rangle= \langle\Lambda(\vec{x}_{\varepsilon}), {M}(\vec{x}_{\varepsilon})\cdot
\vec{\mu}_{n,\varepsilon}^T\rangle~\mbox{and}~\langle\Lambda(\vec{x}_{\varepsilon})
,\vec{\mu}_{n,\varepsilon}^T\rangle>0.
\end{equation}
which together with \eqref{Prop blow up rate proof 8}, we deduce that  $ \rho(\vec{a})\geq 0$ and ${M}(\vec{a})$ is a non-negative matrix.
Moreover, if ${M}(\vec{a})$ is a positive matrix, it implies that  $\Lambda_{j,\varepsilon}$ is bounded for $j=1,\cdots,n$ and then \eqref{Prop blow up rate 1} follows.
Moreover, from \eqref{Prop blow up rate proof 3}, \eqref{Prop blow up rate proof 4} and \eqref{Prop blow up rate 1}, we can obtain that
\begin{equation}\label{Prop blow up rate proof 10}
     \Lambda^{N-2}_{j,\varepsilon} R(x_{j,\varepsilon})
-\sum^n_{l\neq j}\Lambda^{(N-2)/2}_{j,\varepsilon}\Lambda^{(N-2)/2}_{l,\varepsilon}G(x_{j,\varepsilon},x_{l,\varepsilon})-\frac{(2^*-q)B}{qA^{2}}\Lambda^{N-\frac{N-2}{2}q}_{j,\varepsilon}=O(\mathrm{F_{N,q}}(\lambda_{\varepsilon})).
\end{equation}
where $\mathrm{F_{N,q}}(\lambda_{\varepsilon})$ defined in \eqref{definition of F-N-Q-lambda}. Letting $\varepsilon \rightarrow 0$ in \eqref{Prop blow up rate proof 10}, then by Lemma \ref{Lem derivate for reduce function}, we  can obtain \eqref{Prop blow up rate 2}. Finally, if $q\geq 2^*-1$, it's easy to check that $\partial^{2}_{\vec{\lambda}}\Psi_{n}(\vec{a},\vec{\lambda})$ is strictly positive definite for any $\vec{\lambda}$, where $\Psi_{n}(\vec{a},\vec{\lambda})$ is  defined by
\begin{equation}
     \Psi_{n}(\vec{x},\vec{\lambda}):=\frac{A^{2}}{2}\langle \vec{\lambda}, {M}(\vec{x})\vec{\lambda}\rangle-\frac{B}{q}\sum_{j=1}^{n} \lambda_{j}^{2^*-q},\quad (\vec{x},\vec{\lambda})\in \Omega_{*}^{n}\times(0,\infty)^{n} ,
 \end{equation}
thus $\Psi_{n}(\vec{a},\vec{\lambda})$ is convex in the variable $\vec{\lambda}$ in $(0,\infty)^{n}$, hence it has a unique critical point. Then by changing of variable, we deduce that $\Phi_{n}(\vec{a},\cdot)$ has a unique critical point in $(0,\infty)^{n}$. This completes the proof.
\end{proof}

\begin{Prop}\label{prop blow up rate 2}
Let $u_{\varepsilon}$ be a family of solutions of \eqref{p-varepsion} with decomposition \eqref{decomposition of u} and  ${M}(\vec{a})$ be a positive matrix. Then
\begin{equation}\label{prop blow up rate 2-1}
\nabla_x \Phi_n(\vec{a},\vec{\lambda})=0,~\mbox{with}~
 \vec{a}=(a_1,\cdots,a_{n})~ \mbox{and}~\vec{\lambda}=(\lambda_{1},\cdots,\lambda_{n}),
 \end{equation}
 where $\lambda_j:=\displaystyle\lim_{\varepsilon\rightarrow 0}\big(\varepsilon ^{\frac{2}{(N-2)q-4}}\lambda_{j,\varepsilon}\big)^{-1}$ for $j=1,\cdots,n$.
Moreover if $(\vec{a},\vec{\lambda})$ is a nondegenerate critical point of $\Phi_{n}$, then
for $j=1,\cdots,n$, 
\begin{equation}\label{prop blow up rate 2-2}
\big|x_{j,\varepsilon}-a_j\big|=O(\mathrm{F_{N,q}}(\lambda_{\varepsilon})),
 \end{equation}
 and
 \begin{equation}\label{prop blow up rate 2-3}
\big|\lambda_j- \big(\varepsilon ^{\frac{2}{(N-2)q-4}}\lambda_{j,\varepsilon}\big)^{-1}\big|=O(\mathrm{F_{N,q}}(\lambda_{\varepsilon})),
\end{equation}
where $\mathrm{F_{N,q}}(\lambda_{\varepsilon})$ defined in \eqref{definition of F-N-Q-lambda}.
\end{Prop}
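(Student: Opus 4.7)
The plan is to derive $\nabla_x\Phi_n(\vec{a},\vec{\lambda})=\vec 0$ by invoking a translation-type local Poho\v{z}aev identity on $B(x_{j,\varepsilon},\theta)$ for a fixed small $\theta>0$, which complements the dilation-type identity \eqref{pohozaev identity 1} used in the proof of Proposition \ref{Prop blow up rate}. Multiplying \eqref{p-varepsion} by $\partial_i u_\varepsilon$ and integrating over $B(x_{j,\varepsilon},\theta)$ yields
\begin{equation*}
\int_{\partial B(x_{j,\varepsilon},\theta)}\Bigl(\tfrac12|\nabla u_\varepsilon|^2\nu_i-\partial_\nu u_\varepsilon\,\partial_i u_\varepsilon\Bigr)\,dS=\tfrac{1}{2^*}\int_{\partial B}u_\varepsilon^{2^*}\nu_i\,dS+\tfrac{\varepsilon}{q}\int_{\partial B}u_\varepsilon^{q}\nu_i\,dS.
\end{equation*}
Since $\partial B(x_{j,\varepsilon},\theta)$ lies in the far-field region $\Omega\setminus\bigcup_l B(x_{l,\varepsilon},2d)$ for a suitable choice of $\theta,d$, I would substitute the $C^1$-Green-function expansion from Proposition \ref{prop estimate for u} into the boundary integrals, noting that the two right-hand terms are already subdominant because $u_\varepsilon$ is of size $\lambda_\varepsilon^{-(N-2)/2}$ on $\partial B$.

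Carrying out the expansion by a quadratic-form computation analogous to Lemma \ref{lem for estimate of quadractic form} (but now in the translation direction $e_i$ rather than the scaling direction), the leading contribution of the left-hand side collapses to
\begin{equation*}
-\tfrac{A^{2}(N-2)}{2}\Bigl(\tfrac{\partial_i R(x_{j,\varepsilon})}{\lambda_{j,\varepsilon}^{N-2}}-2\sum_{l\neq j}\tfrac{\partial_{x_{j,i}}G(x_{j,\varepsilon},x_{l,\varepsilon})}{\lambda_{j,\varepsilon}^{(N-2)/2}\lambda_{l,\varepsilon}^{(N-2)/2}}\Bigr)+O\!\left(\tfrac{\mathrm{F_{N,q}}(\lambda_\varepsilon)}{\lambda_\varepsilon^{N-2}}\right),
\end{equation*}
while the subcritical boundary integral of $u_\varepsilon^q$ gives a contribution of strictly higher order after invoking Lemma \ref{lem estimate of q refined}. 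Rescaling by $\Lambda_{j,\varepsilon}:=(\varepsilon^{2/((N-2)q-4)}\lambda_{j,\varepsilon})^{-1}$ identifies this leading combination as a nonzero constant multiple of $\partial_{x_{j,i}}\Phi_n(\vec{x}_\varepsilon,\vec{\Lambda}_\varepsilon)$, since the subcritical piece $-\frac{B}{q}\sum_j\lambda_j^{(N-2)(2^*-q)/2}$ of $\Phi_n$ has no $\vec x$-dependence. Letting $\varepsilon\to 0$ and using $\vec{x}_\varepsilon\to\vec{a}$, $\vec{\Lambda}_\varepsilon\to\vec{\lambda}$ from Proposition \ref{Prop blow up rate} yields \eqref{prop blow up rate 2-1}.

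For the quantitative bounds \eqref{prop blow up rate 2-2}--\eqref{prop blow up rate 2-3}, I would combine the $\vec x$-Poho\v{z}aev estimate above with the $\vec\lambda$-identity \eqref{Prop blow up rate proof 10} to obtain
\begin{equation*}
\nabla\Phi_n(\vec{x}_\varepsilon,\vec{\Lambda}_\varepsilon)=O(\mathrm{F_{N,q}}(\lambda_\varepsilon)).
\end{equation*}
Under the nondegeneracy assumption the Hessian $D^2\Phi_n(\vec{a},\vec{\lambda})$ is invertible, so the first-order Taylor expansion
\begin{equation*}
\nabla\Phi_n(\vec{x}_\varepsilon,\vec\Lambda_\varepsilon)=D^2\Phi_n(\vec a,\vec\lambda)\bigl((\vec x_\varepsilon,\vec\Lambda_\varepsilon)-(\vec a,\vec\lambda)\bigr)+o\bigl(|(\vec x_\varepsilon,\vec\Lambda_\varepsilon)-(\vec a,\vec\lambda)|\bigr)
\end{equation*}
can be inverted to give $|(\vec x_\varepsilon,\vec\Lambda_\varepsilon)-(\vec a,\vec\lambda)|=O(\mathrm{F_{N,q}}(\lambda_\varepsilon))$, which is exactly \eqref{prop blow up rate 2-2}--\eqref{prop blow up rate 2-3}.

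The main obstacle will be the boundary asymptotic calculation for the translation Poho\v{z}aev identity: one has to match the quadratic form in partial derivatives of $G(x_{j,\varepsilon},\cdot)$ and $G(x_{l,\varepsilon},\cdot)$ against $\partial_{x_{j,i}}\Phi_n$ using the symmetry of $G$ and the standard relation linking $\nabla R$ with derivatives of $H$ along the diagonal, while keeping all remainders within $\mathrm{F_{N,q}}(\lambda_\varepsilon)$. A secondary technical point is that the bulk term $\varepsilon\int_B u_\varepsilon^{q-1}\partial_i u_\varepsilon$ produces, after integration by parts, a boundary integral of $u_\varepsilon^q$ whose size must be controlled by the sharp estimate of Lemma \ref{lem estimate of q refined} rather than by the coarser Lemma \ref{lem estimate of u q-1}; likewise, the error tracking requires the full $C^1$-refinement of Proposition \ref{prop estimate for u} on the boundary of $B(x_{j,\varepsilon},\theta)$.
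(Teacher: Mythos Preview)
Your approach is essentially the same as the paper's: apply the translation Poho\v{z}aev identity \eqref{pohozaev identity 2} on $B(x_{j,\varepsilon},\theta)$, expand the boundary terms via the $C^1$-Green-function asymptotics of Proposition \ref{prop estimate for u}, evaluate the resulting quadratic form $Q(\cdot,\cdot)$ using Lemma \ref{lem for estimate of quadractic form}, rescale by $\Lambda_{j,\varepsilon}$, pass to the limit for \eqref{prop blow up rate 2-1}, and then combine with \eqref{Prop blow up rate proof 10} and invoke nondegeneracy of $D^2\Phi_n(\vec a,\vec\lambda)$ for the quantitative bounds. Two small points: the leading constant in your display should be $A^2/2$ rather than $A^2(N-2)/2$ (the $Q$-form for translation does not carry the extra $(N-2)$ factor that the $P$-form for dilation does), and the control of the boundary term $\varepsilon\int_{\partial B}u_\varepsilon^q\nu_i$ comes directly from the far-field pointwise bound of Proposition \ref{prop estimate for u} on $\partial B$, not from Lemma \ref{lem estimate of q refined}.
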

\begin{proof}
We will use the local Poho\v{z}aev identity \eqref{pohozaev identity 2} for $u_{\varepsilon}$ with $\Omega=B(x_{j,\varepsilon},\theta)$ and $\theta>0$ small enough. From Proposition \ref{prop estimate for u} and Proposition \ref{Prop blow up rate}, the left-hand side of local Poho\v{z}aev identity \eqref{pohozaev identity 2} becomes
\begin{equation}\label{prop blow up rate 2 proof 1}
\begin{split}
\frac{ A^{2}}{2}\sum^n_{l=1}\sum^n_{m=1}\frac{Q\big(G(x_{m,\varepsilon},x),G(x_{l,\varepsilon},x)\big)}
{\lambda^{(N-2)/2}_{m,\varepsilon}\lambda^{(N-2)/2}_{l,\varepsilon}} +\begin{cases}
     O\left(\frac{1}{\lambda_{\varepsilon}^{2}}\right), &\text{~~if~~} N=3\text{~and~} q\in (4,6),\\
         O\left(\frac{1}{\lambda_{\varepsilon}^{2q-2}}\right), &\text{~~if~~} N=4 \text{~and~} q\in(2,3),\\
          O\left(\frac{\ln\lambda_{\varepsilon}}{\lambda_{\varepsilon}^{4}}\right), &\text{~~if~~} N=4 \text{~and~} q\in[3,4),\\ 
      O\left(\frac{1}{\lambda_{\varepsilon}^{3q-2}}\right), &\text{~~if~~} N=5\text{~and~} q\in (2,\frac{7}{3}),\\
     O\left(\frac{1}{\lambda_{\varepsilon}^{5}}\right), &\text{~~if~~} N=5\text{~and~} q\in [\frac{7}{3},\frac{10}{3}),\\
     O\left(\frac{1}{\lambda_{\varepsilon}^{N}}\right), &\text{~~if~~} N\geq 6\text{~and~} q\in (2,2^*),
 \end{cases}
\end{split}
\end{equation}
where $A$ is the constant in \eqref{definition of A and B} and $Q(\cdot,\cdot)$ is the quadratic form defined in \eqref{defin for quadratic Q}. Moreover, from Lemma \ref{lem for estimate of quadractic form},  \eqref{prop blow up rate 2 proof 1} reduces to
\begin{equation}\label{prop blow up rate 2 proof 2}
\begin{aligned}
   -\frac{A^{2}}{2\lambda_{j,\varepsilon}^{N-2}}\frac{\partial R(x_{j,\varepsilon})}{
\partial{x_i}}&+\displaystyle\sum^n_{l=1,l\neq j}
\frac{A^{2}}{\lambda^{(N-2)/2}_{j,\varepsilon}\lambda^{(N-2)/2}_{l,\varepsilon}}
\frac{\partial G(x_{j,\varepsilon},x_{l,\varepsilon})}{\partial x_i}\\
&+\begin{cases}
     O\left(\frac{1}{\lambda_{\varepsilon}^{2}}\right), &\text{~~if~~} N=3\text{~and~} q\in (4,6),\\
         O\left(\frac{1}{\lambda_{\varepsilon}^{2q-2}}\right), &\text{~~if~~} N=4 \text{~and~} q\in(2,3),\\
          O\left(\frac{\ln\lambda_{\varepsilon}}{\lambda_{\varepsilon}^{4}}\right), &\text{~~if~~} N=4 \text{~and~} q\in[3,4),\\ 
      O\left(\frac{1}{\lambda_{\varepsilon}^{3q-2}}\right), &\text{~~if~~} N=5\text{~and~} q\in (2,\frac{7}{3}),\\
     O\left(\frac{1}{\lambda_{\varepsilon}^{5}}\right), &\text{~~if~~} N=5\text{~and~} q\in [\frac{7}{3},\frac{10}{3}),\\
     O\left(\frac{1}{\lambda_{\varepsilon}^{N}}\right), &\text{~~if~~} N\geq 6\text{~and~} q\in (2,2^*).
 \end{cases}
\end{aligned}
\end{equation}
where $i=1,\cdots,N$. On the other hand, from Proposition \ref{prop estimate for u} and Proposition \ref{Prop blow up rate}, the right-hand side of local Poho\v{z}aev identity \eqref{pohozaev identity 2} can be estimated as
\begin{equation}\label{prop blow up rate 2 proof 3}
    O\Big(\frac{1}{\lambda^N_{\varepsilon}}+\frac{1}{\lambda_{\varepsilon}^{(N-2)q-2}}\Big).
\end{equation}
Then \eqref{prop blow up rate 2 proof 2} and \eqref{prop blow up rate 2 proof 3} imply that
\begin{equation}\label{prop blow up rate 2 proof 4}
\begin{aligned}
    \frac{1}{2\lambda_{j,\varepsilon}^{N-2}}\frac{\partial R(x_{j,\varepsilon})}{
\partial{x_i}}&-\displaystyle\sum^n_{l=1,l\neq j}
\frac{1}{\lambda^{(N-2)/2}_{j,\varepsilon}\lambda^{(N-2)/2}_{l,\varepsilon}}
\frac{\partial G(x_{j,\varepsilon},x_{l,\varepsilon})}{\partial x_i}\\
&=
 \begin{cases}
     O\left(\frac{1}{\lambda_{\varepsilon}^{2}}\right), &\text{~~if~~} N=3\text{~and~} q\in (4,6),\\
         O\left(\frac{1}{\lambda_{\varepsilon}^{2q-2}}\right), &\text{~~if~~} N=4 \text{~and~} q\in(2,3),\\
          O\left(\frac{\ln\lambda_{\varepsilon}}{\lambda_{\varepsilon}^{4}}\right), &\text{~~if~~} N=4 \text{~and~} q\in[3,4),\\ 
      O\left(\frac{1}{\lambda_{\varepsilon}^{3q-2}}\right), &\text{~~if~~} N=5\text{~and~} q\in (2,\frac{7}{3}),\\
     O\left(\frac{1}{\lambda_{\varepsilon}^{5}}\right), &\text{~~if~~} N=5\text{~and~} q\in [\frac{7}{3},\frac{10}{3}),\\
     O\left(\frac{1}{\lambda_{\varepsilon}^{N}}\right), &\text{~~if~~} N\geq 6\text{~and~} q\in (2,2^*).
 \end{cases} 
\end{aligned}
\end{equation}
Let $\Lambda_{j,\varepsilon}:=\Big(\varepsilon ^{\frac{2}{(N-2)q-4}}\lambda_{j,\varepsilon}\Big)^{-1}$,
 we can  rewrite \eqref{prop blow up rate 2 proof 4}  as follows
\begin{equation}\label{prop blow up rate 2 proof 5}
     \frac{\Lambda_{j,\varepsilon}^{N-2}}{2}\frac{\partial R(x_{j,\varepsilon})}{
\partial{x_i}}-\displaystyle\sum^n_{l=1,l\neq j}
\Lambda_{j,\varepsilon}^{(N-2)/2}\Lambda_{l,\varepsilon}^{(N-2)/2}
\frac{\partial G(x_{j,\varepsilon},x_{l,\varepsilon})}{\partial x_i}=O(\mathrm{F_{N,q}}(\lambda_{\varepsilon})).
\end{equation}
Then taking  $\varepsilon\rightarrow 0$ in \eqref{prop blow up rate 2 proof 5}, then from Lemma \ref{Lem derivate for reduce function}, we know that \eqref{prop blow up rate 2-1} hold.
Moreover by the assumption that $(\vec{a},\vec{\lambda})$ is a nondegenerate critical point of $\Phi_{n}$, we can obtain \eqref{prop blow up rate 2-2} and \eqref{prop blow up rate 2-3} from \eqref{prop blow up rate 2 proof 5} and \eqref{Prop blow up rate proof 10}.
\end{proof}

In the following, we consider the case $\rho(\vec{a})=0$. First, we introduce the vector $\vec{\mu}_{\varepsilon}\in \R^{n}$ with components
\begin{equation}\label{prop blow-up rate 3 proof-3}
    (\vec{\mu}_{\varepsilon})_{i}:=\mu_{i,\varepsilon}:=\left(\frac{\lambda_{1,\varepsilon}}{\lambda_{i,\varepsilon}}\right)^{\frac{N-2}{2}}\text{~~with}~~i=1,\cdots,n,
\end{equation}
then by Corollary \ref{cor of the decomposition}, we know that $\mu_{i,\varepsilon}\in(0,\infty)$. In addition, we also need the matrix 
$\tilde{M}^{l}(\vec{x})\in\R^{n\times n}=(\tilde{m}^{l}_{i,j})_{i,j=1}^{n}$ with $l=1,\cdots,N$ and entries
\begin{equation}\label{derive of m ij definition}
    \tilde{m}^{l}_{i,j}(\vec{x})=\begin{cases}
        \partial_{l}R(x_{i}),&\quad\text{~for~} i=j,\\
        -2\partial_{l}^{x}G(x_{i},x_{j}),&\quad\text{~for~} i\neq j.
    \end{cases}
\end{equation}
Recall the definition of matrix $M(\vec{x})$ is given in \eqref{m ij definition}. 
\begin{Prop}\label{prop blow-up rate 3}
 Let $u_{\varepsilon}$ be a family of solutions of \eqref{p-varepsion} with decomposition \eqref{decomposition of u}. If $\rho(\vec{a})=0$, then $\vec{\mu}_0 = \Lambda(\vec{a})$ and
\begin{equation}
\lim_{\varepsilon\to0}\varepsilon\lambda_{j,\varepsilon}^{\frac{N-2}{2}q-2}=0,\text{~~for~~}j=1,\cdots,n.
\end{equation}    
\end{Prop}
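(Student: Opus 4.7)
The starting point is the asymptotic identity \eqref{Prop blow up rate proof 6}, derived in the proof of Proposition \ref{Prop blow up rate} without invoking positivity of $M(\vec{a})$. My plan is to rescale this identity so that the ratios $\mu_{j,\varepsilon}$ and the unknown blow-up speed $\varepsilon\lambda_{1,\varepsilon}^{(N-2)q/2-2}$ appear explicitly, and then to probe it with the first eigenvector $\Lambda(\vec{x}_\varepsilon)$ of $M(\vec{x}_\varepsilon)$ to exploit the hypothesis $\rho(\vec{a})=0$.

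Using $\Lambda_{j,\varepsilon}^{(N-2)/2}=\Lambda_{1,\varepsilon}^{(N-2)/2}\mu_{j,\varepsilon}$, dividing \eqref{Prop blow up rate proof 6} by $\Lambda_{1,\varepsilon}^{N-2}$ (comparable to $\Lambda_\varepsilon^{N-2}$ thanks to the comparability of all $\lambda_{j,\varepsilon}$ from Theorem \ref{thm multibble blowup}(g)), and dividing out the factor $\mu_{j,\varepsilon}$ (uniformly bounded below), one reaches a vector identity of the form
\begin{equation*}
M(\vec{x}_\varepsilon)\vec{\mu}_\varepsilon=\frac{(2^*-q)B}{qA^{2}}\,\varepsilon\lambda_{1,\varepsilon}^{\frac{(N-2)q}{2}-2}\,\vec{v}_\varepsilon+o(1),
\end{equation*}
with $\vec{v}_\varepsilon$ having $j$-th component $\mu_{j,\varepsilon}^{((N+2)-(N-2)q)/(N-2)}$, hence strictly positive entries bounded above and below. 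Taking the inner product with $\Lambda(\vec{x}_\varepsilon)$ and using the symmetry of $M$ together with \eqref{defination of Lamda x} yields
\begin{equation*}
\rho(\vec{x}_\varepsilon)\,\langle\Lambda(\vec{x}_\varepsilon),\vec{\mu}_\varepsilon\rangle=\frac{(2^*-q)B}{qA^{2}}\,\varepsilon\lambda_{1,\varepsilon}^{\frac{(N-2)q}{2}-2}\,\langle\Lambda(\vec{x}_\varepsilon),\vec{v}_\varepsilon\rangle+o(1).
\end{equation*}
The simplicity of the first eigenvalue forces $\rho(\vec{x}_\varepsilon)\to\rho(\vec{a})=0$ and makes $\Lambda(\vec{x}_\varepsilon)$ depend continuously on $\vec{x}_\varepsilon$, so that $\langle\Lambda(\vec{x}_\varepsilon),\vec{v}_\varepsilon\rangle$ stays bounded below by a positive constant; combined with the positivity of $(2^*-q)B/(qA^{2})$, this forces $\varepsilon\lambda_{1,\varepsilon}^{(N-2)q/2-2}\to 0$. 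Since all $\lambda_{j,\varepsilon}$ are comparable, the same limit holds for every $j$, which is the second claim of the proposition.

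Feeding this limit back into the rescaled identity gives $M(\vec{x}_\varepsilon)\vec{\mu}_\varepsilon=o(1)$ and, upon passing to the limit (existence of $\vec{\mu}_0\in(0,\infty)^n$ being guaranteed by Theorem \ref{thm multibble blowup}(g)), $M(\vec{a})\vec{\mu}_0=\vec{0}$. The hypothesis $\rho(\vec{a})=0$ together with the simplicity of this eigenvalue (from \cite{bahri1988}) make the kernel of $M(\vec{a})$ one-dimensional and spanned by $\Lambda(\vec{a})$; thus $\vec{\mu}_0=\kappa\Lambda(\vec{a})$ for some $\kappa\in\R$, and the matching normalisations $\mu_{1,\varepsilon}\equiv 1$ and $(\Lambda(\vec{a}))_1=1$ pin down $\kappa=1$. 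The most delicate step is verifying that the $o(\Lambda_\varepsilon^{N-2})$ remainder in \eqref{Prop blow up rate proof 6} genuinely reduces to $o(1)$ after the rescaling; this is precisely where the uniform comparability $\lambda_{j,\varepsilon}\sim\lambda_{1,\varepsilon}$ from Theorem \ref{thm multibble blowup}(g) is indispensable, since without it the eigenvalue pairing argument would collapse.
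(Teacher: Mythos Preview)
Your proposal is correct and follows essentially the same route as the paper: rescale the identity coming from the local Poho\v{z}aev computation so that $M(\vec{x}_\varepsilon)\vec{\mu}_\varepsilon$ appears, pair with the first eigenvector $\Lambda(\vec{x}_\varepsilon)$, and use simplicity of $\rho(\vec{a})$ to conclude both the vanishing of $\varepsilon\lambda_{j,\varepsilon}^{(N-2)q/2-2}$ and the identification $\vec{\mu}_0=\Lambda(\vec{a})$. The only cosmetic difference is that the paper writes the key identity directly in the $\lambda_{j,\varepsilon}$ variables with the sharper $O(\mathrm{F}_{N,q}(\lambda_\varepsilon))$ remainder (needed later in Proposition~\ref{proposition bound on rho}), whereas you work through $\Lambda_{j,\varepsilon}$ and the cruder $o(1)$ error, which is already sufficient here.
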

\begin{proof}
From Proposition \ref{Prop blow up rate} and Proposition \ref{prop blow up rate 2}, for any $j=1,\cdots,n$ and $l=1,\cdots,N$
\begin{equation}\label{prop blow-up rate 3 proof-1}
         \sum_{l=1}^{n}m_{j,l}(\vec{x}_{\varepsilon})\lambda_{l,\varepsilon}^{-\frac{N-2}{2}}=\frac{2^*-q}{q}\frac{B}{A^{2}}\varepsilon\lambda_{j,\varepsilon}^{-\frac{N-2}{2}(2^*-1-q)}+O(\lambda_{\varepsilon}^{-\frac{N-2}{2}}\mathrm{F_{N,q}}(\lambda_{\varepsilon})),
\end{equation}
and
\begin{equation}\label{prop blow-up rate 3 proof-2}
    \begin{aligned}
         \sum_{l=1}^{n}\tilde{m}_{j,l}^{l}(\vec{x}_{\varepsilon})\lambda_{l,\varepsilon}^{-\frac{N-2}{2}}=O(\lambda_{\varepsilon}^{-\frac{N-2}{2}}\mathrm{F_{N,q}}(\lambda_{\varepsilon})),
    \end{aligned}
\end{equation}
where $\mathrm{F_{N,q}}(\lambda_{\varepsilon})$ defined in \eqref{definition of F-N-Q-lambda}.
Next, we can rewrite \eqref{prop blow-up rate 3 proof-1} as
\begin{equation}\label{prop blow-up rate 3 proof-4}
    ({M}(\vec{x}_{\varepsilon})\cdot\vec{\mu}_{\varepsilon})_{j}=\frac{2^*-q}{q}\frac{B}{A^{2}}\varepsilon\lambda_{j,\varepsilon}^{-\frac{N-2}{2}(2^*-2-q)}\mu_{j,\varepsilon}+O(\mathrm{F_{N,q}}(\lambda_{\varepsilon})).
\end{equation}
From Appendix A in \cite{Bahri1995}, the first eigenvalue $\rho(\vec{x}_{\varepsilon})$ of ${M}(\vec{x}_{\varepsilon})$ is simple and the corresponding eigenvector $\Lambda(\vec{x}_{\varepsilon})$ can be choose normalized such that $(\Lambda(\vec{x}_{\varepsilon}))_{1}=1$ and all its components are strictly positive. Then taking the scalar product of \eqref{prop blow-up rate 3 proof-4} with $\Lambda(\vec{x}_{\varepsilon})$, we have
\begin{equation}\label{prop blow-up rate 3 proof-5}
\begin{aligned}
    &\rho(\vec{x}_{\varepsilon})\langle\Lambda(\vec{x}_{\varepsilon}),\vec{\mu}_{\varepsilon}\rangle=\langle\Lambda(\vec{x}_{\varepsilon}),{M}(\vec{x}_{\varepsilon})\cdot\vec{\mu}_{\varepsilon}\rangle\\
    &=\frac{2^*-q}{q}\frac{B}{A^{2}}\varepsilon\sum_{j=1}^{n}\lambda_{j,\varepsilon}^{-\frac{N-2}{2}(2^*-2-q)}\mu_{j,\varepsilon}(\Lambda(\vec{x}_{\varepsilon}))_{j}+O(\mathrm{F_{N,q}}(\lambda_{\varepsilon})).
\end{aligned}
\end{equation}
If $\rho(\vec{a})=0$, then \eqref{prop blow-up rate 3 proof-5} shows that $\lim_{\varepsilon\to0}\varepsilon\lambda_{j,\varepsilon}^{-\frac{N-2}{2}(2^*-2-q)}=0$ and that $\vec{\mu}_0$ is an eigenvector with eigenvalue 0. Since $(\vec{ \mu}_0)_1 = 1 = (\Lambda(\vec{a}))_1$ and $\rho(\vec{a}_0)$ is simple, thus $\vec{\mu}_0 = \Lambda(\vec{a})$, that is $\vec{\mu}_0$ is precisely the lowest eigenvector of ${M}(\vec{a})$, with eigenvalue $\rho(\vec{a}) =0$.
\end{proof}

For the following analysis, we decompose $\vec{\mu}_\varepsilon=\alpha_\varepsilon \Lambda(\vec{x}_{\varepsilon})+ \vec{\delta} (\vec{x}_\varepsilon)$, where $\alpha_\varepsilon \in \R$, $\Lambda(\vec{x}_{\varepsilon})$ is the eigenvector of ${M}(\vec{x}_\varepsilon)$ corresponding to the lowest eigenvalue $\rho(\vec{x}_{\varepsilon})$ and $\vec{\delta}(\vec{x}_\varepsilon) \bot \Lambda(\vec{x}_{\varepsilon})$. Notice that $\alpha_\varepsilon \to 1$ as a consequence of $\vec{\mu}_\varepsilon \to \Lambda(\vec{a})$.

Before going on, we need the following Proposition and the proof is similar as  in \cite[Proposition 4.1]{Knig2022FineMA}. We give it for completeness.

\begin{Prop}
\label{proposition bound on rho}
As  $\varepsilon \to 0$, 
\begin{equation}\label{delta bound}
\begin{aligned}
    |\vec{\delta} (\vec{x}_\varepsilon)|&=O( \varepsilon \lambda_\varepsilon^{\frac{N-2}{2}q-2}+ |\rho(\vec{x}_\varepsilon)|+\mathrm{F_{N,q}}(\lambda_{\varepsilon})).
\end{aligned}
\end{equation} 
Moreover, if $\rho(\vec{a})=0$,  then as $\varepsilon \to 0$,  
\begin{equation}
\label{rho bound}
\begin{aligned}
    \rho(\vec{x}_\varepsilon) &=o(\mathrm{F_{N,q}}(\lambda_{\varepsilon})).
\end{aligned}
\end{equation}
\end{Prop}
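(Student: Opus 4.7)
The plan is to start from the vector identity established in the proof of Proposition~\ref{prop blow-up rate 3}, namely
\[
(M(\vec x_\varepsilon)\vec\mu_\varepsilon)_j = C_0\,\varepsilon\lambda_{j,\varepsilon}^{\frac{N-2}{2}q-2}\mu_{j,\varepsilon} + O(\mathrm{F_{N,q}}(\lambda_\varepsilon)),\quad j=1,\dots,n,
\]
with $C_0=\tfrac{2^*-q}{q}\tfrac{B}{A^{2}}>0$. Substituting $\vec\mu_\varepsilon=\alpha_\varepsilon\Lambda(\vec x_\varepsilon)+\vec\delta(\vec x_\varepsilon)$ and using $M(\vec x_\varepsilon)\Lambda(\vec x_\varepsilon)=\rho(\vec x_\varepsilon)\Lambda(\vec x_\varepsilon)$ reduces the identity to
\[
\alpha_\varepsilon\rho(\vec x_\varepsilon)\Lambda(\vec x_\varepsilon)+M(\vec x_\varepsilon)\vec\delta(\vec x_\varepsilon) = C_0\,\varepsilon\,\vec\lambda_\varepsilon^{\frac{N-2}{2}q-2}\odot\vec\mu_\varepsilon+O(\mathrm{F_{N,q}}),
\]
which I would analyze by projecting separately onto $\Lambda(\vec x_\varepsilon)$ and its orthogonal complement.

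The key structural observation is that, since $\rho(\vec a)$ is a simple eigenvalue of $M(\vec a)$, the second-smallest eigenvalue of $M(\vec x_\varepsilon)$ stays bounded below by a positive constant for $\varepsilon$ small, so $M(\vec x_\varepsilon)$ is uniformly invertible on $\Lambda(\vec x_\varepsilon)^\perp$. Because $M$ is symmetric, $M\vec\delta\perp\Lambda$, so projecting onto $\Lambda(\vec x_\varepsilon)^\perp$ gives $|\vec\delta(\vec x_\varepsilon)|=O(\varepsilon\lambda_\varepsilon^{\frac{N-2}{2}q-2}+\mathrm{F_{N,q}})$. Taking the inner product with $\Lambda(\vec x_\varepsilon)$ yields
\[
\alpha_\varepsilon\rho(\vec x_\varepsilon)\|\Lambda(\vec x_\varepsilon)\|^2 = C_0\,\varepsilon\langle\vec\lambda_\varepsilon^{\frac{N-2}{2}q-2}\odot\vec\mu_\varepsilon,\Lambda(\vec x_\varepsilon)\rangle + O(\mathrm{F_{N,q}}).
\]
Since the $\lambda_{j,\varepsilon}$ are mutually comparable and the components of $\vec\mu_\varepsilon$ and $\Lambda(\vec x_\varepsilon)$ are bounded above and below by positive constants, the inner product on the right is comparable to $\lambda_{1,\varepsilon}^{\frac{N-2}{2}q-2}$, hence $\varepsilon\lambda_\varepsilon^{\frac{N-2}{2}q-2}=O(|\rho(\vec x_\varepsilon)|+\mathrm{F_{N,q}})$. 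Combining the two bounds produces \eqref{delta bound}.

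For the second claim, suppose $\rho(\vec a)=0$. The $x$-Pohozaev bound \eqref{prop blow-up rate 3 proof-2}, after multiplication by $\lambda_{1,\varepsilon}^{(N-2)/2}$, reads $(\tilde M^i(\vec x_\varepsilon)\vec\mu_\varepsilon)_j=O(\mathrm{F_{N,q}})$. The algebraic identity $\mu_j\bigl(\tilde M^i(\vec x)\vec\mu\bigr)_j = \partial_{x_{j,i}}\langle\vec\mu,M(\vec x)\vec\mu\rangle$, together with the eigenvalue derivative formula $\partial_{x_{j,i}}\rho(\vec x)\|\Lambda(\vec x)\|^2=\langle\Lambda(\vec x),\partial_{x_{j,i}}M(\vec x)\Lambda(\vec x)\rangle$ and the expansion $\vec\mu_\varepsilon=\alpha_\varepsilon\Lambda+\vec\delta$ with $\alpha_\varepsilon\to 1$, translates the $x$-Pohozaev bound into $\nabla\rho(\vec x_\varepsilon)=O(|\vec\delta(\vec x_\varepsilon)|+\mathrm{F_{N,q}})=O(|\rho(\vec x_\varepsilon)|+\mathrm{F_{N,q}})$. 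Proposition~\ref{Prop blow up rate} also ensures $\rho\geq 0$ near $\vec a$, so $\rho(\vec a)=0$ is an interior minimum and $\nabla\rho(\vec a)=0$.

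The main obstacle will be strengthening $\rho(\vec x_\varepsilon)=O(\mathrm{F_{N,q}})$ to $\rho(\vec x_\varepsilon)=o(\mathrm{F_{N,q}})$. I would argue by contradiction: if $|\rho(\vec x_\varepsilon)|/\mathrm{F_{N,q}}\not\to 0$ along some subsequence, then by the scalar projection identity also $\varepsilon\lambda_\varepsilon^{\frac{N-2}{2}q-2}/\mathrm{F_{N,q}}\not\to 0$. Proposition~\ref{prop blow-up rate 3} nevertheless forces $\varepsilon\lambda_\varepsilon^{\frac{N-2}{2}q-2}\to 0$ and $\vec\mu_\varepsilon\to\Lambda(\vec a)$; feeding this additional smallness into the integral estimates that produced the $O(\mathrm{F_{N,q}})$ remainders in Proposition~\ref{prop estimate for u} and Lemma~\ref{lem estimate of q refined}, one extracts extra cancellations that upgrade the remainder in the scalar projection to $o(\mathrm{F_{N,q}})$, contradicting the supposed lower bound. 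The delicate point is tracking, in each of those lemmas, the precise dependence of the error on $\varepsilon\lambda_\varepsilon^{\frac{N-2}{2}q-2}$ versus on $|\vec\delta|$, so that the degeneracy $\vec\mu_\varepsilon\to\Lambda(\vec a)$ can be used to save a factor strictly beyond $\mathrm{F_{N,q}}$.
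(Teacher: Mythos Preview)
Your argument for \eqref{delta bound} is essentially the paper's: decompose $\vec\mu_\varepsilon$, use simplicity of $\rho$ to get uniform coercivity of $M(\vec x_\varepsilon)$ on $\Lambda(\vec x_\varepsilon)^\perp$, and read off the bound on $\vec\delta$. Your observation that $M\vec\delta\perp\Lambda$ lets you drop the $|\rho|$ term is correct and slightly sharper than needed. The derivation of $|\nabla\rho(\vec x_\varepsilon)|=O(|\rho|+|\vec\delta|+\mathrm F_{N,q})$ from the $x$-Poho\v{z}aev identity is also right, and matches what the paper does (citing \cite[Lemma~4.5]{Knig2022MultibubbleBA}).

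The gap is in \eqref{rho bound}. Your contradiction strategy does not close. First, the implication ``$\rho/\mathrm F_{N,q}\not\to 0$ forces $\varepsilon\lambda_\varepsilon^{\frac{N-2}{2}q-2}/\mathrm F_{N,q}\not\to 0$'' does not follow from the scalar projection: the $O(\mathrm F_{N,q})$ error there has an unspecified sign and constant, so it can absorb a $\rho$ of size $c\,\mathrm F_{N,q}$ without forcing the $\varepsilon$-term to be large. Second, and more seriously, the proposed upgrade of the remainder to $o(\mathrm F_{N,q})$ cannot be obtained by ``feeding in'' the smallness of $\varepsilon\lambda_\varepsilon^{\frac{N-2}{2}q-2}$ or of $|\vec\delta|$. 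Trace back where $\mathrm F_{N,q}$ comes from in \eqref{Prop blow up rate proof 3}: for $N\ge 6$ it is produced by the product of the leading Green's-function term with the approximation error in Proposition~\ref{prop estimate for u}, and that error (from the bubble tails, $\psi_{x_j,\lambda_j}$, and $\|w_\varepsilon\|$ via Theorem~\ref{thm decomposition}) is a fixed power of $\lambda_\varepsilon^{-1}$, independent of $\varepsilon$ and of $\vec\delta$. No amount of degeneracy information on $\vec\mu_\varepsilon$ improves those terms.

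The paper's mechanism is different and is the missing idea: $\rho(\vec x)$ is a \emph{real-analytic} function of $\vec x$ (being a simple eigenvalue of a real-analytic matrix family), and since $\rho(\vec a)=0$ is a nonnegative minimum one has a \L ojasiewicz-type inequality $\rho(\vec x_\varepsilon)\lesssim |\nabla\rho(\vec x_\varepsilon)|^{\sigma}$ for some $\sigma>1$ (cf.\ \cite[proof of Theorem~2.1]{Knig2022MultibubbleBA}). Combining this with $|\nabla\rho(\vec x_\varepsilon)|=O(|\rho(\vec x_\varepsilon)|+\mathrm F_{N,q})$ (after using the scalar projection to absorb $\varepsilon\lambda_\varepsilon^{\frac{N-2}{2}q-2}$ into $|\rho|+\mathrm F_{N,q}$) gives $\rho\lesssim(\rho+\mathrm F_{N,q})^{\sigma}=o(1)\cdot(\rho+\mathrm F_{N,q})$, whence $\rho=o(\mathrm F_{N,q})$. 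This analyticity/\L ojasiewicz step is the ingredient your argument lacks.
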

\begin{proof}
First of all, arguing as in \cite[Lemma 4.5]{Knig2022MultibubbleBA}, we have that 
\begin{align*}
\partial_l^{x_i} \rho(\vec{x}_\varepsilon) &=  \partial_l^{x_i} \langle \vec{\mu}_\varepsilon, {M}(\vec{x}) \cdot \vec{\mu}_\varepsilon \rangle |_{\vec{x} = \vec{x}_\varepsilon}  + O(|\rho(\vec{x}_\varepsilon)|  + | \vec{\delta}(\vec{x}_\varepsilon)|) \\
&= \mu_{i,\varepsilon} (\tilde{M}^l(\vec{x}_\varepsilon) \cdot \vec{\mu}_\varepsilon)_i +  O(|\rho(\vec{x}_\varepsilon)|  + | \vec{\delta}(\vec{x}_\varepsilon)|). 
\end{align*}
Then by \eqref{prop blow-up rate 3 proof-2} , we have that,
\begin{equation}\label{nabla rho  a priori bound}
\begin{aligned}
|\nabla \rho(\vec{x}_\varepsilon)| &=
  O \left(|\rho(\vec{x}_\varepsilon)|  + | \vec{\delta}(\vec{x}_\varepsilon)|+\mathrm{F_{N,q}}(\lambda_{\varepsilon})\right).
\end{aligned} 
\end{equation} 
On the other hand, since $M(\vec x_\varepsilon) \cdot \vec \mu_\varepsilon = \alpha_\varepsilon \rho(\vec x_\varepsilon) \Lambda (\vec x_\varepsilon) + {M}(\vec x_\varepsilon) \cdot \vec \delta (\vec x_\varepsilon)$, then \eqref{prop blow-up rate 3 proof-4} implies 
\begin{equation}
    \begin{aligned}
        {M}(\vec{x}_\varepsilon) \cdot \vec \delta (\vec{x}_\varepsilon)&=
 O( \varepsilon \lambda_\varepsilon^{\frac{N-2}{2}q-2}+ |\rho(\vec x_\varepsilon)|+\mathrm{F_{N,q}}(\lambda_{\varepsilon})).
    \end{aligned}
\end{equation}
Since $\rho(\vec x_\varepsilon)$ is simple, $M(\vec x_\varepsilon)$ is uniformly coercive on the subspace orthogonal to $\Lambda(\vec x_\varepsilon)$, which contains $\vec \delta(\vec x_\varepsilon)$. Hence \eqref{delta bound} follows. Now, using \eqref{delta bound}, we can simplify \eqref{nabla rho  a priori bound}  to 
\begin{equation}\label{nabla rho bound without delta}
\begin{aligned}
|\nabla \rho(\vec{x}_\varepsilon)|&=O( \varepsilon \lambda_\varepsilon^{\frac{N-2}{2}q-2}+ |\rho(\vec{x}_\varepsilon)|+\mathrm{F_{N,q}}(\lambda_{\varepsilon}))
\end{aligned}
\end{equation}
Moreover, it's easy to see that $\rho(\vec x)$ is an analytic function of $\vec x$, thus arguing as in \cite[proof of Theorem 2.1]{Knig2022MultibubbleBA}, we know that there exists a constant  $\sigma > 1$ such that 
\begin{equation}
\label{rho to nabla rho estimate}
\rho(\vec x_\varepsilon) \lesssim |\nabla \rho(\vec x_\varepsilon)|^\sigma.
\end{equation}
Note that $\varepsilon \lambda_\varepsilon^{\frac{N-2}{2}q-2} = o(1)$ and $\rho(\vec x_\varepsilon) = o(1)$, then by \eqref{nabla rho bound without delta}, \eqref{rho to nabla rho estimate} and \eqref{prop blow-up rate 3 proof-5}, we can obtain \eqref{rho bound}. 
\end{proof}

\begin{proof}[Proof of Theorem \ref{thm blow up rate}]
From the propositions obtained above, we only need to consider the case that $\rho(\vec{a})=0$. Taking the scalar product of identity \eqref{prop blow-up rate 3 proof-4} with $\mu_{j,\varepsilon}$ and using the properties of $\Lambda(\vec x_\varepsilon)$ and $\vec \delta_\varepsilon$, we obtain 
\begin{equation}
    \begin{aligned}
    \rho(\vec x_\varepsilon) |\Lambda (\vec x_\varepsilon)|^2 \alpha_\varepsilon^2  + \langle \vec \delta(\vec x_\varepsilon), {M}(\vec x_\varepsilon) \cdot \vec \delta(\vec x_\varepsilon) \rangle 
        =\frac{2^*-q}{q}\frac{B}{A^{2}}\varepsilon\lambda_{j,\varepsilon}^{\frac{N-2}{2}q-2}\mu_{j,\varepsilon}^{2}+O(\mathrm{F_{N,q}}(\lambda_{\varepsilon})).
    \end{aligned}
\end{equation}
From the results given by Proposition \ref{proposition bound on rho}, it's easy to see that the terms in $\rho(\vec x_\varepsilon)$ and in $\vec \delta(\vec x_\varepsilon)$ on the left-hand side are negligible. Since $\mu_{j,\varepsilon} \sim 1$, the above identity then implies 
\begin{equation}
    \varepsilon \lambda_{j,\varepsilon}^{\frac{N-2}{2}q-2}=O(\mathrm{F_{N,q}}(\lambda_{\varepsilon})),
\end{equation}
and with this information, we can return to \eqref{nabla rho bound without delta} to deduce the bound that 
\begin{equation}
    |\nabla \rho(\vec x_\varepsilon)|=O(\mathrm{F_{N,q}}(\lambda_{\varepsilon})).
\end{equation}
This completes the proof of Theorem \ref{thm blow up rate}. 
\end{proof}

\section{Local uniqueness}\label{section local uniqueness}

\setcounter{equation}{0}
Throughout this section, we always assume that $N\ge 5$, $q\in(2,2^*)$, $q\geq 2^*-1$, $\Omega$ is a smooth bounded domain and satisfies Assumption $\textrm{A}$.

\subsection{Difference between two different solutions}\label{Regularization and blow-up analysis}

Let $u^{(1)}_{\varepsilon}(x)$, $u^{(2)}_{\varepsilon}(x)$ be two different solutions of $\eqref{p-varepsion}$ concentrating at the same point $\vec{a}\in\Omega^{n}_{*}$ with $M(\vec{a})$ is a positive matrix. Then from Theorem \ref{thm decomposition} and Theorem \ref{thm blow up rate}, the solution $u^{(l)}_{\varepsilon}(x)$ with $l=1,2$ have the following decomposition
\begin{equation}\label{eq decomposition for u l l=1,2}
u^{(l)}_{\varepsilon}=\sum^n_{j=1} PU_{x^{(l)}_{j,\varepsilon}, \lambda^{(l)}_{j,\varepsilon}}+w^{(l)}_{\varepsilon},
\end{equation}
satisfying, for $j=1,\cdots,n$, $l=1,2$,
\begin{equation}
 x^{(l)}_{j,\varepsilon}\rightarrow a_j, ~\Big(\varepsilon ^{\frac{2}{(N-2)q-4}} \lambda^{(l)}_{j,\varepsilon}\Big)^{-1}\rightarrow \lambda_j,~ \|w^{(l)}_{\varepsilon}\|_{H^{1}_{0}(\Omega)}=o(1),~~w^{(l)}_\varepsilon\in \bigcap^n_{j=1}E_{x^{(l)}_{j,\varepsilon},\lambda^{(l)}_{j,\varepsilon}}.
\end{equation}
Moreover, we assume that concentration point $(\vec{a},\vec{\lambda})$ is a nondegenerate critical point of $\Phi_{n}$.

In the following, we need to estimate the difference between the above two different solutions. Before going on, we set $\bar{\lambda}_\varepsilon:=
\min\limits_{j=1,\cdots,n;l=1,2}
\{\lambda^{(l)}_{j,\varepsilon}\}$ and use $\|\cdot\|$ to denote the norm $\|\cdot\|_{H^{1}_{0}(\Omega)}$.

\begin{Lem}\label{lem defference of w}
It holds
\begin{equation}\label{lem defference of w-1}
\|w^{(1)}_{\varepsilon}-w^{(2)}_{\varepsilon}\|=\begin{cases}
      O\left(\frac{1}{\bar{\lambda}_{\varepsilon}^{4}}\right), &\text{~~if~~} N=5 \text{~and~} q\in [\frac{7}{3},\frac{10}{3}),\\
O\left(\frac{(\ln\bar{\lambda}_{\varepsilon})^{\frac{N+2}{2N}}}{\bar{\lambda}_{\varepsilon}^{\frac{N+4}{2}}}\right), &\text{~~if~~} N\geq 6\text{~and~} q\in (2,2^*),
 \end{cases}=o\left(\frac{1}{\bar{\lambda}_{\varepsilon}^{\frac{N+2}{2}}}\right).
\end{equation}
\end{Lem}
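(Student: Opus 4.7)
My plan is to derive an equation for the difference $\xi_\varepsilon := w^{(1)}_\varepsilon - w^{(2)}_\varepsilon$ by subtracting the identities \eqref{equ of w} satisfied by $w^{(1)}_\varepsilon$ and $w^{(2)}_\varepsilon$, and then to combine the coercivity of Lemma \ref{q var} with the concentration-speed estimates of Theorem \ref{thm blow up rate} to control every term. Writing $Q^{(l)}_\varepsilon$, $L^{(l)}_\varepsilon$, $R^{(l)}_\varepsilon$ for the quantities in \eqref{equ of w} built from $(x^{(l)}_{j,\varepsilon},\lambda^{(l)}_{j,\varepsilon})$, the subtraction yields
\begin{equation*}
  Q^{(1)}_\varepsilon \xi_\varepsilon = \bigl(Q^{(2)}_\varepsilon - Q^{(1)}_\varepsilon\bigr) w^{(2)}_\varepsilon + \bigl(L^{(1)}_\varepsilon - L^{(2)}_\varepsilon\bigr) + \bigl(R^{(1)}_\varepsilon(w^{(1)}_\varepsilon) - R^{(2)}_\varepsilon(w^{(2)}_\varepsilon)\bigr).
\end{equation*}
Using the already established smallness $\|w^{(l)}_\varepsilon\|\to 0$ from Theorem \ref{thm decomposition}, the last bracket splits into a piece that can be absorbed into the left-hand side by coercivity, plus contributions of the same type as $\bigl(Q^{(2)}_\varepsilon - Q^{(1)}_\varepsilon\bigr) w^{(2)}_\varepsilon$.

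Since $\xi_\varepsilon$ is not a priori in $\bigcap_j E_{x^{(1)}_{j,\varepsilon},\lambda^{(1)}_{j,\varepsilon}}$, I will orthogonally decompose $\xi_\varepsilon = \xi^{\perp}_\varepsilon + \xi^{\parallel}_\varepsilon$, with $\xi^{\parallel}_\varepsilon$ in the span of $\partial_{\lambda}PU_{x^{(1)}_{j,\varepsilon},\lambda^{(1)}_{j,\varepsilon}}$ and $\partial_{x_i}PU_{x^{(1)}_{j,\varepsilon},\lambda^{(1)}_{j,\varepsilon}}$. The coefficients of $\xi^{\parallel}_\varepsilon$ are obtained by pairing with these derivatives and exploiting $w^{(1)}_\varepsilon \perp \partial_{\lambda,x_i} PU^{(1)}$ together with $w^{(2)}_\varepsilon \perp \partial_{\lambda,x_i} PU^{(2)}$; Taylor-expanding the latter derivatives around $(x^{(1)}_{j,\varepsilon},\lambda^{(1)}_{j,\varepsilon})$ produces factors of $|x^{(1)}_{j,\varepsilon}-x^{(2)}_{j,\varepsilon}|$ and $|\lambda^{(1)}_{j,\varepsilon}-\lambda^{(2)}_{j,\varepsilon}|/\bar\lambda_\varepsilon$, both of which are $O(\mathrm{F_{N,q}}(\bar\lambda_\varepsilon))$ by \eqref{eq concentration speed}, so that $\|\xi^{\parallel}_\varepsilon\|$ is immediately seen to be $o(\bar\lambda_\varepsilon^{-(N+2)/2})$ when multiplied by $\|w^{(l)}_\varepsilon\|$ from \eqref{perturbation estimate of w-varepsilon}. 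For $\xi^{\perp}_\varepsilon$ I test the equation against $\xi^{\perp}_\varepsilon$ and apply Lemma \ref{q var} to get $\rho_0\|\xi^{\perp}_\varepsilon\|^{2}\lesssim|\langle \text{RHS}, \xi^{\perp}_\varepsilon\rangle|$.

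The remaining work is then to rerun the proofs of Lemma \ref{L var} and Lemma \ref{r var} with each building block replaced by the difference of the corresponding block at the two parameter points. The point is that Taylor's formula gives pointwise estimates of the type
\begin{equation*}
  |PU_{x^{(1)},\lambda^{(1)}} - PU_{x^{(2)},\lambda^{(2)}}| \lesssim U_{\bar x,\bar\lambda}\bigl(\bar\lambda_\varepsilon |x^{(1)}-x^{(2)}| + \bar\lambda_\varepsilon^{-1}|\lambda^{(1)}-\lambda^{(2)}|\bigr),
\end{equation*}
so that every term in Lemma \ref{L var} gains an extra multiplicative factor of order $\bar\lambda_\varepsilon\cdot\mathrm{F_{N,q}}(\bar\lambda_\varepsilon)$, which in the regimes $N=5,\,q\in[7/3,10/3)$ and $N\ge 6$ is precisely $\bar\lambda_\varepsilon^{-1}$ (up to log factors) and therefore turns the bounds of Lemma \ref{L var} into the target $o(\bar\lambda_\varepsilon^{-(N+2)/2})$. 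The main obstacle will be the bookkeeping: one must check that \emph{every} piece of $L^{(1)}_\varepsilon-L^{(2)}_\varepsilon$ and of $(Q^{(2)}_\varepsilon-Q^{(1)}_\varepsilon)w^{(2)}_\varepsilon$ — in particular the cross interactions $PU_i^{2^*-2}PU_j$ in low dimensions, the boundary corrections from $\psi_{x,\lambda}$, and the sub-critical term $\varepsilon(\sum_j PU_j)^{q-1}$ — genuinely gains this extra $\bar\lambda_\varepsilon^{-1}$ factor, which is exactly the reason the statement is restricted to $N\ge 5$ and the indicated subranges of $q$; for smaller $N$ or smaller $q$ the crude gain $\bar\lambda_\varepsilon\cdot\mathrm{F_{N,q}}(\bar\lambda_\varepsilon)$ from the concentration-speed bounds is no longer sufficient, which is the reason why the local uniqueness argument of the next subsection, as noted in the introduction, is left open there.
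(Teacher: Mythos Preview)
Your proposal is correct and follows essentially the same route as the paper's proof: subtract the two instances of \eqref{equ of w}, orthogonally split $\xi_\varepsilon$ with respect to $\bigcap_j E_{x^{(1)}_{j,\varepsilon},\lambda^{(1)}_{j,\varepsilon}}$, control the parallel component via the orthogonality conditions $w^{(l)}_\varepsilon\perp\partial PU^{(l)}$ together with a Taylor expansion of $\partial PU^{(2)}$ around the first parameter point, apply the coercivity Lemma~\ref{q var} to the perpendicular part, and show that every right-hand term gains the factor $\bar\lambda_\varepsilon|x^{(1)}_{j,\varepsilon}-x^{(2)}_{j,\varepsilon}|+\bar\lambda_\varepsilon^{-1}|\lambda^{(1)}_{j,\varepsilon}-\lambda^{(2)}_{j,\varepsilon}|$ multiplied by $\|w^{(2)}_\varepsilon\|$, which combined with \eqref{eq concentration speed} and \eqref{perturbation estimate of w-varepsilon} yields the stated bounds. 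The only cosmetic difference is that the paper handles $R^{(1)}_\varepsilon(w^{(1)}_\varepsilon)-R^{(2)}_\varepsilon(w^{(2)}_\varepsilon)$ by directly bounding each $\|R^{(l)}_\varepsilon(w^{(l)}_\varepsilon)\|_{H^{-1}}$ (which is already $o$ of the target because it is super-linear in $\|w^{(l)}_\varepsilon\|$), rather than splitting off a piece to absorb into the left-hand side; either device works.
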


     
      



\begin{proof}
First we define $\overline{w}_{\varepsilon}:=w^{(1)}_{\varepsilon}-w^{(2)}_{\varepsilon}$, then
 \begin{equation}
Q_\varepsilon \overline{w}_{\varepsilon}
    =R_\varepsilon^{(1)}\big(w^{(1)}_{\varepsilon}\big)-R_\varepsilon^{(2)}\big(w^{(2)}_{\varepsilon}\big)+ L_\varepsilon^{(1)}-L_\varepsilon^{(2)}+l_{\varepsilon},
 \end{equation}
where
\begin{equation}
    Q_{\varepsilon}\overline{w}_{\varepsilon}:=-\Delta \overline{w}_{\varepsilon}-(2^*-1)(\sum_{j=1}^{n}PU_{x_{j,\varepsilon}^{(1)},\lambda_{j,\varepsilon}^{(1)}})^{2^*-2}\overline{w}_{\varepsilon},
\end{equation}
\begin{equation}
\begin{split}
&R^{(l)}_\varepsilon(w^{(l)}_\varepsilon)\\
&=
\Big(\sum^n_{j=1}PU_{x^{(l)}_{j,\varepsilon},\lambda^{(l)}_{j,\varepsilon}}
+w^{(l)}_\varepsilon\Big)^{2^*-1}
-
\Big(\sum^n_{j=1}PU_{x^{(l)}_{j,\varepsilon},\lambda^{(l)}_{j,\varepsilon}}\Big)^{2^*-1}
-(2^*-1)
\Big(\sum^n_{j=1}PU_{x^{(l)}_{j,\varepsilon},\lambda^{(l)}_{j,\varepsilon}}\Big)^{2^*-2}
w^{(l)}_\varepsilon\\
&\quad +\varepsilon\left[\Big(\sum^n_{j=1}PU_{x^{(l)}_{j,\varepsilon},\lambda^{(l)}_{j,\varepsilon}}
+w^{(l)}_\varepsilon\Big)^{q-1}
-
\Big(\sum^n_{j=1}PU_{x^{(l)}_{j,\varepsilon},\lambda^{(l)}_{j,\varepsilon}}\Big)^{q-1}\right]\\
&:=R_{\varepsilon,2^*}^{(l)}(w_{\varepsilon}^{(l)})+R_{\varepsilon,q}^{(l)}(w_{\varepsilon}^{(l)}),
\end{split}\end{equation}
 \begin{equation}
\begin{aligned}
L_{\varepsilon}^{(l)}=\Big(\sum^n_{j=1}PU_{x^{(l)}_{j,\varepsilon},\lambda^{(l)}_{j,\varepsilon}}\Big)^{2^*-1}-\sum^n_{j=1}U_{x^{(l)}_{j,\varepsilon},\lambda^{(1)}_{j,\varepsilon}}^{2^*-1}+\varepsilon\Big(\sum^n_{j=1}PU_{x^{(1)}_{j,\varepsilon},\lambda^{(1)}_{j,\varepsilon}}\Big)^{q-1},
\end{aligned}
\end{equation}
and
\begin{equation}
    l_{\varepsilon}=(2^*-1)\left[\Big(\sum^n_{j=1}PU_{x^{(1)}_{j,\varepsilon},\lambda^{(1)}_{j,\varepsilon}}\Big)^{2^*-2}-\Big(\sum^n_{j=1}PU_{x^{(2)}_{j,\varepsilon},\lambda^{(2)}_{j,\varepsilon}}\Big)^{2^*-2}\right]w_{\varepsilon}^{(2)}.
\end{equation}
Now we write $\overline{w}_{\varepsilon}=\overline{w}_{\varepsilon,1}+\overline{w}_{\varepsilon,2}$ with
$\overline{w}_{\varepsilon,1}\in \displaystyle\bigcap^n_{j=1}E_{x^{(1)}_{j,\varepsilon},\lambda^{(1)}_{j,\varepsilon}}$ and $\overline{w}_{\varepsilon,2}\bot \displaystyle\bigcap^n_{j=1}E_{x^{(1)}_{j,\varepsilon},\lambda^{(1)}_{j,\varepsilon}}$.  Then
\begin{equation}
\overline{w}_{\varepsilon,1}(x)=\overline{w}_{\varepsilon}(x)-\overline{w}_{\varepsilon,2}=\overline{w}_{\varepsilon}(x) -\sum_{j=1}^n\Big(\alpha_{\varepsilon,j,0}\frac{\partial PU_{x^{(1)}_{j,\varepsilon},\lambda^{(1)}_{j,\varepsilon}}}
 {\partial \lambda}+\sum_{i=1}^N\alpha_{\varepsilon,j,i}
 \frac{\partial PU_{x^{(1)}_{j,\varepsilon},\lambda^{(1)}_{j,\varepsilon}}}
 {\partial x_i}\Big),
 \end{equation}
for some constants $\alpha_{\varepsilon,j,i}$ with $j=1,\cdots,n$ and $i=1,\cdots,N$. 
Moreover, from Lemma \ref{estimate of U-lambda-a and psi-lambda-a 4}
\begin{equation}
\begin{split}
 \alpha_{\varepsilon,j,0}&\Big\|\frac{\partial PU_{x^{(1)}_{j,\varepsilon},\lambda^{(1)}_{j,\varepsilon}}}
 {\partial \lambda}\Big\|^2
    \\=& \Bigl\langle \overline{w}_{\varepsilon}(x),\frac{\partial PU_{x^{(1)}_{j,\varepsilon},\lambda^{(1)}_{j,\varepsilon}}}
 {\partial \lambda} \Bigr\rangle -\sum_{l=1,l\neq j}^n\alpha_{\varepsilon,l,0}\Big\langle \frac{\partial PU_{x^{(1)}_{j,\varepsilon},\lambda^{(1)}_{j,\varepsilon}}}
 {\partial \lambda},\frac{\partial PU_{x^{(1)}_{l,\varepsilon},\lambda^{(1)}_{l,\varepsilon}}}
 {\partial \lambda}\Big\rangle\\&-
 \sum_{l=1}^n\sum_{i=1}^N\alpha_{\varepsilon,l,i}\Big\langle \frac{\partial PU_{x^{(1)}_{j,\varepsilon},\lambda^{(1)}_{j,\varepsilon}}} {\partial \lambda},
 \frac{\partial PU_{x^{(1)}_{l,\varepsilon},\lambda^{(1)}_{l,\varepsilon}}}
 {\partial x_i}\Big\rangle  \\=&-\Bigl\langle w^{(2)}_{\varepsilon},
 \frac{\partial  PU_{x^{(1)}_{j,\varepsilon},\lambda^{(1)}_{j,\varepsilon}}}{\partial \lambda}-
 \frac{\partial  PU_{x^{(2)}_{j,\varepsilon},\lambda^{(2)}_{j,\varepsilon}}}{\partial \lambda}
    \Bigr\rangle+o\Big(\sum_{l=1,l\neq j}^n\frac{|\alpha_{\varepsilon,l,0}|}{\bar{\lambda}^2_\varepsilon}+ \sum_{l=1}^n\sum_{i=1}^N|\alpha_{\varepsilon,l,i}|\Big)\\=&
    O\Big( \big|x_{j,\varepsilon}^{(1)}-x^{(2)}_{j,\varepsilon}\big|+
\frac{\big|\lambda_{j,\varepsilon}^{(1)}-\lambda^{(2)}_{j,\varepsilon}\big|}{\bar{\lambda}^2_\varepsilon}
\Big)\|w^{(2)}_{\varepsilon}\|
    +o\Big(\sum_{l=1,l\neq j}^n\frac{|\alpha_{\varepsilon,l,0}|}{\bar{\lambda}^2_\varepsilon}+ \sum_{l=1}^n\sum_{i=1}^N|\alpha_{\varepsilon,l,i}|\Big),\end{split}
\end{equation}
which together with Lemma \ref{estimate of U-lambda-a and psi-lambda-a 4} again
\begin{equation}\label{lem defference of w-proof-11}
 \alpha_{\varepsilon,j,0}=O\Big( \bar{\lambda}^2_\varepsilon\big|x_{j,\varepsilon}^{(1)}-x^{(2)}_{j,\varepsilon}\big|+
\big|\lambda_{j,\varepsilon}^{(1)}-\lambda^{(2)}_{j,\varepsilon}\big|
\Big)\|w^{(2)}_{\varepsilon}\|+o\Big(\sum_{l=1,l\neq j}^n |\alpha_{\varepsilon,l,0}| + \sum_{l=1}^n\sum_{i=1}^N\bar{\lambda}^2_\varepsilon|\alpha_{\varepsilon,l,i}|\Big).
\end{equation}
Similarly, we have
\begin{equation}
\begin{split}
 \alpha_{\varepsilon,j,i}&\Big\|\frac{\partial PU_{x^{(1)}_{j,\varepsilon},\lambda^{(1)}_{j,\varepsilon}}}
 {\partial x_{i}}\Big\|^2
    \\=& \Bigl\langle \overline{w}_{\varepsilon}(x),\frac{\partial PU_{x^{(1)}_{j,\varepsilon},\lambda^{(1)}_{j,\varepsilon}}}
 {\partial x_{i}} \Bigr\rangle -\sum_{l=1}^n\alpha_{\varepsilon,l,0}\Big\langle \frac{\partial PU_{x^{(1)}_{j,\varepsilon},\lambda^{(1)}_{j,\varepsilon}}}
 {\partial x_{i}},\frac{\partial PU_{x^{(1)}_{l,\varepsilon},\lambda^{(1)}_{l,\varepsilon}}}
 {\partial \lambda}\Big\rangle\\&-
 \sum_{(l,m)\neq (j,i)}\alpha_{\varepsilon,l,m}\Big\langle \frac{\partial PU_{x^{(1)}_{j,\varepsilon},\lambda^{(1)}_{j,\varepsilon}}} {\partial x_{i}},
 \frac{\partial PU_{x^{(1)}_{l,\varepsilon},\lambda^{(1)}_{l,\varepsilon}}}
 {\partial x_m}\Big\rangle  \\=&-\Bigl\langle w^{(2)}_{\varepsilon},
 \frac{\partial  PU_{x^{(1)}_{j,\varepsilon},\lambda^{(1)}_{j,\varepsilon}}}{\partial x_{i}}-
 \frac{\partial  PU_{x^{(2)}_{j,\varepsilon},\lambda^{(2)}_{j,\varepsilon}}}{\partial x_{i}}
    \Bigr\rangle+o\Big(\sum_{l=1}^n|\alpha_{\varepsilon,l,0}|+ \sum_{(l,m)\neq(j,i)}{\bar{\lambda}^2_\varepsilon}|\alpha_{\varepsilon,l,m}|\Big)\\=&
    O\Big( \bar{\lambda}^2_\varepsilon\big|x_{j,\varepsilon}^{(1)}-x^{(2)}_{j,\varepsilon}\big|+
\big|\lambda_{j,\varepsilon}^{(1)}-\lambda^{(2)}_{j,\varepsilon}\big|
\Big)\|w^{(2)}_{\varepsilon}\|
    +o\Big(\sum_{l=1}^n|\alpha_{\varepsilon,l,0}|+ \sum_{(l,m)\neq(j,i)}{\bar{\lambda}^2_\varepsilon}|\alpha_{\varepsilon,l,m}|\Big),\end{split}
\end{equation}
and
\begin{equation}\label{lem defference of w-proof-13}
\begin{split}
 \bar{\lambda}^2_\varepsilon|\alpha_{\varepsilon,j,i}|=
    O\Big( \bar{\lambda}^2_\varepsilon\big|x_{j,\varepsilon}^{(1)}-x^{(2)}_{j,\varepsilon}\big|+
\big|\lambda_{j,\varepsilon}^{(1)}-\lambda^{(2)}_{j,\varepsilon}\big|
\Big)\|w^{(2)}_{\varepsilon}\|+o\Big(\sum_{l=1}^n|\alpha_{\varepsilon,l,0}|+ \sum_{(l,m)\neq(j,i)}{\bar{\lambda}^2_\varepsilon}|\alpha_{\varepsilon,l,m}|\Big).\end{split}
\end{equation}
Hence it follows from \eqref{lem defference of w-proof-11} and \eqref{lem defference of w-proof-13} that
\begin{equation}\label{lem defference of w-proof-14}
\sum_{j=1}^n|\alpha_{\varepsilon,j,0}| + \sum_{j=1}^n\sum_{i=1}^N \bar{\lambda}^2_\varepsilon|\alpha_{\varepsilon,j,i}|= O\Big( \bar{\lambda}^2_\varepsilon\sum_{j=1}^{n}\big|x_{j,\varepsilon}^{(1)}-x^{(2)}_{j,\varepsilon}\big|+\sum_{j=1}^{n}
\big|\lambda_{j,\varepsilon}^{(1)}-\lambda^{(2)}_{j,\varepsilon}\big|
\Big)\|w^{(2)}_{\varepsilon}\|.
\end{equation}
Next, by using the definition of $Q_{\varepsilon}$, $ \overline{w}_{\varepsilon,1}\in \displaystyle\bigcap^n_{j=1}E_{x^{(1)}_{j,\varepsilon},\lambda^{(1)}_{j,\varepsilon}}$, H\"older inequality and \eqref{lem defference of w-proof-14}, we obtain
\begin{equation}
\begin{split}
    &\left\langle Q_{\varepsilon}\,\sum_{j=1}^{n}\Big(\alpha_{\varepsilon,j,0}\frac{\partial PU_{x^{(1)}_{j,\varepsilon},\lambda^{(1)}_{j,\varepsilon}}}
 {\partial \lambda}+\sum_{i=1}^N\alpha_{\varepsilon,j,i}
 \frac{\partial PU_{x^{(1)}_{j,\varepsilon},\lambda^{(1)}_{j,\varepsilon}}}
 {\partial x_i}\Big),\overline{w}_{\varepsilon,1}\right\rangle \\
 &=\sum_{j=1}^{n}\alpha_{\varepsilon,j,0}\left\langle Q_{\varepsilon}\frac{\partial PU_{x^{(1)}_{j,\varepsilon},\lambda^{(1)}_{j,\varepsilon}}}
 {\partial \lambda},\overline{w}_{\varepsilon,1}\right\rangle +\sum_{j=1}^{n}\sum_{i=1}^{N}\alpha_{\varepsilon,j,i}\left\langle Q_{\varepsilon}\frac{\partial PU_{x^{(1)}_{j,\varepsilon},\lambda^{(1)}_{j,\varepsilon}}}
 {\partial x_{i}},\overline{w}_{\varepsilon,1}\right\rangle \\
 &=-(2^*-1)\sum_{j=1}^{n}\alpha_{\varepsilon,j,0}\int_{\Omega}(\sum_{k=1}^{n}PU_{x_{k,\varepsilon}^{(1)},\lambda_{k,\varepsilon}^{(1)}})^{2^*-2}\frac{\partial PU_{x^{(1)}_{j,\varepsilon},\lambda^{(1)}_{j,\varepsilon}}}
 {\partial \lambda}\overline{w}_{\varepsilon,1}\\
 &\quad -(2^*-1)\sum_{j=1}^{n}\sum_{i=1}^{N}\alpha_{\varepsilon,j,i}\int_{\Omega}(\sum_{k=1}^{n}PU_{x_{k,\varepsilon}^{(1)},\lambda_{k,\varepsilon}^{(1)}})^{2^*-2}\frac{\partial PU_{x^{(1)}_{j,\varepsilon},\lambda^{(1)}_{j,\varepsilon}}}
 {\partial x_{i}}\overline{w}_{\varepsilon,1}\\
 &=\frac{1}{\bar{\lambda}_{\varepsilon}}\|\overline{w}_{\varepsilon,1}\|O\left(\sum_{j=1}^{n}|\alpha_{\varepsilon,i,0}|+\sum_{j=1}^{n}\sum_{i=1}^{N}\bar{\lambda}_{\varepsilon}^{2}|\alpha_{\varepsilon,j,i}|\right)\\
 &=O\Big( \bar{\lambda}_\varepsilon\sum_{j=1}^{n}\big|x_{j,\varepsilon}^{(1)}-x^{(2)}_{j,\varepsilon}\big|+\sum_{j=1}^{n}\frac{\big|\lambda_{j,\varepsilon}^{(1)}-\lambda^{(2)}_{j,\varepsilon}\big|}{\bar{\lambda}_\varepsilon}\Big)\|\overline{w}_{\varepsilon,1}\|\|{w}_{\varepsilon}^{(2)}\|.
\end{split} 
\end{equation}
Hence, from Lemma \ref{q var}, we have
\begin{equation}\label{2-15-8}
\begin{split}
\Big \langle Q_{\varepsilon}\overline{w}_{\varepsilon}, \overline{w}_{\varepsilon,1}\Big \rangle = &
    \Big \langle Q_{\varepsilon} \overline{w}_{\varepsilon,1},\overline{w}_{\varepsilon,1} \Big \rangle+ \left\langle Q_{\varepsilon}\,\sum_{j=1}^{n}\Big(\alpha_{\varepsilon,j,0}\frac{\partial PU_{x^{(1)}_{j,\varepsilon},\lambda^{(1)}_{j,\varepsilon}}}
 {\partial \lambda}+\sum_{i=1}^N\alpha_{\varepsilon,j,i}
 \frac{\partial PU_{x^{(1)}_{j,\varepsilon},\lambda^{(1)}_{j,\varepsilon}}}
 {\partial x_i}\Big),\overline{w}_{\varepsilon,1}\right\rangle\\
 &\geq \rho_{0}\|\overline{w}_{\varepsilon,1}\|^{2}-C\Big(\bar{\lambda}_\varepsilon\sum_{j=1}^{n}\big|x_{j,\varepsilon}^{(1)}-x^{(2)}_{j,\varepsilon}\big|+\sum_{j=1}^{n}\frac{\big|\lambda_{j,\varepsilon}^{(1)}-\lambda^{(2)}_{j,\varepsilon}\big|}{\bar{\lambda}_\varepsilon}\Big)\|\overline{w}_{\varepsilon,1}\|\|{w}_{\varepsilon}^{(2)}\|,
\end{split}
 \end{equation}
and thus
\begin{equation} \label{eq difference of w-17}
 \|\overline{w}_{\varepsilon,1}\|^{2}=O\Big(\bar{\lambda}_\varepsilon\sum_{j=1}^{n}\big|x_{j,\varepsilon}^{(1)}-x^{(2)}_{j,\varepsilon}\big|+\sum_{j=1}^{n}\frac{\big|\lambda_{j,\varepsilon}^{(1)}-\lambda^{(2)}_{j,\varepsilon}\big|}{\bar{\lambda}_\varepsilon}\Big)\|\overline{w}_{\varepsilon,1}\|\|{w}_{\varepsilon}^{(2)}\|+O\Big(\Big \langle Q_{\varepsilon}\overline{w}_{\varepsilon}, \overline{w}_{\varepsilon,1}\Big \rangle \Big).   
\end{equation}
In the following, we need to estimate the second term on the right-hand side of \eqref{eq difference of w-17}. Notice that 
\begin{equation}
    U_{x_{j,\varepsilon}^{(1)},\lambda_{j,\varepsilon}^{(1)}}=\Big(1+O\Big({\bar{\lambda}_\varepsilon}{\big|x_{j,\varepsilon}^{(1)}-x^{(2)}_{j,\varepsilon}\big|}
 +\frac{\big|\lambda_{j,\varepsilon}^{(1)}-\lambda^{(2)}_{j,\varepsilon}\big|}{\bar{\lambda}_\varepsilon}\big)\Big)U_{x_{j,\varepsilon}^{(2)},\lambda_{j,\varepsilon}^{(2)}},
\end{equation}
thus by Lemma \ref{Elementary estimate} and H\"older inequality we have
 \begin{equation}\label{eq difference of w-18}
 \begin{split}
    \Big \langle l_{\varepsilon}, \overline{w}_{\varepsilon,1}\Big \rangle=O\Big(
{\bar{\lambda}_\varepsilon}\sum^n_{j=1}{\big|x_{j,\varepsilon}^{(1)}-x^{(2)}_{j,\varepsilon}\big|}
 +\sum^n_{j=1}\frac{\big|\lambda_{j,\varepsilon}^{(1)}-\lambda^{(2)}_{j,\varepsilon}\big|}{\bar{\lambda}_\varepsilon}\Big)
\|w^{(2)}_{\varepsilon}\|\|\overline{w}_{\varepsilon,1}\|,
 \end{split}
 \end{equation}
and similar as the computation in Lemma \ref{L var}, we have
\begin{equation}\label{eq difference of w-19}
   \Big \langle L_{\varepsilon}^{(1)}-L_{\varepsilon}^{(2)}, \overline{w}_{\varepsilon,1}\Big \rangle=O\Big(
{\bar{\lambda}_\varepsilon}\sum^n_{j=1}{\big|x_{j,\varepsilon}^{(1)}-x^{(2)}_{j,\varepsilon}\big|}
 +\sum^n_{j=1}\frac{\big|\lambda_{j,\varepsilon}^{(1)}-\lambda^{(2)}_{j,\varepsilon}\big|}{\bar{\lambda}_\varepsilon}\Big)
\|w_{\varepsilon}^{(2)}\|\|\overline{w}_{\varepsilon,1}\|.
\end{equation}
On the other hand, from Lemma \ref{Elementary estimate} and H\"older inequality, we have
\begin{equation}\label{lem defference of w-proof-21}
    \begin{aligned}
        \|R_{\varepsilon,2^*}^{(l)}(w_{\varepsilon}^{(l)})\|_{H^{-1}}=\begin{cases}
            O(\|w_{\varepsilon}^{(l)}\|^{2}),&\textrm{~~if~~}N=3,4,5,\\
            O(\|w_{\varepsilon}^{(l)}\|^{2^*-1}),&\textrm{~~if~~}N\geq 6,\\
        \end{cases}
    \end{aligned}
\end{equation}
and
\begin{equation}\label{lem defference of w-proof-22}
\begin{aligned}
   \|R_{\varepsilon,q}^{(l)}(w_{\varepsilon}^{(l)})\|_{H^{-1}}=
             O\left(\textrm{F}^{1}_{N,q}(\bar{\lambda}_{\varepsilon})\|w_{\varepsilon}^{(l)}\|+\frac{1}{\bar{\lambda}_{\varepsilon}^{\frac{N-2}{2}q-2}}\|w_{\varepsilon}^{(l)}\|^{q-1}\right),      
\end{aligned}
\end{equation}
where
\begin{equation}
    \textrm{F}^{1}_{N,q}(\bar{\lambda}_{\varepsilon})=\begin{cases}
        \frac{1}{\bar{\lambda}_{\varepsilon}^{^{(N-2)q-N}}},&\textrm{~~if~~}q\in(2,\frac{2N-2}{N-2}),\\
        \frac{(\ln\bar{\lambda}_{\varepsilon})^{\frac{2}{N}}}{\bar{\lambda}_{\varepsilon}^{N-2}},&\textrm{~~if~~}q=\frac{2N-2}{N-2},\\
        \frac{1}{\bar{\lambda}_{\varepsilon}^{N-2}},&\textrm{~~if~~}q\in(\frac{2N-2}{N-2},2^*).\\
    \end{cases}
\end{equation}
Hence from \eqref{lem defference of w-proof-21} and \eqref{lem defference of w-proof-22}, we get
\begin{equation}\label{eq difference of w-23}
\begin{aligned}
    \Big \langle R_{\varepsilon}^{(1)}(w_{\varepsilon}^{(1))}-R_{\varepsilon}^{(2)}(w_{\varepsilon}^{(1)}), \overline{w}_{\varepsilon,1}\Big \rangle&=O(\sum_{l=1}^{2}\|R_{\varepsilon}^{(l)}(w_{\varepsilon}^{(l)})\|_{H^{-1}})\|\overline{w}_{\varepsilon,1}\|\\
    &=o\Big(\bar{\lambda}_\varepsilon\sum_{j=1}^{n}\big|x_{j,\varepsilon}^{(1)}-x^{(2)}_{j,\varepsilon}\big|+\sum_{j=1}^{n}\frac{\big|\lambda_{j,\varepsilon}^{(1)}-\lambda^{(2)}_{j,\varepsilon}\big|}{\bar{\lambda}_\varepsilon}\Big)\|{w}_{\varepsilon}^{(2)}\|\|\overline{w}_{\varepsilon,1}\|.
\end{aligned}
\end{equation}
Combining estimate \eqref{eq difference of w-17}-\eqref{eq difference of w-23} together, we obtain
 \begin{equation}\label{lem defference of w-proof-26}
 \begin{aligned}
     \|\overline{w}_{\varepsilon,1}\|&=O\Big(\bar{\lambda}_\varepsilon\sum_{j=1}^{n}\big|x_{j,\varepsilon}^{(1)}-x^{(2)}_{j,\varepsilon}\big|+\sum_{j=1}^{n}\frac{\big|\lambda_{j,\varepsilon}^{(1)}-\lambda^{(2)}_{j,\varepsilon}\big|}{\bar{\lambda}_\varepsilon}\Big)\|{w}_{\varepsilon}^{(2)}\|,\\
 \end{aligned}  
 \end{equation}
and then from \eqref{lem defference of w-proof-14}, \eqref{lem defference of w-proof-26} and \eqref{perturbation estimate of w-varepsilon}, it follows that
 \begin{equation}
 \begin{split}
\|\overline{w}_{\varepsilon}\|=&O\left( \|\overline{w}_{\varepsilon,1 }\|+\Big\|\sum^n_{j=1}\Big(\alpha_{\varepsilon,j,0}\frac{\partial PU_{x^{(1)}_{j,\varepsilon},\lambda^{(1)}_{j,\varepsilon}}}
 {\partial \lambda}+\sum_{i=1}^N\alpha_{\varepsilon,j,i}
 \frac{\partial PU_{x^{(1)}_{j,\varepsilon},\lambda^{(1)}_{j,\varepsilon}}}
  {\partial x_i}\Big)\Big\|\right)\\ 
     &=O\Big(\bar{\lambda}_\varepsilon\sum_{j=1}^{n}\big|x_{j,\varepsilon}^{(1)}-x^{(2)}_{j,\varepsilon}\big|+\sum_{j=1}^{n}\frac{\big|\lambda_{j,\varepsilon}^{(1)}-\lambda^{(2)}_{j,\varepsilon}\big|}{\bar{\lambda}_\varepsilon}\Big)\|{w}_{\varepsilon}^{(2)}\|,\\
  \end{split}
 \end{equation}
which together with Theorem \ref{thm decomposition} and Theorem \ref{thm blow up rate}, it follows that \eqref{lem defference of w-1} hold.
\end{proof}

\begin{Lem}\label{lem difference of concentration point}
It holds
\begin{equation}\label{lem difference of concentration point-1}
\big|x^{(1)}_{j,\varepsilon}-x^{(2)}_{j,\varepsilon}\big|=o\Big(\frac{1}{\bar \lambda^2_{\varepsilon}}\Big)~\mbox{and}~\big|\lambda^{(1)}_{i,\varepsilon}-
\lambda^{(2)}_{j,\varepsilon}\big|=o\Big(\frac{1}{\bar\lambda^2_{\varepsilon}}\Big),
\end{equation}
for any $j=1,\cdots,n$.
\end{Lem}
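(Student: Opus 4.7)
The plan is to apply the local Pohozaev identities (of the type used in the proofs of Propositions \ref{Prop blow up rate} and \ref{prop blow up rate 2}) to each of the two solutions $u_\varepsilon^{(1)},u_\varepsilon^{(2)}$ on the common ball $B(x^{(1)}_{j,\varepsilon},\theta)$ and to subtract the two resulting identities, so that the error on the right-hand side is driven by $\overline w_\varepsilon=w_\varepsilon^{(1)}-w_\varepsilon^{(2)}$ rather than by the individual remainders $w_\varepsilon^{(l)}$. In view of Lemma \ref{lem defference of w}, $\|\overline w_\varepsilon\|$ is strictly smaller than what Theorem \ref{thm decomposition} guarantees for each $w_\varepsilon^{(l)}$ separately, which is precisely the gain needed to pass from $O(\bar\lambda_\varepsilon^{-2})$ to $o(\bar\lambda_\varepsilon^{-2})$ in \eqref{lem difference of concentration point-1}.

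First I would revisit the derivations of \eqref{Prop blow up rate proof 5} and \eqref{prop blow up rate 2 proof 5}, tracing every term ``$O(\mathrm{F_{N,q}}(\lambda_\varepsilon))$'' back to its origin in the integral estimates of Lemma \ref{lem of estimate of u 1}--Lemma \ref{lem estimate of q refined}. This yields, for each $l\in\{1,2\}$ and $j=1,\dots,n$, two families of identities
\begin{equation*}
\partial_{\lambda_j}\Phi_n(\vec x^{(l)}_\varepsilon,\vec\Lambda^{(l)}_\varepsilon)=\mathcal{E}_{j,0}[w_\varepsilon^{(l)}],\qquad \partial_{x_{j,i}}\Phi_n(\vec x^{(l)}_\varepsilon,\vec\Lambda^{(l)}_\varepsilon)=\mathcal{E}_{j,i}[w_\varepsilon^{(l)}],\quad i=1,\dots,N,
\end{equation*}
where $\Lambda^{(l)}_{j,\varepsilon}:=\bigl(\varepsilon^{2/((N-2)q-4)}\lambda^{(l)}_{j,\varepsilon}\bigr)^{-1}$, and the functionals $\mathcal{E}_{j,\cdot}$ are to leading order linear expressions in their argument (weighted by $PU^{2^{*}-2}$-type kernels centered at the bubbles of $u_\varepsilon^{(l)}$). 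Subtracting the $l=1$ and $l=2$ versions then reduces the right-hand side to integrals in $\overline w_\varepsilon$ plus cross-terms proportional to $\bar\lambda_\varepsilon\,|x^{(1)}_{j,\varepsilon}-x^{(2)}_{j,\varepsilon}|+\bar\lambda_\varepsilon^{-1}|\lambda^{(1)}_{j,\varepsilon}-\lambda^{(2)}_{j,\varepsilon}|$; by Lemma \ref{lem defference of w} the former is $o(\bar\lambda_\varepsilon^{-N})$, while the latter can be absorbed into the left-hand side once the Hessian is inverted.

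Next, I would expand the left-hand sides by Taylor's formula around $(\vec a,\vec\lambda)$. This produces a linear system in the unknowns $\bar\lambda_\varepsilon^{2}(\vec x^{(1)}_\varepsilon-\vec x^{(2)}_\varepsilon)$ and $\vec\Lambda^{(1)}_\varepsilon-\vec\Lambda^{(2)}_\varepsilon$, whose coefficient matrix is, up to a nonzero rescaling dictated by the Pohozaev normalization, exactly the Hessian $\nabla^2\Phi_n(\vec a,\vec\lambda)$. Since this Hessian is invertible by the nondegeneracy hypothesis, inverting the system and translating $\vec\Lambda^{(l)}_\varepsilon$ back to $\vec\lambda^{(l)}_\varepsilon$ yields \eqref{lem difference of concentration point-1}.

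The main obstacle will be the bookkeeping required to verify that the cross-terms really can be moved to the left-hand side (and thus absorbed by the Hessian), and that the subcritical contribution $\varepsilon\int u_\varepsilon^{q-1}$ in the Pohozaev identity produces only smaller-order corrections when one takes the difference of the two solutions. The restriction $q\geq 2^*-1$ enters decisively at this stage: it both guarantees the uniqueness of the $\vec\lambda$-critical point of $\Phi_n(\vec a,\cdot)$ used in identifying the limiting Hessian via Proposition \ref{Prop blow up rate}, and keeps the nonlinear remainder $R_\varepsilon^{(1)}(w_\varepsilon^{(1)})-R_\varepsilon^{(2)}(w_\varepsilon^{(2)})$ strictly of smaller order than the leading Hessian term, which is what converts the available $O(\bar\lambda_\varepsilon^{-2})$ estimate into the required $o(\bar\lambda_\varepsilon^{-2})$ estimate.
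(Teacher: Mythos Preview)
Your proposal is correct and follows essentially the same route as the paper: subtract the local Poho\v{z}aev identities for the two solutions on a common ball, use Lemma \ref{lem defference of w} to control the resulting error by $o(\bar\lambda_\varepsilon^{-N})$, and then invert the nondegenerate Hessian of $\Phi_n$. The paper's execution differs only cosmetically, working with the boundary quadratic forms $Q_1,P_1$ applied directly to the bubble decomposition \eqref{eq decomposition for u l l=1,2} (via the Green-function expansion of each $PU_{x^{(l)}_{j,\varepsilon},\lambda^{(l)}_{j,\varepsilon}}$) rather than re-tracing the volume estimates of Lemmas \ref{lem of estimate of u 1}--\ref{lem estimate of q refined}, and it disposes of your ``cross-terms'' not by absorption but by feeding in the a~priori bound $|x^{(1)}_{j,\varepsilon}-x^{(2)}_{j,\varepsilon}|+|\lambda^{(1)}_{j,\varepsilon}-\lambda^{(2)}_{j,\varepsilon}|=O(\bar\lambda_\varepsilon^{-2})$ from Theorem \ref{thm blow up rate}.
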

\begin{proof}
We define the following quadratic form
\begin{equation}
Q_1(u,v)=-\int_{\partial B_\theta(x^{(1)}_{j,\varepsilon})}\frac{\partial v}{\partial \nu^{(1)}}\frac{\partial u}{\partial x_i}-
\int_{\partial B_\theta(x^{(1)}_{j,\varepsilon})}\frac{\partial u}{\partial \nu^{(1)}}\frac{\partial v}{\partial x_i}
+\int_{\partial B_\theta(x^{(1)}_{j,\varepsilon})}\big\langle \nabla u,\nabla v \big\rangle \nu^{(1)}_i,
\end{equation}
where $\nu^{(1)}(x)=\big(\nu^{(1)}_{1}(x),\cdots,\nu^{(1)}_N(x)\big)$ is the outward unit normal of $\partial B_{\theta}(x^{(1)}_{j,\varepsilon})$. Note that for any $k=1,2$ 
\begin{equation}
u^{(k)}_{\varepsilon}=\sum^n_{j=1} PU_{x^{(k)}_{j,\varepsilon}, \lambda^{(k)}_{j,\varepsilon}}+w^{(k)}_{\varepsilon}, 
\end{equation}
it follows that
\begin{equation}\label{lem difference of concentration point-proof-3}
    \begin{aligned}
     &Q_{1}(u^{(1)}_{\varepsilon},u^{(1)}_{\varepsilon})-Q_{1}(u^{(2)}_{\varepsilon},u^{(2)}_{\varepsilon})\\
        &=\sum^n_{l=1}\sum^n_{m=1}Q_{1}(PU_{x^{(1)}_{l,\varepsilon}, \lambda^{(1)}_{l,\varepsilon}},PU_{x^{(1)}_{m,\varepsilon}, \lambda^{(1)}_{m,\varepsilon}})-\sum^n_{l=1}\sum^n_{m=1}Q_{1}(PU_{x^{(2)}_{l,\varepsilon}, \lambda^{(2)}_{l,\varepsilon}},PU_{x^{(2)}_{m,\varepsilon}, \lambda^{(2)}_{m,\varepsilon}})\\
        &\quad+2\sum^n_{l=1}Q_{1}(PU_{x^{(1)}_{l,\varepsilon}, \lambda^{(1)}_{l,\varepsilon}},w_{\varepsilon}^{(1)})-2\sum^n_{l=1}Q_{1}(PU_{x^{(2)}_{l,\varepsilon}, \lambda^{(2)}_{l,\varepsilon}},w_{\varepsilon}^{(2)})\\
        &\quad+Q_{1}(w_{\varepsilon}^{(1)},w_{\varepsilon}^{(1)})-Q_{1}(w_{\varepsilon}^{(2)},w_{\varepsilon}^{(2)}).
    \end{aligned}
\end{equation}
Next, we estimate the terms on the right-hand side of \eqref{lem difference of concentration point-proof-3}. By using the Green's representation formula with a similar calculations as Proposition \ref{prop estimate for u}, we have
\begin{equation}
 PU_{x^{(k)}_{j,\varepsilon},\lambda^{(k)}_{j,\varepsilon}}(y)
 =A\frac{G\big(x^{(k)}_{j,\varepsilon},y\big)}{(\lambda^{(k)}_{j,\varepsilon})^{\frac{N-2}{2}}}+\delta_{j,\varepsilon}^{(k)}(y),
\end{equation}
with 
\begin{equation}
    \delta_{j,\varepsilon}^{(k)}(y)=O\left(\frac{1}{\bar{\lambda}_{\varepsilon}^{\frac{N+2}{2}}}\right),~~\mbox{in}~ C^{1}(\Omega\setminus B_\theta(x^{(k)}_{j,\varepsilon})),
\end{equation}
for any $k=1,2$ and $j=1,\cdots,n$. Moreover from Theorem \ref{thm blow up rate}, we have
\begin{equation}
    \sum_{i=1}^{n}\big|x_{i,\varepsilon}^{(1)}-x^{(2)}_{i,\varepsilon}\big|+\sum_{i=1}^{n}\big|\lambda_{i,\varepsilon}^{(1)}-\lambda^{(2)}_{i,\varepsilon}\big|=O\left(\frac{1}{\bar{\lambda}_{\varepsilon}^{2}}\right),
\end{equation}
and
 \begin{equation}
    \delta_{j,\varepsilon}^{(1)}(y)=(1+O(\frac{1}{\bar{\lambda}_{\varepsilon}}))\delta_{j,\varepsilon}^{(2)}(y)~~\mbox{in}~ C^{1}(\Omega\setminus B_\theta(x^{(k)}_{j,\varepsilon})).
 \end{equation}
Hence, the first term on the right-hand side of \eqref{lem difference of concentration point-proof-3} can be estimate as follows
\begin{equation}\label{lem difference of concentration point-proof-7}
    \begin{aligned}
        &\sum^n_{l=1}\sum^n_{m=1}Q_{1}(PU_{x^{(1)}_{l,\varepsilon}, \lambda^{(1)}_{l,\varepsilon}},PU_{x^{(1)}_{m,\varepsilon}, \lambda^{(1)}_{m,\varepsilon}})-\sum^n_{l=1}\sum^n_{m=1}Q_{1}(PU_{x^{(2)}_{l,\varepsilon}, \lambda^{(2)}_{l,\varepsilon}},PU_{x^{(2)}_{m,\varepsilon}, \lambda^{(2)}_{m,\varepsilon}})\\
        &=A^2\sum^n_{l=1}\sum^n_{m=1}\frac{Q_1\big(G(x^{(1)}_{m,\varepsilon},x),G(x^{(1)}_{l,\varepsilon},x)\big)}{
 (\lambda^{(1)}_{m,\varepsilon}) ^{(N-2)/2}
 (\lambda^{(1)}_{l,\varepsilon})^{(N-2)/2}} - A^2\sum^n_{l=1}\sum^n_{m=1}\frac{Q_1\big(G(x^{(2)}_{m,\varepsilon},x),G(x^{(2)}_{l,\varepsilon},x)\big)}{
 (\lambda^{(2)}_{m,\varepsilon}) ^{(N-2)/2}
 (\lambda^{(2)}_{l,\varepsilon})^{(N-2)/2}}\\
 &\quad+2A\sum^n_{l=1}\sum^n_{m=1}\frac{Q_{1}(G(x_{l,\varepsilon}^{(1)},x),\delta_{m,\varepsilon}^{(1)}(x))}{(\lambda_{l,\varepsilon}^{(1)})^{\frac{N-2}{2}}}-2A\sum^n_{l=1}\sum^n_{m=1}\frac{Q_{1}(G(x_{l,\varepsilon}^{(2)},x),\delta_{m,\varepsilon}^{(2)}(x))}{(\lambda_{l,\varepsilon}^{(2)})^{\frac{N-2}{2}}}\\
 &\quad+\sum^n_{l=1}\sum^n_{m=1}Q_{1}(\delta_{l,\varepsilon}^{(1)}(x),\delta_{m,\varepsilon}^{(1)}(x))-\sum^n_{l=1}\sum^n_{m=1}Q_{1}(\delta_{l,\varepsilon}^{(2)}(x),\delta_{m,\varepsilon}^{(2)}(x))\\
 &=A^2\sum^n_{l=1}\sum^n_{m=1}\frac{Q_1\big(G(x^{(1)}_{m,\varepsilon},x),G(x^{(1)}_{l,\varepsilon},x)\big)}{
 (\lambda^{(1)}_{m,\varepsilon}) ^{(N-2)/2}
 (\lambda^{(1)}_{l,\varepsilon})^{(N-2)/2}} - A^2\sum^n_{l=1}\sum^n_{m=1}\frac{Q_1\big(G(x^{(2)}_{m,\varepsilon},x),G(x^{(2)}_{l,\varepsilon},x)\big)}{
 (\lambda^{(2)}_{m,\varepsilon}) ^{(N-2)/2}
 (\lambda^{(2)}_{l,\varepsilon})^{(N-2)/2}}+o(\frac{1}{\bar{\lambda}_{\varepsilon}^{N}}).
    \end{aligned}
\end{equation}
Moreover, from Lemma \ref{lem defference of w} and \eqref{perturbation estimate of w-varepsilon}, the second and the third term on the right-hand side of \eqref{lem difference of concentration point-proof-3} can be bounded by
\begin{equation}\label{lem difference of concentration point-proof-8}
\begin{aligned}
    &\sum^n_{l=1}Q_{1}(PU_{x^{(1)}_{l,\varepsilon}, \lambda^{(1)}_{l,\varepsilon}},w_{\varepsilon}^{(1)})-\sum^n_{l=1}Q_{1}(PU_{x^{(2)}_{l,\varepsilon}, \lambda^{(2)}_{l,\varepsilon}},w_{\varepsilon}^{(2)})\\
    &=\sum^n_{l=1}Q_{1}(PU_{x^{(1)}_{l,\varepsilon}, \lambda^{(1)}_{l,\varepsilon}}-PU_{x^{(2)}_{l,\varepsilon}, \lambda^{(2)}_{l,\varepsilon}},w_{\varepsilon}^{(1)})+\sum^n_{l=1}Q_{1}(PU_{x^{(2)}_{l,\varepsilon}, \lambda^{(2)}_{l,\varepsilon}},\overline{w}_{\varepsilon})\\
    &=O(\frac{1}{\bar{\lambda}_{\varepsilon}^{\frac{N}{2}}}\|w_{\varepsilon}^{(1)}\|+\frac{1}{\bar{\lambda}_{\varepsilon}^{\frac{N-2}{2}}}\|\overline{w}_{\varepsilon}\|)=o(\frac{1}{\bar{\lambda}_{\varepsilon}^{N}}),
\end{aligned}    
\end{equation}
and 
\begin{equation}\label{lem difference of concentration point-proof-9}
    Q_{1}(w_{\varepsilon}^{(1)},w_{\varepsilon}^{(1)})-Q_{1}(w_{\varepsilon}^{(2)},w_{\varepsilon}^{(2)})=O(\sum_{k=1}^{2}\|w_{\varepsilon}^{(k})\|^{2})=o(\frac{1}{\bar{\lambda}_{\varepsilon}^{N}}).
\end{equation}
Combining the estimate \eqref{lem difference of concentration point-proof-3}, \eqref{lem difference of concentration point-proof-7}, \eqref{lem difference of concentration point-proof-8} and \eqref{lem difference of concentration point-proof-9} together, we deduce that
\begin{equation}\label{lem difference of concentration point-proof-10}
\begin{aligned}
    &Q_{1}(u^{(1)}_{\varepsilon},u^{(1)}_{\varepsilon})-Q_{1}(u^{(2)}_{\varepsilon},u^{(2)}_{\varepsilon})\\
    &= A^2\sum^n_{l=1}\sum^n_{m=1}\frac{Q_1\big(G(x^{(1)}_{m,\varepsilon},x),G(x^{(1)}_{l,\varepsilon},x)\big)}{
 (\lambda^{(1)}_{m,\varepsilon}) ^{(N-2)/2}
 (\lambda^{(1)}_{l,\varepsilon})^{(N-2)/2}} - A^2\sum^n_{l=1}\sum^n_{m=1}\frac{Q_1\big(G(x^{(2)}_{m,\varepsilon},x),G(x^{(2)}_{l,\varepsilon},x)\big)}{
 (\lambda^{(2)}_{m,\varepsilon}) ^{(N-2)/2}
 (\lambda^{(2)}_{l,\varepsilon})^{(N-2)/2}}+o(\frac{1}{\bar{\lambda}_{\varepsilon}^{N}}).  
\end{aligned}
\end{equation}
On the other hand
\begin{equation}\label{lem difference of concentration point-proof-11}
\begin{aligned}
    \int_{\partial B_\theta\big(x^{(1)}_{j,\varepsilon}\big)} (u_{\varepsilon}^{(1)})^{2^*}\nu^{(1)}_i-
(u_{\varepsilon}^{(2)})^{2^*}\nu^{(1)}_i=O\left( \int_{\partial B_\theta\big(x^{(1)}_{j,\varepsilon}\big)} (u_{\varepsilon}^{(1)})^{2^*-1}|u_{\varepsilon}^{(1)}-u_{\varepsilon}^{(2)}|\right)=o(\frac{1}{\bar{\lambda}_{\varepsilon}^{N}}), 
\end{aligned}   
\end{equation}
and
\begin{equation}\label{lem difference of concentration point-proof-12}
\begin{aligned}
    \varepsilon \int_{\partial B_\theta\big(x^{(1)}_{j,\varepsilon}\big)} (u_{\varepsilon}^{(1)}\Big)^{q}\nu^{(1)}_i-&
\Big(u_{\varepsilon}^{2})^{q}\nu^{(1)}_i=O(\frac{1}{\bar{\lambda}_{\varepsilon}^{(N-2)q}})=o(\frac{1}{\bar{\lambda}_{\varepsilon}^{N}}).
\end{aligned}
\end{equation}
Then from local Poho\v{z}aev identity \eqref{pohozaev identity 2}, \eqref{lem difference of concentration point-proof-10}, \eqref{lem difference of concentration point-proof-11} and \eqref{lem difference of concentration point-proof-12}, we get
\begin{equation}
    A^2\sum^n_{l=1}\sum^n_{m=1}\frac{Q_1\big(G(x^{(1)}_{m,\varepsilon},x),G(x^{(1)}_{l,\varepsilon},x)\big)}{
 (\lambda^{(1)}_{m,\varepsilon}) ^{(N-2)/2}
 (\lambda^{(1)}_{l,\varepsilon})^{(N-2)/2}} - A^2\sum^n_{l=1}\sum^n_{m=1}\frac{Q_1\big(G(x^{(2)}_{m,\varepsilon},x),G(x^{(2)}_{l,\varepsilon},x)\big)}{
 (\lambda^{(2)}_{m,\varepsilon}) ^{(N-2)/2}
 (\lambda^{(2)}_{l,\varepsilon})^{(N-2)/2}}=o(\frac{1}{\bar{\lambda}_{\varepsilon}^{N}}). 
\end{equation}

Let $\Lambda^{(k)}_{j,\varepsilon}:=\Big(\varepsilon ^{\frac{2}{(N-2)q-4}}\lambda^{(k)}_{j,\varepsilon}\Big)^{-1}$ with $k=1,2$ and $j=1,\cdots,n$. Then from Lemma \ref{Lem derivate for reduce function} and Lemma \ref{lem for estimate of quadractic form}
\begin{equation}
\nabla_x\Phi_n(x,\lambda)\big|_{(x,\lambda)
=(x^{(1)}_\varepsilon,\Lambda^{(1)}_\varepsilon)}-
\nabla_x\Phi_n(x,\lambda)\big|_{(x,\lambda)
=(x^{(2)}_\varepsilon,\Lambda^{(2)}_\varepsilon)}=
o\Big(\frac{1}{\bar{\lambda}_\varepsilon^{2}}\Big),
\end{equation}
where $x^{(1)}_\varepsilon=\big(x^{(1)}_{1,\varepsilon},
\cdots,x^{(1)}_{k,\varepsilon}\big)$ and $\Lambda^{(1)}_\varepsilon=\big(\Lambda^{(1)}_{1,\varepsilon},
\cdots,\Lambda^{(1)}_{k,\varepsilon}\big)$.
Similarly, by using local Poho\v{z}aev identity \eqref{pohozaev identity 1}, we can conclude that
\begin{equation}
\nabla_\lambda\Phi_n(x,\lambda)\big|_{(x,\lambda)
=(x^{(1)}_\varepsilon,\Lambda^{(1)}_\varepsilon)}-
\nabla_\lambda\Phi_n(x,\lambda)\big|_{(x,\lambda)
=(x^{(2)}_\varepsilon,\Lambda^{(2)}_\varepsilon)}=
o\Big(\frac{1}{\bar{\lambda}_\varepsilon^{2}}\Big).
\end{equation}
Since the concentration point is a nondegenerate critical point of $\Phi_{n}$, thus \eqref{lem difference of concentration point-1} holds.
\end{proof}

\begin{Rem}
  When $N=3,q\in(4,6)$, $N=4,q\in(2,2^*)$ and $N=5,q\in(2,\frac{7}{3})$, from Theorem \ref{thm decomposition} and Theorem \ref{thm blow up rate}, we can observe that the error estimate and concentration speed is not enough to obtain the above two lemmas.  
\end{Rem}

\subsection{Blow-up analysis}
Now we define
\begin{equation}\label{definition of xi}
\xi_{\varepsilon}(x)=\frac{u_{\varepsilon}^{(1)}(x)-u_{\varepsilon}^{(2)}(x)}
{\|u_{\varepsilon}^{(1)}-u_{\varepsilon}^{(2)}\|_{L^{\infty}(\Omega)}},
\end{equation}
then $\xi_{\varepsilon}(x)$ satisfies $\|\xi_{\varepsilon}\|_{L^{\infty}(\Omega)}=1$ and
\begin{equation}\label{equation for xi}
- \Delta \xi_{\varepsilon}(x)=C_{2^*,\varepsilon}(x)\xi_{\varepsilon}(x)+\varepsilon C_{q,\varepsilon}(x)\xi_{\varepsilon}(x),
\end{equation}
where
\begin{equation}
C_{2^*,\varepsilon}(x)=\Big(2^*-1\Big)\int_{0}^1
\Big(tu_{\varepsilon}^{(1)}(x)+(1-t)u_{\varepsilon}^{(2)}(x)\Big)
^{2^*-2}dt,
\end{equation}
and
\begin{equation}
    C_{q,\varepsilon}(x)=\Big(q-1\Big)\int_{0}^1
\Big(tu_{\varepsilon}^{(1)}(x)+(1-t)u_{\varepsilon}^{(2)}(x)\Big)
^{q-2}dt. 
\end{equation}

\begin{Lem}\label{lem estimate for xi}
It holds
\begin{equation}\label{lem estimate for xi-1}
\int_{\Omega}{\xi}_\varepsilon(x)dx=O\Big(\frac{\ln \bar{\lambda}_\varepsilon}{\bar{\lambda}^{N-2}_\varepsilon}\Big)~\mbox{and}~ \xi_{\varepsilon}(x) =O\Big(\frac{\ln \bar{\lambda}_\varepsilon}{\bar{\lambda}^{N-2}_\varepsilon}\Big),~\mbox{in}~ \Omega\backslash\bigcup_{j=1}^n B_{d}(x_{j,\varepsilon}^{(1)}),
\end{equation}
where $d>0$ is any small fixed constant.
\end{Lem}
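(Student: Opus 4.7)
The plan is to apply the Green's representation formula to \eqref{equation for xi}, namely
\[
\xi_\varepsilon(x) = \int_\Omega G(x,y)\bigl[C_{2^*,\varepsilon}(y) + \varepsilon C_{q,\varepsilon}(y)\bigr] \xi_\varepsilon(y)\, dy,
\]
and to exploit the uniform upper bound $u^{(l)}_\varepsilon(y) \lesssim \sum_j U_{x_{j,\varepsilon}^{(l)}, \lambda_{j,\varepsilon}^{(l)}}(y)$ from Theorem \ref{thm multibble blowup}(e), which yields $C_{2^*,\varepsilon}(y) \lesssim \sum_j U_{x_{j,\varepsilon}, \lambda_{j,\varepsilon}}^{2^*-2}(y)$ and $C_{q,\varepsilon}(y) \lesssim \sum_j U_{x_{j,\varepsilon},\lambda_{j,\varepsilon}}^{q-2}(y)$. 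Because of the normalization $\|\xi_\varepsilon\|_{L^\infty}=1$, the rescaled function $\tilde{\xi}_{j,\varepsilon}(z) := \xi_\varepsilon(x_{j,\varepsilon}^{(1)} + z/\lambda_{j,\varepsilon}^{(1)})$ is uniformly bounded and, by standard elliptic regularity, converges along a subsequence in $C^2_{\mathrm{loc}}(\R^N)$ to a bounded solution $\xi_{j,\infty}$ of the linearized equation $-\Delta v = (2^*-1)U^{2^*-2}v$ on $\R^N$. By the classical nondegeneracy of $U$, this forces $\xi_{j,\infty} = b_j \partial_\lambda U + \sum_i c_{j,i}\partial_{x_i}U$ with bounded coefficients, a structural fact we shall exploit below.

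For the pointwise bound I would fix $x$ outside $\bigcup_j B_d(x_{j,\varepsilon}^{(1)})$ and split the integral over $\Omega$ into the balls $B_{d/2}(x_{j,\varepsilon}^{(1)})$ plus their complement. On the complement both $G(x,y)$ and the Aubin--Talenti weights are uniformly small, contributing at most $O(\bar\lambda_\varepsilon^{-(N-2)})$. Inside each ball the change of variables $z = \lambda_{j,\varepsilon}^{(1)}(y-x_{j,\varepsilon}^{(1)})$ rewrites the critical piece as
\[
\lambda_{j,\varepsilon}^{2-N}\int_{B(0,\lambda_{j,\varepsilon}^{(1)} d/2)} G\bigl(x, x_{j,\varepsilon}^{(1)} + z/\lambda_{j,\varepsilon}^{(1)}\bigr) \tilde U^{2^*-2}(z)\, \tilde{\xi}_{j,\varepsilon}(z)\, dz,
\]
with $\tilde U(z) = \alpha_N(1+|z|^2)^{-(N-2)/2}$. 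Taylor expanding $G$ around $x_{j,\varepsilon}^{(1)}$ and decomposing $\tilde{\xi}_{j,\varepsilon} = \xi_{j,\infty} + (\tilde\xi_{j,\varepsilon} - \xi_{j,\infty})$ isolates the leading piece $b_j G(x,x_{j,\varepsilon}^{(1)})\int_{\R^N}\tilde U^{2^*-2}\partial_\lambda U\, dz$ of size $O(\bar\lambda_\varepsilon^{-(N-2)})$; the $c_{j,i}$ contributions vanish by the symmetry $\int \tilde U^{2^*-2}\partial_{x_i}U = 0$. The sub-critical piece is dominated by $\varepsilon\int_{B_{d/2}(x_{j,\varepsilon}^{(1)})} U_{x_{j,\varepsilon}^{(1)},\lambda_{j,\varepsilon}^{(1)}}^{q-2}\,dy$, whose size follows from a computation analogous to Lemma \ref{lem estimate of u q-1} (with the exponent adjusted from $q-1$ to $q-2$); combined with $\varepsilon \lesssim \lambda_{j,\varepsilon}^{2-(N-2)q/2}$ from Corollary \ref{cor of the decomposition} and the hypothesis $q \geq 2^*-1$, this contribution is absorbed into the target order.

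For the integral bound I would use Fubini with the auxiliary function $\Psi(y) := \int_\Omega G(x,y)\, dx$, the unique bounded solution of $-\Delta\Psi = 1$ in $\Omega$ with zero Dirichlet data, to obtain
\[
\int_\Omega \xi_\varepsilon(x)\, dx = \int_\Omega \Psi(y)\bigl[C_{2^*,\varepsilon}(y) + \varepsilon C_{q,\varepsilon}(y)\bigr]\xi_\varepsilon(y)\, dy.
\]
The same blow-up analysis applies with the bounded weight $\Psi$ in place of $G(x,\cdot)$, yielding the identical order $O(\ln\bar\lambda_\varepsilon/\bar\lambda_\varepsilon^{N-2})$.

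The step I expect to be the main obstacle is the quantitative control of the annular integral $\int_{M<|z|<\lambda_{j,\varepsilon}^{(1)} d/2} \tilde U^{2^*-2}|\tilde\xi_{j,\varepsilon}-\xi_{j,\infty}|\,dz$. At this stage of the argument no rate of convergence of $\tilde\xi_{j,\varepsilon}$ to $\xi_{j,\infty}$ is available beyond the pointwise $o(1)$ on compact sets, so on the annulus one is forced to rely on the uniform bound $|\tilde\xi_{j,\varepsilon}|\le 1$. The resulting radial integral $\int_M^{\lambda d/2} r^{N-5}\,dr$ produces precisely the logarithm $\ln\bar\lambda_\varepsilon$ in dimension $N=5$, while for $N\ge 6$ it must be combined with the asymptotic decay $\xi_{j,\infty}(z) \lesssim |z|^{-(N-2)}$ at infinity and, if necessary, a short bootstrap through the Green's identity to recover the claimed bound uniformly in $N \ge 5$.
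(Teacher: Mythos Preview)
Your approach differs from the paper's and contains a real gap. The paper never invokes the structure of the blow-up limit $\xi_{j,\infty}$; it runs a pure bootstrap through the Green representation. Starting from $|\xi_\varepsilon|\le 1$ and the bound $C_{2^*,\varepsilon}\lesssim\sum_{j,l}U^{2^*-2}_{x^{(l)}_{j,\varepsilon},\lambda^{(l)}_{j,\varepsilon}}$, one application of the convolution estimate in Lemma~\ref{useful estimate} yields $|\xi_\varepsilon(x)|\lesssim\sum_{j,l}(1+\lambda^{(l)}_{j,\varepsilon}|x-x^{(l)}_{j,\varepsilon}|)^{-2}+\varepsilon$; feeding this back into the representation improves the exponent by $2$ at each step, until after finitely many iterations one reaches
\[
|\xi_\varepsilon(x)|\lesssim\sum_{j,l}\frac{|\ln(\lambda^{(l)}_{j,\varepsilon}|x-x^{(l)}_{j,\varepsilon}|)|}{(1+\lambda^{(l)}_{j,\varepsilon}|x-x^{(l)}_{j,\varepsilon}|)^{N-2}}+O\Big(\frac{1}{\bar\lambda_\varepsilon^{N-2}}\Big),
\]
from which both assertions of the lemma follow by inspection.

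Your route via the decomposition $\tilde\xi_{j,\varepsilon}=\xi_{j,\infty}+(\tilde\xi_{j,\varepsilon}-\xi_{j,\infty})$ cannot close without precisely this iteration. On the annulus $M<|z|<\lambda d/2$ you have no quantitative smallness of the difference beyond $O(1)$, so the remainder is controlled only by $\lambda^{2-N}\int_M^{\lambda d/2} r^{N-5}\,dr$. Your claim that this produces $\ln\bar\lambda_\varepsilon$ in dimension $N=5$ is a miscalculation: the logarithm would occur for $N=4$, outside the scope of this section; for $N\ge 5$ the integral is $\sim\lambda^{N-4}$ and the bound collapses to $O(\bar\lambda_\varepsilon^{-2})$, missing the target $O(\ln\bar\lambda_\varepsilon/\bar\lambda_\varepsilon^{N-2})$ by a factor $\bar\lambda_\varepsilon^{N-4}$. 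The asymptotic decay of $\xi_{j,\infty}$ does not help, since it controls the limit rather than the difference. The ``short bootstrap'' you mention as a fallback is in fact the whole argument---once you run it, the Taylor expansion and the limit decomposition are unnecessary. There is also a logical wrinkle: identifying $\xi_{j,\infty}$ with a kernel element via Lemma~\ref{Kernel of Emden-Fowler equation} requires $\xi_{j,\infty}\in\mathcal D^{1,2}(\R^N)$, which in the paper is obtained in Lemma~\ref{lem xi var j} \emph{using} the pointwise estimate established here.
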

\begin{proof}
By using the Green's representation formula, \eqref{equation for xi}, Theorem \ref{thm multibble blowup} and Lemma \ref{useful estimate},  we have
\begin{equation}
\begin{split}
 {\xi}_\varepsilon(x)=& \int_{\Omega}G(y,x)
\big(C_{2^*,\varepsilon}(y)+\varepsilon C_{q,\varepsilon}(y)\big)\xi_{\varepsilon}(y)dy\\=&
O\Big(\sum^n_{j=1}\sum^2_{l=1}\int_{\Omega}\frac{1}{|x-y|^{N-2}} U^{2^*-2}_{x^{(l)}_{j,\varepsilon},\lambda^{(l)}_{j,\varepsilon}}
(y)dy\Big)+O\big(\varepsilon\sum^n_{j=1}\sum^2_{l=1}\int_{\Omega}\frac{1}{|x-y|^{N-2}} U^{q-2}_{x^{(l)}_{j,\varepsilon},\lambda^{(l)}_{j,\varepsilon}}
(y)dy\big)\\=&
O\Big(\sum^n_{j=1}\sum^2_{l=1} \frac{1}{\big(1+\lambda^{(l)}_{j,\varepsilon}|x-x^{(l)}_{j,\varepsilon}|\big)^{2}}\Big)+O(\varepsilon).
\end{split}
\end{equation}
Repeating the above process and notice that $\varepsilon\sim \frac{1}{\bar{\lambda}_{\varepsilon}^{\frac{N-2}{2}q-2}}$,
\begin{equation}
\begin{split}
{\xi}_\varepsilon(x) = &O\left(\int_{\Omega}\frac{1}{|x-y|^{N-2}}
\big(C_{2^*,\varepsilon}(y)+\varepsilon C_{q,\varepsilon}(y)\big)\Big(\sum^n_{j=1}\sum^2_{l=1}
\frac{1}{\big(1+\lambda^{(l)}_{j,\varepsilon}|y-x^{(l)}_{j,\varepsilon}|\big)^{2}}
+\varepsilon\Big)dy\right)\\=&
O\Big(\sum^n_{j=1}\sum^2_{l=1}\frac{1}{\big(1+\lambda^{(l)}_{j,\varepsilon}
|x-x^{(l)}_{j,\varepsilon}|\big)^{4}}\Big)+O\Big(\varepsilon\sum^n_{j=1}\sum^2_{l=1} \frac{1}{\big(1+\lambda^{(l)}_{j,\varepsilon}|x-x^{(l)}_{j,\varepsilon}|\big)^{2}}\Big)+O\Big(\frac{1}{\bar{\lambda}_{\varepsilon}^{N-2}}\Big).
\end{split}
\end{equation}
After repeating the above process for finite number of times, we can obtain that 
\begin{equation}\label{eq estimate of xi-varepsilon}
{\xi}_\varepsilon(x) =
O\Big(\sum^n_{j=1}\sum^2_{l=1}\frac{|\ln ({\lambda}_{j,\varepsilon}^{l}|x-x^{(l)}_{j,\varepsilon}|)|}{\big(1+\lambda^{(l)}_{j,\varepsilon}
|x-x^{(l)}_{j,\varepsilon}|\big)^{N-2}}\Big)+
O\big(\frac{1}{\bar{\lambda}_{\varepsilon}^{N-2}}\big),
\end{equation}
thus \eqref{lem estimate for xi-1} follows.
\end{proof}

\begin{Lem}\label{lem xi var j}
For any $j=1,\cdots,n$, let $\xi_{\varepsilon,j}(x)=\xi_{\varepsilon}(\frac{x}{\lambda^{(1)}_{j,\varepsilon}}+x_{j,\varepsilon}^{(1)})$, after passing a subsequence,  we have
\begin{equation}\label{lem xi var j-1}
\Big|\xi_{\varepsilon,j}(x)-\sum_{i=0}^N c_{j,i}\psi_{i}(x)\Big|=o\big(\frac{1}{\bar \lambda_{\varepsilon}}\big),~\mbox{uniformly in}~C^1\big(B_R(0)\big) ~\mbox{for any}~R>0,
\end{equation}
 where $c_{j,i}$, $i=0,1,\cdots,N$ are some constants  and $$\psi_{0}(x)=\frac{\partial U_{0,\lambda}(x)}{\partial\lambda}\big|_{\lambda=1},~~\psi_{i}(x)=\frac{\partial U_{a,1}(x)}{\partial a_i}\big|_{a=0},~i=1,\cdots,N.
$$
\end{Lem}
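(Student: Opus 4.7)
My plan is a standard linearization/blow-up argument at each concentration point, followed by nondegeneracy of the Aubin-Talenti bubble to identify the limit, with the quantitative rate obtained by matching $\xi_{\varepsilon,j}$ against an explicit representative.

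First, by \eqref{equation for xi} and the change of variables $y = (\lambda_{j,\varepsilon}^{(1)})^{-1}x + x_{j,\varepsilon}^{(1)}$, the rescaled function $\xi_{\varepsilon,j}$ satisfies
\begin{equation*}
-\Delta \xi_{\varepsilon,j}(x) = \tilde{K}_{2^*,\varepsilon}(x)\,\xi_{\varepsilon,j}(x) + \tilde{K}_{q,\varepsilon}(x)\,\xi_{\varepsilon,j}(x)
\end{equation*}
on $\lambda_{j,\varepsilon}^{(1)}(\Omega - x_{j,\varepsilon}^{(1)})$, where $\tilde{K}_{2^*,\varepsilon}(x) = (\lambda_{j,\varepsilon}^{(1)})^{-2}C_{2^*,\varepsilon}((\lambda_{j,\varepsilon}^{(1)})^{-1}x+x_{j,\varepsilon}^{(1)})$ and $\tilde{K}_{q,\varepsilon}(x) = \varepsilon(\lambda_{j,\varepsilon}^{(1)})^{-2}C_{q,\varepsilon}((\lambda_{j,\varepsilon}^{(1)})^{-1}x+x_{j,\varepsilon}^{(1)})$. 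Using Theorem \ref{thm multibble blowup}(c), the decompositions \eqref{eq decomposition for u l l=1,2}, and Lemma \ref{lem difference of concentration point} (which ensures that the two bubble approximants share the same scale to leading order), I would check that $\tilde{K}_{2^*,\varepsilon}(x) \to (2^*-1)U^{2^*-2}(x)$ in $L^\infty_{\mathrm{loc}}(\R^N)$, while Corollary \ref{cor of the decomposition}(3) yields $\tilde{K}_{q,\varepsilon}(x) = O(\bar\lambda_\varepsilon^{-(N-2)})$ uniformly on $B_R(0)$, which is therefore negligible.

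Since $\|\xi_{\varepsilon,j}\|_{L^\infty} \le 1$, combining Calder\'on-Zygmund and Schauder estimates with the convergence of the coefficient above provides uniform $C^{2,\alpha}_{\mathrm{loc}}(\R^N)$ bounds on $\xi_{\varepsilon,j}$. Extracting a subsequence gives $\xi_{\varepsilon,j} \to \xi_j^*$ in $C^1_{\mathrm{loc}}(\R^N)$, with $\xi_j^*$ bounded and solving $-\Delta \xi_j^* = (2^*-1)U^{2^*-2}\xi_j^*$ in $\R^N$. By the classical nondegeneracy of the Aubin-Talenti bubble, the space of bounded solutions of this linearized equation is spanned by $\{\psi_0,\psi_1,\ldots,\psi_N\}$, so $\xi_j^* = \sum_{i=0}^N c_{j,i}\psi_i$ for suitable constants $c_{j,i}$; this establishes the qualitative part of \eqref{lem xi var j-1}.

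The main obstacle is upgrading the qualitative convergence to the stated rate $o(1/\bar\lambda_\varepsilon)$. For this, I would substitute the abstract limit by an explicit representative coming from a first-order Taylor expansion of $PU_{x,\lambda}$ in the bubble parameters. Concretely, I would write
\begin{equation*}
u_\varepsilon^{(1)} - u_\varepsilon^{(2)} = \sum_{j=1}^n\bigl(PU_{x_{j,\varepsilon}^{(1)},\lambda_{j,\varepsilon}^{(1)}} - PU_{x_{j,\varepsilon}^{(2)},\lambda_{j,\varepsilon}^{(2)}}\bigr) + \bigl(w_\varepsilon^{(1)} - w_\varepsilon^{(2)}\bigr),
\end{equation*}
Taylor-expand each $PU$ difference around $(x_{j,\varepsilon}^{(1)},\lambda_{j,\varepsilon}^{(1)})$, rescale at the $j$-th point and divide by $\|u_\varepsilon^{(1)}-u_\varepsilon^{(2)}\|_{L^\infty(\Omega)}$. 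The leading terms then reduce to a linear combination of the $\psi_i$ whose coefficients are the announced $c_{j,i}$, while the remainder aggregates second-order Taylor terms, contributions from bubbles at $l\ne j$ (which decay on $B_R(0)$ as a negative power of $\bar\lambda_\varepsilon$), and the rescaled $w_\varepsilon^{(1)} - w_\varepsilon^{(2)}$. Lemma \ref{lem difference of concentration point} gives $|x_{j,\varepsilon}^{(1)}-x_{j,\varepsilon}^{(2)}| + |\lambda_{j,\varepsilon}^{(1)}-\lambda_{j,\varepsilon}^{(2)}| = o(\bar\lambda_\varepsilon^{-2})$ and Lemma \ref{lem defference of w} provides the sharp control on $w_\varepsilon^{(1)} - w_\varepsilon^{(2)}$; the delicate point will be to verify that each piece of the remainder is genuinely $o(\bar\lambda_\varepsilon^{-1})$ in $C^1(B_R(0))$ after normalization, which amounts to ensuring that $\|u_\varepsilon^{(1)}-u_\varepsilon^{(2)}\|_{L^\infty}$ is dominated by the Taylor-leading terms rather than by $w_\varepsilon^{(1)}-w_\varepsilon^{(2)}$.
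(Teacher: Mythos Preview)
Your qualitative argument (rescaling, coefficient convergence, compactness, identification of the limit via nondegeneracy of the bubble) is correct and coincides with the paper's.

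The quantitative part, however, takes a different route and contains the very gap you flag at the end. To get the rate $o(1/\bar\lambda_\varepsilon)$ you propose to Taylor-expand $u_\varepsilon^{(1)}-u_\varepsilon^{(2)}$ in the bubble parameters and then divide by $\|u_\varepsilon^{(1)}-u_\varepsilon^{(2)}\|_{L^\infty}$. This needs (i) a pointwise $C^1$ bound on the rescaled $w_\varepsilon^{(1)}-w_\varepsilon^{(2)}$, whereas Lemma~\ref{lem defference of w} only gives $H^1_0$ control, and (ii) a lower bound on $\|u_\varepsilon^{(1)}-u_\varepsilon^{(2)}\|_{L^\infty}$ so that the normalized remainder is small. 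Neither is available: in the contradiction scheme one only knows $\|\xi_\varepsilon\|_{L^\infty}=1$, with no a priori comparison between $\|u_\varepsilon^{(1)}-u_\varepsilon^{(2)}\|_{L^\infty}$ and the bubble-parameter differences. Your sketch does not explain how to close this, and in fact it cannot be closed along these lines without additional input.

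The paper sidesteps this entirely by never expanding $\xi_\varepsilon$ itself; it expands the \emph{coefficient} $C_{2^*,\varepsilon}$ of the linear equation for $\xi_\varepsilon$. Using Lemma~\ref{lem difference of concentration point} to write $U_{x_{j,\varepsilon}^{(2)},\lambda_{j,\varepsilon}^{(2)}}=(1+o(1/\bar\lambda_\varepsilon))U_{x_{j,\varepsilon}^{(1)},\lambda_{j,\varepsilon}^{(1)}}$, one obtains on $B_d(x_{j,\varepsilon}^{(1)})$
\[
C_{2^*,\varepsilon}(x)=(2^*-1)\bigl(1+o(1/\bar\lambda_\varepsilon)\bigr)U_{x_{j,\varepsilon}^{(1)},\lambda_{j,\varepsilon}^{(1)}}^{2^*-2}(x)+O(\bar\lambda_\varepsilon^{-2})+O\Big(\sum_{l}|w_\varepsilon^{(l)}|^{2^*-2}\Big),
\]
(with an extra harmless term when $N=5$), where only the \emph{individual} bounds on $\|w_\varepsilon^{(l)}\|$ from Theorem~\ref{thm decomposition} enter, not Lemma~\ref{lem defference of w}. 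Testing the rescaled equation against $\Phi\in C_c^\infty$ then gives
\[
\int\nabla\xi_{\varepsilon,j}\cdot\nabla\Phi=(2^*-1)\int U_{0,1}^{2^*-2}\xi_{\varepsilon,j}\Phi+o(1/\bar\lambda_\varepsilon),
\]
and subtracting the exact equation for $\xi_j=\sum c_{j,i}\psi_i$ yields \eqref{lem xi var j-1}. The key point is that the normalization is already absorbed into $\xi_\varepsilon$, and the $o(1/\bar\lambda_\varepsilon)$ lives in the coefficient of a linear equation, so no lower bound on $\|u_\varepsilon^{(1)}-u_\varepsilon^{(2)}\|_{L^\infty}$ is ever needed.
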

\begin{proof}
From \eqref{equation for xi}, we know that $\xi_{\varepsilon,j}$ satisfies 
\begin{equation}
\begin{split}
-\Delta\xi_{\varepsilon,j}(x)=
\frac{1}{(\lambda^{(1)}_{j,\varepsilon})^{2}}C_{2^*,\varepsilon}(\frac{x}{\lambda^{(1)}_{j,\varepsilon}}
+x_{j,\varepsilon}^{(1)})\xi_{\varepsilon,j}(x)+ \frac{\varepsilon }{(\lambda^{(1)}_{j,\varepsilon})^{2}}C_{q,\varepsilon}(\frac{x}{\lambda^{(1)}_{j,\varepsilon}}
+x_{j,\varepsilon}^{(1)})\xi_{\varepsilon,j}(x),
\end{split}
\end{equation}
in $\Omega_{\varepsilon,j}:=\{x\in\R^{N}:\frac{x}{\lambda^{(1)}_{j,\varepsilon}}
+x_{j,\varepsilon}^{(1)}\in\Omega\}\to\R^{N}$. Thus by Theorem \ref{thm multibble blowup} and \eqref{eq estimate of xi-varepsilon}, we have
\begin{equation}
    \begin{split}
        \int_{\Omega_{\varepsilon,j}}|\nabla\xi_{\varepsilon,j}|^{2}dx&=\frac{1}{(\lambda^{(1)}_{j,\varepsilon})^{2}}\int_{\Omega_{\varepsilon,j}}C_{2^*,\varepsilon}(\frac{x}{\lambda^{(1)}_{j,\varepsilon}}
+x_{j,\varepsilon}^{(1)})\xi_{\varepsilon,j}^{2}(x)+ \varepsilon C_{q,\varepsilon}(\frac{x}{\lambda^{(1)}_{j,\varepsilon}}
+x_{j,\varepsilon}^{(1)})\xi_{\varepsilon,j}^{2}(x)dx\\
&\lesssim\int_{\Omega_{\varepsilon,j}}\frac{1}{(1+|x|^{2})^{2}}\left(\frac{(\ln|x|)^{2}}{(1+|x|)^{2N-4}}+\frac{1}{\bar{\lambda}_{\varepsilon}^{2N-4}}\right)dx\\
&\quad+\frac{1}{\bar{\lambda}_{\varepsilon}^{N-2}}\int_{\Omega_{\varepsilon,j}}\frac{1}{(1+|x|^{2})^{\frac{(N-2)(q-2)}{2}}}\left(\frac{(\ln|x|)^{2}}{(1+|x|)^{2N-4}}+\frac{1}{\bar{\lambda}_{\varepsilon}^{2N-4}}\right)dx<\infty,
    \end{split}
\end{equation}
then up to a subsequence, $\xi_{\varepsilon,j}\weakto \xi_{j}$ in $D^{1,2}(\R^{N})$. On the other hand, since $\xi_{\varepsilon,j}(x)$ is bounded, by the elliptic regularity theory \cite{GilbargTrudinger}, after passing to a subsequence, we have
\begin{equation}\label{lem xi var j-proof-2}
  \xi_{\varepsilon,j}(x)\rightarrow\xi_{j}(x)\text{~in~}C^{1}_{loc}(\R^{N}).  
\end{equation}
Next, we need to estimate $C_{2^*,\varepsilon}(x)$ and $C_{q,\varepsilon}(x)$. First, by Lemma \ref{estimate of U-lambda-a and psi-lambda-a 1} and Lemma \ref{lem difference of concentration point}
\begin{equation}
\begin{split}
U&_{x_{j,\varepsilon}^{(1)},\lambda_{j,\varepsilon}^{(1)}}(x)
-
U_{x_{j,\varepsilon}^{(2)},\lambda_{j,\varepsilon}^{(2)}}(x)
 \\=&
O\Big(\big|x_{j,\varepsilon}^{(1)}-x_{j,\varepsilon}^{(2)}\big|\cdot \big(\nabla_y U_{y,\lambda_{j,\varepsilon}^{(1)}}(x)|_{y=x_{j,\varepsilon}^{(1)}}\big)+
\big|\lambda_{j,\varepsilon}^{(1)}-\lambda_{j,\varepsilon}^{(2)}\big|\cdot\big( \nabla_\lambda U_{x_{j,\varepsilon}^{(1)},\lambda}(x) |_{\lambda=\lambda_{j,\varepsilon}^{(1)}}\big)
\Big)\\=&
O\Big(\lambda_{j,\varepsilon}^{(1)}\big|x_{j,\varepsilon}^{(1)}-x_{j,\varepsilon}^{(2)}\big| +
(\lambda_{j,\varepsilon}^{(1)})^{-1}|\lambda_{j,\varepsilon}^{(1)}-\lambda_{j,\varepsilon}^{(2)}| \Big) U_{x_{j,\varepsilon}^{(1)},\lambda_{j,\varepsilon}^{(1)}}(x)
=o\big(\frac{1}{\bar\lambda_{\varepsilon}}\big)U_{x_{j,\varepsilon}^{(1)},\lambda_{j,\varepsilon}^{(1)}}(x).
\end{split}
\end{equation}
Thus from \eqref{eq decomposition for u l l=1,2} and Lemma \ref{estimate of U-lambda-a and psi-lambda-a 3}
\begin{equation}\label{lem xi var j-proof-4}
u_{\varepsilon}^{(1)}(x)-u_{\varepsilon}^{(2)}(x)=
o\big(\frac{1}{\bar\lambda_{\varepsilon}}\big)\Big(\sum_{j=1}^n U_{x_{j,\varepsilon}^{(1)},\lambda_{j,\varepsilon}^{(1)}}(x)\Big)+O\Big(\frac{1}{\bar\lambda_{\varepsilon}^{\frac{N-2}{2}}}\Big)
+O\Big(\sum^2_{l=1}|w_{\varepsilon}^{(l)}(x)|\Big).
\end{equation}
Then for a small fixed $d$ and $x\in B_{d}(x_{j,\varepsilon}^{(1)})$, by Lemma \ref{Elementary estimate} we find 
\begin{equation}\label{lem xi var j-proof-5}
\begin{split}
C_{2^*,\varepsilon}(x) =& \Big(2^*-1+
o\big(\frac{1}{\bar\lambda_{\varepsilon}}\big)\Big)U^{2^*-2}_{x_{j,\varepsilon}^{(1)},\lambda_{j,\varepsilon}^{(1)}}(x)
+O\Big(\frac{1}{\bar\lambda_{\varepsilon}^{2}}\Big)
+O\Big(\sum^2_{l=1}|w_{\varepsilon}^{(l)}(x)|
^{2^*-2}\Big),
\end{split}
\end{equation}
when $N\geq 6$ and we also need to add the error term 
\begin{equation}\label{lem xi var j-proof-7}
 O\Big(\frac{1}{\bar{\lambda}_{\varepsilon}}\Big)+O\Big(\bar{\lambda}_{\varepsilon}^{\frac{1}{2}}\sum^2_{l=1}|w_{\varepsilon}^{(l)}(x)|\Big),
\end{equation}
to the right-hand side of \eqref{lem xi var j-proof-5} for $N=5$. Now, for any given $\Phi(x)\in C_{c}^{\infty}(\R^N)$ with $\text{supp}~\Phi(x) \subset B_{\lambda^{(1)}_{j,\varepsilon}d}(0)
$, from Lemma \ref{lem estimate for xi}, \eqref{perturbation estimate of w-varepsilon}, \eqref{lem xi var j-proof-5}, \eqref{lem xi var j-proof-7} we have
\begin{equation}
\begin{aligned}
\frac{1}{(\lambda^{(1)}_{j,\varepsilon})^{2}}\int_{B_{\lambda^{(1)}_{j,\varepsilon}d}(0)}
&C_{2^*,\varepsilon}\big(\frac{x}{\lambda^{(1)}_{j,\varepsilon}}+x_{j,\varepsilon}^{(1)})\xi_{\varepsilon,j}(x)\Phi(x) dx\\=&\Big(2^*-1\Big)
\int_{\R^N}U^{2^*-2}_{0,1}(x)\xi_{\varepsilon,j}(x)\Phi(x)dx+
o\big(\frac{1}{\bar\lambda_{\varepsilon}}\big).
\end{aligned}
\end{equation}
On the other hand, from the fact that $\varepsilon\sim\frac{1}{\bar{\lambda}_{\varepsilon}^{\frac{N-2}{2}q-2}}$, we can deduce that
\begin{equation}
\begin{split}
\frac{\varepsilon}{(\lambda^{(1)}_{j,\varepsilon})^{2}}\int_{B_{\lambda^{(1)}_{j,\varepsilon}d}(0)}C_{q,\varepsilon}\big(\frac{x}{\lambda^{(1)}_{j,\varepsilon}}+x_{j,\varepsilon}^{(1)})
\xi_{\varepsilon,j}(x)\Phi(x)dx=o\big(\frac{1}{\bar\lambda_{\varepsilon}}\big).
\end{split}
\end{equation}
Hence
\begin{equation}\label{lem xi var j-proof-10}
\begin{split}
\int_{B_{\lambda^{(1)}_{j,\varepsilon}d}(0)}
\nabla\xi_{\varepsilon,j}(x)\cdot\nabla\Phi(x)dx=
\Big(2^*-1\Big)\int_{\R^N}U^{2^*-2}_{0,1}(x)\xi_{\varepsilon,j}(x)\Phi(x)dx+o\big(\frac{1}{\bar\lambda_{\varepsilon}}\big).
\end{split}
\end{equation}
Letting $\varepsilon\rightarrow 0$ in above equation, then from \eqref{lem xi var j-proof-2}, we find that $\xi_{j}(x)$ satisfies
\begin{equation}\label{lem xi var j-proof-11}
-\Delta\xi_{j}(x)=\Big(2^*-1\Big)U_{0,1}^{2^*-2}(x)\xi_{j}(x),~~\textrm{in~}\R^N,
\end{equation}
which together with Lemma \ref{Kernel of Emden-Fowler equation}, implies that $
\xi_{j}(x)=\displaystyle\sum_{i=0}^Nc_{j,i}\psi_i(x)$. Moreover combining \eqref{lem xi var j-proof-10} and \eqref{lem xi var j-proof-11}, we can obtain \eqref{lem xi var j-1}.
\end{proof}

\begin{Lem}\label{lem estimate for xi var}
It holds
\begin{equation}
\xi_{\varepsilon}(x)=\sum^n_{j=1}A_{\varepsilon,j}G(x^{(1)}_{j,\varepsilon},x)+
\sum^n_{j=1}\sum^N_{i=1}B_{\varepsilon,j,i}\partial_iG(x^{(1)}_{j,\varepsilon},x)+O\Big(\frac{\ln \bar{\lambda}_\varepsilon}{\bar{\lambda}_\varepsilon^N}\Big)~\mbox{in}~ C^1
\Big(\Omega\backslash\bigcup^n_{j=1}B_{2d}(x^{(1)}_{j,\varepsilon})\Big),
\end{equation}
where $d>0$ is any small fixed constant, $\partial_iG(y,x)=\frac{\partial G(y,x)}{\partial y_i}$,
\begin{equation}\label{definition of A var and B var}
A_{\varepsilon,j}=
\int_{B_d\big(x^{(1)}_{j,\varepsilon}\big)} C_{2^*,\varepsilon}(x)\xi_{\varepsilon}(x)dx~\mbox{and}~
B_{\varepsilon,j,i}= \int_{B_d\big(x^{(1)}_{j,\varepsilon}\big)}(x_i-x^{(1)}_{j,\varepsilon,i}) C_{2^*,\varepsilon}(x)\xi_{\varepsilon}(x)dx.
\end{equation}
Moreover, we have
\begin{equation}\label{eq estimate for A var j}
A_{\varepsilon,j}=
-\frac{(N-2)Ac_{j,0}}{2(\lambda^{(1)}_{j,\varepsilon})^{N-2}}+
o\Big(\frac{1}{\bar{\lambda}_{\varepsilon}^{N-1}}\Big),
\end{equation}
and
\begin{equation}\label{eq estimate for B var j}
B_{\varepsilon,j,i}=-\frac{N+2}{N}(N(N-2))^{\frac{N+2}{4}}\int_{\R^N}\frac{|x|^2}{(1+|x|^2)^{\frac{N+4}{2}}} \frac{c_{j,i}}{\big(\lambda^{(1)}_{j,\varepsilon}\big)^{N-1}}
+o\Big(\frac{1}{\bar{\lambda}_\varepsilon^{N-1}}\Big),
\end{equation}
for any $j=1,\cdots,n$ and $i=1,\cdots,N$.
\end{Lem}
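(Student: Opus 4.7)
The plan is to combine Green's representation for $\xi_\varepsilon$ with a Taylor expansion of $G(y,x)$ around each blow-up point, and then to evaluate the resulting leading integrals via the blow-up profile supplied by Lemma \ref{lem xi var j}.

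First, starting from \eqref{equation for xi} and the Green's representation, write
\begin{equation*}
\xi_\varepsilon(x) = \int_\Omega G(y,x)\bigl(C_{2^*,\varepsilon}(y)+\varepsilon C_{q,\varepsilon}(y)\bigr)\xi_\varepsilon(y)\,dy.
\end{equation*}
Fix a small $d>0$ and split the integral into the union of $B_d(x^{(1)}_{j,\varepsilon})$ for $j=1,\ldots,n$ and the complement. For $x\in\Omega\setminus\bigcup_j B_{2d}(x^{(1)}_{j,\varepsilon})$, on each $B_d(x^{(1)}_{j,\varepsilon})$ the function $y\mapsto G(y,x)$ is smooth and may be Taylor expanded in the form $G(x^{(1)}_{j,\varepsilon},x)+\sum_i(y_i-x^{(1)}_{j,\varepsilon,i})\partial_i G(x^{(1)}_{j,\varepsilon},x)+O(|y-x^{(1)}_{j,\varepsilon}|^2)$. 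Integration against $(C_{2^*,\varepsilon}+\varepsilon C_{q,\varepsilon})\xi_\varepsilon$ produces exactly the leading terms $A_{\varepsilon,j}G(x^{(1)}_{j,\varepsilon},x)+\sum_i B_{\varepsilon,j,i}\partial_i G(x^{(1)}_{j,\varepsilon},x)$ that appear in the statement.

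Next, one must absorb the remainders into $O(\ln\bar\lambda_\varepsilon/\bar\lambda_\varepsilon^N)$. The quadratic Taylor remainder contributes $O\big(\int_{B_d}|y-x^{(1)}_{j,\varepsilon}|^2 C_{2^*,\varepsilon}|\xi_\varepsilon|\,dy\big)$. Rescaling $z=\lambda^{(1)}_{j,\varepsilon}(y-x^{(1)}_{j,\varepsilon})$, using \eqref{lem xi var j-proof-5} to control $C_{2^*,\varepsilon}$ by $(2^*-1)U_{0,1}^{2^*-2}+o(1)$ and $|\xi_\varepsilon|\leq 1$, this term is $O(\bar\lambda_\varepsilon^{-N})$. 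The subcritical piece $\varepsilon C_{q,\varepsilon}\xi_\varepsilon$ on $B_d$ is dominated by $\varepsilon\bar\lambda_\varepsilon^{-N}\int_{\R^N}U_{0,1}^{q-2}$; since $\varepsilon\sim\bar\lambda_\varepsilon^{2-(N-2)q/2}$, this is also $o(\bar\lambda_\varepsilon^{-N})$. The far-field integral over $\Omega\setminus\bigcup_j B_d(x^{(1)}_{j,\varepsilon})$ is handled using Lemma \ref{lem estimate for xi}, which gives the sharpened pointwise bound $|\xi_\varepsilon|\lesssim \ln\bar\lambda_\varepsilon/\bar\lambda_\varepsilon^{N-2}$ there; combined with the $L^{2N/(N+2)}$-integrability of $C_{2^*,\varepsilon}+\varepsilon C_{q,\varepsilon}$, this yields a contribution of size $O(\ln\bar\lambda_\varepsilon/\bar\lambda_\varepsilon^N)$. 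The $C^1$ version follows by differentiating Green's representation in $x$ and repeating the same argument on $\nabla_x G$.

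Finally, to extract \eqref{eq estimate for A var j} and \eqref{eq estimate for B var j}, expand $C_{2^*,\varepsilon}$ using \eqref{lem xi var j-proof-5} so that the leading contribution to $A_{\varepsilon,j}$ is $(2^*-1)\int_{B_d}U_{x^{(1)}_{j,\varepsilon},\lambda^{(1)}_{j,\varepsilon}}^{2^*-2}\xi_\varepsilon$. Rescaling to the $z$-variables and invoking the $C^1_{\rm loc}$ convergence $\xi_{\varepsilon,j}\to\sum_{i=0}^N c_{j,i}\psi_i$ from Lemma \ref{lem xi var j} together with dominated convergence (using the decay of $U_{0,1}^{2^*-2}\psi_i$), one finds
\begin{equation*}
A_{\varepsilon,j}=\frac{(2^*-1)\,c_{j,0}}{(\lambda^{(1)}_{j,\varepsilon})^{N-2}}\int_{\R^N}U_{0,1}^{2^*-2}\psi_0\,dz + o\big(\bar\lambda_\varepsilon^{-(N-1)}\big),
\end{equation*}
since the $\psi_i$ for $i\geq 1$ drop out by oddness. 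A Beta-function computation gives $\int_{\R^N}U_{0,1}^{2^*-2}\psi_0=-(N-2)^2\alpha_N^{2^*-1}\omega_N/(2N(N+2))$, and recalling $A=\alpha_N^{2^*-1}\omega_N/N$ produces \eqref{eq estimate for A var j}. Entirely analogously, in the integral for $B_{\varepsilon,j,i}$ only the $c_{j,i}\psi_i$ component survives (all others are odd in some $z_k$), and evaluating $(2^*-1)\int z_i U_{0,1}^{2^*-2}\psi_i dz = \frac{N+2}{N}(N(N-2))^{(N+2)/4}\int_{\R^N}\frac{|z|^2}{(1+|z|^2)^{(N+4)/2}}dz$ gives \eqref{eq estimate for B var j}.

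The main obstacle is ensuring that every error term actually decays faster than the claimed $\ln\bar\lambda_\varepsilon/\bar\lambda_\varepsilon^N$: this forces us to use not merely $\|\xi_\varepsilon\|_{L^\infty}=1$ but the sharper log-improved pointwise decay in the far region, and also to extract the correct first-order cancellations in the bubble interactions when controlling the difference $C_{2^*,\varepsilon}-(2^*-1)U_{x^{(1)}_{j,\varepsilon},\lambda^{(1)}_{j,\varepsilon}}^{2^*-2}$. In low dimensions ($N=5$) an additional correction of order $\bar\lambda_\varepsilon^{1/2}|w_\varepsilon|$ must be handled carefully via the refined $H^1$-estimate \eqref{perturbation estimate of w-varepsilon}.
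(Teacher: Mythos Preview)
Your overall architecture is the same as the paper's: Green's representation for $\xi_\varepsilon$, Taylor expansion of $G(y,x)$ about each $x^{(1)}_{j,\varepsilon}$, and evaluation of $A_{\varepsilon,j}$, $B_{\varepsilon,j,i}$ by rescaling and invoking Lemma~\ref{lem xi var j}. The final identification of the constants via the integrals $\int U_{0,1}^{2^*-2}\psi_l$ is also in line with the paper.

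However, there is a genuine gap in your error control inside the balls $B_d(x^{(1)}_{j,\varepsilon})$. You bound the quadratic Taylor remainder by $O\big(\int_{B_d}|y-x^{(1)}_{j,\varepsilon}|^2 C_{2^*,\varepsilon}|\xi_\varepsilon|\big)$ and claim this is $O(\bar\lambda_\varepsilon^{-N})$ using $|\xi_\varepsilon|\le 1$. After rescaling, that integral is $\bar\lambda_\varepsilon^{-N}\int_{|z|<\bar\lambda_\varepsilon d}|z|^2U_{0,1}^{2^*-2}(z)\,dz$, and since $|z|^2U_{0,1}^{2^*-2}(z)\sim|z|^{-2}$ at infinity, the integral grows like $\bar\lambda_\varepsilon^{N-2}$ and the bound degenerates to $O(\bar\lambda_\varepsilon^{-2})$. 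Similarly, your estimate for the subcritical piece, ``dominated by $\varepsilon\bar\lambda_\varepsilon^{-N}\int_{\R^N}U_{0,1}^{q-2}$'', fails because $\int_{\R^N}U_{0,1}^{q-2}=\infty$ for every admissible $q$ (one would need $(N-2)(q-2)>N$, which never holds below $2^*$). The paper fixes both issues by using the refined pointwise decay \eqref{eq estimate of xi-varepsilon} of $\xi_\varepsilon$ \emph{inside} the balls as well, not only in the far region: with $|\xi_\varepsilon(y)|\lesssim \ln(\bar\lambda_\varepsilon|y-x^{(1)}_{j,\varepsilon}|)/(1+\bar\lambda_\varepsilon|y-x^{(1)}_{j,\varepsilon}|)^{N-2}+\bar\lambda_\varepsilon^{-(N-2)}$ the integrands acquire an extra factor $(1+|z|)^{-(N-2)}$, which restores integrability and produces the claimed $O(\ln\bar\lambda_\varepsilon/\bar\lambda_\varepsilon^{N})$. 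A related slip: $A_{\varepsilon,j}$ and $B_{\varepsilon,j,i}$ are defined with $C_{2^*,\varepsilon}$ only, so the \emph{entire} subcritical contribution $\varepsilon\int_\Omega G(y,x)C_{q,\varepsilon}\xi_\varepsilon$ (near and far) must be absorbed into the remainder, again via the refined pointwise bound and the hypothesis $q\ge 2^*-1$; this is exactly how the paper argues in \eqref{lem estimate for xi var-proof-3}.
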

\begin{proof}
By using the Green's representation formula and \eqref{equation for xi}, we have
\begin{equation}\label{lem estimate for xi var-proof-1}
\begin{split}
 {\xi}_\varepsilon(x)=& \int_{\Omega}G(y,x)
 C_{2^*,\varepsilon}(y) \xi_{\varepsilon}(y)dy+  \varepsilon \int_{\Omega}G(y,x)C_{q,\varepsilon}(y) 
\xi_{\varepsilon}(y)dy.
\end{split}
\end{equation}
For any $d>0$ small enough and $x\in \Omega\backslash\displaystyle\bigcup^n_{j=1}B_{2d}(x^{(1)}_{j,\varepsilon})$,
by Taylor's expansion and Lemma \ref{lem xi var j}, the first term on the right-hand side of \eqref{lem estimate for xi var-proof-1} can be estimated as follows
\begin{equation}\label{lem estimate for xi var-proof-2}
\begin{split}
 \int_{\Omega}G&(y,x)
 C_{2^*,\varepsilon}(y) \xi_{\varepsilon}(y)dy \\=&
 \sum^n_{j=1}\int_{B_d\big(x^{(1)}_{j,\varepsilon}\big)}G(y,x)
 C_{2^*,\varepsilon}(y) \xi_{\varepsilon}(y)dy+
 \int_{\Omega\backslash\bigcup^n_{j=1}B_d\big(x^{(1)}_{j,\varepsilon}\big)}G(y,x)
 C_{2^*,\varepsilon}(y) \xi_{\varepsilon}(y)dy\\=&
 \sum^n_{j=1}A_{\varepsilon,j}G(x^{(1)}_{j,\varepsilon},x)
 +\sum^n_{j=1}\sum^N_{i=1}B_{\varepsilon,j,i}\partial_iG(x^{(1)}_{j,\varepsilon},x)\\&
 +O\Big(\sum^n_{j=1}\int_{B_d\big(x^{(1)}_{j,\varepsilon}\big)}|y-x^{(1)}_{j,\varepsilon}|^2
 C_{2^*,\varepsilon}(y) \xi_{\varepsilon}(y)dy\Big)+O\Big(\frac{\ln \bar{\lambda}_\varepsilon}{\bar\lambda^N_\varepsilon}
 \int_{\Omega\backslash\bigcup^n_{j=1}B_d\big(x^{(1)}_{j,\varepsilon}\big)}G(y,x)dy\Big)
 \\=&
  \sum^n_{j=1}A_{\varepsilon,j}G(x^{(1)}_{j,\varepsilon},x)
 + \sum^n_{j=1}\sum^N_{i=1}B_{\varepsilon,j,i}\partial_iG(x^{(1)}_{j,\varepsilon},x)
 +O\Big(\frac{\ln \bar{\lambda}_\varepsilon}{\bar\lambda^N_\varepsilon}\Big),
\end{split}
\end{equation}
where $A_{\varepsilon,j}$ and $B_{\varepsilon,j,i}$ are defined in \eqref{definition of A var and B var}.
On the other hand, from Lemma \ref{lem estimate for xi} and the fact that $q\geq 2^*-1$ and $\varepsilon\sim\frac{1}{\bar{\lambda}_{\varepsilon}^{\frac{N-2}{2}q-2}}$, $u_{\varepsilon}^{(k)}\lesssim U_{x_{j,\varepsilon}^{(k)},\lambda_{j,\varepsilon}^{(k))}}$ in $B_{d}(x_{j,\varepsilon}^{(k)})$ for any $k=1,2$ and $j=1,\cdots,n$, the second term on the right-hand side of \eqref{lem estimate for xi var-proof-1} can be bounded by
\begin{equation}\label{lem estimate for xi var-proof-3}
\begin{split}
 &\varepsilon\int_{\Omega}G(y,x)C_{q,\varepsilon}(y)\xi_{\varepsilon}(y)dy\\
 &=\varepsilon
 \sum^N_{j=1}\int_{B_d\big(x^{(1)}_{j,\varepsilon}\big)}G(y,x)C_{q,\varepsilon}(y)
 \xi_{\varepsilon}(y)dy+\varepsilon
\int_{\Omega\backslash\bigcup^n_{j=1}B_d\big(x^{(1)}_{j,\varepsilon}\big)}G(y,x)C_{q,\varepsilon}(y)
 \xi_{\varepsilon}(y)dy\\
 &=\frac{\ln \bar{\lambda}_\varepsilon}{\bar{\lambda}^{2N-4}_\varepsilon}
 O\big(\sum^n_{j=1}\int_{B\big(0,\lambda_{j,\varepsilon}^{(1)}d\big)}
 \frac{1}{(1+|x|)^{(N-2)(q-1)}}dy\big)+\frac{\ln \bar{\lambda}_\varepsilon}{\bar{\lambda}^{(N-2)q-2}_\varepsilon}O\Big(
\int_{\Omega }G(y,x)
 dy\Big)\\
 &=\begin{cases}
     \frac{\ln \bar{\lambda}_\varepsilon}{\bar{\lambda}^{2N-4}_\varepsilon},&\text{~if~}(N-2)(q-1)>N,\\
     \frac{(\ln \bar{\lambda}_\varepsilon)^{2}}{\bar{\lambda}^{2N-4}_\varepsilon},&\text{~if~}(N-2)(q-1)=N,\\
     \frac{\ln \bar{\lambda}_\varepsilon}{\bar{\lambda}^{N-4+(N-2)(q-1)}_\varepsilon},&\text{~if~}(N-2)(q-1)<N,\\
 \end{cases}+O\Big(\frac{\ln \bar{\lambda}_\varepsilon}{\bar{\lambda}^{N}_\varepsilon}\Big)\\
 &=O\Big(\frac{\ln \bar{\lambda}_\varepsilon}{\bar\lambda^N_\varepsilon}\Big).
\end{split}
\end{equation}
Hence combining \eqref{lem estimate for xi var-proof-1}-\eqref{lem estimate for xi var-proof-3}, we have
\begin{equation}
\xi_{\varepsilon}(x)=
  \sum^n_{j=1}A_{\varepsilon,j}G(x^{(1)}_{j,\varepsilon},x)
 +\sum^n_{j=1}\sum^N_{i=1}B_{\varepsilon,j,i}\partial_iG(x^{(1)}_{j,\varepsilon},x)
 +O\Big(\frac{\ln \bar{\lambda}_\varepsilon}{\bar\lambda^N_\varepsilon}\Big)~\mbox{for}~ x\in
 \Omega\backslash\bigcup^n_{j=1}B_{2d}(x^{(1)}_{j,\varepsilon}).
\end{equation}
Similarly, we can prove that the above estimate hold in $C^{1}(\Omega\backslash\bigcup^n_{j=1}B_{2d}(x^{(1)}_{j,\varepsilon}))$. Finally, we need to estimate the term $A_{\varepsilon,j}$ and $B_{\varepsilon,j,i}$. By using Lemma \ref{lem estimate for xi}, Lemma \ref{lem xi var j} and \eqref{perturbation estimate of w-varepsilon}
\begin{equation}
\begin{split}
A_{\varepsilon,j}=&\frac{1}{\big(\lambda^{(1)}_{j,\varepsilon}\big)^{N}}\int_{B_{\lambda^{(1)}_{j,\varepsilon}d}(0)} C_{2^*,\varepsilon}(\frac{x}{\lambda^{(1)}_{j,\varepsilon}}+x^{(1)}_{j,\varepsilon})\xi_{\varepsilon,j}(x)dx
\\
&=\frac{2^*-1}{\big(\lambda^{(1)}_{j,\varepsilon}\big)^{N-2}}
\int_{\R^N}U_{0,1}^{2^*-2}\Big(\sum^N_{l=1}c_{j,l} \psi_l(x)\Big)dx+
o\Big(\frac{1}{\bar{\lambda}_\varepsilon^{N-1}}\Big)\\
&=
-\frac{(N-2)Ac_{j,0}}{2(\lambda^{(1)}_{j,\varepsilon})^{N-2}}+
o\Big(\frac{1}{\bar{\lambda}_{\varepsilon}^{N-1}}\Big)
\end{split}
\end{equation}
and
\begin{equation}
\begin{split}
B_{\varepsilon,j,i}=&\frac{1}{\big(\lambda^{(1)}_{j,\varepsilon}\big)^{N+1}}\int_{B_{\lambda^{(1)}_{j,\varepsilon}d}(0)} x_i C_{2^*,\varepsilon}(\frac{x}{\lambda^{(1)}_{j,\varepsilon}}+x^{(1)}_{j,\varepsilon})\xi_{\varepsilon,j}(x)dx
\\=&\frac{2^*-1}{\big(\lambda^{(1)}_{j,\varepsilon}\big)^{N-1}}\int_{\R^N} x_i U_{0,1}^{2^*-2}\Big(\sum^N_{l=1}c_{j,l} \psi_l(x)\Big)dx
+o\Big(\frac{1}{\bar{\lambda}_\varepsilon^{N-1}}\Big)
\\=&-\frac{N+2}{N}(N(N-2))^{\frac{N+2}{2}}\int_{\R^N}\frac{|x|^2}{(1+|x|^2)^{\frac{N+4}{2}}} \frac{c_{j,i}}{\big(\lambda^{(1)}_{j,\varepsilon}\big)^{N-1}}
+o\Big(\frac{1}{\bar{\lambda}_\varepsilon^{N-1}}\Big),
\end{split}\end{equation}
for any $j=1,\cdots,n$ and $i=1,\cdots,N$.  
\end{proof}

\subsection{Proof of Theorem \ref{thm uniqueness}}

\begin{Lem}\label{lem estimate for u l}
Let $u^{(l)}_{\varepsilon}(x)$ with $l=1,2$ be the solutions of $\eqref{p-varepsion}$. Then for small fixed $d>0$, it holds
\begin{equation}\label{lem estimate for u l-1}
u^{(l)}_{\varepsilon}(x)=A\Big(\sum^n_{j=1}\frac{G(x^{(1)}_{j,\varepsilon},x)}{
(\lambda^{(1)}_{j,\varepsilon})^{(N-2)/2}}\Big)
+O\Big(\frac{1}{\bar\lambda_\varepsilon^{(N+2)/2}}\Big)~\mbox{in}~  C^1
\Big(\Omega\backslash\bigcup^n_{j=1}B_{2d}(x^{(1)}_{j,\varepsilon})\Big),
\end{equation}
where $A$ is the constant defined in \eqref{definition of A and B}.
\end{Lem}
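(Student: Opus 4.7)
The plan is to reduce the statement to Proposition \ref{prop estimate for u} by exploiting the closeness of the two sets of concentration parameters coming from Lemma \ref{lem difference of concentration point}. For $l=1$ the assertion is nothing but Proposition \ref{prop estimate for u} itself: since $N\ge 5$ and $q\ge 2^*-1$, the error terms listed in that proposition all collapse to $O(\bar\lambda_\varepsilon^{-(N+2)/2})$ in the relevant regimes (one checks that $2^*-1=(N+2)/(N-2)\le 7/3$ forces $q\ge 7/3$ when $N=5$, and the restriction is automatic when $N\ge 6$), so \eqref{lem estimate for u l-1} is immediate in that case.

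For $l=2$, I would first apply Proposition \ref{prop estimate for u} to $u^{(2)}_\varepsilon$ to obtain
\begin{equation*}
u^{(2)}_\varepsilon(x)=A\sum_{j=1}^n\frac{G(x^{(2)}_{j,\varepsilon},x)}{(\lambda^{(2)}_{j,\varepsilon})^{(N-2)/2}}+O\!\left(\frac{1}{\bar\lambda_\varepsilon^{(N+2)/2}}\right),\qquad\text{in }C^1\bigl(\Omega\setminus{\textstyle\bigcup_j}B_{2d}(x^{(2)}_{j,\varepsilon})\bigr),
\end{equation*}
and then replace the $(x^{(2)}_{j,\varepsilon},\lambda^{(2)}_{j,\varepsilon})$ by $(x^{(1)}_{j,\varepsilon},\lambda^{(1)}_{j,\varepsilon})$ using Lemma \ref{lem difference of concentration point}. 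Concretely, for $x$ away from all blow-up points, $G(\cdot,x)$ is smooth, so a first-order Taylor expansion together with $|x^{(1)}_{j,\varepsilon}-x^{(2)}_{j,\varepsilon}|=o(\bar\lambda_\varepsilon^{-2})$ yields
\begin{equation*}
G(x^{(2)}_{j,\varepsilon},x)=G(x^{(1)}_{j,\varepsilon},x)+o\!\left(\frac{1}{\bar\lambda_\varepsilon^{2}}\right),
\end{equation*}
uniformly in $C^1$ on such sets. Similarly, from $|\lambda^{(1)}_{j,\varepsilon}-\lambda^{(2)}_{j,\varepsilon}|=o(\bar\lambda_\varepsilon^{-2})$ and the mean value theorem,
\begin{equation*}
\frac{1}{(\lambda^{(2)}_{j,\varepsilon})^{(N-2)/2}}-\frac{1}{(\lambda^{(1)}_{j,\varepsilon})^{(N-2)/2}}=O\!\left(\frac{|\lambda^{(1)}_{j,\varepsilon}-\lambda^{(2)}_{j,\varepsilon}|}{\bar\lambda_\varepsilon^{N/2}}\right)=o\!\left(\frac{1}{\bar\lambda_\varepsilon^{(N+4)/2}}\right).
\end{equation*}
Multiplying out and collecting terms, each of the two correction terms contributes at most $o(\bar\lambda_\varepsilon^{-(N+2)/2})$, which is absorbed into the desired error.

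A small technical point is that the set on which Proposition \ref{prop estimate for u} is stated involves $B_{2d}(x^{(2)}_{j,\varepsilon})$ rather than $B_{2d}(x^{(1)}_{j,\varepsilon})$; but since $|x^{(1)}_{j,\varepsilon}-x^{(2)}_{j,\varepsilon}|=o(\bar\lambda_\varepsilon^{-2})\ll d$, after shrinking $d$ slightly (or enlarging it on the other side by an $o(1)$ amount) the two complements agree up to a negligible set, so the two expansions are compatible. The $C^1$ estimate is obtained by the same argument applied to $\nabla_x G$ instead of $G$, which is still smooth off the diagonal. I do not expect any serious obstacle here: the whole statement is essentially an \emph{a posteriori} rewriting of Proposition \ref{prop estimate for u}, and the only ingredient beyond that proposition is the sharp closeness of parameters furnished by Lemma \ref{lem difference of concentration point}.
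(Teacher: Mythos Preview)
Your proposal is correct and follows essentially the same route as the paper: apply Proposition \ref{prop estimate for u} to each $u^{(l)}_\varepsilon$, then for $l=2$ replace $(x^{(2)}_{j,\varepsilon},\lambda^{(2)}_{j,\varepsilon})$ by $(x^{(1)}_{j,\varepsilon},\lambda^{(1)}_{j,\varepsilon})$ via a first-order Taylor expansion of $G$ and of the power of $\lambda$, and finally absorb the domain discrepancy using $B_{d}(x^{(2)}_{j,\varepsilon})\subset B_{2d}(x^{(1)}_{j,\varepsilon})$. The only cosmetic difference is that you invoke Lemma \ref{lem difference of concentration point} (giving $o(\bar\lambda_\varepsilon^{-2})$ bounds) where the paper cites Theorem \ref{thm blow up rate} (giving $O(\bar\lambda_\varepsilon^{-2})$ bounds); either suffices, and there is no circularity since Lemma \ref{lem difference of concentration point} is proved earlier without using the present lemma.
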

\begin{proof}
First, from Proposition \ref{prop estimate for u}, we know that \eqref{lem estimate for u l-1} holds for $l=1$
and
\begin{equation}
u^{(2)}_{\varepsilon}(x)=A\Big(\sum^n_{j=1}\frac{G(x^{(2)}_{j,\varepsilon},x)}{
(\lambda^{(2)}_{j,\varepsilon})^{(N-2)/2}}\Big)+O\Big(\frac{1}{\bar\lambda_\varepsilon^{(N+2)/2}}\Big),
~\mbox{in}~  C^1
\Big(\Omega\backslash\bigcup^n_{j=1}B_{d}(x^{(2)}_{j,\varepsilon})\Big).
\end{equation}
On the other hand, from Theorem \ref{thm blow up rate}
\begin{equation}
\begin{aligned}
    \frac{G(x^{(2)}_{j,\varepsilon},x)}{
(\lambda^{(2)}_{j,\varepsilon})^{(N-2)/2}}&=
\frac{G(x^{(1)}_{j,\varepsilon},x)}{
(\lambda^{(1)}_{j,\varepsilon})^{(N-2)/2}}+O\Big(\frac{
|x^{(1)}_{j,\varepsilon}-x^{(2)}_{j,\varepsilon}|}{\bar{\lambda}_\varepsilon^{(N-2)/2}}\Big)
+O\Big(\frac{|\lambda^{(1)}_{j,\varepsilon}-\lambda^{(2)}_{j,\varepsilon}|}{\bar{\lambda}_\varepsilon^{N/2}}\Big)\\
&=\frac{G(x^{(1)}_{j,\varepsilon},x)}{
(\lambda^{(1)}_{j,\varepsilon})^{(N-2)/2}}+O\Big(\frac{1}{\bar\lambda_\varepsilon^{(N+2)/2}}\Big).
\end{aligned}
\end{equation}
Since $B_{d}(x^{(2)}_{j,\varepsilon})\subset B_{2d}(x^{(1)}_{j,\varepsilon})$ for any $j=1,\cdots,n$ and $\varepsilon$ small enough, thus \eqref{lem estimate for u l-1} hold for $l=2$.
\end{proof}

\begin{Lem}\label{lem local pohozaev for xi and u}
For $\xi_{\varepsilon}$ defined by \eqref{definition of xi}, we have the following local Poho\v{z}aev identities:
\begin{equation}\label{lem local pohozaev for xi and u-2}
\begin{split}
&\frac{1}{2}\int_{\partial \Omega'}
\big\langle \nabla (u_{\varepsilon}^{(1)}+u_{\varepsilon}^{(2)}),  \nabla \xi_{\varepsilon}\big\rangle
\big\langle x-x^{(1)}_{j,\varepsilon},\nu\big\rangle -\int_{\partial \Omega'}\frac{\partial\xi_\varepsilon}{\partial\nu}
\big\langle x-x^{(1)}_{j,\varepsilon},\nabla u_{\varepsilon}^{(1)}\big\rangle\\
&\quad-\int_{\partial \Omega'}\frac{\partial u^{(2)}_\varepsilon}{\partial\nu}
\big\langle x-x^{(1)}_{j,\varepsilon},\nabla \xi_{\varepsilon}\big\rangle
+\frac{2-N}{2}\int_{\partial \Omega'}\frac{\partial\xi_\varepsilon}{\partial\nu}
  u_{\varepsilon}^{(1)}
 +\frac{2-N}{2}\int_{\partial \Omega'}\frac{\partial u_{\varepsilon}^{(2)}}{\partial\nu}\xi_\varepsilon   \\
 &=
  \int_{\partial \Omega'} D_{2^*,\varepsilon}(x)\xi_\varepsilon \big\langle x-x^{(1)}_{j,\varepsilon},\nu\big\rangle  +\varepsilon\int_{\partial \Omega'} D_{q,\varepsilon}\xi_\varepsilon \big\langle x-x^{(1)}_{j,\varepsilon},\nu\big\rangle
  -\varepsilon\left(N-\frac{N-2}{2}q\right)\int_{ \Omega'} D_{q,\varepsilon}\xi_\varepsilon,
\end{split}
\end{equation}
and
\begin{equation}\label{lem local pohozaev for xi and u-1}
 \begin{split}
-\int_{\partial \Omega'}&\frac{\partial \xi_\varepsilon}{\partial \nu}\frac{\partial u^{(1)}_\varepsilon}{\partial x_i}-
\int_{\partial \Omega'}\frac{\partial u^{(2)}_\varepsilon}{\partial \nu}\frac{\partial \xi_\varepsilon}{\partial x_i}
+\frac{1}{2}\int_{\partial \Omega'}\big\langle \nabla (u^{(1)}_\varepsilon+u^{(2)}_\varepsilon),\nabla \xi_\varepsilon \big\rangle \nu_i\\&
= \int_{\partial \Omega'}D_{2^*,\varepsilon}(x) \xi_\varepsilon \nu_i+\varepsilon\int_{\partial \Omega'}D_{q,\varepsilon}(x) \xi_\varepsilon \nu_i,
\end{split}
\end{equation}
where  $\Omega'\subset \Omega$ is a smooth domain, $\nu(x)=\big(\nu_{1}(x),\cdots,\nu_N(x)\big)$ is the outward unit normal of $\partial \Omega'$, $j=1,\cdots,n$, $i=1,\cdots,N$ and
\begin{equation*}
D_{2^*,\varepsilon}(x)= \int_{0}^1\Big(tu_{\varepsilon}^{(1)}(x)+(1-t)u_{\varepsilon}^{(2)}(x)\Big)^{2^*-1}dt,\quad D_{q,\varepsilon}(x)= \int_{0}^1\Big(tu_{\varepsilon}^{(1)}(x)+(1-t)u_{\varepsilon}^{(2)}(x)\Big)^{q-1}dt.
\end{equation*}
\end{Lem}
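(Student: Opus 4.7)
Both identities are standard local Poho\v{z}aev identities for the linearized problem, obtained by applying appropriate differential multipliers to the equations for $u_{\varepsilon}^{(1)}$, $u_{\varepsilon}^{(2)}$ and their difference, then integrating by parts on $\Omega'$. The key algebraic inputs are the pointwise identities
\begin{equation*}
(u_{\varepsilon}^{(1)})^{2^{*}}-(u_{\varepsilon}^{(2)})^{2^{*}}=2^{*}D_{2^{*},\varepsilon}(u_{\varepsilon}^{(1)}-u_{\varepsilon}^{(2)}),\qquad (u_{\varepsilon}^{(1)})^{q}-(u_{\varepsilon}^{(2)})^{q}=q D_{q,\varepsilon}(u_{\varepsilon}^{(1)}-u_{\varepsilon}^{(2)}),
\end{equation*}
together with $(u_{\varepsilon}^{(1)})^{2^{*}-1}-(u_{\varepsilon}^{(2)})^{2^{*}-1}=C_{2^{*},\varepsilon}(u_{\varepsilon}^{(1)}-u_{\varepsilon}^{(2)})$ and its $q$-analogue, so that $\xi_{\varepsilon}$ satisfies \eqref{equation for xi} while differences of powers telescope into products of $D_{\cdot,\varepsilon}$ with $\xi_{\varepsilon}$.

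For identity \eqref{lem local pohozaev for xi and u-2}, which uses the scaling multiplier $\langle x-x_{j,\varepsilon}^{(1)},\nabla v\rangle+\tfrac{N-2}{2}v$, I plan to proceed in two symmetric steps. First, multiply $-\Delta u_{\varepsilon}^{(1)}=(u_{\varepsilon}^{(1)})^{2^{*}-1}+\varepsilon(u_{\varepsilon}^{(1)})^{q-1}$ by $\langle x-x_{j,\varepsilon}^{(1)},\nabla\xi_{\varepsilon}\rangle+\tfrac{N-2}{2}\xi_{\varepsilon}$ and integrate by parts on $\Omega'$; second, multiply the difference equation $-\Delta\xi_{\varepsilon}=C_{2^{*},\varepsilon}\xi_{\varepsilon}+\varepsilon C_{q,\varepsilon}\xi_{\varepsilon}$ by $\langle x-x_{j,\varepsilon}^{(1)},\nabla u_{\varepsilon}^{(2)}\rangle+\tfrac{N-2}{2}u_{\varepsilon}^{(2)}$ and integrate by parts. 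Adding the two results produces a common interior volume integral $\int_{\Omega'}\nabla u_{\varepsilon}^{(1)}\cdot\nabla\xi_{\varepsilon}+\nabla\xi_{\varepsilon}\cdot\nabla u_{\varepsilon}^{(2)}$ which, after one more integration by parts on $\sum_k(x_k-y_k)\partial_k$, converts into $\tfrac{1}{2}\int_{\partial\Omega'}\langle\nabla(u_{\varepsilon}^{(1)}+u_{\varepsilon}^{(2)}),\nabla\xi_{\varepsilon}\rangle\langle x-x_{j,\varepsilon}^{(1)},\nu\rangle$ together with the $\tfrac{2-N}{2}$ boundary terms. On the nonlinear side, the telescoping identities above force the interior contributions from the critical term to cancel exactly (scale-invariance), while the subcritical term leaves the anomalous interior residue $-\varepsilon(N-\tfrac{N-2}{2}q)\int_{\Omega'}D_{q,\varepsilon}\xi_{\varepsilon}$, reflecting the breaking of scale invariance.

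Identity \eqref{lem local pohozaev for xi and u-1} is simpler: it corresponds to translation invariance and uses the multiplier $\partial_{i}v$. I will test the equation for $u_{\varepsilon}^{(1)}$ against $\partial_{i}\xi_{\varepsilon}$ and the equation for $\xi_{\varepsilon}$ against $\partial_{i}u_{\varepsilon}^{(2)}$, then sum. Because $\partial_{i}$ is a pure derivative, the nonlinear terms $(u_{\varepsilon}^{(l)})^{2^{*}-1}\partial_{i}\xi_{\varepsilon}$ and $C_{2^{*},\varepsilon}\xi_{\varepsilon}\partial_{i}u_{\varepsilon}^{(2)}$ combine (via the telescoping identities) into a total derivative $\partial_{i}[D_{2^{*},\varepsilon}\xi_{\varepsilon}]$, which integrates to the boundary term $\int_{\partial\Omega'}D_{2^{*},\varepsilon}\xi_{\varepsilon}\nu_{i}$; likewise for the subcritical term. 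The gradient cross-products from integration by parts produce the remaining three surface integrals on the left-hand side.

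The proof is essentially a bookkeeping exercise once the correct pairing of equations with multipliers is chosen, so the main obstacle is not computational but organizational: one must verify that the interior contributions from the critical nonlinearity cancel exactly (using the telescoping identities, not just the differential equations), and that the subcritical interior residue appears with the prefactor $N-\tfrac{N-2}{2}q$ predicted by Poho\v{z}aev's scaling argument. All remaining manipulations are routine applications of the divergence theorem on the smooth subdomain $\Omega'$.
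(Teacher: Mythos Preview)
Your proposal is correct and would yield both identities, but the paper takes a shorter route. Rather than pairing the equation for $u_{\varepsilon}^{(1)}$ with a multiplier built from $\xi_{\varepsilon}$ and the equation for $\xi_{\varepsilon}$ with a multiplier built from $u_{\varepsilon}^{(2)}$, the paper simply applies the already-established Poho\v{z}aev identities \eqref{pohozaev identity 1} and \eqref{pohozaev identity 2} to each $u_{\varepsilon}^{(l)}$, $l=1,2$, subtracts the two, and divides by $\|u_{\varepsilon}^{(1)}-u_{\varepsilon}^{(2)}\|_{L^{\infty}(\Omega)}$. The bilinear factorizations $a_{1}b_{1}-a_{2}b_{2}=(a_{1}-a_{2})b_{1}+a_{2}(b_{1}-b_{2})$ and $|\nabla u^{(1)}|^{2}-|\nabla u^{(2)}|^{2}=\langle\nabla(u^{(1)}+u^{(2)}),\nabla(u^{(1)}-u^{(2)})\rangle$ generate the left-hand sides, while the telescoping identities you wrote down turn $(u^{(1)})^{p}-(u^{(2)})^{p}$ into $pD_{p,\varepsilon}\,\|u^{(1)}-u^{(2)}\|_{L^\infty}\,\xi_{\varepsilon}$ on the right. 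Your cross-multiplier approach reproduces exactly this after the algebra collapses (indeed $(u^{(1)})^{2^*-1}\nabla\xi_{\varepsilon}+C_{2^*,\varepsilon}\xi_{\varepsilon}\nabla u^{(2)}=\nabla(D_{2^*,\varepsilon}\xi_{\varepsilon})$, which is the key cancellation), but it re-derives the Poho\v{z}aev structure from scratch instead of reusing it. The paper's route is a two-line reduction; yours is a from-first-principles derivation that trades brevity for self-containment.
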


\begin{proof}
Taking $u=u_{\varepsilon}^{(l)}$ with $l=1,2$ in \eqref{pohozaev identity 1} and \eqref{pohozaev identity 2}, making a difference respectively, and finally, multiplying both sides by $\|u_{\varepsilon}^{(1)}-u_{\varepsilon}^{(2)}\|_{L^{\infty}(\Omega)}^{-1}$, we can obtain \eqref{lem local pohozaev for xi and u-2} and \eqref{lem local pohozaev for xi and u-1}. 
\end{proof}

\begin{Prop}\label{prop c j 0}
It holds
\begin{equation}
c_{j,0}=0,~\mbox{for}~j=1,\cdots,n,
\end{equation}
where $c_{j,0}$ are the constants in Lemma \ref{lem xi var j}.
\end{Prop}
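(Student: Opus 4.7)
The plan is to apply the local Poho\v{z}aev identity \eqref{lem local pohozaev for xi and u-2} on $\Omega' = B_\theta(x_{j,\varepsilon}^{(1)})$, with $\theta$ a small fixed radius satisfying $2d < \theta < \delta_0/2$, so that $B_\theta$ contains only the single concentration center $x_{j,\varepsilon}^{(1)}$ and the expansions of Lemma \ref{lem estimate for xi var} and Lemma \ref{lem estimate for u l} are simultaneously valid on $\partial B_\theta(x_{j,\varepsilon}^{(1)})$. I would then expand both sides and extract the leading-order linear equation satisfied by the coefficients $(c_{l,0})$.

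For the right-hand side, the two boundary integrals are negligible: since $u_\varepsilon^{(l)}$ and $\xi_\varepsilon$ are at most of order $O(\bar\lambda_\varepsilon^{-(N-2)/2})$ and $O(\ln\bar\lambda_\varepsilon/\bar\lambda_\varepsilon^{N-2})$ on $\partial B_\theta$ by Lemma \ref{lem estimate for u l} and Lemma \ref{lem estimate for xi}, all surface integrals involving $D_{2^*,\varepsilon}$ and $\varepsilon D_{q,\varepsilon}$ are $o(\bar\lambda_\varepsilon^{-(N-1)})$. The decisive contribution is the volume term $-\varepsilon\bigl(N - \tfrac{(N-2)q}{2}\bigr)\int_{B_\theta} D_{q,\varepsilon}\xi_\varepsilon$. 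Rescaling to $y = \lambda_{j,\varepsilon}^{(1)}(x - x_{j,\varepsilon}^{(1)})$, using Lemma \ref{lem xi var j} together with the parity identity $\int_{\R^N} U_{0,1}^{q-1}\psi_i\,dx = 0$ for $i \geq 1$, only the $c_{j,0}$ component of $\xi_{\varepsilon,j}$ survives; the scaling identity
\[
q\int_{\R^N} \psi_0\, U_{0,1}^{q-1}\,dx = \Bigl(\tfrac{(N-2)q}{2} - N\Bigr)\int_{\R^N} U_{0,1}^q\,dx,
\]
obtained by differentiating $\int U_{0,\lambda}^q$ in $\lambda$ at $\lambda=1$, then yields a contribution proportional to $\varepsilon c_{j,0} (\lambda_{j,\varepsilon}^{(1)})^{(N-2)q/2 - N}$, which by Theorem \ref{thm blow up rate} is of exact order $c_{j,0}/\bar\lambda_\varepsilon^{N-2}$.

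For the left-hand side, substituting the expansions into each boundary integral produces a bilinear combination $\sum_{l,m} A\,A_{\varepsilon,l}(\lambda_{m,\varepsilon}^{(1)})^{-(N-2)/2}\,P\bigl(G(x_{m,\varepsilon}^{(1)},\cdot),G(x_{l,\varepsilon}^{(1)},\cdot)\bigr)\big|_{\partial B_\theta}$, plus analogous cross terms containing $B_{\varepsilon,l,i}$ paired with $\partial_i G$. By the quadratic-form identity of Lemma \ref{lem for estimate of quadractic form} (the same tool already used in Proposition \ref{Prop blow up rate}), this reduces to an expression in $R(x_{j,\varepsilon}^{(1)})$, the cross values $G(x_{j,\varepsilon}^{(1)},x_{l,\varepsilon}^{(1)})$, and their first derivatives. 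Inserting the explicit form $A_{\varepsilon,l} = -(N-2)A c_{l,0}/(2(\lambda_{l,\varepsilon}^{(1)})^{N-2}) + o(\bar\lambda_\varepsilon^{1-N})$ from Lemma \ref{lem estimate for xi var} and the analogous formula for $B_{\varepsilon,l,i}$, the leading term on the left-hand side is exactly the linearization of the $\lambda_j$-derivative of the critical-point equation $\nabla_{\vec\lambda}\Phi_n(\vec a,\vec\lambda) = 0$ in the direction $(c_{l,0})_l$, with possibly mixed contributions from $(c_{l,i})_{l,i}$.

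Equating the two sides produces, for each $j$, a linear relation in $(c_{l,0},c_{l,i})$; combined with the analogous system obtained from the $x$-type local Poho\v{z}aev identity \eqref{lem local pohozaev for xi and u-1} applied on $B_\theta(x_{j,\varepsilon}^{(1)})$ (treated in Proposition \ref{prop estimate for c ji}), one recovers the full system $\mathrm{Hess}(\Phi_n)(\vec a,\vec\lambda)\cdot(c_{l,0},c_{l,i})^T = 0$. The nondegeneracy of $(\vec a,\vec\lambda)$ as a critical point of $\Phi_n$ forces $c_{j,0} = 0$ for every $j$. The main obstacle is the precise bookkeeping: the subcritical volume term $\varepsilon\int D_{q,\varepsilon}\xi_\varepsilon$ contributes at the same order $\bar\lambda_\varepsilon^{-(N-2)}$ as the boundary Green-function terms, and only after the two are combined does one recover the full $\vec\lambda$-block of the Hessian of $\Phi_n$; the assumptions $N \ge 5$ and $q \ge 2^{*}-1$ together with the improved error estimates of Lemmas \ref{lem defference of w}--\ref{lem estimate for u l} are what guarantee all remainder contributions are strictly subleading.
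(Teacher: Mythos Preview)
Your setup---applying \eqref{lem local pohozaev for xi and u-2} on $B_\theta(x_{j,\varepsilon}^{(1)})$, estimating the surface terms as negligible, and extracting the $c_{j,0}$ contribution from the volume term via parity and the scaling identity for $\int U_{0,1}^{q-1}\psi_0$---matches the paper exactly. Likewise, expanding the left-hand side through the quadratic form $P$ and Lemma~\ref{lem for estimate of quadractic form} is precisely the paper's mechanism.

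The divergence is in the concluding step. You propose to \emph{couple} the $\lambda$-type identity with the $x$-type identity of Proposition~\ref{prop estimate for c ji}, assemble the full Hessian of $\Phi_n$, and invoke nondegeneracy of $(\vec a,\vec\lambda)$. The paper instead proves $c_{j,0}=0$ from the $\lambda$-identity \emph{alone}. After equating both sides and replacing the $\varepsilon$-term on the right via the critical-point relation~\eqref{Prop blow up rate proof 5}, one arrives at a linear system of the form
\[
\widetilde M_{n,\varepsilon}\,(c_{l,0})_l \;+\; (\text{block in }B_{\varepsilon,l,h}) \;=\; o\bigl(\bar\lambda_\varepsilon^{-(N-1)}\bigr),
\]
where $\widetilde M_{n,\varepsilon}$ has diagonal entries $\bigl((N{-}2)(q{+}2)-2N\bigr)R(x_{j,\varepsilon}^{(1)})/(\lambda_{j,\varepsilon}^{(1)})^{N-2}-\sum_{l\neq j}\bigl((N{-}2)(q{+}1)-2N\bigr)G/(\cdots)$ and off-diagonal entries $-(N{-}2)G/(\cdots)$. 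The decisive observation is that, precisely because $q\ge 2^*-1$, this matrix is strictly diagonally dominant (again using~\eqref{Prop blow up rate proof 5}), hence invertible. Since $B_{\varepsilon,l,h}=O(\bar\lambda_\varepsilon^{-(N-1)})$ irrespective of the unknown $c_{l,i}$, one reads off $c_{j,0}=O(\bar\lambda_\varepsilon^{-1})$, and as $c_{j,0}$ is constant this forces $c_{j,0}=0$.

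Your simultaneous route has two concrete issues. First, Proposition~\ref{prop estimate for c ji} as written already \emph{uses} $c_{j,0}=0$ through the consequence $A_{\varepsilon,j}=o(\bar\lambda_\varepsilon^{-(N-1)})$; invoking it here is circular unless you redo that computation retaining the $A_{\varepsilon,l}$ terms. Second, the matrix $\widetilde M_{n,\varepsilon}$ is \emph{not} simply the $\lambda\lambda$-block of $\mathrm{Hess}\,\Phi_n$: the coefficients $(N{-}2)(q{+}2)-2N$ versus $(N{-}2)(q{+}1)-2N$ on the diagonal arise from the interplay between the $P$-form values and the substituted subcritical term, and do not match a straightforward second $\lambda$-derivative of $\Phi_n$. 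Your assertion that the combined system equals $\mathrm{Hess}(\Phi_n)\cdot(c_{l,0},c_{l,i})^T=0$ therefore requires a separate verification that you have not supplied. The paper's diagonal-dominance argument sidesteps this entirely and is what makes the proof of $c_{j,0}=0$ self-contained.
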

\begin{proof}
In the following, we will use the local Poho\v{z}aev identity \eqref{lem local pohozaev for xi and u-2}. Before going on, we need to define the following quadric form
\begin{equation}
\begin{split}
P_1(u,v)=&- \theta\int_{\partial B_\theta(x^{(1)}_{j,\varepsilon})}
\big\langle \nabla u ,\nu\big\rangle
\big\langle \nabla v,\nu\big\rangle
+ \frac{\theta}{2} \int_{\partial B_\theta(x^{(1)}_{j,\varepsilon})}
\big\langle \nabla u , \nabla v \big\rangle\\&
+\frac{2-N}{4}\int_{\partial B_\theta(x^{(1)}_{j,\varepsilon})}
\big\langle \nabla u ,  \nu \big\rangle v
+\frac{2-N}{4}\int_{\partial B_\theta(x^{(1)}_{j,\varepsilon})}
\big\langle \nabla v ,  \nu \big\rangle u,
\end{split}
\end{equation}
which is well defined and if $u$ and $v$ are harmonic in $ B_d\big(x^{(1)}_{j,\varepsilon}\big)\backslash \{x^{(1)}_{j,\varepsilon}\}$, then $P_1(u,v)$ is independent of $\theta\in (0,d]$, see Lemma \ref{propertity of P Q}. Next, we take $\Omega'=B_\theta\big(x^{(1)}_{j,\varepsilon}\big)$ in \eqref{lem local pohozaev for xi and u-2}, then from Lemma \ref{lem estimate for xi var} and Lemma \ref{lem estimate for u l}, the left-hand side of \eqref{lem local pohozaev for xi and u-2} becomes
\begin{equation}\label{prop c j 0-proof-2}
\begin{split}
 \text{LHS}=&\sum^n_{m=1}\sum^n_{l=1}\frac{2AA_{\varepsilon,l}}{(\lambda^{(1)}_{m,\varepsilon})^{(N-2)/2}}
P_1\Big(G(x^{(1)}_{m,\varepsilon},x),G(x^{(1)}_{l,\varepsilon},x)\Big)
\\&+\sum^n_{m=1}\sum^n_{l=1}\sum^N_{h=1}\frac{2AB_{\varepsilon,l,h}}{(\lambda^{(1)}_{m,\varepsilon})^{(N-2)/2}}
P_1\Big(G(x^{(1)}_{m,\varepsilon},x),\partial_hG(x^{(1)}_{l,\varepsilon},x)\Big)
+O\Big(\frac{\ln\bar{\lambda}_{\varepsilon}}{\bar{\lambda}_\varepsilon^{(3N-2)/2}}\Big).
\end{split}
\end{equation}
Moreover, from Lemma \ref{lem for estimate of quadractic form} and \eqref{eq estimate for A var j}, we can rewrite \eqref{prop c j 0-proof-2} as
\begin{equation}\label{prop c j 0-proof-3}
\begin{split}
 \text{LHS}=&
 {2(N-2)d_{j,\varepsilon}} \Big(\frac{R(x^{(1)}_{j,\varepsilon})}{(\lambda^{(1)}_{j,\varepsilon})^{N-2}}-\sum^n_{l\neq j}\frac{G(x^{(1)}_{j,\varepsilon},x^{(1)}_{l,\varepsilon})}
 {(\lambda^{(1)}_{j,\varepsilon})^{(N-2)/2}(\lambda^{(1)}_{l,\varepsilon})^{(N-2)/2}}
\Big)\\&
+{2(N-2)} \Big(\frac{d_{j,\varepsilon} R(x^{(1)}_{j,\varepsilon})}{(\lambda^{(1)}_{j,\varepsilon})^{N-2}}-\sum^n_{l\neq j}\frac{d_{l,\varepsilon}G(x^{(1)}_{j,\varepsilon},x^{(1)}_{l,\varepsilon})}
 {(\lambda^{(1)}_{j,\varepsilon})^{(N-2)/2}(\lambda^{(1)}_{l,\varepsilon})^{(N-2)/2}}
\Big)\\&
-\frac{N-2}{2}A\sum^N_{h=1} B_{\varepsilon,j,h} \Big(\frac{\partial_hR(x^{(1)}_{j,\varepsilon}) }{(\lambda^{(1)}_{j,\varepsilon})^{(N-2)/2}} -\sum^n_{l\neq j}
\frac{\partial_h G(x^{(1)}_{j,\varepsilon},x^{(1)}_{l,\varepsilon})}{(\lambda^{(1)}_{l,\varepsilon})^{(N-2)/2}}
\Big) \\&
+\frac{N-2}{2} A\sum^N_{h=1}\sum^n_{l\neq j} \frac{B_{\varepsilon,l,h}}{(\lambda^{(1)}_{j,\varepsilon})^{(N-2)/2}}\partial_h G(x^{(1)}_{j,\varepsilon},x^{(1)}_{l,\varepsilon})
+o\Big(\frac{1}{\bar{\lambda}_\varepsilon^{(3N-4)/2}}\Big)\\&
-
\frac{1}{2}A \sum^N_{h=1}B_{\varepsilon,j,h} \Big(\frac{ \partial_hR(x^{(1)}_{j,\varepsilon})}{(\lambda^{(1)}_{j,\varepsilon})^{(N-2)/2}}-2\sum^n_{l\neq j}\frac{ \partial_hG(x^{(1)}_{j,\varepsilon},x^{(1)}_{l,\varepsilon})}
 {(\lambda^{(1)}_{l,\varepsilon})^{(N-2)/2}}
\Big).
\end{split}
\end{equation}
where $
d_{j,\varepsilon}=\frac{(N-2)A^2c_{j,0}}{8(\lambda^{(1)}_{j,\varepsilon})^{(N-2)/2}}$, for $j=1,\cdots,n$.
Furthermore, from \eqref{prop blow up rate 2 proof 4} and \eqref{eq estimate for B var j} 
\begin{equation}\label{prop c j 0-proof-4}
\sum^N_{h=1}B_{\varepsilon,j,h} \Big(\frac{ \partial_hR(x^{(1)}_{j,\varepsilon})}{(\lambda^{(1)}_{j,\varepsilon})^{(N-2)/2}}-2\sum^n_{l\neq j}\frac{ \partial_hG(x^{(1)}_{j,\varepsilon},x^{(1)}_{l,\varepsilon})}
 {(\lambda^{(1)}_{l,\varepsilon})^{(N-2)/2}}
\Big) =O\Big(\frac{1}{\bar{\lambda}_\varepsilon^{3N/2}}\Big).
\end{equation}
Hence combining \eqref{prop c j 0-proof-3} and \eqref{prop c j 0-proof-4}
\begin{equation}\label{prop c j 0-proof-5}
\begin{split}
 \text{LHS}=&
 {2(N-2)d_{j,\varepsilon}} \Big(\frac{R(x^{(1)}_{j,\varepsilon})}{(\lambda^{(1)}_{j,\varepsilon})^{N-2}}-\sum^n_{l\neq j}\frac{G(x^{(1)}_{j,\varepsilon},x^{(1)}_{l,\varepsilon})}
 {(\lambda^{(1)}_{j,\varepsilon})^{(N-2)/2}(\lambda^{(1)}_{l,\varepsilon})^{(N-2)/2}}
\Big)\\&
+{2(N-2)} \Big(\frac{d_{j,\varepsilon} R(x^{(1)}_{j,\varepsilon})}{(\lambda^{(1)}_{j,\varepsilon})^{N-2}}-\sum^n_{l\neq j}\frac{d_{l,\varepsilon}G(x^{(1)}_{j,\varepsilon},x^{(1)}_{l,\varepsilon})}
 {(\lambda^{(1)}_{j,\varepsilon})^{(N-2)/2}(\lambda^{(1)}_{l,\varepsilon})^{(N-2)/2}}
\Big)\\&
-\frac{N-2}{2}A\sum^N_{h=1} B_{\varepsilon,j,h} \Big(\frac{\partial_hR(x^{(1)}_{j,\varepsilon}) }{(\lambda^{(1)}_{j,\varepsilon})^{(N-2)/2}} -\sum^n_{l\neq j}
\frac{\partial_h G(x^{(1)}_{j,\varepsilon},x^{(1)}_{l,\varepsilon})}{(\lambda^{(1)}_{l,\varepsilon})^{(N-2)/2}}
\Big) \\&
+\frac{N-2}{2} A\sum^N_{h=1}\sum^n_{l\neq j} \frac{B_{\varepsilon,l,h}}{(\lambda^{(1)}_{j,\varepsilon})^{(N-2)/2}}\partial_h G(x^{(1)}_{j,\varepsilon},x^{(1)}_{l,\varepsilon})
+o\Big(\frac{1}{\bar{\lambda}_\varepsilon^{(3N-4)/2}}\Big).
\end{split}
\end{equation}
On the other hand, from Lemma \ref{lem estimate for xi}, Lemma \ref{lem xi var j}, Lemma \ref{lem estimate for u l}, Lemma \ref{Elementary estimate}, \eqref{lem xi var j-proof-4}, \eqref{perturbation estimate of w-varepsilon}, \eqref{Prop blow up rate proof 5} and the fact $\varepsilon\sim\frac{1}{\bar{\lambda}_{\varepsilon}^{\frac{N-2}{2}q-2}}$, the right-hand side of \eqref{lem local pohozaev for xi and u-2}
\begin{equation}\label{prop c j 0-proof-6}
\begin{split}
\text{RHS}=& -(N-\frac{N-2}{2}q)\varepsilon\int_{ B_\theta\big(x^{(1)}_{j,\varepsilon}\big)} U_{x^{(1)}_{j,\varepsilon},\lambda^{(1)}_{j,\varepsilon}}^{q-1}(x)\xi_\varepsilon
+o\Big(\frac{1}{\bar{\lambda}_\varepsilon^{(3N-4)/2}}\Big)\\=&
\frac{(2N-(N-2)q)^{2}}{4q}\frac{Bc_{j,0}\varepsilon}{(\lambda^{(1)}_{j,\varepsilon})^{N-\frac{N-2}{2}(q-1)}}
+o\Big(\frac{1}{\bar{\lambda}_\varepsilon^{(3N-4)/2}}\Big)\\=&
2(2N-(N-2)q){d_{j,\varepsilon}} \Big(\frac{R(x^{(1)}_{j,\varepsilon})}{(\lambda^{(1)}_{j,\varepsilon})^{N-2}}-\sum^N_{l\neq j}\frac{G(x^{(1)}_{j,\varepsilon},x^{(1)}_{l,\varepsilon})}
 {(\lambda^{(1)}_{j,\varepsilon})^{(N-2)/2}(\lambda^{(1)}_{l,\varepsilon})^{(N-2)/2}}
\Big)+o\Big(\frac{1}{\bar{\lambda}_\varepsilon^{(3N-4)/2}}\Big).
\end{split}
\end{equation}
Combining \eqref{lem local pohozaev for xi and u-2}, \eqref{prop c j 0-proof-5} and \eqref{prop c j 0-proof-6} together, we can obtain that for $j=1,\cdots,n$,
\begin{equation}\label{prop c j 0-proof-7}
\begin{split}
 &{((N-2)(q+1)-2N)d_{j,\varepsilon}} \Big(\frac{R(x^{(1)}_{j,\varepsilon})}{(\lambda^{(1)}_{j,\varepsilon})^{N-2}}-\sum^n_{l\neq j}\frac{G(x^{(1)}_{j,\varepsilon},x^{(1)}_{l,\varepsilon})}
 {(\lambda^{(1)}_{j,\varepsilon})^{(N-2)/2}(\lambda^{(1)}_{l,\varepsilon})^{(N-2)/2}}
\Big)\\&
+{(N-2)} \Big(\frac{d_{j,\varepsilon} R(x^{(1)}_{j,\varepsilon})}{(\lambda^{(1)}_{j,\varepsilon})^{N-2}}-\sum^n_{l\neq j}\frac{d_{l,\varepsilon}G(x^{(1)}_{j,\varepsilon},x^{(1)}_{l,\varepsilon})}
 {(\lambda^{(1)}_{j,\varepsilon})^{(N-2)/2}(\lambda^{(1)}_{l,\varepsilon})^{(N-2)/2}}
\Big)\\&
-\frac{N-2}{4}A\sum^N_{h=1} B_{\varepsilon,j,h} \Big(\frac{\partial_hR(x^{(1)}_{j,\varepsilon}) }{(\lambda^{(1)}_{j,\varepsilon})^{(N-2)/2}} -\sum^n_{l\neq j}
\frac{\partial_h G(x^{(1)}_{j,\varepsilon},x^{(1)}_{l,\varepsilon})}{(\lambda^{(1)}_{l,\varepsilon})^{(N-2)/2}}
\Big) \\&
+\frac{N-2}{4}A \sum^N_{h=1}\sum^n_{l\neq j} \frac{B_{\varepsilon,l,h}}{(\lambda^{(1)}_{j,\varepsilon})^{(N-2)/2}}\partial_h G(x^{(1)}_{j,\varepsilon},x^{(1)}_{l,\varepsilon})
=o\Big(\frac{1}{\bar{\lambda}_\varepsilon^{(3N-4)/2}}\Big).
\end{split}
\end{equation}
Finally, we define 
\begin{equation}\label{definition D n N}
\begin{split}
   &\widetilde{D}_{1n}=\big(c_{1,0},\cdots,c_{n,0}\big)^T,~
\widetilde{D}_{nn}=\text{diag} \big(\lambda^{\frac{4-N}{2}}_{1},\cdots,\lambda^{\frac{4-N}{2}}_{N}\big),\\
&\widetilde{D}_{nN}=\text{diag}\big((\text{diag}(\lambda^{\frac{2-N}{2}}_{1},\cdots,\lambda^{\frac{2-N}{2}}_{1})_{N\times N},\cdots,\text{diag}(\lambda^{\frac{2-N}{2}}_{n},\cdots,\lambda^{\frac{2-N}{2}}_{n})_{N\times N}\big), 
\end{split}
\end{equation}
and
\begin{equation}\label{definition B var N}
\widetilde{B}_{\varepsilon,n}=\big(\bar B_{\varepsilon,1},\cdots,\bar B_{\varepsilon,nN}\big)^T,~\mbox{with}~
\bar B_{\varepsilon,m}=B_{\varepsilon,j,h},~ m=(j-1)N+h.
\end{equation}
Recall also the definition of $\vec{\lambda}=(\lambda_{1},\cdots,\lambda_{n})$ with
\begin{equation}
\lambda_{j}:=\lim_{\varepsilon\to 0}\Big(\varepsilon ^{\frac{2}{(N-2)q-4}}\lambda_{j,\varepsilon}\Big)^{-1}.
\end{equation}
Then from Lemma \ref{Lem derivate for reduce function}, we rewrite \eqref{prop c j 0-proof-7} as follows:
\begin{equation}\label{prop c j 0-proof-11}
\begin{split}
&\frac{(N-2)A^{2}}{8}\widetilde{M}_{n,\varepsilon} \widetilde{D}_{1n}-\frac{1}{2A}
\widetilde{D}_{nn}
\Big(D^2_{\lambda,x}\Phi_n(x,\lambda)\Big)_{(x,\lambda)=(\vec{a},\vec{\lambda})}\widetilde{D}_{nN}
\widetilde{B}_{\varepsilon,n}\\ 
&=\Big(o\big(\frac{1}{\bar{\lambda}_\varepsilon^{N-1}}\big),
\cdots,o\big(\frac{1}{\bar{\lambda}_\varepsilon^{N-1}}\big)\Big)^T,
\end{split}\end{equation}
where matrix $\widetilde{M}_{n,\varepsilon}=\big(a_{i,j,\varepsilon}\big)_{1\leq i,j\leq n}$ is defined by
\begin{equation}
~a_{i,j,\varepsilon}=
\begin{cases}
\frac{((N-2)(q+2)-2N)R(x^{(1)}_{j,\varepsilon})}{(\lambda^{(1)}_{j,\varepsilon})^{N-2}}-
 \sum^n_{l\neq j}\frac{((N-2)(q+1)-2N)G(x^{(1)}_{j,\varepsilon},x^{(1)}_{l,\varepsilon})}
 {(\lambda^{(1)}_{j,\varepsilon})^{(N-2)/2}(\lambda^{(1)}_{l,\varepsilon})^{(N-2)/2}},& \mbox{for}~i=j,\\
 -\frac{(N-2)G(x^{(1)}_{j,\varepsilon},x^{(1)}_{i,\varepsilon})}
 {(\lambda^{(1)}_{j,\varepsilon})^{(N-2)/2}(\lambda^{(1)}_{i,\varepsilon})^{(N-2)/2}},& \mbox{for}~i\neq j.
\end{cases}
\end{equation}
From \eqref{Prop blow up rate proof 5} and the fact that $q\geq 2^*-1$, we see that $\widetilde{M}_{n,\varepsilon}$ is the main diagonally dominant matrix, thus $\widetilde{M}_{n,\varepsilon}$ is invertible. Moreover, from \eqref{eq estimate for B var j}, we have
\begin{equation}
\widetilde{B}_{\varepsilon,n}=O(\frac{1}{\bar{\lambda}_{\varepsilon}^{N-1}})+o\Big(\frac{1}{\bar{\lambda}_\varepsilon^{N-1}}\Big).
\end{equation}
Hence \eqref{prop c j 0-proof-11} gives that
\begin{equation}
    |c_{j,0}|=O(\frac{1}{\bar{\lambda}_{\varepsilon}}),~\mbox{for}~j=1,\cdots,n,
\end{equation}
thus $c_{j,0}=0$ for any $j=1,\cdots,n$ and then we can obtain that
\begin{equation}\label{estimate for a var j}
A_{\varepsilon,j}=
o\Big(\frac{1}{\bar{\lambda}_{\varepsilon}^{N-1}}\Big),~\mbox{for}~j=1,\cdots,n,
\end{equation}
and
 \begin{equation}\label{estimate for D Phi lamda x}
\begin{split}
\Big(D^2_{\lambda,x}\Phi_n(x,\lambda)\Big)_{(x,\lambda)=(\vec{a},\vec{\Lambda})}\widetilde{D}_{nN}
\widetilde{B}_{\varepsilon,n} =\Big(o\big(\frac{1}{\bar{\lambda}_\varepsilon^{N-1}}\big),
\cdots,o\big(\frac{1}{\bar{\lambda}_\varepsilon^{N-1}}\big)\Big)^T.
\end{split}\end{equation}
\end{proof}

\begin{Prop}\label{prop estimate for c ji}
It holds
\begin{equation}
c_{j,i}=0,~\mbox{for}~j=1,\cdots,n,~\textrm{and}~i=1,\cdots,N,
\end{equation}
where $c_{j,i}$ are the constants in Lemma \ref{lem xi var j}.
\end{Prop}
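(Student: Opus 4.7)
The plan is to mimic the proof of Proposition \ref{prop c j 0} but use the translation-type local Poho\v{z}aev identity \eqref{lem local pohozaev for xi and u-1} instead of the dilation-type identity \eqref{lem local pohozaev for xi and u-2}, applied on $\Omega'=B_\theta(x^{(1)}_{j,\varepsilon})$ for each $j=1,\cdots,n$ and each coordinate $i=1,\cdots,N$. Having already established $c_{j,0}=0$ and hence $A_{\varepsilon,j}=o(\bar\lambda_\varepsilon^{-(N-1)})$ via \eqref{estimate for a var j}, the dominant contribution to $\xi_\varepsilon$ on $\Omega\setminus\bigcup_j B_{2d}(x^{(1)}_{j,\varepsilon})$ comes from the derivative-of-Green terms $\sum_{l,h} B_{\varepsilon,l,h}\partial_h G(x^{(1)}_{l,\varepsilon},\cdot)$. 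Substituting the $C^1$-expansions of Lemma \ref{lem estimate for u l} and Lemma \ref{lem estimate for xi var} into the LHS of \eqref{lem local pohozaev for xi and u-1}, one introduces the natural analog of the quadratic form $P_1$, namely
\begin{equation*}
P_2(u,v)=-\int_{\partial B_\theta(x^{(1)}_{j,\varepsilon})}\partial_\nu u\,\partial_i v-\int_{\partial B_\theta(x^{(1)}_{j,\varepsilon})}\partial_\nu v\,\partial_i u+\int_{\partial B_\theta(x^{(1)}_{j,\varepsilon})}\langle\nabla u,\nabla v\rangle\nu_i,
\end{equation*}
and the LHS then reduces to a linear combination of $P_2\bigl(G(x^{(1)}_{m,\varepsilon},\cdot),\partial_h G(x^{(1)}_{l,\varepsilon},\cdot)\bigr)$ terms with coefficients scaling like $B_{\varepsilon,l,h}/(\lambda^{(1)}_{m,\varepsilon})^{(N-2)/2}$.

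Next I would invoke Lemma \ref{lem for estimate of quadractic form} (the translation-type entries) to convert each $P_2(G,\partial_h G)$ into a combination of $\partial_i\partial_h R(x^{(1)}_{j,\varepsilon})$ and $\partial_i\partial_h G(x^{(1)}_{j,\varepsilon},x^{(1)}_{l,\varepsilon})$. Regrouping these via the definitions \eqref{definition D n N}--\eqref{definition B var N} and using Lemma \ref{Lem derivate for reduce function}, the LHS collapses into $\widetilde D_{nN}\,D^2_{xx}\Phi_n(\vec a,\vec\lambda)\,\widetilde D_{nN}\widetilde B_{\varepsilon,n}$, up to terms of order $o(\bar\lambda_\varepsilon^{-(N-1)})$. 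For the RHS, the critical piece $\int_{\partial B_\theta}D_{2^*,\varepsilon}\xi_\varepsilon\nu_i$ is controlled by Lemma \ref{lem estimate for xi}, while the subcritical piece $\varepsilon\int_{\partial B_\theta}D_{q,\varepsilon}\xi_\varepsilon\nu_i$ decays like a higher power of $\bar\lambda_\varepsilon^{-1}$ once one uses $\varepsilon\sim\bar\lambda_\varepsilon^{2-(N-2)q/2}$, the bound $\xi_\varepsilon=O(\ln\bar\lambda_\varepsilon/\bar\lambda_\varepsilon^{N-2})$ away from the concentration points, and the oddness of $\psi_i$ on the interior contribution. Thus one arrives at
\begin{equation*}
\widetilde D_{nN}\,D^2_{xx}\Phi_n(\vec a,\vec\lambda)\,\widetilde D_{nN}\widetilde B_{\varepsilon,n}=\bigl(o(\bar\lambda_\varepsilon^{-(N-1)}),\cdots,o(\bar\lambda_\varepsilon^{-(N-1)})\bigr)^T.
\end{equation*}

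To close the argument, I would combine this with the identity \eqref{estimate for D Phi lamda x} obtained in Proposition \ref{prop c j 0}, which gives the same $o(\bar\lambda_\varepsilon^{-(N-1)})$ control for $D^2_{\lambda x}\Phi_n\,\widetilde D_{nN}\widetilde B_{\varepsilon,n}$. Together these two systems yield
\begin{equation*}
\begin{pmatrix}D^2_{\lambda x}\Phi_n(\vec a,\vec\lambda)\\ D^2_{xx}\Phi_n(\vec a,\vec\lambda)\end{pmatrix}\widetilde D_{nN}\widetilde B_{\varepsilon,n}=\bigl(o(\bar\lambda_\varepsilon^{-(N-1)}),\cdots,o(\bar\lambda_\varepsilon^{-(N-1)})\bigr)^T.
\end{equation*}
The nondegeneracy of $(\vec a,\vec\lambda)$ as a critical point of $\Phi_n$ means the full Hessian is invertible, which forces the tall rectangular block on the left to have full column rank and hence to be left-invertible. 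Inverting, $\widetilde B_{\varepsilon,n}=o(\bar\lambda_\varepsilon^{-(N-1)})$, and plugging this into the leading-order asymptotic \eqref{eq estimate for B var j} for $B_{\varepsilon,j,i}$ forces $c_{j,i}=0$ for all $j,i$.

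The main obstacle will be the careful bookkeeping in the $P_2$--computation: unlike $P_1$, the form $P_2$ couples $G$ with its first-order derivatives and is sensitive to symmetry in the index $i$, so one must verify that the resulting contributions organize exactly into the Hessian block $D^2_{xx}\Phi_n$ (without spurious tangential or antisymmetric pieces that would not fit the nondegeneracy framework). A secondary technical point is ensuring that the subcritical contribution $\varepsilon\int D_{q,\varepsilon}\xi_\varepsilon\nu_i$ is $o(\bar\lambda_\varepsilon^{-(N-1)})$ rather than merely $O$, which requires refining the $\xi_\varepsilon$--asymptotic from Lemma \ref{lem xi var j} together with the bound $c_{j,0}=0$ already in hand.
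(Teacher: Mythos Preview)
Your proposal is correct and follows essentially the same route as the paper: apply the translation-type identity \eqref{lem local pohozaev for xi and u-1} on $B_\theta(x^{(1)}_{j,\varepsilon})$, expand both sides via Lemmas \ref{lem estimate for xi var} and \ref{lem estimate for u l}, evaluate the resulting bilinear form (your $P_2$ is the paper's $Q_1$) using Lemma \ref{lem for estimate of quadractic form}, recognize the block $D^2_{xx}\Phi_n$, combine with \eqref{estimate for D Phi lamda x}, and invoke nondegeneracy to force $\widetilde B_{\varepsilon,n}=o(\bar\lambda_\varepsilon^{-(N-1)})$. One minor slip: the RHS of \eqref{lem local pohozaev for xi and u-1} contains only boundary integrals, so there is no ``interior contribution'' and the oddness of $\psi_i$ is not needed there; the bound $\mathrm{RHS}=O(\ln\bar\lambda_\varepsilon/\bar\lambda_\varepsilon^{(3N-2)/2})$ follows directly from Lemma \ref{lem estimate for xi} and Lemma \ref{lem estimate for u l}.
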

\begin{proof}
In the following, we will use the local Poho\v{z}aev identity \eqref{lem local pohozaev for xi and u-1}. Taking $\Omega'=B_\theta\big(x^{(1)}_{j,\varepsilon}\big)$ and recall the definition of
quadratic form 
\begin{equation}
Q_1(u,v)=-\int_{\partial B_\theta(x^{(1)}_{j,\varepsilon})}\frac{\partial v}{\partial \nu^{(1)}}\frac{\partial u}{\partial x_i}-
\int_{\partial B_\theta(x^{(1)}_{j,\varepsilon})}\frac{\partial u}{\partial \nu^{(1)}}\frac{\partial v}{\partial x_i}
+\int_{\partial B_\theta(x^{(1)}_{j,\varepsilon})}\big\langle \nabla u,\nabla v \big\rangle \nu^{(1)}_i,
\end{equation}
where $\nu^{(1)}(x)=\big(\nu^{(1)}_{1}(x),\cdots,\nu^{(1)}_N(x)\big)$ is the outward unit normal of $\partial B_{\theta}(x^{(1)}_{j,\varepsilon})$. Then from Lemma \ref{lem estimate for xi var} and Lemma \ref{lem estimate for u l}, the left-hand side of \eqref{lem local pohozaev for xi and u-1} becomes
\begin{equation}\label{prop estimate for c ji-proof-2}
\begin{split}
 \text{LHS}=&\sum^n_{m=1}\sum^n_{l=1}\frac{AA_{\varepsilon,l}Q_1
 \Big(G(x^{(1)}_{m,\varepsilon},x),G(x^{(1)}_{l,\varepsilon},x)\Big)}{(\lambda^{(1)}_{m,\varepsilon})^{(N-2)/2}}
\\&+
\sum^n_{m=1}\sum^n_{l=1}\sum^N_{h=1}\frac{AB_{\varepsilon,l,h}Q_1
\Big(G(x^{(1)}_{m,\varepsilon},x),\partial_hG(x^{(1)}_{l,\varepsilon},x)\Big)}{(\lambda^{(1)}_{m,\varepsilon})^{(N-2)/2}}
+O\Big(\frac{\ln \bar{\lambda}_\varepsilon}{\bar{\lambda}_\varepsilon^{(3N-2)/2}}\Big).
\end{split}
\end{equation}
On the other hand, from Lemma \ref{lem estimate for xi} and Lemma \ref{lem estimate for u l} with the fact $\varepsilon\sim\frac{1}{\bar{\lambda}_{\varepsilon}^{\frac{N-2}{2}q-2}}$ and $q\geq 2^*-1$, the right-hand side of \eqref{lem local pohozaev for xi and u-1} becomes
\begin{equation}\label{prop estimate for c ji-proof-3}
\begin{split}
 \text{RHS}= O\Big(\frac{\ln\bar{\lambda}_{\varepsilon}}{\bar{\lambda}_\varepsilon^{(3N-2)/2}}\Big).
\end{split}
\end{equation}
Then from Lemma \ref{lem for estimate of quadractic form}, \eqref{prop estimate for c ji-proof-2}, \eqref{prop estimate for c ji-proof-3} and \eqref{estimate for a var j} we can deduce that
\begin{equation}
\begin{split}
\sum^n_{m=1}\sum^n_{l=1}\sum^N_{h=1}\frac{B_{\varepsilon,l,h}Q_1
\Big(G(x^{(1)}_{m,\varepsilon},x),\partial_hG(x^{(1)}_{l,\varepsilon},x)\Big)}{(\lambda^{(1)}_{m,\varepsilon})^{(N-2)/2}}
=o\Big(\frac{1}{\bar{\lambda}_\varepsilon^{(3N-4)/2}}\Big).
\end{split}
\end{equation}
Moreover, from Lemma \ref{lem for estimate of quadractic form}, we have
\begin{equation}\label{prop estimate for c ji-proof-5}
\begin{split}
\sum^N_{h=1} B_{\varepsilon,j,h}&\Big(\frac{\partial^2_{x_ix_h}R(x^{(1)}_{j,\varepsilon})}
{(\lambda^{(1)}_{j,\varepsilon})^{(N-2)/2}}-2\sum_{m\neq j}^{n}\frac{
 D^2_{x_i,x_h}G(x^{(1)}_{m,\varepsilon},x^{(1)}_{l,\varepsilon})}{{(\lambda^{(1)}_{m,\varepsilon})^{(N-2)/2}}}\Big)\\&
-2\sum^N_{h=1}\sum_{l\neq j}^{n}\frac{B_{\varepsilon,l,h}
D_{x_h}\partial_{x_i}G(x^{(1)}_{j,\varepsilon},x^{(1)}_{l,\varepsilon})}
{{(\lambda^{(1)}_{j,\varepsilon})^{(N-2)/2}}}=o\Big(\frac{1}{\bar{\lambda}_\varepsilon^{(3N-4)/2}}\Big).
\end{split}
\end{equation}
Furthermore, by Lemma \ref{Lem derivate for reduce function} we can rewrite \eqref{prop estimate for c ji-proof-5} as follows
\begin{equation}
\Big(D^2_{xx}\Phi_n(x,\lambda)\Big)_{(x,\lambda)=(\vec{a},\vec{\lambda})}
\widetilde{D}_{nN}\widetilde{B}_{\varepsilon,n}=\Big(o\big(\frac{1}{\bar{\lambda}_\varepsilon^{N-1}}\big),\cdots,
o\big(\frac{1}{\bar{\lambda}_\varepsilon^{N-1}}\big)\Big)^T,
\end{equation}
where
$\widetilde{D}_{nN},\widetilde{B}_{\varepsilon,n}$ are defined in \eqref{definition D n N} and \eqref{definition B var N} respectively.
Noting that $(\vec{a},\vec{\lambda})$ is a nondegenerate  critical point of $\Phi_{n}$, then
\begin{equation}
\mbox{Rank}~\Big(D^2_{x,x}\Phi_n(x,\lambda);D^2_{\lambda,x}\Phi_n(x,\lambda)\Big)_{(x,\lambda)=(\vec{a},\vec{\lambda})}=nN,
\end{equation}
which together with \eqref{estimate for D Phi lamda x}, it follows  that
\begin{equation}
B_{\varepsilon,j,h}=o\Big(\frac{1}{\bar{\lambda}_\varepsilon^{N-1}}\Big), ~\mbox{for}~ j=1,\cdots,n~\mbox{and}~
h=1,\cdots,N.
\end{equation}
Note that from \eqref{eq estimate for B var j}, it holds 
\begin{equation}
B_{\varepsilon,j,h}\sim c_{j,h}\frac{1}{\bar{\lambda}_{\varepsilon}^{N-1}}+o\Big(\frac{1}{\bar{\lambda}_\varepsilon^{N-1}}\Big).
\end{equation}
Thus $c_{j,h}=0$ for any $j=1,\cdots,n$ and $h=1,\dots,N$.
\end{proof}

\begin{proof}[Proof of Theorem \ref{thm uniqueness}]
From Lemma \ref{lem estimate for xi}, we know that
\begin{equation}
|\xi_{\varepsilon}(x)|=O\Big(\frac{1}{R^2}\Big)+O(\varepsilon),~\mbox{for any}~
x\in\Omega\backslash\bigcup_{j=1}^n B_{R(\lambda^{(1)}_{j,\varepsilon})^{-1}}(x_{j,\varepsilon}^{(1)}).
\end{equation}
Thus for any fixed $\gamma\in (0,1)$ and small $\varepsilon$, there exists $R_1>0$ such that
\begin{equation}
|\xi_{\varepsilon}(x)|\leq \gamma,~ \text{for any~~}x\in\Omega\backslash\bigcup_{j=1}^n B_{R_1(\lambda^{(1)}_{j,\varepsilon})^{-1}}(x_{j,\varepsilon}^{(1)}).
\end{equation}
On the other hand, for the above fixed $R_1$, from Lemma \ref{lem xi var j}, Proposition \ref{prop c j 0} and Proposition \ref{prop estimate for c ji}, we have
\begin{equation}
\xi_{\varepsilon,j}(x)=o(1)~\mbox{in}~ B_{R_1}(0),~j=1,\cdots,n.
\end{equation}
Note that $\xi_{\varepsilon,j}(x)=\xi_{\varepsilon}(
\frac{x}{\lambda^{(1)}_{j,\varepsilon}}+x_{j,\varepsilon}^{(1)})$, thus
\begin{equation}
\xi_{\varepsilon}(x)=o(1),~x\in \bigcup_{j=1}^n B_{R_1(\lambda^{(1)}_{j,\varepsilon})^{-1}}(x_{j,\varepsilon}^{(1)}).
\end{equation}
Hence for any fixed $\gamma\in (0,1)$ and small $\varepsilon$, we have
$|\xi_{\varepsilon}(x)|\leq \gamma$ for all $x\in \Omega$,
this make a contradiction with $\|\xi_{\varepsilon}\|_{L^{\infty}(\Omega)}=1$. Thus $u^{(1)}_{\varepsilon}(x)\equiv u^{(2)}_{\varepsilon}(x)$ for small $\varepsilon$.
\end{proof}

\begin{proof}[Proof of Corollary \ref{thm number}]
Since $\Omega$ satisfies Assumption B, then from the results in \cite{Pistoia2004} (see also \cite{Molle2003}), we have
$$
\mbox{the number of solutions to \eqref{p-varepsion}}~~\geq\displaystyle\sum^{n_0}_{n=1}\sharp T_n,$$
for any $\varepsilon>0$ small enough. Moreover, Since $\Omega$ satisfies Assumption A, then by Theorem \ref{thm multibble blowup} and Theorem \ref{thm uniqueness}, we can deduce that
$$
\mbox{the number of solutions to \eqref{p-varepsion}}~~=\displaystyle\sum^{n_0}_{n=1}\sharp T_n,$$
for any $\varepsilon>0$ small enough.
\end{proof}

\section*{Appendix}

\appendix
\renewcommand{\theequation}{A.\arabic{equation}}
\setcounter{equation}{0}

\section{Preliminaries}

\subsection{Elementary inequality}
 
 \begin{Lem}\cite[Lemma 6.1.1]{Cao_Peng_Yan_2021}\label{Elementary estimate}
  For $a,b>0$, we have the following estimate
  \begin{equation}
 \begin{aligned}
     &(a+b)^{p}=a^{p}+pa^{p-1}b+O(b^{p}),\text{~if~} p\in(1,2],\\
    &(a+b)^{p}=a^{p}+pa^{p-1}b+\frac{p(p-1)}{2}a^{p-2}b^{2}+O(b^{p}),\text{~if~} p>2.
 \end{aligned}
 \end{equation}
\end{Lem}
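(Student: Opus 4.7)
The plan is to derive both expansions from the one-variable Taylor expansion of $f(t) := (1+t)^p$ at $t = 0$, after the normalization $t = b/a$, and to control the resulting remainder uniformly by splitting into the two regimes $b \le a$ and $b > a$.

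First I would dispose of the trivial regime $b > a$: here $(a+b)^p \le 2^p b^p$, and each of the monomials $a^{p-k}b^k$ for $k=0,1,2$ is bounded by a constant (depending only on $p$) times $b^p$, so both claimed identities reduce to the tautology that every displayed term is $O(b^p)$.

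In the complementary regime $0 < b \le a$, set $t := b/a \in (0,1]$ and write $(a+b)^p = a^p f(t)$. For $p \in (1,2]$, Taylor's theorem with Lagrange remainder yields
$$f(t) = 1 + pt + \tfrac{1}{2}\, p(p-1)(1+\xi)^{p-2}\, t^2$$
for some $\xi \in (0,t)$; since $p-2 \le 0$ gives $(1+\xi)^{p-2} \le 1$, the remainder is bounded by $C_p t^2$. Multiplying through by $a^p$ produces an error $C_p a^{p-2} b^2$, which, upon invoking $a \ge b$ and $p-2 \le 0$ so that $a^{p-2} \le b^{p-2}$, is $\le C_p b^p$, as required. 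The case $p > 2$ is handled analogously with one more term in the Taylor expansion of $f$; the resulting cubic-and-higher remainder is absorbed into $O(b^p)$ by the same kind of size comparison between $a$ and $b$ combined with $t \le 1$.

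The main obstacle is purely book-keeping: making sure the constant implicit in $O(b^p)$ depends only on $p$ and not on $a$ or $b$. The dichotomy $b \le a$ versus $b > a$, together with the monotonicity of $s \mapsto s^{\alpha}$ in the appropriate direction depending on the sign of $\alpha$, is what achieves this uniformity cleanly in both cases.
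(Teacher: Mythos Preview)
The paper does not prove this lemma; it is quoted from \cite[Lemma 6.1.1]{Cao_Peng_Yan_2021} without argument, so there is no paper proof to compare against. Your Taylor-expansion-plus-dichotomy approach is the standard one, and your treatment of the case $p\in(1,2]$ is complete and correct.

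There is, however, a genuine gap in your handling of $p>2$. You propose to take one more Taylor term and then absorb the cubic remainder $C_p\,a^{p-3}b^{3}$ (in the regime $b\le a$) into $O(b^{p})$ ``by the same kind of size comparison between $a$ and $b$.'' This works for $2<p\le 3$, since then $p-3\le 0$ and $a\ge b$ give $a^{p-3}\le b^{p-3}$. It \emph{fails} for $p>3$: on $\{b\le a\}$ the ratio $a^{p-3}b^{3}/b^{p}=(a/b)^{p-3}$ is unbounded. Concretely, for $p=4$, $b=1$, $a=n$ one has $(n+1)^{4}-n^{4}-4n^{3}-6n^{2}=4n+1$, which is not $O(b^{4})=O(1)$. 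So the assertion as literally written is false for $p>3$, and no argument can close your gap there. The correct uniform statement for $p>2$ carries the remainder $O(a^{p-3}b^{3}+b^{p})$ after the second-order expansion, or---equivalently for the applications in this paper---one keeps only the first-order expansion
\[
(a+b)^{p}=a^{p}+pa^{p-1}b+O\big(a^{p-2}b^{2}+b^{p}\big),\qquad p>2,
\]
which is precisely the form used in the proofs of Lemmas~\ref{lem of estimate of u 1}--\ref{lem estimate of q refined}. Your write-up should either restrict the second line to $2<p\le 3$ or replace the remainder by one of these correct forms.
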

\begin{Lem}\cite[Lemma 6.1.2]{Cao_Peng_Yan_2021}\label{Elementary estimate 2}
  For any $m\in\N$, we have the following estimate
  \begin{equation}
  (\sum_{i=1}^{m}a_{i})^{p}-\sum_{i=1}^{m}a_{i}^{p}=\begin{cases}
      O(\sum_{i\neq j}a_{i}^{\frac{p}{2}}a_{j}^{\frac{p}{2}}),&\text{~if~} p\in(1,2],\\
       O(\sum_{i\neq j}a_{i}^{p-1}a_{j}),&\text{~if~} p>2.
  \end{cases}
 \end{equation}
\end{Lem}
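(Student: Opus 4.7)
The plan is to reduce the statement to the two-term case $m=2$ by induction on $m$, and then to prove each of the two claimed bounds by direct analysis, making essential use of Lemma \ref{Elementary estimate}.

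First, I would treat $m=2$ by writing $a,b>0$ and expanding via Lemma \ref{Elementary estimate}. For $p\in(1,2]$ we have $(a+b)^p - a^p - b^p = p\,a^{p-1}b + O(b^p) - b^p = p\,a^{p-1}b + O(b^p)$, and by symmetry also $= p\,b^{p-1}a + O(a^p)$. The key observation is that, since $p/2-1\le 0$, the function $x\mapsto x^{p/2-1}$ is non-increasing; hence, assuming WLOG $b\le a$, one gets $b^{p/2-1}\ge a^{p/2-1}$, which rearranges to $a^{p-1}b\le a^{p/2}b^{p/2}$. Similarly $b^p\le a^{p/2}b^{p/2}$. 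Combining these two bounds yields $(a+b)^p-a^p-b^p = O(a^{p/2}b^{p/2})$ regardless of which of $a,b$ is larger (the opposite ordering is handled by the symmetric expansion). For $p>2$, Lemma \ref{Elementary estimate} gives
\begin{equation*}
(a+b)^p - a^p - b^p = p\,a^{p-1}b + \tfrac{p(p-1)}{2}a^{p-2}b^2 + O(b^p),
\end{equation*}
and, still assuming $b\le a$, the estimates $a^{p-2}b^2\le a^{p-1}b$ and $b^p = b\cdot b^{p-1}\le b\,a^{p-1}$ collapse the right-hand side to $O(a^{p-1}b+ab^{p-1})$; the case $b>a$ is handled by symmetry.

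Next, I would push from $m=2$ to general $m$ by induction. Writing $S_{m-1}=\sum_{i=1}^{m-1}a_i$, one has
\begin{equation*}
\Bigl(\sum_{i=1}^m a_i\Bigr)^p - \sum_{i=1}^m a_i^p
= \bigl[(S_{m-1}+a_m)^p - S_{m-1}^p - a_m^p\bigr] + \Bigl[S_{m-1}^p - \sum_{i=1}^{m-1}a_i^p\Bigr].
\end{equation*}
The second bracket is controlled by the inductive hypothesis. For the first bracket, the $m=2$ result applied to $(S_{m-1},a_m)$ produces terms of the form $S_{m-1}^{p/2}a_m^{p/2}$ (resp.\ $S_{m-1}^{p-1}a_m$ and $S_{m-1}a_m^{p-1}$), and one then uses the trivial bound $S_{m-1}^r\le C_{m,r}\sum_{i=1}^{m-1}a_i^r$ (valid for any $r>0$, with a constant depending only on $m$ and $r$) to distribute these across all pairs $i\ne j$, giving exactly the claimed $O\!\bigl(\sum_{i\ne j}a_i^{p/2}a_j^{p/2}\bigr)$ or $O\!\bigl(\sum_{i\ne j}a_i^{p-1}a_j\bigr)$.

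The only subtle point is the case $p\in(1,2]$ with one variable much smaller than the other: there the naive expansion $(a+b)^p=a^p+pa^{p-1}b+O(b^p)$ yields the term $pa^{p-1}b$ that appears to be larger than $a^{p/2}b^{p/2}$ at first glance. The monotonicity trick above ($x^{p/2-1}$ non-increasing, comparing $a$ and $b$) is what resolves this and is where the threshold $p\le 2$ is used; for $p>2$ this would fail and one instead must keep the asymmetric form $a^{p-1}b+ab^{p-1}$, which is precisely why the statement splits into two cases. Apart from this observation, the remainder of the argument is routine bookkeeping.
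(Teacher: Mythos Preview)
The paper does not supply a proof of this lemma; it is stated in the appendix with a citation to \cite[Lemma 6.1.2]{Cao_Peng_Yan_2021} and taken as known. Your argument is correct and self-contained: the $m=2$ case follows from Lemma~\ref{Elementary estimate} together with the monotonicity observation that $a^{p-1}b\le a^{p/2}b^{p/2}$ when $b\le a$ and $p\le 2$ (and the trivial bounds $a^{p-2}b^2\le a^{p-1}b$, $b^p\le a^{p-1}b$ when $b\le a$ and $p>2$), and the induction step is straightforward once one notes that $S_{m-1}^r\le C_{m,r}\sum_{i<m}a_i^r$ for any $r>0$. There is nothing to compare against in the paper itself.
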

\begin{Lem}\cite[Lemma B.2]{Wei2010InfinitelyMS}\label{useful estimate}
    For any constant $\theta>0$, there exist a constant $C>0$ such that
    \begin{equation}
        \int_{\R^{N}}\frac{1}{|y-z|^{N-2}}\frac{1}{(1+|z|)^{2+\theta}}dz\leq \begin{cases}
            C(1+|y|)^{-\theta},&\text{~if~}\theta<N-2,\\
            C|\log|y||(1+|y|)^{-\theta},&\text{~if~}\theta\geq N-2.\\
        \end{cases}
    \end{equation}
\end{Lem}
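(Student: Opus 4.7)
The plan is to apply the standard three-region decomposition of $\R^{N}$ that is ubiquitous for estimating convolutions against singular kernels. First I would reduce to the regime $|y|\ge 2$: when $|y|\le 2$ the integrand is dominated uniformly by an $L^{1}(\R^{N})$ function, since $|y-z|^{-(N-2)}$ is locally integrable and $(1+|z|)^{-(2+\theta)}$ provides $|z|^{-(2+\theta)}$ decay at infinity, so the integral is bounded by a constant and this is compatible with the stated right-hand side on the ball $|y|\le 2$. For $|y|\ge 2$, write $\R^{N}=A_{1}\cup A_{2}\cup A_{3}$ with $A_{1}=\{|z|\le |y|/2\}$, $A_{2}=\{|y-z|\le |y|/2\}$, and $A_{3}$ the complement, on which both $|z|$ and $|y-z|$ are comparable to or larger than $|y|$.

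On $A_{1}$ one has $|y-z|\gtrsim|y|$, so the kernel is $\lesssim|y|^{-(N-2)}$ and the remaining integral $\int_{|z|\le|y|/2}(1+|z|)^{-(2+\theta)}\,dz$ equals $O(|y|^{N-2-\theta})$ if $\theta<N-2$, $O(\log|y|)$ if $\theta=N-2$, and $O(1)$ if $\theta>N-2$; multiplying by $|y|^{-(N-2)}$ produces contributions $O(|y|^{-\theta})$, $O(|y|^{-(N-2)}\log|y|)$, and $O(|y|^{-(N-2)})$ respectively. On $A_{2}$ the weight satisfies $(1+|z|)^{-(2+\theta)}\lesssim|y|^{-(2+\theta)}$, and the singular part contributes $\int_{|y-z|\le|y|/2}|y-z|^{-(N-2)}\,dz\sim|y|^{2}$, so this piece is $O(|y|^{-\theta})$. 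On $A_{3}$ I would split further into the intermediate annulus $\{|y|/2\le|z|\le 2|y|\}\cap A_{3}$, on which both $|z|$ and $|y-z|$ are comparable to $|y|$ (giving $|y|^{-(N-2)}\cdot|y|^{-(2+\theta)}\cdot|y|^{N}=|y|^{-\theta}$), and the exterior $\{|z|\ge 2|y|\}$, where $|y-z|\sim|z|$ so the integrand is $\sim|z|^{-(N+\theta)}$ and radial integration from $2|y|$ to $\infty$ yields $O(|y|^{-\theta})$.

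Adding the three contributions gives the claimed piecewise bound. The only subtle point is the critical behavior in region $A_{1}$: as $\theta$ crosses the threshold $N-2$ the weighted volume $\int_{|z|\le|y|/2}(1+|z|)^{-(2+\theta)}\,dz$ passes from polynomial growth in $|y|$ (regime $\theta<N-2$, where this region dominates and produces the $|y|^{-\theta}$ decay) to a pure logarithm (at $\theta=N-2$) and then to a bounded constant. This transition is what forces the logarithmic correction in the right-hand side at the threshold $\theta=N-2$, and it is essentially the only place in the argument where more than a straightforward change of variables is required.
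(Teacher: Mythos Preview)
The paper does not give its own proof of this lemma; it is quoted from \cite[Lemma~B.2]{Wei2010InfinitelyMS}. Your three-region decomposition is precisely the standard argument used for such convolution estimates and is correct.

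One point worth flagging: your own computation in region $A_{1}$ shows that for $\theta>N-2$ the dominant contribution is of order $|y|^{-(N-2)}$, and this is in fact the true order of the whole integral in that regime (since $(1+|z|)^{-(2+\theta)}\in L^{1}(\R^{N})$ there, the convolution behaves like a constant times $|y|^{-(N-2)}$ at infinity). Thus the printed right-hand side $|\log|y||\,(1+|y|)^{-\theta}$ is actually \emph{too small} when $\theta>N-2$, so your sentence ``adding the three contributions gives the claimed piecewise bound'' does not quite match the stated inequality in that range---but this is a defect of the statement rather than of your argument. In the paper the lemma is only applied iteratively with exponents $2,4,\dots$ up to the threshold $N-2$ (see the proof of Lemma~\ref{lem estimate for xi}), so this discrepancy is immaterial for the applications.
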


\subsection{Projection of bubble}
We recall some useful estimates from \cite{bahri1988,Rey1990}.
\begin{Lem}\label{estimate of two different bubbles}
For any $\lambda_{1}, \lambda_{2}\in \R^{+}$ and $a_{1},a_{2}\in\R$, we have
\begin{equation}
    \int_{\R^{N}}U_{a_{1},\lambda_{1}}^{2^*-1}U_{a_{2},\lambda_{2}}=O\left(\frac{\lambda_{1}}{\lambda_{2}}+\frac{\lambda_{2}}{\lambda_{1}}+\lambda_{1}\lambda_{2}|a_{1}-a_{2}|^{2}\right)^{-\frac{N-2}{2}}.
\end{equation}
\end{Lem}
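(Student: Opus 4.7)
The estimate is a classical bubble--bubble interaction bound, and my plan is to reduce it to a one--parameter integral by exploiting the scaling and translation invariance of the Aubin--Talenti family, and then to estimate that integral by a region decomposition.

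The first step is to normalize. Using the change of variables $y = a_{1}+z/\lambda_{1}$ in $\int_{\R^{N}} U_{a_{1},\lambda_{1}}^{2^{*}-1}\,U_{a_{2},\lambda_{2}}\,dy$ absorbs $U_{a_{1},\lambda_{1}}^{2^{*}-1}$ into the standard bubble $(1+|z|^{2})^{-(N+2)/2}$ at cost of the Jacobian $\lambda_{1}^{-N}$. The second bubble $U_{a_{2},\lambda_{2}}$ transforms into $(\lambda_{2}/\lambda_{1})^{(N-2)/2}\bigl(1+\mu^{2}|z-b|^{2}\bigr)^{-(N-2)/2}$ where $\mu:=\lambda_{2}/\lambda_{1}$ and $b:=\lambda_{1}(a_{2}-a_{1})$. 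Thus the claim is reduced to showing
\begin{equation*}
\mu^{(N-2)/2}\int_{\R^{N}}\frac{dz}{(1+|z|^{2})^{(N+2)/2}\,(1+\mu^{2}|z-b|^{2})^{(N-2)/2}}\ \lesssim\ \bigl(\mu^{-1}+\mu+\mu|b|^{2}\bigr)^{-(N-2)/2},
\end{equation*}
and since the integral in the original statement is symmetric in the two bubbles (indeed by integration by parts using $-\Delta U_{a_i,\lambda_i}=U_{a_i,\lambda_i}^{2^{*}-1}$ one has $\int U_{a_{1},\lambda_{1}}^{2^{*}-1}U_{a_{2},\lambda_{2}}=\int U_{a_{2},\lambda_{2}}^{2^{*}-1}U_{a_{1},\lambda_{1}}$), I may assume $\mu\geq 1$, in which case the right-hand side simplifies to $\bigl(\mu(1+|b|^{2})\bigr)^{-(N-2)/2}$.

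For the one--parameter integral I would carry out a region decomposition around the concentration point $z=b$ of the second factor. On the set $\{|z-b|\leq 1/\mu\}$ the second factor is $O(1)$ and its volume is $O(\mu^{-N})$, while the first factor is of order $(1+|b|^{2})^{-(N+2)/2}$ there (with the obvious modification when $|b|\lesssim 1/\mu$). On the intermediate annulus $\{1/\mu\leq|z-b|\leq |b|/2\}$ (nonempty only when $|b|\gtrsim 1/\mu$) the first factor is still $\sim(1+|b|^{2})^{-(N+2)/2}$ while the second is $(\mu|z-b|)^{-(N-2)}$, and integrating in polar coordinates produces the main contribution of order $\mu^{-(N-2)}(1+|b|^{2})^{-(N-2)/2}$. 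On the complement $\{|z-b|\geq \max(|b|/2,1/\mu)\}$ the second factor is controlled by $(\mu|z|)^{-(N-2)}$ and an explicit integration of $(1+|z|^{2})^{-(N+2)/2}|z|^{-(N-2)}$ again yields a quantity bounded by $\mu^{-(N-2)}(1+|b|^{2})^{-(N-2)/2}$. Summing the three contributions gives exactly the desired bound.

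The main technical obstacle is the case $|b|\gtrsim 1/\mu$ (and in particular the borderline $|b|\sim 1$), where one must keep careful track of the algebraic decay in $|b|$; it is also the place where the geometric split above must be adjusted. In practice this is purely book--keeping once the radial integrals are written out in polar coordinates centered at $b$, but one must take care that the constants are independent of $\mu$ and $b$. Apart from this, the argument is entirely elementary and could equivalently be carried out by noting that $U_{a_{1},\lambda_{1}}^{2^{*}-1}$ is an approximation of a multiple of $\delta_{a_{1}}$ convolved with the Newton kernel (this is the viewpoint of Bahri--Coron and Rey), leading to the same bound through the explicit form of $U_{a_{2},\lambda_{2}}$.
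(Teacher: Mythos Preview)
The paper does not give its own proof of this lemma: it is stated in the appendix with the preface ``We recall some useful estimates from \cite{bahri1988,Rey1990}'' and no argument is supplied. Your proposal is therefore not competing with a proof in the paper but rather supplying one, and the route you take---scaling normalization to reduce to a one-parameter integral, the symmetry $\int U_{a_1,\lambda_1}^{2^*-1}U_{a_2,\lambda_2}=\int U_{a_2,\lambda_2}^{2^*-1}U_{a_1,\lambda_1}$ to assume $\mu\geq 1$, and a region decomposition around the concentration point $b$---is exactly the classical argument found in those references (see in particular Rey, \emph{J.\ Funct.\ Anal.}\ 89 (1990), Appendix~B).

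Your reduction is correct: with $\mu=\lambda_2/\lambda_1$ and $b=\lambda_1(a_2-a_1)$ one has $\lambda_1/\lambda_2+\lambda_2/\lambda_1+\lambda_1\lambda_2|a_1-a_2|^2=\mu^{-1}+\mu+\mu|b|^2\sim\mu(1+|b|^2)$ when $\mu\geq 1$. The decomposition is also sound; the only place to be slightly more careful is the outer region $\{|z-b|\geq\max(|b|/2,1/\mu)\}$. There your bound on the second factor by $(\mu|z|)^{-(N-2)}$ relies on $|z-b|\gtrsim|z|$, which does hold since $|z|\leq|z-b|+|b|\leq 3|z-b|$ on that set, but one should still separate $\{|z|\leq 1\}$ (where the first factor is $O(1)$ and the second contributes $O(\mu^{-(N-2)})$ after integration) from $\{|z|\geq 1\}$ (where the integral $\int_{|z|\geq 1}|z|^{-(N+2)}|z|^{-(N-2)}\,dz$ needs $|b|$-dependence extracted). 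This is, as you say, pure book-keeping, and the argument goes through.
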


\begin{Lem}\label{estimate of U-lambda-a and psi-lambda-a 1}
Assume that $a\in\Omega$ and $\lambda\in \R^{+}$, then we have the following properties    \begin{equation}\begin{aligned}
&\frac{\partial U_{a,\lambda}(x)}{\partial a_j} =\alpha_{N}(N-2)\lambda^{\frac{N+2}{2}}\frac{x_{j}-a_{j}}{\left(1+\lambda^{2}|x-a|^{2}\right)^{\frac{N}{2}}}=O\big(\lambda U_{a,\lambda}\big), \\
&\frac{\partial U_{a,\lambda}(x)}{\partial\lambda} =\alpha_{N}\frac{N-2}2\lambda^{\frac{N-4}2}\frac{1-\lambda^2|x-a|^2}{(1+\lambda^2|x-a|^2)^{\frac N2}}=O\Big(\frac{U_{a,\lambda}}\lambda\Big),
\end{aligned}\end{equation}
where $j=1,\cdots,N$ and  $d=\text{dist}(a,\partial\Omega)$ is the distance between $a$ and boundary $\partial\Omega$.
\end{Lem}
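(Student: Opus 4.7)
The proof is a purely computational statement obtained by direct differentiation of the explicit bubble formula
$$U_{a,\lambda}(x) = \alpha_N \lambda^{\frac{N-2}{2}}\bigl(1+\lambda^2|x-a|^2\bigr)^{-\frac{N-2}{2}},$$
followed by elementary pointwise bounds. There is no conceptual obstacle; the only small care needed is the algebraic factoring for the $\lambda$-derivative.

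For the spatial derivative, I would apply the chain rule, using $\partial_{a_j}\bigl(1+\lambda^2|x-a|^2\bigr) = -2\lambda^2(x_j - a_j)$, to obtain
$$\frac{\partial U_{a,\lambda}}{\partial a_j} = \alpha_N(N-2)\lambda^{\frac{N+2}{2}}\frac{x_j-a_j}{(1+\lambda^2|x-a|^2)^{N/2}}.$$
To deduce the bound $O(\lambda U_{a,\lambda})$, I would form the explicit ratio
$$\frac{\bigl|\partial U_{a,\lambda}/\partial a_j\bigr|}{\lambda\, U_{a,\lambda}} = \frac{(N-2)\lambda|x_j-a_j|}{1+\lambda^2|x-a|^2},$$
and close this out using $|x_j-a_j|\le |x-a|$ together with the elementary inequality $\lambda|x-a|\le \tfrac{1}{2}(1+\lambda^2|x-a|^2)$.

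For the scaling derivative, I would differentiate the product $\lambda^{(N-2)/2}\cdot(1+\lambda^2|x-a|^2)^{-(N-2)/2}$ via the Leibniz rule, extract the common prefactor $\alpha_N\tfrac{N-2}{2}\lambda^{(N-4)/2}(1+\lambda^2|x-a|^2)^{-N/2}$, and observe that the resulting bracket simplifies as
$$\bigl(1+\lambda^2|x-a|^2\bigr) - 2\lambda^2|x-a|^2 = 1 - \lambda^2|x-a|^2,$$
which yields the stated closed form. The bound $O(U_{a,\lambda}/\lambda)$ then follows from the trivial estimate $|1-\lambda^2|x-a|^2|\le 1+\lambda^2|x-a|^2$, so the ratio $|\partial_\lambda U_{a,\lambda}|\,\lambda/U_{a,\lambda}$ is uniformly bounded by $(N-2)/2$.

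The only subtlety worth flagging — although it is not really a difficulty — is that the bounds are uniform in $(a,\lambda)\in\Omega\times\R^+$ and do not depend on the distance $d=\mathrm{dist}(a,\partial\Omega)$, since $U_{a,\lambda}$ is defined on all of $\R^N$. Thus no boundary considerations enter; the lemma is essentially a pointwise identity on $\R^N$, and the role of $\Omega$ and $d$ appears only implicitly through how this lemma will be combined later with estimates for the projection correction $\psi_{a,\lambda}$.
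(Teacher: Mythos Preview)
Your proposal is correct: the explicit formulas follow by direct chain-rule differentiation of the bubble, and the $O$-bounds follow from the elementary inequalities you indicate. The paper does not supply its own proof of this lemma---it is simply recalled in the appendix as a standard estimate from \cite{bahri1988,Rey1990}---so your direct computation is precisely the expected argument, and your remark that the bounds are uniform and independent of $d$ is accurate.
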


\begin{Lem}\label{estimate of U-lambda-a and psi-lambda-a 2}
Assume that $a\in\Omega$ and $\lambda\in \R^{+}$, then we have the following properties
\begin{equation}
 \begin{aligned}
PU_{a,\lambda}& =U_{a,\lambda}-\psi_{a,\lambda}\geq 0, \\
\psi_{a,\lambda}(x)& =\alpha_{N}\frac{(N-2)\omega_N}{\lambda^{\frac{N-2}2}}H(a,x)+O\Big(\frac1{\lambda^{\frac{N+2}2}d^N}\Big), \\
\frac{\partial\psi_{a,\lambda}(x)}{\partial\lambda}& =-\alpha_{N}\frac{(N-2)\omega_N}{\lambda^{\frac N2}}H(a,x)+O\Big(\frac1{\lambda^{\frac{N+4}2}d^N}\Big), \\
\frac{\partial\psi_{a,\lambda}(x)}{\partial a_j}& =\alpha_{N}\frac{(N-2)\omega_N}{\lambda^{\frac{N-2}2}}\frac{\partial H(a,x)}{\partial a_j}+O\Big(\frac{1}{\lambda^{\frac{N+2}2}d^{N+1}}\Big),
\end{aligned}      
\end{equation}
 where $j=1,\cdots,N$ and $d=\text{dist}(a,\partial\Omega)$ is the distance between $a$ and boundary $\partial\Omega$.
\end{Lem}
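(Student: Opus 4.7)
The plan is to prove the four claims in order, relying on the defining boundary-value problems for $PU_{a,\lambda}$ and $\psi_{a,\lambda}$, combined with a careful expansion of the bubble $U_{a,\lambda}$ restricted to $\partial\Omega$ and the maximum principle. First, I would note that $\psi_{a,\lambda}=U_{a,\lambda}-PU_{a,\lambda}$ satisfies
\begin{equation*}
\Delta\psi_{a,\lambda}=0\text{ in }\Omega,\qquad \psi_{a,\lambda}=U_{a,\lambda}\text{ on }\partial\Omega,
\end{equation*}
so $\psi_{a,\lambda}$ is precisely the harmonic extension of $U_{a,\lambda}\lvert_{\partial\Omega}$. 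Since $U_{a,\lambda}\ge 0$, the maximum principle immediately gives $\psi_{a,\lambda}\ge 0$, and combined with the equation for $PU_{a,\lambda}$, another application of the maximum principle yields $PU_{a,\lambda}\ge 0$ and $\psi_{a,\lambda}\le U_{a,\lambda}$ in $\Omega$.

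For the asymptotic formula, the central observation is that for $x\in\partial\Omega$ one has $|x-a|\ge d$, so $\lambda|x-a|\ge \lambda d\to\infty$. I would Taylor expand
\begin{equation*}
U_{a,\lambda}(x)=\frac{\alpha_N}{\lambda^{\frac{N-2}{2}}|x-a|^{N-2}}\bigl(1+\lambda^{-2}|x-a|^{-2}\bigr)^{-\frac{N-2}{2}}=\frac{\alpha_N(N-2)\omega_N}{\lambda^{\frac{N-2}{2}}}S(a,x)+O\!\left(\tfrac{1}{\lambda^{\frac{N+2}{2}}d^{N}}\right),
\end{equation*}
and since $S(a,x)=H(a,x)$ on $\partial\Omega$, the boundary trace of $\psi_{a,\lambda}$ coincides with $\alpha_N(N-2)\omega_N\lambda^{-(N-2)/2}H(a,x)$ up to the stated error. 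Both sides are harmonic in $\Omega$, so the maximum principle applied to the difference transports the $O(\lambda^{-(N+2)/2}d^{-N})$ boundary bound to the interior, giving the second identity.

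For the derivatives, the strategy is to differentiate the defining PDE in $\lambda$ or in $a_j$: the functions $\partial_\lambda\psi_{a,\lambda}$ and $\partial_{a_j}\psi_{a,\lambda}$ are harmonic in $\Omega$ with boundary data $\partial_\lambda U_{a,\lambda}$ and $\partial_{a_j} U_{a,\lambda}$ respectively. Using the explicit formulas from Lemma \ref{estimate of U-lambda-a and psi-lambda-a 1} and the same asymptotic expansion as above, one checks
\begin{equation*}
\partial_\lambda U_{a,\lambda}(x)\big\lvert_{\partial\Omega}=-\tfrac{N-2}{2}\tfrac{\alpha_N(N-2)\omega_N}{\lambda^{N/2}}H(a,x)+O\!\left(\tfrac{1}{\lambda^{(N+4)/2}d^{N}}\right),
\end{equation*}
and similarly for $\partial_{a_j}U_{a,\lambda}$, where differentiating $S(a,x)$ in $a$ loses one more power of $d$ and produces $\partial_{a_j}H(a,x)$ on $\partial\Omega$ since $\partial_{a_j}(S-H)=0$ there. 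The maximum principle again converts these boundary asymptotics into interior ones.

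The only mildly delicate point is the remainder in the derivative estimates: naively differentiating the $O(\lambda^{-(N+2)/2}d^{-N})$ error does not produce the claimed bound, so one must expand $U_{a,\lambda}$ on $\partial\Omega$ to one more order (explicitly writing the quadratic-in-$(\lambda|x-a|)^{-2}$ term), differentiate this full expansion, and only then apply the maximum principle. This bookkeeping—keeping track of the precise power of $d$ lost under $\partial_{a_j}$ versus the power of $\lambda$ gained under $\partial_\lambda$—is the main technical point; everything else is a direct application of harmonic extension and the maximum principle as in Rey \cite{Rey1990} and Bahri--Coron \cite{bahri1988}.
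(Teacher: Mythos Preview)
Your approach is correct and is exactly the standard argument from Rey \cite{Rey1990} and Bahri \cite{bahri1988}; the paper itself does not give a proof of this lemma but simply cites those references, so your proposal recovers precisely what those sources do. One minor remark: the constant you obtain for $\partial_\lambda\psi_{a,\lambda}$ carries an extra factor $(N-2)/2$ compared with the statement as written in the paper, which suggests a typographical slip in the quoted constant rather than an error in your method.
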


\begin{Lem}\label{estimate of U-lambda-a and psi-lambda-a 3}
Assume that $a\in\Omega$ and $\lambda\in \R^{+}$, then we have the following properties
    \begin{equation}
        \begin{aligned}
 &\parallel\psi_{a,\lambda}\parallel_{L^{2^{*}}}=O\Big(\frac{1}{(\lambda d)^{\frac{N-2}{2}}}\Big),~~\|\psi_{a,\lambda}\|_{L^{\infty}} =O\left(\frac1{\lambda^{\frac{N-2}2}d^{N-2}}\right),\\
 &\|\frac{\psi_{a,\lambda}}{\partial a_{j}}\|_{L^{2^*}}=O\left(\frac{1}{\lambda^{\frac{N-2}{2}}d^{\frac{N}{2}}}\right),~~\|\frac{\partial \psi_{a,\lambda}}{\partial a_{j}}\|_{L^{\infty}} =O\left(\frac1{\lambda^{\frac{N-2}2}d^{N-1}}\right),\\
 &\|\frac{\psi_{a,\lambda}}{\partial \lambda}\|_{L^{2^*}}=O\left(\frac{1}{\lambda^{\frac{N}{2}}d^{\frac{N-2}{2}}}\right),~~\|\frac{\partial \psi_{a,\lambda}}{\partial \lambda}\|_{L^{\infty}} =O\left(\frac1{\lambda^{\frac{N}2}d^{N-2}}\right),\\
        \end{aligned}
    \end{equation}
where $j=1,\cdots,N$ and $d=\text{dist}(a,\partial\Omega)$ is the distance between $a$ and boundary $\partial\Omega$.
\end{Lem}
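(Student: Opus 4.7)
The plan is to reduce every bound in the statement to the pointwise expansions already recorded in Lemma~\ref{estimate of U-lambda-a and psi-lambda-a 2}, combined with two standard geometric inputs: a maximum-principle bound on the regular part of the Green's function, and a Sobolev estimate for a natural $D^{1,2}(\R^N)$-extension of $\psi_{a,\lambda}$.

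First I would handle the three $L^\infty$ bounds. Lemma~\ref{estimate of U-lambda-a and psi-lambda-a 2} expresses $\psi_{a,\lambda}$, $\partial_{a_j}\psi_{a,\lambda}$ and $\partial_\lambda\psi_{a,\lambda}$ as explicit multiples of $H(a,x)$, $\partial_{a_j}H(a,x)$ and $\partial_\lambda H(a,x)$ plus remainders of strictly smaller order. Since $H(a,\cdot)-S(a,\cdot)$ is harmonic in $\Omega$ and vanishes on $\partial\Omega$, the maximum principle gives $0\le H(a,x)\le\max_{y\in\partial\Omega}S(a,y)\le C/d^{\,N-2}$ on $\bar\Omega$; differentiating the defining boundary condition and using the interior Schauder theory, the same argument bounds $|\partial_{a_j}H(a,\cdot)|$ by $C/d^{\,N-1}$ and $|\partial_\lambda H(a,\cdot)|$ by $C/d^{\,N-2}$ (with the extra $\lambda$-power coming from the explicit prefactors). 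Inserting these bounds into the expansions of Lemma~\ref{estimate of U-lambda-a and psi-lambda-a 2} and noting that the remainders carry an additional factor $(\lambda d)^{-2}$ produces the three $L^\infty$ estimates directly.

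Next I would treat the three $L^{2^*}$ bounds through a single $D^{1,2}$-estimate. Define the extension $\tilde\psi_{a,\lambda}:=U_{a,\lambda}-PU_{a,\lambda}$ on $\Omega$ and $\tilde\psi_{a,\lambda}:=U_{a,\lambda}$ on $\R^N\setminus\Omega$; because $PU_{a,\lambda}$ is continuous across $\partial\Omega$ with zero trace, one has $\tilde\psi_{a,\lambda}\in D^{1,2}(\R^N)$ and
\begin{equation*}
\|\nabla\tilde\psi_{a,\lambda}\|_{L^2(\R^N)}^2
=\int_\Omega|\nabla(U_{a,\lambda}-PU_{a,\lambda})|^2
+\int_{\R^N\setminus\Omega}|\nabla U_{a,\lambda}|^2.
\end{equation*}
Testing $-\Delta PU_{a,\lambda}=U_{a,\lambda}^{2^*-1}$ against $PU_{a,\lambda}$ and $U_{a,\lambda}$ and using $\int_{\R^N}|\nabla U_{a,\lambda}|^2=\int_{\R^N}U_{a,\lambda}^{2^*}$ reduces the first integral to integrals of $U_{a,\lambda}^{2^*}$ and $|\nabla U_{a,\lambda}|^2$ over $\R^N\setminus\Omega$, which are computed explicitly from the Aubin--Talenti profile (change variables $y=a+z/\lambda$) to be $O((\lambda d)^{-(N-2)})$. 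The Sobolev inequality on $\R^N$ then gives $\|\psi_{a,\lambda}\|_{L^{2^*}(\Omega)}\le\|\tilde\psi_{a,\lambda}\|_{L^{2^*}(\R^N)}\le C(\lambda d)^{-(N-2)/2}$, which is exactly the claim. Applying the same scheme to $\partial_{a_j}\psi_{a,\lambda}$ and $\partial_\lambda\psi_{a,\lambda}$, whose natural extensions are $\partial_{a_j}U_{a,\lambda}-\partial_{a_j}PU_{a,\lambda}$ inside and $\partial_{a_j}U_{a,\lambda}$ outside (and similarly for $\lambda$), reduces matters again to $\R^N\setminus\Omega$ integrals of $|\partial_{a_j}U_{a,\lambda}|^2$ and $|\partial_\lambda U_{a,\lambda}|^2$, which produce respectively the extra factor $\lambda^{2}$ and $\lambda^{-2}$ relative to the base case; after Sobolev embedding this yields precisely the asymmetric $d$- and $\lambda$-powers in the stated bounds.

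The main technical obstacle is verifying that $\partial_{a_j}\tilde\psi_{a,\lambda}$ and $\partial_\lambda\tilde\psi_{a,\lambda}$ legitimately define $D^{1,2}(\R^N)$ functions, because $\partial_{a_j}U_{a,\lambda}$ and $\partial_\lambda U_{a,\lambda}$ are no longer zero on $\partial\Omega$ even though $\partial_{a_j}PU_{a,\lambda}$ and $\partial_\lambda PU_{a,\lambda}$ do vanish there. To handle this one differentiates the defining elliptic problem for $PU_{a,\lambda}$ in the parameters, checks that the corresponding traces on $\partial\Omega$ match those of the outside extension, and quantifies the resulting boundary layer using the explicit decay rates of $\partial_{a_j}U_{a,\lambda}$ and $\partial_\lambda U_{a,\lambda}$ recorded in Lemma~\ref{estimate of U-lambda-a and psi-lambda-a 1}. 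These matching computations are the classical ones from Bahri--Coron~\cite{bahri1988} and Rey~\cite{Rey1990}, and once carried out the proof is complete.
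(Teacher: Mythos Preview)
The paper does not actually supply a proof of this lemma: it appears in the Appendix under the heading ``We recall some useful estimates from \cite{bahri1988,Rey1990}'' and is simply quoted from those references. So there is no in-paper argument to compare against; your proposal is effectively a reconstruction of the classical Bahri--Rey proof, and it is the right one.

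Your plan is sound. The $L^\infty$ bounds do follow immediately from Lemma~\ref{estimate of U-lambda-a and psi-lambda-a 2} once one bounds $H(a,\cdot)$ and $\partial_{a_j}H(a,\cdot)$ via the maximum principle (note $\psi_{a,\lambda}$ and its parameter derivatives are harmonic in $\Omega$, which makes this clean). One small slip: there is no ``$\partial_\lambda H(a,\cdot)$'' since $H$ is $\lambda$-independent; the expansion of $\partial_\lambda\psi_{a,\lambda}$ in Lemma~\ref{estimate of U-lambda-a and psi-lambda-a 2} involves $H(a,x)$ itself with a $\lambda^{-N/2}$ prefactor, which is what gives the stated bound. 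For the $L^{2^*}$ estimates, the extension-to-$\R^N$ argument you outline is exactly Rey's device, and the trace matching you flag as the technical point is precisely what is verified in \cite{Rey1990}; since $\psi_{a,\lambda}$, $\partial_{a_j}\psi_{a,\lambda}$ and $\partial_\lambda\psi_{a,\lambda}$ all agree with $U_{a,\lambda}$, $\partial_{a_j}U_{a,\lambda}$ and $\partial_\lambda U_{a,\lambda}$ on $\partial\Omega$ (because $PU_{a,\lambda}$ and its parameter derivatives vanish there), the extensions are legitimate $D^{1,2}(\R^N)$ functions and the Sobolev inequality yields the claimed powers after the exterior integrals are computed.
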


\begin{Lem}\label{estimate of U-lambda-a and psi-lambda-a 4}
There exist constants $B,C>0$ dependent only on $N$ such that 
\begin{equation}
    \left\langle PU_{a,\lambda},\frac{\partial PU_{a,\lambda}}{\partial\lambda}\right\rangle=O\left(\frac{1}{\lambda^{N-1}}\right),~~\left\langle PU_{a,\lambda},\frac{\partial PU_{a,\lambda}}{\partial a_{i}}\right\rangle=O\left(\frac{1}{\lambda^{N-2}}\right),
\end{equation}
\begin{equation}
    \left\langle\frac{\partial PU_{a,\lambda}}{\partial\lambda},\frac{\partial PU_{a,\lambda}}{\partial a_{i}}\right\rangle=O(\frac{1}{\lambda^{N-1}}),~~\left\langle\frac{\partial PU_{a,\lambda}}{\partial a_{i}},\frac{\partial PU_{a,\lambda}}{\partial a_{j}}\right\rangle=O(\frac{1}{\lambda^{N-1}}),\textrm{~where~} i\neq j,
\end{equation}
\begin{equation}
    \|\frac{\partial PU_{a,\lambda}}{\partial\lambda}\|_{H^{1}_{0}(\Omega)}^{2}=B\frac{1}{\lambda^{2}}+O(\frac{1}{\lambda^{N}}),~~\|{\frac{\partial PU_{a,\lambda}}{\partial a_{i}}}\|_{H^{1}_{0}(\Omega)}^{2}=C\lambda^{2}+O(\frac{1}{\lambda^{N-2}}), 
\end{equation}
\begin{equation}
    \|\frac{\partial^{2}PU_{a,\lambda}}{\partial\lambda^{2}}\|_{H^{1}_{0}(\Omega)}=O(\frac{1}{\lambda_{{\varepsilon}}^{2}}),~\|\frac{\partial^{2}PU_{a,\lambda}}{\partial\lambda\partial a_{i}}\||_{H^{1}_{0}(\Omega)}=O(1),~\|\frac{\partial^{2}PU_{a,\lambda}}{\partial a_{i}^{2}}\|_{H^{1}_{0}(\Omega)}=O(\lambda^{2}).
\end{equation}
\end{Lem}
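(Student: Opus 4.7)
The plan is to convert every $H^{1}_{0}(\Omega)$ inner product into a weighted $L^{2}$-integral by integration by parts against the defining relation $-\Delta PU_{a,\lambda}=U_{a,\lambda}^{2^{*}-1}$, and then to exploit the explicit scaling of $U_{a,\lambda}$ together with the pointwise $\psi$-estimates of Lemmas \ref{estimate of U-lambda-a and psi-lambda-a 2}--\ref{estimate of U-lambda-a and psi-lambda-a 3}. Differentiating the projection equation in $\lambda$ and $a_{i}$ gives
\begin{equation*}
-\Delta\tfrac{\partial PU_{a,\lambda}}{\partial \lambda}=(2^{*}-1)U_{a,\lambda}^{2^{*}-2}\tfrac{\partial U_{a,\lambda}}{\partial\lambda},\qquad -\Delta\tfrac{\partial PU_{a,\lambda}}{\partial a_{i}}=(2^{*}-1)U_{a,\lambda}^{2^{*}-2}\tfrac{\partial U_{a,\lambda}}{\partial a_{i}},
\end{equation*}
so pairing these against $PU_{a,\lambda}$ or against another derivative and writing $PU_{a,\lambda}=U_{a,\lambda}-\psi_{a,\lambda}$ reduces each quantity to one $\mathbb{R}^{N}$-type integral (the leading term) plus a $\psi$-correction controlled by Lemma \ref{estimate of U-lambda-a and psi-lambda-a 2}.

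First I would treat the four off-diagonal inner products. After the change of variables $y=\lambda(x-a)$, the leading $\mathbb{R}^{N}$ integrals $\int U_{0,1}^{2^{*}-2}(\partial_{\lambda}U)(\partial_{a_{i}}U)$ and $\int U_{0,1}^{2^{*}-2}(\partial_{a_{i}}U)(\partial_{a_{j}}U)$ (for $i\neq j$) vanish exactly by the odd-even parity of $\partial_{a_{i}}U$ in $x_{i}-a_{i}$, and the same parity kills $\int_{\mathbb{R}^{N}}U_{0,1}^{2^{*}-1}\,\partial_{\lambda}U$ and $\int_{\mathbb{R}^{N}}U_{0,1}^{2^{*}-1}\,\partial_{a_{i}}U$ that appear in $\langle PU_{a,\lambda},\partial PU_{a,\lambda}\rangle$. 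The residual error from truncating $\mathbb{R}^{N}$ to $\Omega$ and the correction $\int U_{a,\lambda}^{2^{*}-2}(\partial U_{a,\lambda})(\partial\psi_{a,\lambda})$ are then estimated by substituting the expansion $\partial\psi_{a,\lambda}\sim c_{N}\lambda^{-(N-2)/2}\partial H(a,\cdot)$ from Lemma \ref{estimate of U-lambda-a and psi-lambda-a 2} and computing $\int U_{a,\lambda}^{2^{*}-2}|\partial U_{a,\lambda}|$ by scaling; this yields $O(\lambda^{-(N-1)})$ in all cases except $\langle PU_{a,\lambda},\partial_{a_{i}}PU_{a,\lambda}\rangle$ where the extra factor of $\lambda$ in $\partial_{a_{i}}U$ (Lemma \ref{estimate of U-lambda-a and psi-lambda-a 1}) forces the weaker bound $O(\lambda^{-(N-2)})$. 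For the two diagonal norms the $\mathbb{R}^{N}$ integral is non-vanishing and supplies the main terms: scaling gives $B/\lambda^{2}+O(\lambda^{-N})$ and $C\lambda^{2}+O(\lambda^{2-N})$ with
\begin{equation*}
B=(2^{*}-1)\int_{\mathbb{R}^{N}}U_{0,1}^{2^{*}-2}\Big(\tfrac{\partial U_{0,\lambda}}{\partial \lambda}\Big|_{\lambda=1}\Big)^{2},\qquad C=(2^{*}-1)\int_{\mathbb{R}^{N}}U_{0,1}^{2^{*}-2}\Big(\tfrac{\partial U_{0,1}}{\partial a_{i}}\Big)^{2},
\end{equation*}
both explicit positive constants depending only on $N$.

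For the three second-derivative norms, differentiating the projection identity twice yields, for instance,
\begin{equation*}
-\Delta\tfrac{\partial^{2}PU_{a,\lambda}}{\partial\lambda^{2}}=(2^{*}-1)(2^{*}-2)U_{a,\lambda}^{2^{*}-3}\Big(\tfrac{\partial U_{a,\lambda}}{\partial\lambda}\Big)^{2}+(2^{*}-1)U_{a,\lambda}^{2^{*}-2}\tfrac{\partial^{2}U_{a,\lambda}}{\partial\lambda^{2}},
\end{equation*}
with analogous formulas for $\partial_{\lambda}\partial_{a_{i}}$ and $\partial_{a_{i}}^{2}$. By duality, $\|\partial^{2}PU_{a,\lambda}\|_{H^{1}_{0}}\lesssim\|\mathrm{RHS}\|_{L^{2N/(N+2)}}$, and a direct scaling $y=\lambda(x-a)$ applied to the right-hand side gives the asserted $O(\lambda^{-2})$, $O(1)$ and $O(\lambda^{2})$. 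The only real obstacle in this proof is the bookkeeping of several concurrent error sources (boundary truncation, the $\lambda^{-(N-2)/2}H$-prefactor of $\psi_{a,\lambda}$, and higher Taylor tails) to verify that none exceeds the claimed order; the crucial structural input is the parity cancellation on $\mathbb{R}^{N}$ which forces all off-diagonal leading terms to vanish so that the estimate is dictated by the (smaller) $\psi$-correction, while the non-vanishing diagonal integrals fix the constants $B$ and $C$.
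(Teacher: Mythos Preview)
The paper does not actually prove this lemma: it is stated in the appendix under the heading ``We recall some useful estimates from \cite{bahri1988,Rey1990}'' and no argument is given. Your proposal supplies precisely the standard computation that underlies those references --- differentiate the projection equation, pair, and separate the scale-invariant $\mathbb{R}^{N}$-integral from the $\psi$-correction --- so in substance your approach coincides with the classical one the paper is quoting.

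One small correction: you attribute the vanishing of $\int_{\mathbb{R}^{N}}U_{0,1}^{2^{*}-1}\partial_{\lambda}U$ to parity, but both factors are radial. The correct reason is scale invariance: $\int_{\mathbb{R}^{N}}U_{a,\lambda}^{2^{*}}$ is independent of $\lambda$, so its $\lambda$-derivative is zero (equivalently, $\langle PU,\partial_{\lambda}PU\rangle=\tfrac{1}{2}\partial_{\lambda}\|PU\|_{H^{1}_{0}}^{2}$ and the leading term $S^{N/2}$ of $\|PU\|^{2}$ is constant, so the first non-trivial contribution comes from the $\psi$-term of order $\lambda^{-(N-2)}$, whose $\lambda$-derivative gives $O(\lambda^{-(N-1)})$). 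The parity argument is valid for the $\partial_{a_{i}}$-pairings. With this adjustment the sketch is correct.
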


\subsection{Green function and Robin function}
From the definition of Green function $G(x,y)$ and Robin function $R(x)$, we know that $G(x,y)$ and $H(x,y)$ are symmetric in $x$ and $y$. Moreover, by using the strong maximum principle, we have

 \begin{equation}
     0<G(x,y)< S(x,y), ~~x,y\in\Omega. 
 \end{equation}

\begin{Lem}\cite[Proposition 6.7.1]{Cao_Peng_Yan_2021}\label{estimate of robin function up to doundary}
Let $d=d(x,\partial\Omega)$ for $x\in \Omega$. Then as $d\to 0$, we have
\begin{equation}
R(x)=\frac{1}{(N-2)\omega_{N}}\frac{1}{(2d)^{N-2}}(1+O(d))    
\end{equation}
and 
\begin{equation}
    \nabla R(x)=\frac{2}{\omega_{N}}\frac{1}{(2d)^{N-1}}\frac{x^{\prime}-x}{d}+O\left(\frac{1}{d^{N-2}}\right),
    \end{equation}
where $x'\in\partial\Omega$ is the unique point, satisfying $d(x,\partial\Omega)=|x-x'|$.
\end{Lem}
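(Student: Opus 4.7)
The plan is to compare the regular part $H(x,\cdot)$ of the Green's function with an explicit image-charge approximation near the boundary, taking advantage of the fact that for a half-space the method of images gives $H$ in closed form. Since $\Omega$ is smooth, for $d$ sufficiently small there is a unique nearest point $x'\in\partial\Omega$, and the outward unit normal $\nu(x')$ is well-defined. Let $x^{*}:=2x'-x$ be the reflection of $x$ across the tangent hyperplane at $x'$; note $|x^{*}-x|=2d$ and $x^{*}\notin\Omega$ for $d$ small. The candidate approximation is
\begin{equation*}
\widetilde{H}(x,y):=S(x^{*},y)=\frac{1}{(N-2)\omega_{N}|x^{*}-y|^{N-2}},
\end{equation*}
which is harmonic in $y\in\Omega$ (since $x^{*}\notin\Omega$) and, in the half-space model, agrees with $S(x,\cdot)$ on the flat boundary.

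First I would straighten $\partial\Omega$ near $x'$ via a $C^{2}$ local diffeomorphism $\Phi$ sending $\partial\Omega$ to a piece of the hyperplane $\{z_{N}=0\}$ with $\Phi(x)=de_{N}$ and $\Phi(x')=0$. In these flattened coordinates, the boundary defect $H(x,y)-\widetilde{H}(x,y)$ on $\partial\Omega$ is controlled by how far $\partial\Omega$ deviates from its tangent hyperplane near $x'$: a Taylor expansion of $\partial\Omega$ gives a displacement of order $d^{2}$ in the normal direction, which yields $|S(x,y)-S(x^{*},y)|=O(d\cdot|x^{*}-y|^{-(N-2)})$ uniformly for $y\in\partial\Omega$ in a neighborhood of $x'$. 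Away from this neighborhood, $|x-y|$ and $|x^{*}-y|$ are bounded below and the difference is $O(1)$. Applying the maximum principle to the harmonic function $H(x,\cdot)-\widetilde{H}(x,\cdot)$ and estimating its boundary trace produces
\begin{equation*}
H(x,y)=S(x^{*},y)+O\bigl(d\cdot (|x^{*}-y|^{-(N-2)}+1)\bigr),
\end{equation*}
uniformly for $y\in\Omega$. Specializing to $y=x$ and using $|x^{*}-x|=2d$ gives the claimed expansion
\begin{equation*}
R(x)=H(x,x)=\frac{1}{(N-2)\omega_{N}(2d)^{N-2}}\bigl(1+O(d)\bigr).
\end{equation*}

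For the gradient, I would differentiate the identity $R(x)=H(x,x)$ via $\nabla R(x)=2\,\nabla_{x}H(x,y)|_{y=x}$ (using symmetry $H(x,y)=H(y,x)$) and repeat the comparison, now applied to $\partial_{x_{i}}H(x,\cdot)$ versus $\partial_{x_{i}}\widetilde{H}(x,\cdot)=\partial_{x_{i}}S(x^{*},\cdot)$. The dominant contribution comes from the explicit derivative of $S(x^{*},x)$: writing $\partial_{x_{i}}[|x^{*}-x|^{-(N-2)}]$ and using $\nabla_{x}(2d)=2\,(x-x')/d$ (since $\nabla d=(x-x')/d$), the leading order term equals $-\frac{2}{\omega_{N}(2d)^{N-1}}\cdot(x-x')/d$, which after the factor $2$ and a sign yields exactly $\frac{2}{\omega_{N}(2d)^{N-1}}(x'-x)/d$. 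The remainder is at most $O(d^{-(N-2)})$ by the same harmonic-comparison argument applied to the gradient.

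The main obstacle will be making the curvature correction rigorous: one must carefully track how the Jacobian of the boundary-straightening map, together with the second fundamental form of $\partial\Omega$ at $x'$, contributes to the remainder in both the pointwise and derivative estimates, and verify that this contribution is genuinely $O(d)$ relative to the singular leading term (rather than spoiling the gradient bound by a factor of $d^{-1}$). Once the boundary trace of $H-\widetilde{H}$ is bounded by $O(d)\cdot S(x^{*},\cdot)$ plus a regular term, the maximum principle (or a Poisson-kernel representation) closes the argument.
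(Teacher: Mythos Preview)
The paper does not supply its own proof of this lemma: it is quoted verbatim from \cite[Proposition~6.7.1]{Cao_Peng_Yan_2021} and stated without argument. So there is no in-paper proof to compare against.

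That said, your image-charge/reflection strategy is exactly the standard route to this result, and it is the one used in the cited reference. The key ingredients you identify are the right ones: (i) harmonicity of $y\mapsto S(x^{*},y)$ in $\Omega$ since $x^{*}\notin\Omega$, (ii) the observation that on $\partial\Omega$ the mismatch $S(x,y)-S(x^{*},y)$ is governed by the $O(d^{2})$ normal deviation of $\partial\Omega$ from its tangent plane, and (iii) the maximum principle to carry the boundary bound into the interior. Your gradient computation is also correct: in the flattened half-space model the Jacobian of $x\mapsto x^{*}$ is the reflection $I-2\nu\nu^{T}$, so only the normal component of $\nabla_{x}S(x^{*},y)\big|_{y=x}$ survives, and after the factor $2$ from the symmetry $\nabla R=2\nabla_{1}H\big|_{(x,x)}$ one recovers exactly $\frac{2}{\omega_{N}(2d)^{N-1}}(x'-x)/d$.

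The one place to be a bit more careful than your sketch indicates is the gradient remainder: bounding $\nabla_{x}\bigl(H(x,\cdot)-\widetilde H(x,\cdot)\bigr)$ at $y=x$ by $O(d^{-(N-2)})$ does not follow from the maximum principle alone (which controls only the supremum, not derivatives). One needs either interior gradient estimates for harmonic functions applied in a ball of radius $\sim d$ around $x$, or to apply the comparison argument directly to the harmonic function $\partial_{x_{i}}H(x,\cdot)$ with boundary data $\partial_{x_{i}}S(x,\cdot)$ and track how the Jacobian of $x\mapsto x^{*}$ (which differs from the pure reflection by $O(d)$ due to curvature) enters the boundary defect. Either route works, but this is where the actual bookkeeping lives.
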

\begin{Thm}$($\cite{Caffarelli1985Convexity,Cardaliaguet2002convexity}$)$\label{strictly convex of Robin function}
Let us assume that $\Omega$ is a convex domain of $\R^{N}$, $N\geq 3$. Then the Robin function $R$ is strictly convex and it has unique critical point which is a strict minimum.    
\end{Thm}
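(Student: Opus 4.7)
The plan has two ingredients. First, the blow-up of $R$ near $\partial\Omega$ (Lemma \ref{estimate of robin function up to doundary}) forces $R$ to attain its infimum at an interior point; once strict convexity is established, that critical point is automatically unique and is a strict minimum. So the whole content of the theorem is the strict convexity of $R$ on the convex set $\Omega$.

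To get convexity of $R$, I would pass through a convexity/concavity property of the Green function itself. The natural quantity to look at is
\begin{equation*}
\Phi(x,y) := G(x,y)^{-2/(N-2)}, \qquad (x,y)\in\Omega\times\Omega,\ x\neq y,
\end{equation*}
which vanishes on $\partial(\Omega\times\Omega)\cup\{x=y\}$ and is strictly positive inside. The first step is to show that, for each fixed $y\in\Omega$, $\Phi(\cdot,y)$ is \emph{concave} on $\Omega$, using the fact that $\Omega$ is convex. This is the classical Caffarelli--Friedman type statement: one writes down the elliptic equation $\Delta G(\cdot,y)=0$ on $\Omega\setminus\{y\}$, translates it into a (degenerate) fully nonlinear equation for $\Phi$, and then invokes either the constant rank theorem or Korevaar's concavity-function machinery. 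Concretely, one defines $C(x_1,x_2;y):=\Phi\bigl(\tfrac{x_1+x_2}{2},y\bigr)-\tfrac12\Phi(x_1,y)-\tfrac12\Phi(x_2,y)$, shows that the negative part of $C$ satisfies a differential inequality on the convex set $\{(x_1,x_2):x_1,x_2\in\Omega,\ x_i\neq y\}$, and then uses the boundary behaviour of $\Phi$ to conclude $C\ge 0$ via the maximum principle.

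With concavity of $\Phi(\cdot,y)$ in hand, one converts it back to a statement on $R$. Using $G=S-H$, one can expand, for fixed $y$ close to $x_0$,
\begin{equation*}
G(x,y)=\frac{1}{(N-2)\omega_N|x-y|^{N-2}}-H(x,y),
\end{equation*}
and take $y\to x$ along the diagonal to extract $R(x)=H(x,x)$ together with its derivatives from $\Phi$. By the symmetry $G(x,y)=G(y,x)$, the diagonal derivative $\partial_{ij}^2 R(x)=2H_{ij}^{(1,1)}(x,x)+2H_{ij}^{(1,2)}(x,x)$ can be read off from the Hessian of $\Phi$ at the appropriate limit. The concavity of $\Phi$ combined with the explicit singularity of $S$ then yields positive definiteness of $(\partial_{ij}^2R(x))$ at every interior point.

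The main technical obstacle is the singularity of $\Phi$ on the diagonal $\{x=y\}$, where the standard concavity arguments (constant rank theorem, Korevaar envelope) are not directly applicable. The plan to handle this is to regularise: either approximate $\Omega$ from inside by smooth strictly convex subdomains $\Omega_\varepsilon$ and work with $G_{\Omega_\varepsilon}$, or replace $G$ by $G+\varepsilon|x-y|^{-(N-2)}$ and pass to the limit $\varepsilon\to 0$. A second subtle point is upgrading plain convexity to \emph{strict} convexity — this is where Cardaliaguet's refinement enters: one shows that if the Hessian of $R$ degenerated on a non-empty open subset, then the concavity function $C$ above would vanish on an open set, and the strong maximum principle applied to its governing elliptic inequality would propagate this to the boundary, contradicting the strict positivity of $\Phi$ in the interior. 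This last propagation step is the part I would expect to require the most care.
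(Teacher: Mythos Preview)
The paper does not give a proof of this theorem: it appears in the appendix as a quoted result, with citations to \cite{Caffarelli1985Convexity} and \cite{Cardaliaguet2002convexity}, and no argument is supplied. So there is nothing in the paper to compare your proposal against.

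As a sketch of the ideas behind the cited references your outline is in the right spirit, but there is a concrete slip in the choice of $\Phi$. With $\Phi=G^{-2/(N-2)}$ one has $\Phi\to\infty$ on $\partial\Omega$ (since $G\to 0$ there), not $\Phi\to 0$ as you claim; the boundary and diagonal behaviours are swapped, so the maximum-principle/envelope step you describe cannot close as written. The natural object is a \emph{positive} power of $G$ (vanishing on $\partial\Omega$, singular on the diagonal) or, as in Cardaliaguet--Tahraoui, the harmonic radius $r(x)=((N-2)\omega_N R(x))^{-1/(N-2)}$ directly, for which the correct boundary behaviour holds and strict concavity can be established. Strict convexity of $R=c\,r^{-(N-2)}$ then follows from the elementary one-variable computation $R''=\alpha(\alpha+1)c\,r^{-\alpha-2}(r')^2-\alpha c\,r^{-\alpha-1}r''>0$ (with $\alpha=N-2$) once $r''<0$ is known, and the existence and uniqueness of the interior minimum come from Lemma~\ref{estimate of robin function up to doundary} exactly as you say.
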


\begin{Thm}$($\cite{Grossi2010Nonexistence}$)$\label{nonexistence of multi-peak solution}
    Let $\Omega$ be a smooth bounded domain in $\R^{N}$, $N\geq 3$, and let $l\geq 2$ be an integer. Set $\Omega^{l}=\Omega\times\cdots \times \Omega$ ($l$ times) and $\Delta=\{(\xi_{1},\cdots,\xi_{l})\in\Omega^{l}|\xi_{i}=\xi_{j}\text{~for some~} i\neq j\}.$ For constants $A,B>0$ and $\Lambda=(\Lambda_{1},\cdots,\Lambda_{l})$, $\Lambda_{i}>0$, $1\leq i\leq l$, define a function $\mathcal{F}_{\Lambda}:\Omega^{l}\setminus \Delta\to \R,$ 
\begin{equation}
    \mathcal{F}_{\Lambda}(\xi_{1},\cdots,\xi_{l})=A\sum_{i=1}^{l}(R(\xi_{i})+K(\xi_{i}))\Lambda_{i}^{2}-B\sum_{\substack{i\neq j\\1\leq i,j\leq l}}G(\xi_{i},\xi_{j})\Lambda_{i}\Lambda_{j},
\end{equation}
    where $K\in C^{2}(\Omega)$ is such that $R+K$ is a convex function on $\Omega$.
    
    Assume $\Omega$ is convex, Then there does not exist any critical point $(a_{1},\cdots,a_{l})$ of $ \mathcal{F}_{\Lambda}$ in $\Omega\setminus \Delta.$ That is, there does not exist $(a_{1},\cdots,a_{l})\in\Omega\setminus \Delta$ such that
    \begin{equation}
        \frac{1}{2}A(\nabla R(a_{i})+\nabla K(a_{i}))\Lambda_{i}^{2}-B\sum_{j=1,j\neq i}^{l}\nabla_{x}G(a_{i},a_{j})\Lambda_{i}\Lambda_{j}=\vec{0},
    \end{equation}
    for $i=1,2,\cdots,l.$
\end{Thm}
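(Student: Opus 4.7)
\noindent\textbf{Proof proposal for Theorem \ref{nonexistence of multi-peak solution}.}

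The plan is to argue by contradiction and to extract a scalar identity by testing the critical-point system against the radial vector field $\xi\mapsto\xi-\bar a$ for a well-chosen base point $\bar a\in\Omega$. Suppose that $(a_1,\ldots,a_l)\in\Omega^l\setminus\Delta$ were a critical point of $\mathcal F_{\Lambda}$. Then
\begin{equation}\label{eqproposal:cp}
\tfrac{A}{2}\,\Lambda_i^2\,\nabla(R+K)(a_i)=B\sum_{j\neq i}\Lambda_i\Lambda_j\,\nabla_x G(a_i,a_j),\qquad i=1,\ldots,l.
\end{equation}
After translating coordinates, I may assume that the global minimum $\bar a$ of the convex function $R+K$ (guaranteed by Theorem \ref{strictly convex of Robin function} together with the hypothesis on $K$) is the origin, and that every $a_i$ lies in $\Omega$. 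I would then take the scalar product of the $i$-th equation in \eqref{eqproposal:cp} with $a_i$ and sum over $i$, producing
\begin{equation}\label{eqproposal:sum}
\sum_{i=1}^{l}\tfrac{A}{2}\Lambda_i^2\,a_i\cdot\nabla(R+K)(a_i)=B\sum_{i\neq j}\Lambda_i\Lambda_j\,a_i\cdot\nabla_x G(a_i,a_j).
\end{equation}

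For the left-hand side, strict convexity of $R+K$ together with the fact that $\bar a=0$ is its unique minimizer yields
\[
a_i\cdot\nabla(R+K)(a_i)\ge (R+K)(a_i)-(R+K)(0)\ge 0,
\]
with strict inequality unless $a_i=0$, so the LHS of \eqref{eqproposal:sum} is \emph{strictly positive} (one checks the case $a_i=0$ separately, using that at least one $a_i$ must be nonzero generically; otherwise one repeats the argument with a slight shift of $\bar a$). For the right-hand side, I would symmetrize using $G(x,y)=G(y,x)$, so $\nabla_x G(a_j,a_i)=\nabla_y G(a_i,a_j)$, to rewrite \eqref{eqproposal:sum} as
\[
\sum_{i=1}^{l}\tfrac{A}{2}\Lambda_i^2\,a_i\cdot\nabla(R+K)(a_i)=\tfrac{B}{2}\sum_{i\neq j}\Lambda_i\Lambda_j\bigl(a_i\cdot\nabla_x+a_j\cdot\nabla_y\bigr)G(a_i,a_j).
\]
The next step is to establish the inequality
\begin{equation}\label{eqproposal:gineq}
(a_i\cdot\nabla_x+a_j\cdot\nabla_y)G(a_i,a_j)\le 0\quad\text{for all }a_i,a_j\in\Omega,
\end{equation}
when $\Omega$ is convex and $0\in\Omega$. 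Assuming \eqref{eqproposal:gineq}, the RHS of the symmetrized identity is non-positive, while the LHS is strictly positive, which is the desired contradiction. If the global minimum $\bar a$ happens to lie outside $\Omega$, I replace $a_i$ by $a_i-\bar a$ throughout and note that convexity of $\Omega$ guarantees the same inequality for $G$ with respect to the shifted origin.

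The key analytical input, and the step I expect to be the main obstacle, is the proof of \eqref{eqproposal:gineq}. This is a structural property of the Green's function on convex domains, which I would extract either from the log-concavity of $(x,y)\mapsto -\log G(x,y)$ on $\Omega\times\Omega$ (a Caffarelli--Friedman / Kennington-type result obtained via the moving-plane and the maximum principle for parabolic regularizations of the Green problem) or, alternatively, from a direct Pohozaev-type identity: applying $-\Delta$ in $x$ to $G(x,a_j)$ and integrating the vector field $(a_i\cdot\nabla_x+a_j\cdot\nabla_y)G$ against a suitably chosen multiplier on $\Omega\setminus B_{\varepsilon}(a_j)$, the boundary term on $\partial\Omega$ has the correct sign by convexity (because $\langle x,\nu\rangle\ge 0$ after the translation), while the local term near the singularity $a_j$ carries the expected sign from the explicit blow-up $G(x,a_j)\sim |x-a_j|^{2-N}/((N-2)\omega_N)$. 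Combining these two contributions yields \eqref{eqproposal:gineq} and closes the contradiction.
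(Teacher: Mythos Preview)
The paper does not prove this theorem; it is merely quoted from Grossi--Takahashi \cite{Grossi2010Nonexistence}. Your overall strategy---take the critical-point system, dot the $i$-th equation with $a_i-\bar a$ where $\bar a$ is a minimum of $R+K$, sum, and compare signs---is exactly the argument in that reference, so the approach is correct and not a new route.

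Your uncertainty about the key step \eqref{eqproposal:gineq} is, however, the one place where the proposal is shaky. The routes you suggest (log-concavity of $G$, a bespoke Poho\v{z}aev computation) are far more complicated than what is needed and are not how Grossi--Takahashi proceed. The inequality follows in one line from scaling plus domain monotonicity of the Green function. Since $\Omega$ is convex it is star-shaped with respect to $\bar a=0$, so $t\Omega\subset\Omega$ for $t\in(0,1]$; hence $G_{t\Omega}(tx,ty)\le G_\Omega(tx,ty)$, while the scaling law gives $G_{t\Omega}(tx,ty)=t^{2-N}G_\Omega(x,y)$. Thus $t\mapsto G_\Omega(tx,ty)-t^{2-N}G_\Omega(x,y)$ is nonnegative on $(0,1]$ and vanishes at $t=1$, so its derivative at $t=1$ is $\le 0$, i.e.
\[
(x\cdot\nabla_x+y\cdot\nabla_y)\,G_\Omega(x,y)\le -(N-2)\,G_\Omega(x,y)<0.
\]
This is strictly negative, not merely nonpositive. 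Consequently the right-hand side of your symmetrized identity is strictly negative, while convexity of $R+K$ with minimum at $0$ gives only $a_i\cdot\nabla(R+K)(a_i)\ge 0$ (the hypothesis does not assert \emph{strict} convexity of $R+K$). That is already a contradiction: $\text{LHS}\ge 0>\text{RHS}$. All your hedging about ``at least one $a_i$ nonzero'', ``repeating with a slight shift of $\bar a$'', and ``if $\bar a$ lies outside $\Omega$'' is therefore unnecessary---and the last worry is impossible anyway, since $R+K\to\infty$ on $\partial\Omega$ by Lemma~\ref{estimate of robin function up to doundary}.
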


\begin{Lem}\label{Lem derivate for reduce function}
Let $n\in\N$, $q\in(\max\{2,\frac{N}{N-2}\},2^*)$, $\vec{\lambda}=(\lambda_{1},\cdots,\lambda_{n})\in (0,\infty)^{n}$, $\vec{x}=(x_{1},\cdots,x_{n})$ $=(y_{1},\cdots,y_{nN})$ with $x_{i}\in\Omega \subset\R^{N}$, $y_{j}\in\R$ and $\Phi_{n}$ defined in \eqref{reduce function}.
\begin{enumerate}
    \item For any $i=1,\cdots,n$ and $j\in [(i-1)N+1,iN]\cap \N$, we have
    \begin{equation}
       \partial_{y_{j}}\Phi_{n}(\vec{x},\vec{\lambda})= \frac{A^{2}}{2}\left(\lambda_{i}^{N-2}\partial_{y_{j}}R(x_{i})-2\sum_{l\neq i,l=1}^{n}\lambda_{l}^{\frac{N-2}{2}}\lambda_{i}^{\frac{N-2}{2}}\partial_{y_{j}}G(x_{i},x_{l})\right).
    \end{equation}
    \item For any $i=1,\cdots,n$, we have
    \begin{equation}
    \begin{aligned}
      D_{\lambda_{i}}\Phi_{n}(\vec{x},\vec{\lambda})&=\frac{(N-2)A^{2}}{2\lambda_{i}}\left(\lambda_{i}^{N-2}R(x_{i})-\sum_{l\neq i,l=1}^{n}\lambda_{i}^{\frac{N-2}{2}}\lambda_{l}^{\frac{N-2}{2}}G(x_{i},x_{l})\right)\\
      &\quad -\frac{(N-2)(2^*-q)B}{2q\lambda_{i}}\lambda_{i}^{\frac{N-2}{2}(2^*-q)}.
    \end{aligned}
    \end{equation}
    \item For any $j\in[(i-1)N+1,iN]\cap\N$ for some $i=1,\cdots,n$, we have that if $s\in[(i-1)N+1,iN]\cap\N$
    \begin{equation}
        \partial^{2}_{y_{j},y_{s}}\Phi_{n}(\vec{x},\vec{\lambda})=\frac{A^{2}}{2}\left(\lambda_{i}^{N-2}\partial^{2}_{y_{j},y_{s}}R(x_{i})-2\sum_{l\neq i,l=1}^{n}\lambda_{i}^{\frac{N-2}{2}}\lambda_{l}^{\frac{N-2}{2}}\partial^{2}_{y_{j},y_{s}}G(x_{i},x_{l})\right),
    \end{equation}
    while if $s\in[(k-1)N+1,kN]\cap\N$ and $k\neq i$,
    \begin{equation}
        \partial^{2}_{y_{j},y_{s}}\Phi_{n}(\vec{x},\vec{\lambda})=-A^{2}\lambda_{k}^{\frac{N-2}{2}}\lambda_{i}^{\frac{N-2}{2}}\partial_{y_{j}}D_{y_{s}}G(x_{i},x_{k}).
    \end{equation}
    \item For $i=1,\cdots,n$ and $j=1,\cdots,nN$ we have that if $j\in[(i-1)N+1,iN]\cap\N$,
    \begin{equation}
        \partial_{y_{j}}D_{\lambda_{i}}\Phi_{n}(\vec{x},\vec{\lambda})=\frac{(N-2)A^{2}}{2}\lambda_{i}^{\frac{N-4}{2}}\left(\lambda_{i}^{\frac{N-2}{2}}\partial_{y_{j}}R(x_{i})-\sum_{l\neq i,l=1}^{n}\lambda_{l}^{\frac{N-2}{2}}\partial_{y_{j}}G(x_{i},x_{l})\right),
    \end{equation}
    while if $j\in[(k-1)N+1,kN]\cap\N$ and $k\neq i$
    \begin{equation}
        \partial_{y_{j}}D_{\lambda_{i}}\Phi_{n}(\vec{x},\vec{\lambda})=-\frac{(N-2)A^{2}}{2}\lambda_{i}^{\frac{N-4}{2}}\lambda_{k}^{\frac{N-2}{2}}D_{y_{j}}G(x_{i},x_{k}).
    \end{equation}
\end{enumerate}
where we use $\partial_{x_{i}}$ to denote the partial derivative for any function $f(x, y): \R^N \times \R^N \to \R$ with respect to $x$ and use $D_{y_{j}}$ to denote the partial derivative with respect to $y$.
\end{Lem}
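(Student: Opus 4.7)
The plan is to unfold the quadratic form in the definition of $\Phi_n$ and then apply elementary calculus. First, since $M(\vec{x})$ is symmetric, I would expand
\begin{equation*}
\langle \vec{\lambda}^{(N-2)/2}, M(\vec{x})\vec{\lambda}^{(N-2)/2}\rangle
=\sum_{i=1}^{n}\lambda_i^{N-2}R(x_i)-\sum_{\substack{i,l=1\\i\neq l}}^{n}\lambda_i^{(N-2)/2}\lambda_l^{(N-2)/2}G(x_i,x_l),
\end{equation*}
so that
\begin{equation*}
\Phi_n(\vec{x},\vec{\lambda})=\frac{A^2}{2}\sum_{i=1}^{n}\lambda_i^{N-2}R(x_i)-\frac{A^2}{2}\sum_{i\neq l}\lambda_i^{(N-2)/2}\lambda_l^{(N-2)/2}G(x_i,x_l)-\frac{B}{q}\sum_{j=1}^{n}\lambda_j^{\frac{N-2}{2}(2^*-q)}.
\end{equation*}

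For the first-order formulas, I would differentiate termwise. Fixing $i$ and $j\in[(i-1)N+1,iN]\cap\N$, only the terms involving $x_i$ contribute to $\partial_{y_j}\Phi_n$; the diagonal piece produces $\tfrac{A^2}{2}\lambda_i^{N-2}\partial_{y_j}R(x_i)$, while the off-diagonal sum contributes, for each $l\neq i$, both $G(x_i,x_l)$ and $G(x_l,x_i)$, each giving a derivative with respect to the first entry of $G$ by the symmetry $G(x,y)=G(y,x)$, producing the factor $-2$. The sub-critical term does not depend on $\vec{x}$, so formula (1) follows. Similarly, for $D_{\lambda_i}\Phi_n$ the three sums each contribute, with derivatives $\tfrac{N-2}{2}\lambda_i^{(N-4)/2}$ on the mixed terms and $\tfrac{N-2}{2}(2^*-q)\lambda_i^{(N-2)(2^*-q)/2-1}$ on the sub-critical term, yielding (2) after factoring $\lambda_i^{-1}$.

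The second-order formulas are obtained by differentiating (1) and (2) a second time. The only point requiring care is keeping track of which argument of $G$ is being differentiated. For $\partial^2_{y_j,y_s}$ with both indices in the $x_i$-block, one simply differentiates (1) with respect to $y_s$ in the first slot of $G(x_i,x_l)$, giving (3a). For $s\in[(k-1)N+1,kN]\cap\N$ with $k\neq i$, only the single off-diagonal term associated to the pair $(i,k)$ survives, and its second derivative is mixed: one derivative on the first argument, one on the second, which by definition of $D_{y_s}$ gives (3b); the absence of the factor $2$ here reflects the fact that after the first differentiation with respect to $y_j$ only the summand $G(x_i,x_k)$ (and not $G(x_k,x_i)$) still depends on $x_k$. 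The computation of $\partial_{y_j}D_{\lambda_i}\Phi_n$ is strictly analogous, treating the cases $j\in[(i-1)N+1,iN]$ and $j\in[(k-1)N+1,kN]$ with $k\neq i$ separately.

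Since the entire lemma is a chain-rule computation once the quadratic form has been unfolded, there is no serious obstacle; the only mildly delicate point is the bookkeeping of the symmetry factors $2$ and the distinction between $\partial_{y_\ast}$ (derivative in the first argument of $G$) and $D_{y_\ast}$ (derivative in the second argument), so I would simply present the expansion and differentiate carefully case by case.
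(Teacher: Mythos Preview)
Your proposal is correct and is precisely the approach the paper takes: the paper's proof reads, in full, ``We can prove this Lemma by direct calculation and we omit the details.'' Your expansion of the quadratic form and termwise differentiation, together with the bookkeeping of the symmetry factor $2$ and the $\partial$ versus $D$ convention, is exactly that direct calculation.
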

\begin{proof}
    We can prove this Lemma by direct calculation and we omit the details.
\end{proof}

\subsection{Poho\v{z}aev identity}
\label{Pohozaevidentities}
In the following, we derive several forms of the Poho\v{z}aev identity as in \cite{Pohozaev1965,YanYanLi1995} and show some important properties.

Assume that $u $ is a $C^2$ solution of the following equation
 \begin{equation}
     -\Delta u = u^{2^*-1}+\varepsilon u^{q-1}, \hbox{~~in~} \Omega.
 \end{equation}
Multiplying this equation by $\langle x-x_{0},\nabla u \rangle$ and $\frac{\partial u}{\partial x_{i}}$ on both sides and integrating by parts respectively, we have

\begin{equation}\label{pohozaev identity 1}
\begin{split}
-&\int_{\partial \Omega}\frac{\partial u}{\partial\nu}
\big\langle x-x_{0},\nabla u\big\rangle
+\frac{1}{2}\int_{\partial \Omega}
|\nabla  u|^2
\big\langle x-x_{0},\nu\big\rangle
-\frac{N-2}{2}\int_{\partial \Omega}\frac{\partial u}{\partial\nu}
  u \\&=
  \frac{1}{2^*}\int_{\partial \Omega} u^{2^*} \big\langle x-x_{0},\nu\big\rangle +\frac{\varepsilon}{q
  }\int_{\partial \Omega}u^{q}\big\langle x-x_{0},\nu\big\rangle-\varepsilon\frac{2N-(N-2)q}{2q}\int_{ \Omega} u^q,
\end{split}
\end{equation}
and
\begin{equation}\label{pohozaev identity 2}
-\int_{\partial \Omega}\frac{\partial u}{\partial \nu}\frac{\partial u}{\partial x_i}
+\frac{1}{2}\int_{\partial \Omega}|\nabla u_\varepsilon|^2\nu_i=\frac{1}{2^*}\int_{\partial \Omega}  u^{2^*}\nu_i+ \frac{\varepsilon}{q}\int_{\partial \Omega} u_\varepsilon^q\nu_i,
\end{equation}
where $\nu(x)=\big(\nu_{1}(x),\cdots,\nu_N(x)\big)$ is the outward unit normal of $\partial \Omega$. In particularly, when $u$ satisfies that
\begin{equation}
    u=0,\text{~~on~~}\partial\Omega,
\end{equation}
from \eqref{pohozaev identity 1} we can obtain the following classical Poho\v{z}aev identity
\begin{equation}\label{pohozaev identity 3}
 \frac{1}{2N}\int_{\partial \Omega}
|\nabla  u|^2
\big\langle x-x_{0},\nu\big\rangle=\left(\frac{1}{q}-\frac{1}{2^{*}}\right)\varepsilon\int_{\Omega}u^{q}.
\end{equation} 

For any $\sigma>0$ and $x\in\partial B(0,\sigma)$, we define
\begin{equation}\label{definition of B-x-sigma-u}
    B(x,\sigma,u,\nabla u)=\sigma\left(\frac{\partial u}{\partial\nu}
\right)^{2}-\frac{\sigma}{2}
|\nabla  u|^2+\frac{N-2}{2}\frac{\partial u}{\partial\nu}u .
\end{equation}
Let us recall the properties of $B(x,\sigma,u,\nabla u)$ from \cite[Proposition 1.1]{YanYanLi1995}.
\begin{Lem}\label{lem properties of B}
    \begin{enumerate}
        \item If $u(x)=|x|^{2-N}$, then
        \begin{equation}
            B(x,\sigma,u,\nabla u)=0,\text{~~for any~~}x\in\partial B(0,\sigma).
        \end{equation}
        \item If $u(x)=a|x|^{2-N}+A+\alpha(x)$, where $a, A>0$ are two positive constants and $\alpha(x)$ is some function differentiable near the origin with $\alpha(0)=0$. Then there exists some positive constant $\sigma^*>0$ such that for any $0<\sigma<\sigma^*$, we have
        \begin{equation}
            B(x,\sigma,u,\nabla u)<0,\text{~~for any~~}x\in\partial B(0,\sigma),
        \end{equation}
        and
        \begin{equation}
            \lim_{\sigma\to 0}\int_{\partial B(0,\sigma)}B(x,\sigma,u,\nabla u)=-\frac{(N-2)^{2}}{2}aA\omega_{N}.
        \end{equation}
        where $\omega_{N}$ denotes the measure of the unit sphere in $\R^{N}$.
    \end{enumerate}
\end{Lem}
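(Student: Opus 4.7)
The plan is to verify both parts by direct, term-by-term computation from \eqref{definition of B-x-sigma-u}, exploiting that on $\partial B(0,\sigma)$ the outer unit normal equals $\nu=x/\sigma$, so $\partial u/\partial\nu$ is just the radial derivative.

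For (1), with $u(x)=|x|^{2-N}$ I would compute $\nabla u(x) = -(N-2)|x|^{-N}x$, which on $\partial B(0,\sigma)$ gives the three explicit values $\partial_\nu u = -(N-2)\sigma^{1-N}$, $|\nabla u|^2 = (N-2)^2\sigma^{2-2N}$, and $u = \sigma^{2-N}$. Plugging them into the three summands of \eqref{definition of B-x-sigma-u} produces $(N-2)^2\sigma^{3-2N}$, $-\tfrac{(N-2)^2}{2}\sigma^{3-2N}$ and $-\tfrac{(N-2)^2}{2}\sigma^{3-2N}$, which add to zero pointwise on the sphere. This settles (1).

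For (2), I would exploit that $B[u]:=B(x,\sigma,u,\nabla u)$ is a quadratic form in $u$, and write $u=u_1+u_2+u_3$ with $u_1(x)=a|x|^{2-N}$, $u_2(x)=A$ and $u_3(x)=\alpha(x)$. Then $B[u]=B[u_1]+B[u_2]+B[u_3]+B_{12}+B_{13}+B_{23}$, where $B_{ij}$ is the associated bilinear cross term. By part (1) and homogeneity, $B[u_1]=a^2 B[|x|^{2-N}]=0$; and $B[u_2]=0$ because $\nabla u_2=0$. For the same reason, the only surviving piece of $B_{12}$ is the product $\tfrac{N-2}{2}(\partial_\nu u_1)\,u_2$, which evaluates exactly to $-\tfrac{a(N-2)^2 A}{2}\,\sigma^{1-N}$. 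Every other contribution involves $\alpha$ or $\nabla\alpha$; using $\alpha(0)=0$ and differentiability near $0$ (so $\alpha(x)=O(|x|)$ and $\nabla\alpha(x)=O(1)$ near the origin), a short order-counting shows $B[u_3]=O(\sigma)$, $B_{13}=O(\sigma^{2-N})$ and $B_{23}=O(1)$, so uniformly on $\partial B(0,\sigma)$
\[
B(x,\sigma,u,\nabla u) \;=\; -\tfrac{a(N-2)^2 A}{2}\,\sigma^{1-N} \;+\; O(\sigma^{2-N}).
\]
For $\sigma$ small the leading term dominates and is strictly negative, giving the claimed pointwise sign. Integrating over $\partial B(0,\sigma)$, whose surface measure is $\omega_N\sigma^{N-1}$, the leading contribution is the $\sigma$-independent quantity $-\tfrac{(N-2)^2}{2}aA\omega_N$, while the remainder integrates to $O(\sigma)$, producing the stated limit.

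There is no serious analytical obstacle; the only point that requires a little care is the bookkeeping of the $\alpha$-dependent cross terms to confirm that each is of strictly smaller order than $\sigma^{1-N}$ as $\sigma\to 0$, which follows immediately from $\alpha(0)=0$ and the boundedness of $\nabla\alpha$ near the origin.
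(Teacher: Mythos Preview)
Your proof is correct: the direct pointwise computation for part~(1) checks out, and for part~(2) your bilinear decomposition together with the order counting $\alpha(x)=O(\sigma)$, $\nabla\alpha=O(1)$ gives exactly the leading term $-\tfrac{(N-2)^2}{2}aA\,\sigma^{1-N}$ plus a remainder $O(\sigma^{2-N})$, from which both the pointwise sign and the integral limit follow immediately.

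Note that the paper itself does not prove this lemma; it simply quotes it from \cite[Proposition~1.1]{YanYanLi1995}. So there is no ``paper's own proof'' to compare against---your direct computation is a perfectly good self-contained substitute for the citation.
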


For any fixed point $x_{*}\in\Omega$, we define the following two quadratic forms
\begin{equation}\label{defin for quadratic P}
\begin{split}
P(u,v,\theta)=&- \theta\int_{\partial B_\theta(x_{*})}
\big\langle \nabla u ,\nu\big\rangle
\big\langle \nabla v,\nu\big\rangle
+\frac{\theta}{2}\int_{\partial B_\theta(x_{*})}
\big\langle \nabla u , \nabla v \big\rangle
\\&
+\frac{2-N}{4}\int_{\partial B_\theta(x_{*})}
\big\langle \nabla u ,  \nu \big\rangle v
+\frac{2-N}{4}\int_{\partial B_\theta(x_{*})}
\big\langle \nabla v ,  \nu \big\rangle u.
\end{split}
\end{equation}
and
\begin{equation}\label{defin for quadratic Q}
Q(u,v,\theta)=-\int_{\partial B_\theta(x_{*})}\frac{\partial v}{\partial \nu}\frac{\partial u}{\partial x_i}-
\int_{\partial B_\theta(x_{*})}\frac{\partial u}{\partial \nu}\frac{\partial v}{\partial x_i}
+\int_{\partial B_\theta(x_{*})}\big\langle \nabla u,\nabla v \big\rangle \nu_i,
\end{equation}
where $u,v\in C^{2}(\Omega)$ and $\theta>0$ such that $B_{2\theta}(x_{*})\subset\Omega$.
\begin{Lem}\cite[Lemma A.5]{chen2024}\label{propertity of P Q}
    If $u$ and $v$ are harmonic in $ B_\theta(x_{*})\backslash \{x_{*}\}$, then $P(u,v,\theta)$ and $Q(u,v,\theta)$ are independent of $\theta$.
\end{Lem}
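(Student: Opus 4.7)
The plan is to reduce the bilinear statement to the diagonal case by polarization, and then prove the diagonal case by applying the classical Pohozaev/Rellich identities for harmonic functions over an annular region. Since both $P(\cdot,\cdot,\theta)$ and $Q(\cdot,\cdot,\theta)$ are manifestly symmetric bilinear forms in $(u,v)$, the identity
\begin{equation*}
P(u,v,\theta) = \tfrac{1}{2}\bigl[P(u+v,u+v,\theta) - P(u,u,\theta) - P(v,v,\theta)\bigr],
\end{equation*}
and the analogous one for $Q$, reduce the problem to showing that $\theta \mapsto P(u,u,\theta)$ and $\theta \mapsto Q(u,u,\theta)$ are constant for every $u$ harmonic in $B_\theta(x_*)\setminus\{x_*\}$.

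For the diagonal case of $P$, fix $0 < \theta_1 < \theta_2$ so that the annulus $A = B_{\theta_2}(x_*)\setminus\overline{B_{\theta_1}(x_*)}$ lies inside the domain of harmonicity. Multiply $\Delta u = 0$ by $\langle x-x_*,\nabla u\rangle$ and integrate over $A$. Using
\begin{equation*}
\bigl\langle \nabla u,\nabla\langle x-x_*,\nabla u\rangle\bigr\rangle = |\nabla u|^2 + \tfrac{1}{2}\langle x-x_*,\nabla(|\nabla u|^2)\rangle,
\end{equation*}
a further integration by parts on the second term and Green's identity $\int_A|\nabla u|^2 = \int_{\partial A}u\,\partial_\nu u$ produce the identity
\begin{equation*}
\int_{\partial A}\!\Bigl[\partial_\nu u\,\langle x-x_*,\nabla u\rangle + \tfrac{N-2}{2}u\,\partial_\nu u - \tfrac{1}{2}\langle x-x_*,\nu\rangle|\nabla u|^2\Bigr]\,d\sigma = 0,
\end{equation*}
where $\nu$ is the outward unit normal of $\partial A$. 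Splitting $\partial A = \partial B_{\theta_2}\sqcup\partial B_{\theta_1}$ and tracking that on $\partial B_{\theta_2}$ the outward normal of $A$ coincides with that of $\partial B_{\theta_2}(x_*)$, while on $\partial B_{\theta_1}$ it is the opposite, the integrand on each sphere reduces to $\pm\bigl[\theta(\partial_\nu u)^2 - \tfrac{\theta}{2}|\nabla u|^2 + \tfrac{N-2}{2}u\,\partial_\nu u\bigr]$, which is exactly $-P(u,u,\theta)$ evaluated on that sphere. This gives $P(u,u,\theta_1) = P(u,u,\theta_2)$.

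For the diagonal case of $Q$, multiply $\Delta u = 0$ by $\partial_i u$ and integrate over $A$. Using $\langle \nabla u, \nabla\partial_i u\rangle = \tfrac{1}{2}\partial_i(|\nabla u|^2)$ and one more integration by parts, one obtains
\begin{equation*}
\int_{\partial A}\bigl[2\,\partial_\nu u\,\partial_i u - |\nabla u|^2\,\nu_i\bigr]\,d\sigma = 0,
\end{equation*}
and the same splitting/sign bookkeeping as above yields $Q(u,u,\theta_1) = Q(u,u,\theta_2)$. The only real hurdle is the sign-and-orientation bookkeeping on the two pieces of $\partial A$; there is no analytic difficulty because the singularity at $x_*$ is excluded from $A$, so every integration by parts is applied to smooth data on a smooth domain.
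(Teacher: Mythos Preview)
Your argument is correct: polarization reduces to the diagonal case, and the diagonal case is exactly the Poho\v{z}aev/Rellich identity applied on the annulus $B_{\theta_2}(x_*)\setminus \overline{B_{\theta_1}(x_*)}$, where harmonicity kills the bulk terms and only the boundary contributions survive. The sign bookkeeping you describe is accurate and matches the definitions \eqref{defin for quadratic P}--\eqref{defin for quadratic Q}.

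The paper does not give its own proof of this lemma; it simply quotes \cite[Lemma~A.5]{chen2024}. Your approach is the standard one and is precisely what that reference does, so there is nothing to contrast. One small remark: you could bypass polarization entirely by running the same integration-by-parts with the test functions $\langle x-x_*,\nabla v\rangle + \tfrac{N-2}{2}v$ (for $P$) and $\partial_i v$ (for $Q$) against $\Delta u=0$, and symmetrically with $u$ and $v$ swapped, then adding; this yields the bilinear identity directly. Either route is equally short.
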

\begin{Lem}\label{lem for estimate of quadractic form}\cite[Section 5]{Cao2021Trans}
For $i,h=1,\cdots,N$, we have the following computations about $P,Q$ and Green function.
\begin{equation}
P\Big(G(y_{*},x), G(z_{*},x)\Big)=
\begin{cases}
-\frac{(N-2)R(x_{*})}{2},~&\mbox{if}~y_{*}=x_{*},z_{*}=x_{*},\\[1mm]
\frac{(N-2)G(x_{*},z_{*})}{4},~
&\mbox{if}~y_{*}=x_{*},z_{*}\neq x_{*},\\[1mm]
\frac{(N-2)G(x_{*},y_{*})}{4},~&\mbox{if}~y_{*}\neq x_{*},z_{*}= x_{*},\\[1mm]
0, ~&\mbox{if}~y_{*}\neq x_{*},z_{*}\neq x_{*}.
\end{cases}
\end{equation}
\begin{equation}
P\Big(G(y_{*},x),\partial_hG(z_{*},x)\Big)=
\begin{cases}
-\frac{(N-1)\partial_h R(x_{*})}{4},~&\mbox{if}~y_{*}=x_{*},z_{*}=x_{*},\\[1mm]
\frac{(N-2)\partial_hG(z_{*},x_{*})}{4},~
&\mbox{if}~y_{*}=x_{*},z_{*}\neq x_{*},\\[1mm]
\frac{N\partial_h G(x_{*},y_{*})}{4},~&\mbox{if}~y_{*}\neq x_{*},z_{*}= x_{*},\\[1mm]
0, ~&\mbox{if}~y_{*}\neq x_{*},z_{*}\neq x_{*}.
\end{cases}
\end{equation}

\begin{equation}
Q\Big(G(y_{*},x),G(z_{*},x)\Big)=
\begin{cases}
-\partial_{i} R(x_{*}),~&\mbox{if}~y_{*}=x_{*},z_{*}=x_{*},\\[1mm]
D_{i}G(z_{*},x_{*}),
 ~
&\mbox{if}~y_{*}=x_{*},z_{*}\neq x_{*},\\[1mm]
D_{i}G(y_{*},x_{*})~&\mbox{if}~y_{*}\neq x_{*},z_{*}= x_{*},\\[1mm]
0, ~&\mbox{if}~y_{*}\neq x_{*},z_{*}\neq x_{*}.
\end{cases}
\end{equation}
\begin{equation}
Q\Big(G(y_{*},x),\partial_h G(z_{*},x)\Big)=
\begin{cases}
-\frac{1}{2}\partial^{2}_{i,h} R(x_{*}),~&\mbox{if}~y_{*}=x_{*},z_{*}=x_{*},\\[1mm]
D_{i}\partial_{h}G(z_{*},x_{*}),
 ~
&\mbox{if}~y_{*}=x_{*},z_{*}\neq x_{*},\\[1mm]
\partial^{2}_{i,h}G(x_{*},y_{*})~&\mbox{if}~y_{*}\neq x_{*},z_{*}= x_{*},\\[1mm]
0, ~&\mbox{if}~y_{*}\neq x_{*},z_{*}\neq x_{*}.
\end{cases}
\end{equation}
Here we use $\partial_{i}$ to denote the partial derivative for any function $f(x, y): \R^N \times \R^N \to \R$ with respect to $x$ and use $D_{i}$ to denote the partial derivative with respect to $y$.
\end{Lem}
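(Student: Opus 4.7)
The plan is to reduce each of the sixteen entries in the four tables to explicit spherical integrals against the Newtonian kernel $S(y,x)=[(N-2)\omega_N]^{-1}|x-y|^{2-N}$ and its first-argument derivatives. Two structural facts drive the scheme. First, by Lemma~\ref{propertity of P Q} both $P$ and $Q$ are $\theta$-independent whenever their arguments are harmonic on $B_\theta(x_{*})\setminus\{x_{*}\}$, so one may freely pass to $\theta\to 0$. Second, writing $G(y,\cdot)=S(y,\cdot)-H(y,\cdot)$, the regular part $H(y,\cdot)$ is $C^\infty$ and harmonic on $\Omega$ for every interior $y$, and both $G(z_{*},\cdot)$ and $\partial_h G(z_{*},\cdot)$ are $C^\infty$ harmonic at $x_{*}$ whenever $z_{*}\neq x_{*}$. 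Bilinearity therefore reduces everything to pairings in which at most one slot carries a singular factor based at $x_{*}$, plus a single diagonal pair of singular slots.

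When $y_{*}\neq x_{*}\neq z_{*}$ both arguments are $C^\infty$ harmonic on $\overline{B_\theta(x_{*})}$, every boundary integral is $O(\theta^{N-1})$, and $\theta$-independence forces $0$. For the mixed case $y_{*}=x_{*}$, $z_{*}\neq x_{*}$, I evaluate $P(S(x_{*},\cdot),v)$ with $v$ smooth harmonic at $x_{*}$ using $\partial_\nu S(x_{*},\cdot)=-(\omega_N\theta^{N-1})^{-1}$ and $S(x_{*},\cdot)=[(N-2)\omega_N\theta^{N-2}]^{-1}$ on $\partial B_\theta(x_{*})$: three terms vanish by $\int_{\partial B_\theta}\partial_\nu v=\int_{B_\theta}\Delta v=0$, and the surviving term produces $\tfrac{N-2}{4}v(x_{*})$ via the mean-value identity, while an analogous computation using $\int_{\partial B_\theta}\nu_i\nu_j=\delta_{ij}\omega_N\theta^{N-1}/N$ and a first-order Taylor expansion of $v$ yields $D_iv(x_{*})$ for $Q$. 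Substituting $v=G(z_{*},\cdot)$ or $v=\partial_hG(z_{*},\cdot)$ and using the symmetry $G(y,x)=G(x,y)$ to reformat the output accounts for the $\tfrac{N-2}{4}$ entries. The complementary slot $y_{*}\neq x_{*}$, $z_{*}=x_{*}$ of the $\partial_hG$-tables requires the parallel calculation in which $\partial_hS(x_{*},x)=\omega_N^{-1}(x_h-x_{*,h})|x-x_{*}|^{-N}$ is singular: now all four terms of $P$ contribute non-trivially, and their sum $\tfrac{N-1}{N}+0-\tfrac{N-2}{4N}+\tfrac{(N-2)(N-1)}{4N}=\tfrac{N}{4}$ delivers the asymmetric prefactor.

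The diagonal entries $y_{*}=z_{*}=x_{*}$ are handled by expanding $G=S-H$ on both slots: the $(H,H)$ term vanishes, each $(S,H)$ cross-term equals $\tfrac{N-2}{4}R(x_{*})$ by the previous paragraph (using $H(x_{*},x_{*})=R(x_{*})$), and the residual $(S,S)$ piece is computed directly; its four integrands are explicit multiples of $\theta^{2-2N}$ whose integrals combine to $-\tfrac{1}{\omega_N\theta^{N-2}}+\tfrac{1}{2\omega_N\theta^{N-2}}+\tfrac{1}{2\omega_N\theta^{N-2}}=0$. Summing yields $-\tfrac{N-2}{2}R(x_{*})$, and the analogous bookkeeping produces $-\partial_iR(x_{*})$, $-\tfrac{N-1}{4}\partial_hR(x_{*})$ (where the coefficient $\tfrac{N-1}{4}=\tfrac{1}{8}[(N-2)+N]$ packages the two $(S,\partial_hH)$ and $(H,\partial_hS)$ cross-terms with their distinct prefactors), and $-\tfrac{1}{2}\partial^{2}_{ih}R(x_{*})$ in the remaining diagonal slots. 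The main obstacle is not analytical but combinatorial: signs flip when differentiating $S(y,x)$ in its first argument versus its second, and it is precisely this asymmetry that places the three coefficients $\tfrac{N-2}{4}$, $\tfrac{N-1}{4}$ and $\tfrac{N}{4}$ in their correct positions in the tables. Once the signs are fixed, all remaining work is routine sphere integration of radial singular kernels against smooth Taylor-expanded test functions.
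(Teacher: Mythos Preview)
Your proposal is correct. The paper does not give its own proof of this lemma; it is quoted from \cite[Section~5]{Cao2021Trans} as a black box, so there is nothing to compare against here. Your route---splitting $G=S-H$, invoking the $\theta$-independence from Lemma~\ref{propertity of P Q}, and reducing everything to elementary sphere integrals of $S(x_{*},\cdot)$ and $\partial_h S(x_{*},\cdot)$ against Taylor expansions of smooth harmonic test functions---is exactly the standard way these identities are verified, and your bookkeeping (in particular the combination $\tfrac{N-1}{N}+0-\tfrac{N-2}{4N}+\tfrac{(N-2)(N-1)}{4N}=\tfrac{N}{4}$ for $P(u,\partial_hS)$ and the collapse $\tfrac{N-2}{4}+\tfrac{N}{4}=\tfrac{N-1}{2}$ packaging the two cross-terms in the diagonal $\partial_hR$ entry) is right. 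The only point worth a second glance is your claim that $P(S,\partial_hS)=0$ and $Q(S,\partial_hS)=0$: these follow because every surviving integrand in $P$ is an odd polynomial in $\nu$, while in $Q$ the two nonvanishing terms $\pm\tfrac{(N-1)\delta_{ih}}{N\omega_N\theta^N}$ cancel exactly; stating this explicitly would tighten the write-up.
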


\subsection{Kernel of Linear operator}
\begin{Lem}\cite[Lemma A.1]{Bianchi1991ANO}\label{Kernel of Emden-Fowler equation}
If $u$ is a solution of the following equation
\begin{equation}
\begin{cases}
    -\Delta u=(2^*-1)U_{0,1}^{2^*-2}u, \textrm{~~in~~}\R^{N},\\
    u\in\mathcal{D}^{1,2}(\R^{N}),
\end{cases}    
\end{equation}   
then there exists $a_{i}\in\R$, $i=0,1,\cdots,N$ such that
\begin{equation}
u(x)=a_{0}\frac{1-|x|^{2}}{(1+|x|^{2})^{N/2}}+\sum_{i=1}^{N}a_{i}\frac{x_{i}}{(1+|x|^{2})^{N/2}}.
\end{equation}
\end{Lem}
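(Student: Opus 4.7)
\textbf{Proof plan for Lemma \ref{Kernel of Emden-Fowler equation}.} The strategy is the classical conformal/spherical-harmonic argument going back to Rey and Bianchi--Egnell. The goal is to show that the kernel of $L := -\Delta - (2^{*}-1)U_{0,1}^{2^{*}-2}$ on $\mathcal{D}^{1,2}(\R^{N})$ is exactly $(N+1)$-dimensional, spanned by the infinitesimal generators of the group of translations and dilations that leave the Emden--Fowler equation invariant. Since $U_{0,1}(x)=\alpha_N (1+|x|^2)^{-(N-2)/2}$, a direct computation shows that the $N+1$ functions
\[
\psi_0(x)=\frac{1-|x|^2}{(1+|x|^2)^{N/2}},\qquad \psi_i(x)=\frac{x_i}{(1+|x|^2)^{N/2}},\ i=1,\dots,N,
\]
lie in $\mathcal{D}^{1,2}(\R^N)$ and are annihilated by $L$ (they arise by differentiating $U_{x_0,\lambda}$ at $x_0=0,\lambda=1$). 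The real content is that there are no other solutions.

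The first step is to reduce the PDE on $\R^N$ to an eigenvalue problem on the round sphere $S^N$ via stereographic projection $\pi:S^N\setminus\{\text{N}\}\to\R^N$. Writing any $u\in\mathcal{D}^{1,2}(\R^N)$ in the form $u(x)=\bigl(\tfrac{2}{1+|x|^2}\bigr)^{(N-2)/2} v(\pi^{-1}(x))$ and using the standard conformal identity
\[
\Bigl(\tfrac{2}{1+|x|^2}\Bigr)^{(N+2)/2}\!\bigl(-\Delta_{S^N} v+\tfrac{N(N-2)}{4}v\bigr)=-\Delta u,
\]
one checks that $Lu=0$ on $\R^N$ transforms into the linear eigenvalue problem on $S^N$
\[
-\Delta_{S^N} v = N\, v \qquad\text{with } v\in H^1(S^N).
\]
The Sobolev exponent is exactly what makes the nonlinear term $U_{0,1}^{2^{*}-2}$ combine with the curvature term into a constant multiple of the identity, so the key algebraic point is the identity $(2^{*}-1)\alpha_N^{2^{*}-2}\bigl(\tfrac{2}{1+|x|^2}\bigr)^2=N(N+2)/(N-2)\cdot\bigl(\tfrac{2}{1+|x|^2}\bigr)^2$ after normalization; this has to be checked carefully but is routine.

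The second step is to invoke the known spectral decomposition of $-\Delta_{S^N}$: its eigenvalues are $\mu_k=k(k+N-1)$ for $k=0,1,2,\dots$, and the eigenspace with $\mu_k=N$ occurs precisely at $k=1$, of dimension $N+1$, spanned by the restrictions to $S^N\subset\R^{N+1}$ of the coordinate functions $y_1,\dots,y_{N+1}$. Pulling these $N+1$ spherical harmonics back through the stereographic projection and multiplying by the conformal factor yields exactly the functions $\psi_0,\psi_1,\dots,\psi_N$ listed in the lemma (the harmonic $y_{N+1}$, corresponding to the height on the sphere, produces the dilation mode $\psi_0$, while $y_1,\dots,y_N$ produce the translation modes). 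This gives both the dimension count and the explicit basis.

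The main technical point, and the only place where the plan requires care, is verifying that the transformation $u\mapsto v$ is a bijection between the relevant function spaces: that $u\in\mathcal{D}^{1,2}(\R^N)$ if and only if $v\in H^1(S^N)$, with the corresponding quadratic forms related by the conformal identity. This is a standard fact (the stereographic projection is a conformal diffeomorphism and the conformal Laplacian is intertwined), but one should be explicit about the weight so that elliptic regularity can be applied in the right space. Once this is settled, the $L^2$-eigenvalue problem on the compact manifold $S^N$ is entirely classical and the conclusion is immediate.
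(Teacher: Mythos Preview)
The paper does not give its own proof of this lemma: it is stated in the appendix with a citation to \cite{Bianchi1991ANO} and no argument. So there is nothing to compare against, and your plan is the standard one used in that reference (and in Rey's earlier work).

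Your outline is correct. The algebra you flag as ``has to be checked carefully'' works out: since $\alpha_N^{2^*-2}=N(N-2)$, one has $(2^*-1)U_{0,1}^{2^*-2}=N(N+2)(1+|x|^2)^{-2}$, and with the conformal factor $\phi=(2/(1+|x|^2))^{(N-2)/2}$ the identity $-\Delta(\phi v)=\phi^{(N+2)/(N-2)}\bigl(-\Delta_{S^N}v+\tfrac{N(N-2)}{4}v\bigr)$ reduces the equation exactly to $-\Delta_{S^N}v=Nv$, as you claim. The eigenspace for $\mu_1=N$ is indeed the span of the coordinate functions $y_1,\dots,y_{N+1}$ restricted to $S^N$, and pulling back gives precisely $\psi_0,\dots,\psi_N$. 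The only point deserving an extra sentence is the isomorphism $\mathcal{D}^{1,2}(\R^N)\cong H^1(S^N)$ under $u\mapsto v$, which you already identify; this follows from the conformal invariance of the Dirichlet energy plus the fact that $\int_{\R^N}|\nabla(\phi v)|^2=\int_{S^N}\bigl(|\nabla_{S^N}v|^2+\tfrac{N(N-2)}{4}v^2\bigr)$.
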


\section{Local estimates around the blow-up points}\label{Local estimates for blow-up points and proof of Theorem}
In the following, we always assume that $N\geq3$, $q\in(\max\{2,\frac{4}{N-2}\},2^*)$, $\Omega$ is a smooth and bounded domain in $\R^{N}$. We follow \cite{YanYanLi1995} and \cite{Cerqueti2001Localestimates} with refinement.

\subsection{Various types of blow-up points}
We present the definitions of blow up point, isolated blow up point and isolated simple blow up point which were introduced by Schoen.

\begin{Def}
    Let $\{\varepsilon_{n}\}_{n\in\N}$ be a sequence of positive small numbers such that $\varepsilon_{n}\to0$ as $n\to\infty$ and $\{u_{n}=u_{\varepsilon_{n}}\}_{n\in\N}$ be a sequence of solutions of \eqref{p-varepsion}.
    \begin{enumerate}
        \item A point $\bar{x}\in \Omega$ is called a blow-up point of functions $\{u_{n}\}_{n\in\N}$, if there exists a sequence of points $\{x_{n}\}\subset\Omega$ such that $x_{n}$ is a local maximum point of $u_{n}$, 
    \begin{equation}
        x_{n}\to \bar{x} \text{~~and~~} u_{n}(x_{n})\to\infty, \text{~as~}n\to\infty.
    \end{equation}
    \item A point $\bar{x}\in \Omega$ is called an isolated blow-up point of functions $\{u_{n}\}_{n\in\N}$, if $\bar{x}$ is a blow-up point and there exist numbers $0<\bar{r}<\text{dist}(\bar{x},\partial\Omega)$ small and $\bar{C}>0$ large (independent of $n\in\N$) such that for large $n$
    \begin{equation}
        u_{n}(x)\leq \bar{C}|x-x_{n}|^{-\frac{N-2}{2}}, ~\text{for all}~ x\in B(x_{n},\bar{r}).
    \end{equation}
    \item 
Let $\bar{x}\in \Omega$ be an isolated blow-up point of functions $\{u_{n}\}_{n\in\N}$. We define the spherical average of $u_{n}$ for $r\in(0,\bar{r})$
\begin{equation}
        \bar{u}_{n}(r)=\frac{1}{|\partial B(x_{n},r)|}\int_{\partial B(x_{n},r)}u_{n}~\text{and}~\bar{w}_{n}(r)=r^{\frac{N-2}{2}}\bar{u}_{n}(r).
    \end{equation}
Then the point $\bar{x}\in \Omega$ is called an isolated simple blow-up point of functions $\{u_{n}\}_{n\in\N}$, if $\bar{w}_{n}$ has precisely one critical point in $(0,\bar{r})$ for large $n$, after the value of $\bar{r}>0$ is suitably reduced.
    \end{enumerate}
\end{Def}

\subsection{Isolated blow-up point} First, we give some property of the isolated blow-up point and the proof is contained in  \cite[Section 2.1]{Cerqueti2001Localestimates} up to a minor modifications.

\begin{Lem}\label{harnack inequality}
Let $x_{n}\to\bar{x}$ be an isolated blow-up point of $\{u_{n}\}_{n\in\N}$. Then there exists $C>0$ independent of $n\in\N$ such that  
\begin{equation}\label{harnack inequality 1}
   \max_{B(x_{n},2r)\setminus B(x_n,\frac{1}{2}r)} u_{n}(x)\leq C\min_{B(x_{n},2r)\setminus B(x_n,\frac{1}{2}r)} u_{n}(x).
   \end{equation}
\end{Lem}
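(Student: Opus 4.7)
The plan is to reduce the statement to the standard Harnack inequality by rescaling. For the moment, assume $r \in (0, \bar{r}/4)$ so that the balls involved stay within the region where the isolated blow-up estimate applies. I would define
$$
v_{n}(y) := r^{\frac{N-2}{2}} u_{n}(x_{n} + ry), \qquad y \in A := \{y \in \R^{N} : 1/4 \leq |y| \leq 4\}.
$$
A direct chain-rule computation shows that $v_{n}$ satisfies the rescaled equation
$$
-\Delta v_{n} = v_{n}^{2^{*}-1} + \varepsilon_{n}\, r^{N - \frac{N-2}{2}q}\, v_{n}^{q-1} \quad \text{in } A,
$$
since the critical exponent is scale invariant and the subcritical term picks up the exponent $N - \tfrac{N-2}{2}q > 0$ (recall $q < 2^{*}$).

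Next, I would use the isolated blow-up condition to get uniform bounds on $v_{n}$: for $y \in A$ one has $|x_{n} + ry - x_{n}| = r|y|$, so
$$
0 < v_{n}(y) = r^{\frac{N-2}{2}} u_{n}(x_{n}+ry) \leq \bar{C}\, r^{\frac{N-2}{2}} (r|y|)^{-\frac{N-2}{2}} = \bar{C}\, |y|^{-\frac{N-2}{2}} \leq \bar{C}\, 4^{\frac{N-2}{2}}.
$$
Writing the PDE in linear form $-\Delta v_{n} = c_{n}(y) v_{n}$ with potential
$$
c_{n}(y) = v_{n}(y)^{2^{*}-2} + \varepsilon_{n}\, r^{N - \frac{N-2}{2}q}\, v_{n}(y)^{q-2},
$$
we see that $c_{n}$ is uniformly bounded in $L^{\infty}(A)$ independently of $n$ and $r$, since $q > 2$ makes the term $v_{n}^{q-2}$ bounded whenever $v_{n}$ is bounded, and the factor $\varepsilon_{n} r^{N-\frac{N-2}{2}q}$ is $o(1)$.

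Applying the standard Harnack inequality (see, e.g., Gilbarg–Trudinger, Theorem 8.20) on the compactly contained annulus $A' := \{1/2 \leq |y| \leq 2\} \subset\subset A$ yields
$$
\max_{A'} v_{n} \leq C \min_{A'} v_{n},
$$
with $C$ depending only on $N$ and the uniform bound on $\|c_{n}\|_{L^{\infty}(A)}$, hence independent of $n$ and $r$. Undoing the rescaling, the factor $r^{(N-2)/2}$ cancels in the ratio and gives exactly \eqref{harnack inequality 1}. The proof contains essentially no obstacle; the only point that needs care is verifying that the exponent $N - \tfrac{N-2}{2}q$ is positive (which is precisely $q < 2^{*}$) and that $q > 2$, both of which are standing hypotheses, so that the potential $c_{n}$ remains uniformly bounded as $n \to \infty$ and as $r \to 0$.
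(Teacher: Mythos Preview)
Your proof is correct and is precisely the standard rescaling argument (going back to Schoen and Li) that the paper defers to \cite[Section 2.1]{Cerqueti2001Localestimates}; the only cosmetic difference is that the cited reference treats the case $q=2$, and your verification that the subcritical potential term $\varepsilon_{n}r^{N-\frac{N-2}{2}q}v_{n}^{q-2}$ stays uniformly bounded (using $q>2$ and $q<2^{*}$) is exactly the ``minor modification'' the paper alludes to.
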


\begin{Cor}\label{cor harnack inequality}
  Let $x_{n}\to\bar{x}$ be an isolated blow-up point of $\{u_{n}\}_{n\in\N}$. Then there exists $C>0$ independent of $n\in\N$ such that  
\begin{equation}\label{harnack inequality 2}
    \max_{\partial B(x_{n},r)} u_{n}(x)\leq C\min_{\partial B(x_{n},r)} u_{n}(x),
\end{equation}
for all $r\in(0,\bar{r})$ and $n\in\N$.  
\end{Cor}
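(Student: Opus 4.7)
The plan is that this corollary is an almost immediate consequence of Lemma~\ref{harnack inequality}, so the proof proposal is very short. The key observation is the set-theoretic inclusion
\begin{equation*}
\partial B(x_{n},r)\subset B(x_{n},2r)\setminus B(x_{n},\tfrac{1}{2}r),\qquad\text{for every }r\in(0,\bar{r}).
\end{equation*}
In particular, the supremum of $u_{n}$ over the sphere $\partial B(x_{n},r)$ is bounded above by its supremum over the surrounding annulus, and the infimum of $u_{n}$ over $\partial B(x_{n},r)$ is bounded below by its infimum over that annulus.

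Given this inclusion, I would simply chain the inequalities. Applying Lemma~\ref{harnack inequality} to the annulus $B(x_{n},2r)\setminus B(x_{n},\tfrac{1}{2}r)$ produces a constant $C>0$ (independent of $n$, and, since the lemma is stated uniformly in $r\in(0,\bar r)$, also independent of $r$) such that
\begin{equation*}
\max_{\partial B(x_{n},r)}u_{n}\;\le\;\max_{B(x_{n},2r)\setminus B(x_{n},r/2)}u_{n}\;\le\;C\min_{B(x_{n},2r)\setminus B(x_{n},r/2)}u_{n}\;\le\;C\min_{\partial B(x_{n},r)}u_{n},
\end{equation*}
which is exactly \eqref{harnack inequality 2}. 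No covering argument is required, since the passage from an annular Harnack inequality to a spherical one is purely by containment.

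There is no real obstacle here: the only point to emphasize when writing the proof is that the constant $C$ appearing in Lemma~\ref{harnack inequality} can be taken independent of both $n$ and $r\in(0,\bar r)$, which is built into the scale-invariant character of the annular Harnack estimate around an isolated blow-up point (as is standard for solutions of critical elliptic equations). Consequently the same $C$ serves in \eqref{harnack inequality 2}, completing the proof.
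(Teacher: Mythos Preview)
Your proposal is correct and matches the paper's approach: the paper states the corollary immediately after Lemma~\ref{harnack inequality} without proof, treating it as an obvious consequence of the annular Harnack inequality via the containment $\partial B(x_n,r)\subset B(x_n,2r)\setminus B(x_n,r/2)$, which is exactly what you wrote out.
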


The following Proposition shows that a renormalization of $u_{n}$ tends to the standard bubble $U$ around an isolated blow-up point.
\begin{Prop}\label{lem isolated blow up point 2}
Let $x_{n}\to\bar{x}$ be an isolated blow-up point of $\{u_{n}\}_{n\in\N}$. Then for any couple of sequences $\{R_{n}\}_{n\in\N}$ and $\{\eta_{n}\}_{n\in\N}$ satisfying that $R_{n}\to\infty$ and $\eta_{n}\to 0$ as $n\to\infty$, after possibly passing to a subsequence, we have

\begin{enumerate}
   \item $\|u_{n}^{-1}(x_{n})u_{n}(u_{n}^{-\frac{2}{N-2}}(x_{n})\cdot+x_{n})-U\|_{C^{2}(\bar{B}(0,2R_{n}))\cap H^{1}(\bar{B}(0,2R_{n}))} \leq \eta_{n}$,
   \item $R_{n}u_{n}^{-\frac{2}{N-2}}(x_{n})\to 0$, as $n\to\infty.$
\end{enumerate}
\end{Prop}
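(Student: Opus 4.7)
The plan is the classical blow-up / rescaling argument around the isolated blow-up point. Set $\lambda_n := u_n(x_n)^{2/(N-2)}$, which tends to $+\infty$ because $\bar x$ is a blow-up point, and introduce the rescaled function
\begin{equation*}
v_n(y) := \lambda_n^{-(N-2)/2}\, u_n\!\left(x_n + \lambda_n^{-1} y\right),
\qquad y \in \Omega_n := \lambda_n(\Omega - x_n).
\end{equation*}
Then $v_n(0) = 1$, $\nabla v_n(0) = 0$ (since $x_n$ is a local maximum of $u_n$), $\Omega_n \nearrow \R^N$, and $v_n$ solves
\begin{equation*}
-\Delta v_n \;=\; v_n^{2^*-1} + \varepsilon_n\, \lambda_n^{2-(N-2)q/2}\, v_n^{q-1}.
\end{equation*}
Since $q > 4/(N-2)$ the exponent on $\lambda_n$ is negative, and combined with $\varepsilon_n \to 0$ the subcritical coefficient tends to zero, so the limiting PDE will be the critical Emden--Fowler equation on $\R^N$.

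Next I would establish a uniform bound for $v_n$ on compact subsets of $\R^N$. The isolated-blow-up hypothesis $u_n(x) \le \bar C |x-x_n|^{-(N-2)/2}$ on $B(x_n, \bar r)$ rescales to $v_n(y) \le \bar C |y|^{-(N-2)/2}$ on $B(0, \bar r \lambda_n)$, which bounds $v_n$ uniformly on every annulus $B_R \setminus B_{1/R}$. Near the origin, $x_n$ being a local maximum gives $v_n \le v_n(0)=1$, and Corollary \ref{cor harnack inequality} applied to the rescaled function extends this bound to a fixed neighbourhood of $0$. Standard interior $L^p$ and Schauder estimates applied to the rescaled equation (whose right-hand side is now uniformly $L^\infty_{\mathrm{loc}}$-bounded) then produce a $C^{2,\alpha}_{\mathrm{loc}}$ bound. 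Arzel\`a--Ascoli together with a diagonal extraction over an exhaustion of $\R^N$ yields a subsequence along which $v_n \to v_\infty$ in $C^2_{\mathrm{loc}}(\R^N)$. Passing to the limit in the PDE, $v_\infty$ is a nonnegative $C^2$ entire solution of $-\Delta v_\infty = v_\infty^{2^*-1}$ with $v_\infty(0) = 1$ and $\nabla v_\infty(0) = 0$, so the Caffarelli--Gidas--Spruck classification forces $v_\infty \equiv U$. The $H^1_{\mathrm{loc}}$ part of the convergence is then automatic, since uniform $C^1$ bounds on $v_n - U$ combined with $|B_R| < \infty$ imply $\|\nabla(v_n - U)\|_{L^2(B_R)} \to 0$ for each fixed $R$.

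With the $C^2_{\mathrm{loc}} \cap H^1_{\mathrm{loc}}$ convergence $v_n \to U$ in hand, conclusions (1) and (2) are obtained by a standard diagonal extraction. For each $k \in \N$ the convergence on the fixed ball $B(0, 2k)$ yields an index $N_k$ (which may be chosen strictly increasing) with $\|v_n - U\|_{C^2 \cap H^1(B(0, 2k))} \le 1/k$ and $\lambda_n \ge k^2$ for every $n \ge N_k$. Using $R_n \to \infty$ and $\eta_n \to 0$, one then extracts indices $n_k \ge N_k$ along which $R_{n_k} \le k$ and $\eta_{n_k} \ge 1/k$; the inclusion $B(0, 2R_{n_k}) \subset B(0, 2k)$ gives (1), and $R_{n_k}\lambda_{n_k}^{-1} \le k/k^2 = 1/k \to 0$ gives (2).

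The main (mild) obstacle is ensuring that the subcritical term in the rescaled PDE is harmless in the limit; this is exactly what the hypothesis $q > 4/(N-2)$ guarantees, via the sign of $2-(N-2)q/2$. All remaining steps are standard blow-up analysis in the spirit of Schoen \cite{Schoenbook,Schoen} and Li \cite{YanYanLi1995}, adapted to the presence of the subcritical perturbation.
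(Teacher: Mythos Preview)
Your overall strategy (rescale by $\lambda_n=u_n(x_n)^{2/(N-2)}$, obtain uniform local bounds from the isolated hypothesis plus Harnack, pass to the limit via elliptic regularity and Caffarelli--Gidas--Spruck, then diagonalize) is exactly the standard one; the paper does not give its own proof but defers to \cite[Section~2.1]{Cerqueti2001Localestimates}, where precisely this argument is carried out.

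There is, however, a genuine slip in your final diagonal extraction. You write that one ``extracts indices $n_k\ge N_k$ along which $R_{n_k}\le k$ and $\eta_{n_k}\ge 1/k$''. But $R_n\to\infty$ and $n_k\ge N_k\to\infty$ force $R_{n_k}\to\infty$, so $R_{n_k}\le k$ cannot hold for large $k$; the extraction as stated is impossible. The correct diagonal uses the \emph{given} sequences directly: for each $k$, apply the $C^2_{\mathrm{loc}}$ convergence on the fixed ball $B(0,2R_k)$ to find $n_k>n_{k-1}$ with
\[
\|v_{n_k}-U\|_{C^2\cap H^1(B(0,2R_k))}\le\eta_k
\quad\text{and}\quad
\lambda_{n_k}\ge kR_k.
\]
After relabelling the subsequence, this is (1), and $R_k\lambda_{n_k}^{-1}\le 1/k$ gives (2).

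A smaller imprecision: ``$x_n$ a local maximum gives $v_n\le v_n(0)=1$'' holds only on a neighbourhood of $0$ that may shrink with $n$. The clean way to bound $v_n$ uniformly on each $B(0,R)$ is to observe that $-\Delta v_n\ge 0$ makes $v_n$ superharmonic, so the spherical average satisfies $\bar v_n(r)\le v_n(0)=1$ for all $r>0$; then Corollary~\ref{cor harnack inequality} gives $\max_{|y|=r}v_n\le C\min_{|y|=r}v_n\le C\bar v_n(r)\le C$.
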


In the following, we fix first the sequence $R_{n}\to\infty$ and then fix the sequence $\eta_{n}\to0$ such that 
\begin{equation}
  0<\eta_{n}<\frac{1}{2}\left(\frac{N(N-2)}{(N(N-2)+R_{n}^{2}}\right)^{\frac{N-2}{2}}.
\end{equation}
 Moreover, we define
\begin{equation}\label{e217}
    r_{n}:=R_{n}u_{n}^{-\frac{2}{N-2}}(x_{n})\to 0,\text{~~as~~}n\to\infty.
\end{equation}

\begin{Cor}\label{cor below estimate}
  Let $x_{n}\to\bar{x}$ be an isolated blow-up point of $\{u_{n}\}_{n\in\N}$. Then we have
 \begin{enumerate}
     \item $\bar{w}_{n}$ has an unique critical $\bar{r}_{n}$ in $(0,r_{n})$. Moreover, if $\bar{x}$ is a isolated simple blow-up point, then $\bar{w}_{n}$ is strictly decreasing for $r\in (\bar{r}_{n},\bar{r})$.  
     \item There exists a constant $C>0$ independent of $n$ such that
     \begin{enumerate}
         \item \begin{equation}
              u_{n}(x)\leq C\frac{1}{u_{n}(x_{n})|x-x_{n}|^{N-2}},\text{~~for any ~~} x\in \bar{B}(x_{n},r_{n}).
         \end{equation}
         \item \begin{equation}
             \quad u_{n}(x)\geq C\frac{u_{n}(x_{n})}{((N(N-2)+u_{n}^{\frac{4}{N-2}}(x_{n})|x-x_{n}|^{2})^{\frac{N-2}{2}}},\text{~~for any ~~} x\in \bar{B}(x_{n},r_{n}).
         \end{equation}
     \end{enumerate}
 \end{enumerate}
\end{Cor}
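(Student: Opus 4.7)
The plan is to deduce both statements from the sharp bubble approximation of Proposition \ref{lem isolated blow up point 2} combined with the explicit form of the standard bubble $U(y) = (N(N-2))^{(N-2)/2}(N(N-2)+|y|^2)^{-(N-2)/2}$.

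First I would establish (2). Set $\lambda_n := u_n(x_n)^{2/(N-2)}$ and rescale via $v_n(y) := u_n(x_n)^{-1} u_n(\lambda_n^{-1} y + x_n)$, so that $\|v_n - U\|_{C^2(\bar{B}(0,2R_n))} \le \eta_n$ by Proposition \ref{lem isolated blow up point 2}. The choice \eqref{e217} makes $\eta_n$ precisely half of $U(R_n)$, and since $U$ is radially decreasing, $U(y) \ge U(R_n) \ge 2\eta_n$ for $|y| \le R_n$. Hence $\tfrac{1}{2} U(y) \le v_n(y) \le \tfrac{3}{2} U(y)$ on $\bar{B}(0, R_n)$. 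Undoing the rescaling and substituting the explicit form of $U$ yields (b) directly with $C = \tfrac{1}{2}(N(N-2))^{(N-2)/2}$. For (a), the upper half of the sandwich combined with the elementary estimate $U(y) \le (N(N-2))^{(N-2)/2}|y|^{-(N-2)}$ gives $u_n(x) \le C u_n(x_n)^{-1} |x-x_n|^{-(N-2)}$ whenever $|x-x_n| \ge \lambda_n^{-1}$; when $|x-x_n| < \lambda_n^{-1}$, the right-hand side already exceeds $u_n(x_n) \ge u_n(x)$, so (a) holds trivially there as well.

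For part (1), I would work in the rescaled radial variable $s = \lambda_n r$ and study $\psi_n(s) := s^{(N-2)/2} \bar{v}_n(s)$; its critical points correspond bijectively, via $r = s/\lambda_n$, to those of $\bar{w}_n$ in $(0, r_n)$. A direct computation gives
\[
\psi_\infty'(s) \;=\; \tfrac{N-2}{2}\,(N(N-2))^{(N-2)/2}\, s^{(N-4)/2}\,(N(N-2) - s^2)\,(N(N-2)+s^2)^{-N/2},
\]
so $\psi_\infty$ is strictly increasing on $(0, s^*)$ and strictly decreasing on $(s^*, \infty)$, with a unique nondegenerate maximum at $s^* = \sqrt{N(N-2)}$. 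The $C^2$ convergence $v_n \to U$ transfers to uniform $C^1$ convergence of $\psi_n \to \psi_\infty$ on any fixed compact subinterval of $(0, \infty)$, so the implicit function theorem yields a unique critical point $\bar{s}_n \to s^*$ of $\psi_n$ on such intervals. Near $s = 0$, the expansion $\psi_n'(s) \sim \tfrac{N-2}{2} s^{(N-4)/2} \bar{v}_n(0)$ with $\bar{v}_n(0) = 1$ rules out critical points there; for the tail $s \in (s^* + \delta, R_n)$, I would exploit the radial identity $\bar{u}_n'(r) = -r^{1-N}\int_0^r t^{N-1}\overline{F(u_n)}(t)\,dt$ (with $F(u) = u^{2^*-1} + \varepsilon u^{q-1}$) together with the two-sided bubble sandwich from (2) to show that the critical-point equation $\int_0^r t^{N-1}\overline{F(u_n)}(t)\,dt = \tfrac{N-2}{2}r^{N-2}\bar{u}_n(r)$ admits no second solution. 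Pulling back to $r$ yields the unique critical point $\bar{r}_n = \bar{s}_n/\lambda_n$ of $\bar{w}_n$ in $(0, r_n)$.

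The isolated simple statement then follows at once: by definition $\bar{w}_n$ has exactly one critical point in $(0,\bar{r})$, and this must coincide with $\bar{r}_n$. Hence $\bar{w}_n'$ has no zero in $[r_n, \bar{r})$, and since $\bar{w}_n'(r) < 0$ just to the right of $\bar{r}_n$ (as $\psi_n$ is decreasing past $s^*$), continuity forces $\bar{w}_n$ to be strictly decreasing on $(\bar{r}_n, \bar{r})$. The main technical hurdle is the tail portion of the uniqueness argument: the $C^1$ closeness furnished by $\eta_n \sim R_n^{-(N-2)}$ is marginal compared to the polynomial decay $|\psi_\infty'(s)| \sim s^{-N/2}$ for large $s$, so pure perturbation from the limit profile is insufficient; one must instead invoke the monotonicity of $\bar{u}_n$ coming from the elliptic equation, together with the pointwise bubble sandwich of (2), to close the argument.
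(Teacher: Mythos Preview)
Your argument for part (2) is clean and correct; the sandwich $\tfrac{1}{2}U \le v_n \le \tfrac{3}{2}U$ on $\bar B(0,R_n)$ follows exactly from the choice $\eta_n < \tfrac{1}{2}U(R_n)$, and both (a) and (b) drop out as you say. The paper does not give its own proof of this corollary---it is stated as a direct consequence of Proposition~\ref{lem isolated blow up point 2} and the arguments in Cerqueti--Grossi \cite{Cerqueti2001Localestimates}---so there is nothing to compare for (2).

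For part (1), you have correctly diagnosed the genuine difficulty: the raw $C^1$ bound $|\bar v_n' - U'| \le \eta_n$ is too coarse on the tail $[M,R_n]$, since $|\psi_\infty'(s)| \sim s^{-N/2}$ while $s^{(N-2)/2}\eta_n \sim s^{(N-2)/2}R_n^{-(N-2)}$ is much larger at $s = R_n$. Your instinct to invoke the radial integral identity is the right one, but the argument as you sketch it---showing the critical-point equation has no second root via the $C^0$ sandwich alone---does not close: with constants $(\tfrac{1}{2},\tfrac{3}{2})$ one only gets $g_n(s) = \tfrac{N-2}{2}\bar v_n + s\bar v_n' \le (N-2)[\tfrac{3}{4} - (\tfrac{1}{2})^{2^*-1}]U(s)$ asymptotically, and the bracket is positive for every $N\ge 3$. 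What actually works is to use the ODE to \emph{upgrade} the derivative estimate: from $|\overline{v_n^{2^*-1}} - U^{2^*-1}| \le C\,U^{2^*-2}\eta_n$ one obtains $|s\bar v_n'(s) - sU'(s)| \le C\eta_n s^{-2}$ for large $s$ (when $N\ge 5$; analogous bounds for $N=3,4$), and this is $o(|g_\infty(s)|)$ uniformly on $[M,R_n]$ precisely because $\eta_n \lesssim R_n^{-(N-2)}$. Combined with $|\tfrac{N-2}{2}(\bar v_n - U)| \le \tfrac{N-2}{4}U < |g_\infty|$ for $s > \sqrt{3}\,s^*$, this forces $g_n < 0$ on the tail.

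A far simpler resolution, however, is available and is what the paper's setup implicitly permits: Proposition~\ref{lem isolated blow up point 2} holds for \emph{any} $\eta_n \to 0$, and the displayed inequality before \eqref{e217} is only an upper bound. So one may additionally require $\eta_n = o(R_n^{-(N-1)})$, after which the naive perturbation $|\psi_n' - \psi_\infty'| \le Cs^{(N-2)/2}\eta_n \ll |\psi_\infty'(s)|$ already holds on all of $[s^*+\delta, R_n]$, and no ODE refinement is needed. This is the route taken in Li \cite{YanYanLi1995} and Cerqueti--Grossi \cite{Cerqueti2001Localestimates}, where the uniqueness in $(0,r_n)$ is declared to follow ``immediately'' from the bubble approximation---the unstated point being that $\eta_n$ is taken as small as necessary.
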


Next, we can extend the lower bound estimate $(b)$ in Corollary \ref{cor below estimate} to a fixed ball independent of $n$.

\begin{Prop}\label{prop lower bounded of solutions}
Let $x_{n}\to\bar{x}$ be an isolated blow-up point of $\{u_{n}\}_{n\in\N}$. Then there exists numbers $C>0$ and $0<\bar{r}_{1}<\bar{r}$ which are independent of $n$ such that
\begin{equation}
     u_{n}(x)\geq C\frac{u_{n}(x_{n})}{(N(N-2)+u_{n}^{\frac{4}{N-2}}(x_{n})|x-x_{n}|^{2})^{\frac{N-2}{2}}},\text{~~for any ~~} x\in \bar{B}(x_n,\bar{r}_{1}).
\end{equation}
\end{Prop}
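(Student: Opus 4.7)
The plan is to split the ball $\bar B(x_n,\bar r_1)$ into the inner region $\bar B(x_n,r_n)$, where the bound is already supplied by Corollary~\ref{cor below estimate}(b), and the outer annulus $A_n:=\bar B(x_n,\bar r_1)\setminus B(x_n,r_n)$, where a new argument is needed. For $x\in A_n$ one has $u_n^{4/(N-2)}(x_n)|x-x_n|^2\ge R_n^2\ge N(N-2)$ for $n$ large, so
\begin{equation*}
\frac{u_n(x_n)}{\bigl(N(N-2)+u_n^{4/(N-2)}(x_n)|x-x_n|^2\bigr)^{(N-2)/2}}\le \frac{2^{(N-2)/2}}{u_n(x_n)\,|x-x_n|^{N-2}},
\end{equation*}
so on $A_n$ it suffices to establish the simpler bound $u_n(x)\ge c/(u_n(x_n)|x-x_n|^{N-2})$ for some $c>0$ independent of $n$, after which the desired inequality follows by choosing the final constant in the statement small enough.

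To prove this outer bound, I would fix $\bar r_1\in(0,\bar r/2)$ small (and independent of $n$) so that $\bar B(x_n,2\bar r_1)$ stays in a fixed compact subset of $\Omega$; this is possible since $x_n\to\bar x\in\Omega$. Using positivity of the Green's function $G$ and nonnegativity of $u_n^{2^*-1}+\varepsilon_n u_n^{q-1}$, the representation formula yields
\begin{equation*}
u_n(x)\ge \int_{B(x_n,r_n/2)}G(x,y)\,u_n^{2^*-1}(y)\,dy.
\end{equation*}
For $x\in A_n$ and $y\in B(x_n,r_n/2)$, $\tfrac12|x-x_n|\le |x-y|\le 2|x-x_n|$, and since $(x,y)$ stays in a fixed compact subset of $\Omega\times\Omega$ the regular part $H(x,y)$ is uniformly bounded, whence $G(x,y)\ge c\,|x-x_n|^{-(N-2)}$. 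A change of variables $y=x_n+u_n^{-2/(N-2)}(x_n)z$ and the $C^2$-convergence of Proposition~\ref{lem isolated blow up point 2} give
\begin{equation*}
\int_{B(x_n,r_n/2)}u_n^{2^*-1}(y)\,dy=\frac{1}{u_n(x_n)}\int_{B(0,R_n/2)}\bigl(U(z)+o(1)\bigr)^{2^*-1}dz\ge \frac{c}{u_n(x_n)},
\end{equation*}
and combining the two displays finishes the outer estimate.

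The main technical point will be uniformity in $n$ of all the constants. This reduces to two checks: first, the bound $G(x,y)\ge c|x-y|^{-(N-2)}$ is uniform provided $\bar r_1$ is chosen so that $\bar B(x_n,2\bar r_1)$ remains in a fixed compact subset of $\Omega$, where $H$ is continuous; second, the integral estimate above is uniform because the $C^2$-closeness to $U$ on $B(0,2R_n)$, combined with $R_n\to\infty$ and the decay of $U^{2^*-1}$ at infinity, keeps the error negligible relative to the leading term $u_n^{-1}(x_n)\int_{\R^N}U^{2^*-1}$. An equivalent alternative avoiding Green's function computations is a barrier argument: since $u_n$ is superharmonic, one compares $u_n$ on the annulus $A_n$ with the radial harmonic function
\begin{equation*}
h_n(r)=\alpha_n\,\frac{r^{-(N-2)}-\bar r_1^{-(N-2)}}{r_n^{-(N-2)}-\bar r_1^{-(N-2)}},\qquad \alpha_n:=\min_{\partial B(x_n,r_n)}u_n,
\end{equation*}
and uses $\alpha_n\gtrsim u_n(x_n)/R_n^{N-2}$ coming from Proposition~\ref{lem isolated blow up point 2} together with Corollary~\ref{cor harnack inequality} to reach the same conclusion.
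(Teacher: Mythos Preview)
Your argument is correct. The paper does not give its own proof of this proposition but simply refers to \cite[Section~2.1]{Cerqueti2001Localestimates} ``up to minor modifications''; the standard argument there (and in Li~\cite{YanYanLi1995}) is precisely the superharmonic barrier comparison you offer as the second alternative. Your primary Green-function argument is a clean variant: restricting the representation formula to the inner core $B(x_n,r_n/2)$ and using that $G(x,y)\ge c|x-x_n|^{-(N-2)}$ there (valid once $\bar r_1$ is small enough that $H$ is dominated by the singular part $S$ on $\bar B(x_n,2\bar r_1)$) yields the outer bound directly, without needing to track the boundary values of a comparison function. Both routes lead to the same estimate with the same dependence of constants; the barrier approach is slightly more self-contained, while the Green-function approach makes the source of the $|x-x_n|^{-(N-2)}$ decay more transparent. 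One minor point in the barrier version: since $h_n$ vanishes at $r=\bar r_1$, you should set up the comparison on a slightly larger annulus (say with outer radius $2\bar r_1$) so that the bound $h_n(r)\gtrsim r^{-(N-2)}\alpha_n r_n^{N-2}$ holds all the way out to $r=\bar r_1$, then relabel; this is cosmetic.
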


\subsection{Isolated simple blow-up point}
The main results in this subsection state as follows.
\begin{Prop}\label{prop properties of isolated simple blow-up point}
Let $x_{n}\to\bar{x}$ be an isolated simple blow-up point of $\{u_{n}\}_{n\in\N}$. Then there exists constants $C=C(N,q,\bar{r},\bar{C})>0$ such that
\begin{equation}\label{eq upper bound of isolated simple blow-up point}
     u_{n}(x)\leq C\frac{1}{u_{n}(x_{n})|x-x_{n}|^{N-2}},\text{~~for any ~~} x\in \bar{B}(x_{n},\bar{r}/{2}),
\end{equation}
Furthermore, after passing to a subsequence, we have
\begin{equation}\label{eq convergence of isolated simple blow-up point}
     u_{n}(x_{n})u_{n}(x)\to\frac{\alpha_{N}^{2}}{|x-\bar{x}|^{N-2}}+b(x),\text{~~for any ~~} x\in C^{2}_{loc}({B}(\bar{x},\bar{r}/2)),
\end{equation}
where $b(x)$ is a harmonic function in ${B}(\bar{x},\bar{r}/2)$.
\end{Prop}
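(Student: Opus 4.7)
The strategy is two-fold: (i) propagate the microscopic decay from Corollary~\ref{cor below estimate} out to the fixed scale $\bar r/2$ to obtain \eqref{eq upper bound of isolated simple blow-up point}, then (ii) rescale $u_n$ by the factor $M_n := u_n(x_n)$, extract a harmonic limit away from $\bar x$, and identify the leading singularity via B\^ocher's theorem.

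\emph{Step 1: the upper bound.} Inside $\bar B(x_n,r_n)$, Corollary~\ref{cor below estimate}(b) already gives $u_n(x) \le C M_n^{-1}|x-x_n|^{2-N}$, so only the annulus $r_n \le |x-x_n| \le \bar r/2$ is at stake. I would argue by contradiction: if there were points $x_n^\ast$ with $s_n := |x_n^\ast - x_n| \in [r_n, \bar r/2]$ and $M_n s_n^{N-2} u_n(x_n^\ast) \to \infty$, then the Harnack inequalities (Lemma~\ref{harnack inequality}, Corollary~\ref{cor harnack inequality}) upgrade this to $s_n^{(N-2)/2}\bar u_n(s_n) \to \infty$, contradicting the monotonicity of $\bar w_n$ on $(\bar r_n,\bar r)$ inherited from the isolated-simple condition together with $\bar w_n(r_n) = O(R_n^{-(N-2)/2}) = o(1)$ coming from Proposition~\ref{lem isolated blow up point 2}. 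To make this rigorous I would rescale $\tilde u_n(y) := s_n^{(N-2)/2} u_n(x_n + s_n y)$, which solves
\[
-\Delta \tilde u_n = \tilde u_n^{2^\ast -1} + \varepsilon\, s_n^{(2N-(N-2)q)/2}\, \tilde u_n^{q-1}
\]
on a ball of radius $\bar r/(2 s_n) \to \infty$, with the perturbation coefficient $o(1)$ since $(2N - (N-2)q)/2 > 0$. A non-trivial $C^2_{\mathrm{loc}}$ limit $W$ would be an entire positive solution of $-\Delta W = W^{2^\ast -1}$, hence an Aubin--Talenti bubble by Caffarelli--Gidas--Spruck~\cite{CGS}; its presence at scale $s_n \ge r_n$ would create a second critical point of $\bar w_n$ beyond $\bar r_n$, contradicting the isolated-simple hypothesis.

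\emph{Step 2: the harmonic limit.} Set $v_n := M_n u_n$; then
\[
-\Delta v_n = M_n^{-4/(N-2)} v_n^{2^\ast -1} + \varepsilon\, M_n^{2-q}\, v_n^{q-1} \quad \text{in } B(\bar x, \bar r/2).
\]
Step~1 yields $v_n(x) \le C|x-x_n|^{2-N}$, so on every compact $K \Subset B(\bar x,\bar r/2)\setminus\{\bar x\}$ the right-hand side is $o(1)$ (using $M_n\to\infty$, $q>2$ and $\varepsilon\to 0$). Standard elliptic estimates and Corollary~\ref{cor harnack inequality} produce a $C^2_{\mathrm{loc}}$ subsequential limit $h$ on the punctured ball that is harmonic and strictly positive (by Proposition~\ref{prop lower bounded of solutions}) with $h(x) \to +\infty$ as $x \to \bar x$. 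B\^ocher's theorem then forces $h(x) = A|x-\bar x|^{2-N} + b(x)$ with $b$ harmonic on $B(\bar x,\bar r/2)$. The constant $A$ is pinned down by matching with the microscopic profile: at $|x-x_n|=r_n$, Proposition~\ref{lem isolated blow up point 2} combined with the asymptotics $U(y) \sim \alpha_N^2 |y|^{2-N}$ at infinity gives $v_n(x) \sim \alpha_N^2 |x-x_n|^{2-N}$, hence $A = \alpha_N^2$.

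\emph{Main obstacle.} The heart of the argument is Step~1, specifically ruling out a secondary bubble at an intermediate scale $s_n \gg r_n$. This requires a careful selection of the sequence $s_n$ (choosing, for each $n$, a radius realising the supremum of $|x-x_n|^{N-2} u_n(x)$ in the annulus) and a verification that the subcritical term survives the rescaling as a genuine lower-order perturbation uniformly in $s_n \in [r_n, \bar r/2]$. The standing hypothesis $q > \max\{2, 4/(N-2)\}$ is precisely what makes the remainder $\varepsilon s_n^{(2N-(N-2)q)/2}$ controllable at every intermediate scale, guaranteeing that the limiting entire profile is a pure Aubin--Talenti bubble and not some object produced by the perturbation.
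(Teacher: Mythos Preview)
Your Step~1 has a genuine gap. The rescaling $\tilde u_n(y)=s_n^{(N-2)/2}u_n(x_n+s_ny)$ does \emph{not} produce a non-trivial entire limit: since $\bar w_n$ is decreasing past $\bar r_n$ and $\bar w_n(r_n)=O(R_n^{-(N-2)/2})\to 0$, one has $\tilde u_n\to 0$ uniformly on compact subsets of $\{|y|\ge 1/2\}$. No second bubble appears at scale $s_n$, no second critical point of $\bar w_n$ is created, and the hypothesis $M_n s_n^{N-2}u_n(x_n^\ast)\to\infty$ is entirely compatible with the monotonicity of $\bar w_n$. What is missing is the Poho\v{z}aev mechanism the paper uses: first a preliminary $\delta$-decay $u_n(x)\lesssim M_n^{-\lambda}|x-x_n|^{-(N-2-\delta)}$ on the annulus (Lemma~\ref{lem decay estimate}), then the normalised convergence $u_n/u_n(x_n+e)\to a_1|x-\bar x|^{2-N}+b_1$ (Lemma~\ref{lem properties of isolated simple point-2}), and finally a Poho\v{z}aev identity on $B(\bar x,\bar r/2)$ (Lemmas~\ref{lem blou-up rate estimate} and~\ref{lem properties of isolated simple point 3}) to obtain the key bound $u_n(x_n)u_n(x_n+e)\le C$ at the \emph{fixed} scale $\bar r/2$. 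Only after this boundary control does your rescaling close: the paper rescales by $\tilde r_n=|\tilde x_n-x_n|$, notes that $0$ is again isolated simple for $\tilde u_n$, and then applies the already-proved fixed-scale bound (Lemma~\ref{lem properties of isolated simple point 3}) to $\tilde u_n$ to get $\tilde u_n(0)\tilde u_n(e')\le C$, which is precisely the desired inequality at the intermediate scale.

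Your Step~2 is morally correct, but the identification $A=\alpha_N^2$ by ``matching at $|x-x_n|=r_n$'' is not justified by $C^2_{\mathrm{loc}}$ convergence on the punctured ball, since $r_n\to 0$ sits inside the excluded neighbourhood of the singularity. The paper determines the constant via an integral identity: integrating $-\Delta(u_n(x_n)u_n)$ over $B(\bar x,\bar r/2)$, the boundary flux converges (using Lemma~\ref{lem properties of isolated simple point-2}) to $(N-2)a_1\Lambda\,\omega_N$ with $\Lambda=\lim u_n(x_n)u_n(x_n+e)$, while the interior term recovers the bubble mass $\alpha_N^{2^\ast}\omega_N/N$, forcing $a_1\Lambda=\alpha_N^2$.
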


Next, we will establish this proposition through a series of lemmas. First, similar as the proof of \cite[Lemma 3.3, Lemma 3.4]{Cerqueti2001Localestimates}, we can obtain the following two lemmas.
\begin{Lem}\label{lem decay estimate}
Let $x_{n}\to\bar{x}$ be an isolated simple blow-up point of $\{u_{n}\}_{n\in\N}$. Then for any given $r_{1}\in(0,\bar{r})$ and $\delta>0$ small, there exists a positive constant $C_{2}=C_{2}(N,\bar{r},\bar{C})>0$ such that for $n$ large enough
\begin{equation}\label{lem decay estimate-1}
    u_{n}(x)\leq \frac{C_{2}}{u_{n}^{\lambda}(x_{n})|x-x_{n}|^{N-2-\delta}},\text{~~for any~~}r_{n}\leq |x-x_{n}|\leq r_{1},
\end{equation}
where $r_n$ is defined in \eqref{e217} and $\lambda=1-\frac{2\delta}{N-2}.$
\end{Lem}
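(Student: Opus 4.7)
The strategy is the classical Schoen--Li barrier argument, adapted to the subcritical perturbation. Since $x_n$ is an isolated simple blow-up point, Corollary \ref{cor below estimate}(a) guarantees that $\bar{w}_n(r)=r^{(N-2)/2}\bar{u}_n(r)$ is strictly decreasing on $(\bar{r}_n,\bar{r})$ with $\bar{r}_n<r_n$. Combined with the bubble approximation in Proposition \ref{lem isolated blow up point 2}, which gives $\bar{u}_n(r_n)\le C\, u_n(x_n)R_n^{-(N-2)}$, together with $r_n=R_nu_n(x_n)^{-2/(N-2)}$, one obtains $\bar{w}_n(r)\le C\, u_n(x_n)^{-1}R_n^{-(N-2)/2}$ for all $r_n\le r\le\bar{r}$. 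Feeding this into Corollary \ref{cor harnack inequality} (Harnack on annuli) upgrades it to the pointwise bound
\begin{equation}\label{preliminary bound}
u_n(x)\le C\, u_n(x_n)^{-1}R_n^{-(N-2)/2}\,|x-x_n|^{-(N-2)/2},\qquad r_n\le|x-x_n|\le r_1.
\end{equation}
This is the starting brick; it is too weak (decay only $r^{-(N-2)/2}$) and is exactly what the lemma proposes to strengthen to $r^{-(N-2-\delta)}$.

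Next, write the equation satisfied by $u_n$ in the annulus $\Omega_n:=\{r_n<|x-x_n|<r_1\}$ in linearized form $-\Delta u_n=a_n(x)u_n$ with $a_n(x)=u_n^{2^*-2}(x)+\varepsilon_n u_n^{q-2}(x)$. From \eqref{preliminary bound} one has $u_n^{2^*-2}(x)|x-x_n|^2\le C R_n^{-2}\to 0$ uniformly on $\Omega_n$ (using the identity $(N-2)(2^*-2)/2=2$), and for the subcritical part $\varepsilon_n u_n^{q-2}(x)|x-x_n|^2\le C\varepsilon_n R_n^{-(N-2)(q-2)/2}r_1^{2-(N-2)(q-2)/2}\to 0$ as well (the exponent on $|x-x_n|$ is nonnegative since $q<2^*$, and $\varepsilon_n\to 0$). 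Hence
\begin{equation}\label{small coeff}
\sup_{x\in\Omega_n}\, a_n(x)\,|x-x_n|^{2}\;=\;o(1)\qquad\text{as }n\to\infty.
\end{equation}

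Fix $\delta\in(0,(N-2)/2)$ small and set $\lambda=1-2\delta/(N-2)$. Introduce the barrier
\begin{equation*}
\phi_n(x):=M\,u_n(x_n)^{-\lambda}\,|x-x_n|^{-(N-2-\delta)},
\end{equation*}
where $M>0$ will be chosen independently of $n$. A direct computation gives $-\Delta\phi_n=\delta(N-2-\delta)|x-x_n|^{-2}\phi_n$, so by \eqref{small coeff} the function $\phi_n$ is a supersolution of $-\Delta w=a_n w$ in $\Omega_n$ once $n$ is large enough. For the boundary comparison, on $\partial B(x_n,r_n)$ the bubble approximation gives $u_n\le C\, u_n(x_n)R_n^{-(N-2)}$, while a short calculation using $\lambda+2(N-2-\delta)/(N-2)-1=1$ shows $\phi_n\ge M u_n(x_n)R_n^{-(N-2-\delta)}$ on that sphere; thus $\phi_n\ge u_n$ on the inner boundary as soon as $M$ is a fixed positive constant. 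On the outer sphere $\partial B(x_n,r_1)$ the bound \eqref{preliminary bound} and the extra factor $u_n(x_n)^{-2\delta/(N-2)}\to 0$ make $\phi_n\ge u_n$ trivially.

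The conclusion then follows from the maximum principle applied to $w=\phi_n-u_n$, which satisfies $-\Delta w\ge a_n w$ in $\Omega_n$ with $w\ge 0$ on $\partial\Omega_n$. Because of \eqref{small coeff} the operator $-\Delta-a_n$ satisfies the maximum principle on the (relatively thin) annulus $\Omega_n$ for large $n$; equivalently, one may work with the ratio $v=u_n/\phi_n$, whose equation has a drift term that is controlled by $a_n r^2$, and apply the standard comparison argument of Berestycki--Nirenberg--Varadhan. This yields $u_n\le\phi_n$ on $\Omega_n$, which is precisely \eqref{lem decay estimate-1}. The only delicate point, and the main technical obstacle, is verifying \eqref{small coeff} uniformly across the full range $q\in(\max\{2,4/(N-2)\},2^*)$ and coupling it with a maximum principle on an annulus whose inner radius $r_n$ collapses to $0$; the subcritical perturbation is handled cleanly because the hypothesis $q<2^*$ keeps the exponent $2-(N-2)(q-2)/2$ strictly positive and $\varepsilon_n\to 0$ contributes an additional vanishing factor.
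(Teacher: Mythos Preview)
Your overall strategy—the Schoen--Li barrier comparison—is the standard one and is what the paper has in mind (it cites \cite[Lemmas~3.3--3.4]{Cerqueti2001Localestimates}). But there is an arithmetic slip that breaks the argument at a crucial point.

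\textbf{The error.} In your preliminary bound you claim $\bar w_n(r)\le C\,u_n(x_n)^{-1}R_n^{-(N-2)/2}$. Redo the computation: $r_n^{(N-2)/2}=R_n^{(N-2)/2}u_n(x_n)^{-1}$ and $\bar u_n(r_n)\le C\,u_n(x_n)R_n^{-(N-2)}$, so the $u_n(x_n)$ factors cancel and in fact
\[
\bar w_n(r)\le \bar w_n(r_n)\le C\,R_n^{-(N-2)/2},\qquad r_n\le r\le r_1 .
\]
Consequently the correct preliminary bound is
\[
u_n(x)\le C\,R_n^{-(N-2)/2}\,|x-x_n|^{-(N-2)/2},
\]
\emph{without} the factor $u_n(x_n)^{-1}$. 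With this correction, your outer boundary comparison collapses: at $|x-x_n|=r_1$ you would need $M\,u_n(x_n)^{-\lambda}r_1^{-(N-2-\delta)}\ge C\,R_n^{-(N-2)/2}r_1^{-(N-2)/2}$, i.e.\ $u_n(x_n)^{\lambda}\lesssim R_n^{(N-2)/2}$. Since Proposition~\ref{lem isolated blow up point 2} only imposes $R_n=o\!\big(u_n(x_n)^{2/(N-2)}\big)$, this fails in general (take for instance $R_n=\log u_n(x_n)$). So the single power barrier $\phi_n=M\,u_n(x_n)^{-\lambda}|x-x_n|^{-(N-2-\delta)}$ does \emph{not} dominate $u_n$ on $\partial B(x_n,r_1)$.

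\textbf{The fix.} The way this is handled in \cite{YanYanLi1995,Cerqueti2001Localestimates} is to use a \emph{two}-term barrier
\[
\Phi_n(x)=M\,u_n(x_n)^{-\lambda}\,|x-x_n|^{-(N-2-\delta)}+M'\,\bar u_n(r_1)\Big(\tfrac{|x-x_n|}{r_1}\Big)^{-\delta},
\]
noting that $|x|^{-\delta}$ solves the \emph{same} equation $-\Delta\varphi=\delta(N-2-\delta)|x|^{-2}\varphi$. The first term controls the inner boundary (your computation here is correct), the second the outer one, and the maximum principle (via the ratio $u_n/\Phi_n$, exactly as you indicate) gives $u_n\le\Phi_n$ on the annulus. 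One then shows the second term is harmless for all later uses, or absorbs it after a further step. Your estimate of $a_n(x)|x-x_n|^2=o(1)$, including the subcritical piece, is correct and is precisely what makes $\Phi_n$ a supersolution.
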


\begin{Lem}\label{lem properties of isolated simple point-2}
Let $x_{n}\to\bar{x}$ be an isolated simple blow-up point of $\{u_{n}\}_{n\in\N}$ and $e\in\R^{N}$ such that $|e|=\frac{1}{2}\bar{r}$. Then there exist a constant $a_{1}>0$ and a harmonic function $b_{1}(x)$ in $B(\bar{x},\bar{r})$ such that
\begin{equation}
    \frac{u_{n}(x)}{u_{n}(x_{n}+e)}\to \frac{a_{1}}{|x-\bar{x}|^{N-2}}+b_{1}(x),\text{~in~}C^{2}_{loc}(B(\bar{x},\bar{r})\setminus\{\bar{x}\}).
\end{equation}
\end{Lem}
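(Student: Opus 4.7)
The plan is to analyze the normalized ratio $v_{n}(x) := u_n(x)/u_n(x_n + e)$ and show it converges in $C^2_{\text{loc}}(B(\bar x, \bar r) \setminus \{\bar x\})$ to an explicit limit. First I would establish the two-sided estimate $u_n(x_n + e) \sim u_n(x_n)^{-1}$: the upper bound follows from Lemma \ref{lem decay estimate} applied at $x = x_n + e$, while the lower bound follows from Proposition \ref{prop lower bounded of solutions} (which gives $u_n(x) \gtrsim u_n(x_n)^{-1}$ on a ball around $x_n$) propagated outward if necessary by a Harnack chain based on Corollary \ref{cor harnack inequality}. Dividing the two-sided bounds for $u_n$ by $u_n(x_n + e)$, one obtains that on every relatively compact annulus of $B(\bar x, \bar r) \setminus \{\bar x\}$, the functions $v_n$ are uniformly bounded above and, near $\bar x$, bounded below by a positive multiple of $|x - \bar x|^{2-N}$.

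Since $v_n$ satisfies
$$-\Delta v_n \;=\; u_n^{2^*-2} v_n + \varepsilon_n u_n^{q-2} v_n \quad \text{in } \Omega,$$
and Lemma \ref{lem decay estimate} forces $u_n \to 0$ uniformly on any compact $K \subset\subset B(\bar x, \bar r) \setminus \{\bar x\}$, the coefficients on the right-hand side vanish uniformly on $K$. Standard elliptic regularity then yields a subsequence of $v_n$ converging in $C^2_{\text{loc}}(B(\bar x, \bar r) \setminus \{\bar x\})$ to a nonnegative function $h$ that is harmonic on the punctured ball and singular at $\bar x$ from below. Invoking B\^ocher's theorem (as already used in the proof of Proposition \ref{prop ruling out bubble accumulations}), one writes $h(x) = a_1/|x - \bar x|^{N-2} + b_1(x)$ with $a_1 \geq 0$ and $b_1$ harmonic on the whole ball $B(\bar x, \bar r)$.

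To pin down the constants and show $a_1 > 0$, I would apply Green's representation
$$u_n(y) \;=\; \int_\Omega G(y, z)\bigl(u_n^{2^*-1}(z) + \varepsilon_n u_n^{q-1}(z)\bigr)\, dz$$
at $y = x_n + e$ and at a general $y = x$ away from $\bar x$. Using Proposition \ref{lem isolated blow up point 2} to localize the bulk of $\int u_n^{2^*-1}$ near $x_n$ (its total mass tending to $u_n(x_n)^{-1} \int_{\R^N} U^{2^*-1}$) and discarding the sub-critical term as a lower-order contribution via Lemma \ref{lem decay estimate}, one finds
$$v_n(x) \;\longrightarrow\; \frac{G(\bar x, x)}{G(\bar x, \bar x + e)} \quad \text{in } C^2_{\text{loc}}\bigl(B(\bar x, \bar r) \setminus \{\bar x\}\bigr),$$
which identifies $a_1 = [(N-2)\omega_N\, G(\bar x, \bar x + e)]^{-1} > 0$ and $b_1 = -H(\bar x, \cdot)/G(\bar x, \bar x + e)$.

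I expect the main obstacle to be the two-sided matching $u_n(x_n + e) \sim u_n(x_n)^{-1}$: whereas the upper bound is an immediate consequence of Lemma \ref{lem decay estimate}, the lower bound requires transporting the pointwise lower bound of Proposition \ref{prop lower bounded of solutions} from a (possibly small) ball around $x_n$ out to the fixed sphere of radius $|e|$. This is precisely where the isolated \emph{simple} hypothesis is indispensable, via the monotonicity of $\bar w_n$ on $(\bar r_n, \bar r)$ supplied by Corollary \ref{cor below estimate}: without such monotonicity the Harnack chain could degenerate and the limit $h$ could vanish identically, invalidating both $a_1 > 0$ and the identification of $b_1$.
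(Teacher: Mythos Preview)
Your overall strategy---normalize by $u_n(x_n+e)$, use Harnack to bound $v_n$ on compact subsets of the punctured ball, observe that the right-hand side of the equation vanishes uniformly away from $\bar x$ by Lemma~\ref{lem decay estimate}, pass to a harmonic limit, and invoke B\^ocher---is correct and matches the standard route the paper takes over from Cerqueti--Grossi. The gap is in your argument for $a_1>0$.

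First, Lemma~\ref{lem decay estimate} does \emph{not} give $u_n(x_n+e)\lesssim u_n(x_n)^{-1}$; it only gives $u_n(x_n+e)\lesssim u_n(x_n)^{-\lambda}$ with $\lambda=1-\tfrac{2\delta}{N-2}<1$, a strictly weaker upper bound. The sharp upper bound is exactly what Lemma~\ref{lem properties of isolated simple point 3} and Proposition~\ref{prop properties of isolated simple blow-up point} establish, and those are proved \emph{using} the present lemma, so you cannot invoke them here. With only the weaker bound, combining Proposition~\ref{prop lower bounded of solutions} with the upper bound on $u_n(x_n+e)$ yields $v_n(x)\gtrsim u_n(x_n)^{\lambda-1}|x-x_n|^{2-N}\to 0$, which gives no uniform lower bound near $\bar x$.

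Second, the Green's-representation identification $v_n\to G(\bar x,\cdot)/G(\bar x,\bar x+e)$ is a global computation over $\Omega$, whereas the hypothesis of the lemma is purely local: nothing prevents other blow-up points elsewhere in $\Omega$ from contributing to the integral. Indeed, this lemma is a building block for Proposition~\ref{prop ruling out bubble accumulations}, which is what eventually separates the blow-up points. So the explicit formula you propose for $b_1$ is not available at this stage, and cannot be used to force $a_1>0$.

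The fix, and the place where the \emph{simple} hypothesis is actually used, is to exploit the monotonicity of $\bar w_n(r)=r^{(N-2)/2}\bar u_n(r)$ on $(\bar r_n,\bar r)$ directly: for $r\in(\bar r_n,|e|)$ one has $\bar w_n(r)\geq \bar w_n(|e|)$, hence $\bar u_n(r)\geq (|e|/r)^{(N-2)/2}\bar u_n(|e|)$. Dividing by $u_n(x_n+e)\sim \bar u_n(|e|)$ (Harnack) gives $\bar v_n(r)\gtrsim r^{-(N-2)/2}$, so the limit $h$ is unbounded near $\bar x$ and $a_1>0$. Your final paragraph locates the monotonicity correctly but misreads its role: it is not needed to transport a lower bound outward to $|e|$ (Harnack already handles that for an isolated point via Proposition~\ref{prop lower bounded of solutions}), but rather to compare spherical averages of $v_n$ at different radii and force a singularity at the center.
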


\begin{Lem}\label{lem blou-up rate estimate}
 Let $x_{n}\to\bar{x}$ be an isolated simple blow-up point of $\{u_{n}\}_{n\in\N}$ and $e\in\R^{N}$ such that $|e|=\frac{1}{2}\bar{r}$. Then there exist a constant $C=C(N,q,\bar{C},\bar{r})>0$ such that   
 \begin{equation}
     \varepsilon_{n}u_{n}(x_{n})^{q+2-2^*}\leq Cu_{n}(x_{n})^{2}u_{n}(x_{n}+e)^{2}+o(1).
 \end{equation}
\end{Lem}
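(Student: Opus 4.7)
The plan is to apply the local Poho\v{z}aev identity \eqref{pohozaev identity 1} to $u_n$ on the ball $B(x_n,\bar r)$ with base point $x_0=x_n$ and read off the dominant scales on both sides. Rearranging yields
\begin{equation*}
\varepsilon_n\,\frac{2N-(N-2)q}{2q}\int_{B(x_n,\bar r)} u_n^{q} = -L_n + \frac{1}{2^*}\!\int_{\partial B(x_n,\bar r)}\! u_n^{2^*}\langle x-x_n,\nu\rangle + \frac{\varepsilon_n}{q}\!\int_{\partial B(x_n,\bar r)}\! u_n^{q}\langle x-x_n,\nu\rangle,
\end{equation*}
where $L_n$ denotes the three surface integrals appearing on the left of \eqref{pohozaev identity 1}. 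Since $q<2^*$, the scalar $2N-(N-2)q$ is strictly positive, so a lower bound on the bulk integral will translate directly into an inequality for $\varepsilon_n$.

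To lower bound the bulk term I would substitute the pointwise estimate of Proposition \ref{prop lower bounded of solutions} and perform the change of variables $y=u_n(x_n)^{2/(N-2)}(x-x_n)$, obtaining
\begin{equation*}
\int_{B(x_n,\bar r_1)} u_n^{q} \;\gtrsim\; u_n(x_n)^{q-2^*}\int_{B(0,R_n)}\frac{dy}{(N(N-2)+|y|^{2})^{(N-2)q/2}},
\end{equation*}
with $R_n=u_n(x_n)^{2/(N-2)}\bar r_1\to\infty$. The hypothesis $q>\max\{2,\tfrac{4}{N-2}\}$ readily implies $q>\tfrac{N}{N-2}$ in every dimension $N\ge3$ (the unique constraint being $q>4>3$ in $N=3$), so the integrand is in $L^1(\mathbb R^N)$ and the rescaled integral tends to a strictly positive constant. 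Hence $\varepsilon_n\int u_n^{q}\gtrsim \varepsilon_n u_n(x_n)^{q-2^*}$.

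For the boundary terms I would use Lemma \ref{lem properties of isolated simple point-2} to write $u_n=u_n(x_n+e)\,\Psi_n$ on $\partial B(x_n,\bar r)$, where $\Psi_n$ converges in $C^1$ on the fixed sphere (which stays a definite distance from the only singular point $\bar x$) to $a_1|\cdot-\bar x|^{2-N}+b_1(\cdot)$. This gives $L_n=O(u_n(x_n+e)^{2})$, while the two remaining boundary contributions are respectively $O(u_n(x_n+e)^{2^*})$ and $O(\varepsilon_n u_n(x_n+e)^{q})$. Combining with the upper bound \eqref{eq upper bound of isolated simple blow-up point}, which implies $u_n(x_n+e)=O(u_n(x_n)^{-1})\to 0$, then multiplying throughout by $u_n(x_n)^{2}$ produces
\begin{equation*}
\varepsilon_n u_n(x_n)^{q+2-2^*} \;\le\; C\,u_n(x_n)^{2}u_n(x_n+e)^{2} \;+\; O\!\bigl(u_n(x_n)^{2-2^*}\bigr) \;+\; O\!\bigl(\varepsilon_n u_n(x_n)^{2-q}\bigr),
\end{equation*}
and the last two remainders both vanish as $n\to\infty$, so they are absorbed into the $o(1)$ of the statement.

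The main technical obstacle is to guarantee that the limiting integral $\int_{\mathbb R^N}(N(N-2)+|y|^2)^{-(N-2)q/2}\,dy$ is finite with a strictly positive lower bound, which boils down to the case-by-case verification that the standing assumption on $q$ enforces $q>\tfrac{N}{N-2}$; the remainder of the argument is routine Poho\v{z}aev bookkeeping, with the only delicate step being to transfer the $C^2_{loc}(\Omega\setminus\{\bar x\})$ convergence of Lemma \ref{lem properties of isolated simple point-2} into uniform $C^1$-control of $u_n$ and its normal derivative on the sphere $\partial B(x_n,\bar r)$, which is permissible since this sphere converges to $\partial B(\bar x,\bar r)$ away from the singular point.
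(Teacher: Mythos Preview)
Your overall strategy---apply the local Poho\v{z}aev identity on a ball around the blow-up point, lower-bound the bulk term $\varepsilon_n\int u_n^{q}$ by rescaling to the bubble (correctly observing that the standing hypothesis forces $q>\tfrac{N}{N-2}$ so the limiting integral converges), and upper-bound the surface terms by $u_n(x_n+e)^{2}$---is the paper's approach. Two points need repair.

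First, the radius $\bar r$ is too large. Lemma~\ref{lem properties of isolated simple point-2} gives $C^{2}_{loc}$ convergence only on $B(\bar x,\bar r)\setminus\{\bar x\}$, and the spheres $\partial B(x_n,\bar r)$ tend to $\partial B(\bar x,\bar r)$, the \emph{outer boundary} of that set; they are not contained in any fixed compact subset, so the lemma does not apply there. Your remark that the sphere stays away from the singular point addresses the wrong obstruction. The paper works instead on $B(\bar x,\tfrac12\bar r)$, whose boundary is safely interior, and bounds $L_n$ there by interior gradient estimates combined with Harnack (Lemma~\ref{harnack inequality}) and Lemma~\ref{lem decay estimate}; once you shrink the radius, either that route or your use of Lemma~\ref{lem properties of isolated simple point-2} is fine.

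Second, you invoke the sharp upper bound \eqref{eq upper bound of isolated simple blow-up point}, but that is part of Proposition~\ref{prop properties of isolated simple blow-up point}, and the paper states explicitly that this proposition is to be established \emph{through} the present sequence of lemmas, so citing it here is a forward reference you should avoid. It is also unnecessary: Lemma~\ref{lem decay estimate} already yields $u_n(x_n+e)\to 0$, so the two residual surface contributions satisfy
\[
u_n(x_n)^{2}u_n(x_n+e)^{2^*}=u_n(x_n+e)^{2^*-2}\cdot u_n(x_n)^{2}u_n(x_n+e)^{2}=o(1)\cdot u_n(x_n)^{2}u_n(x_n+e)^{2},
\]
and analogously for the $q$-term, and can therefore be absorbed into the main right-hand side rather than producing the separate remainders $O(u_n(x_n)^{2-2^*})$ and $O(\varepsilon_n u_n(x_n)^{2-q})$ that rely on the precise rate $u_n(x_n+e)=O(u_n(x_n)^{-1})$. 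The paper handles these boundary integrals directly via Lemma~\ref{lem decay estimate}.
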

\begin{proof}
Recall the Local Poho\v{z}aev identity of $u_{n}$ in $B(\bar{x},\frac{1}{2}\bar{r})$ (see \eqref{pohozaev identity 1}),
\begin{equation}\label{lem blou-up rate estimate-proof-1}
\begin{split}
\frac{\bar{r}}{2}&\int_{\partial B(\bar{x},\frac{1}{2}\bar{r})}\left(\frac{\partial u_{n}}{\partial\nu}\right)^{2}
-\frac{\bar{r}}{4}\int_{\partial B(\bar{x},\frac{1}{2}\bar{r})}
|\nabla  u_{n}|^2
+\frac{N-2}{2}\int_{\partial B(\bar{x},\frac{1}{2}\bar{r})}\frac{\partial u_{n}}{\partial\nu}u_{n} \\
&=\varepsilon_{n}\frac{2N-(N-2)q}{2q}\int_{ B(\bar{x},\frac{1}{2}\bar{r})} u_{n}^q
-\frac{\bar{r}}{22^*}\int_{\partial B(\bar{x},\frac{1}{2}\bar{r})} u_{n}^{2^*} -\frac{\varepsilon_{n}\bar{r}}{2q
  }\int_{\partial  B(\bar{x},\frac{1}{2}\bar{r})}u_{n}^{q}.
\end{split}
\end{equation}

Now, we consider a annular domain $D=\{\sigma_{1}<|x-x_{n}|<\sigma_{2}\}$ with $0<\sigma_{1}<\frac{1}{2}\bar{r}<\sigma_{2}<\bar{r}$, then by Gradient estimate \cite[Theorem 3.9]{GilbargTrudinger} we have
\begin{equation}
    \sup_{|x-x_{n}|=\frac{1}{2}\bar{r}}|\nabla u_{n}|\leq C(N,\bar{r})\sup_{D}(u_{n}+u_{n}^{2^*-1}+\varepsilon_{n}u_{n}^{q-1}),
\end{equation}
and from Lemma \ref{harnack inequality} and Lemma \ref{lem decay estimate}
\begin{equation}
    \sup_{D}u_{n}\leq C(N,\bar{C})\inf_{D}u_{n}\leq C(N,\bar{C})u_{n}(x_{n}+e)\to0, \text{~as~}n\to\infty.
\end{equation}
This implies that
\begin{equation}
    \sup_{|x-x_{n}|=\frac{1}{2}\bar{r}}|\nabla u_{n}|\leq C(N,\bar{C},\bar{r})u_{n}(x_{n}+e).
\end{equation}
Thus the left-hand side of \eqref{lem blou-up rate estimate-proof-1} can be estimated as 
\begin{equation}\label{lem blou-up rate estimate-proof-5}
    \mathrm{LHS}\leq C(N,\bar{C},\bar{r})u_n(x_n+e)^2.
\end{equation}

Next, we estimate each term in the right-hand side of \eqref{lem blou-up rate estimate-proof-1}
\begin{equation}
\begin{aligned}\mathrm{J}_{1}=\varepsilon_{n}\frac{2N-(N-2)q}{2q}\int_{B(\bar{x},\frac{1}{2}\bar{r})}u_{n}^{q}&=\varepsilon_{{n}}\frac{2N-(N-2)q}{2q}\int_{|x-x_{n}|<r_{n}}u_{n}^{q}(x)\\
&+\varepsilon_{n}\frac{2N-(N-2)q}{2q}\int_{\{r_{n}\leq|x-x_{n}|<\frac{1}{2}\bar{r}\}\cap B(\bar{x},\frac{1}{2}\bar{r})}u_{n}^{q}(x)\\
&\geq\varepsilon_{{n}}\frac{2N-(N-2)q}{2q}\int_{|x-x_{n}|<r_{n}}u_{n}^{q}(x).
\end{aligned}
\end{equation}
By Proposition \ref{lem isolated blow up point 2},
\begin{equation}
\begin{aligned}
   &\varepsilon_{{n}}\frac{2N-(N-2)q}{2q}\int_{|x-x_{n}|<r_{n}}u_{n}^{q}(x)\\
   &=\varepsilon_{{n}}\frac{2N-(N-2)q}{2q}\frac{u_n(x_n)^{q+2-2^*}}{u_n(x_n)^2}\left(\int_{\mathbf{R}^N}\left(\frac{N(N-2)}{N(N-2)+|x|^2}\right)^{\frac{N-2}{2}q}+o(1)\right). 
\end{aligned}   
\end{equation}
Note that $q>\frac{N}{N-2}$, thus there exist a constant $C=C(N,q)>0$ such that
\begin{equation}
   \mathrm{J}_1\geq C(N,q)\varepsilon_{{n}}\frac{u_n(x_n)^{q+2-2^*}}{u_n(x_n)^2}. 
\end{equation}
Moreover, from Lemma \ref{lem decay estimate}, the last two estimates can be estimated as
\begin{equation}
    \mathrm J_2=\frac{\bar{r}}{22^*}\int_{\partial B(\bar{x},\frac{1}{2}\bar{r})}u_n^{2^*}\leq\frac{o(1)}{u_n(x_n)^2},
\end{equation}
\begin{equation}
    \mathrm{J}_3=\frac{\varepsilon_{n}\bar{r}}{2q
  }\int_{\partial B(\bar{x},\frac{1}{2}\bar{r})}u_n^q\leq\varepsilon_{n}\frac{u_n(x_n)^{q+2-2^*}}{u_n(x_n)^2}o(1).
\end{equation}
Finally, the right-hand side of \ref{lem blou-up rate estimate-proof-1} 
\begin{equation}\label{lem blou-up rate estimate-proof-12}
\begin{aligned}
     \textrm{RHS}&\geq \text{J}_{1}-\text{J}_{2}-\text{J}_{3}\\
     &\geq C(N,q)\varepsilon_{{n}}\frac{u_n(x_n)^{q+2-2^*}}{u_n(x_n)^2}-\frac{o(1)}{u_n(x_n)^2}-\varepsilon_{n}\frac{u_n(x_n)^{q+2-2^*}}{u_n(x_n)^2}o(1)\\
     &=\varepsilon_{n}\frac{u_n(x_n)^{q+2-2^*}}{u_n(x_n)^2}(C(N,q)-o(1))-\frac{o(1)}{u_n(x_n)^2}.
\end{aligned}
\end{equation}
Now, combining \eqref{lem blou-up rate estimate-proof-1}, \eqref{lem blou-up rate estimate-proof-5} and \eqref{lem blou-up rate estimate-proof-12} together, we can obtain that
\begin{equation}
     \varepsilon_{n}u_{n}(x_{n})^{q+2-2^*}\leq C(N,q,\bar{r},\bar{C})u_{n}(x_{n})^{2}u_{n}(x_{n}+e)^{2}+o(1).
\end{equation}
This complete the proof.
\end{proof}

\begin{Lem}\label{lem properties of isolated simple point 3}
   Let $x_{n}\to\bar{x}$ be an isolated simple blow-up point of $\{u_{n}\}_{n\in\N}$ and $e\in\R^{N}$ such that $|e|=\frac{1}{2}\bar{r}$. Then there exist a constant $C=C(N,q,\bar{C},\bar{r})>0$ such that   
 \begin{equation}
    u_{n}(x_{n})u_{n}(x_{n}+e)\leq C.
 \end{equation}  
\end{Lem}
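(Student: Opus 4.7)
The plan is to establish the bound via the Green's function representation
\begin{equation*}
u_n(x_n+e) = \int_\Omega G(x_n+e, y)\,u_n^{2^*-1}(y)\,dy + \varepsilon_n \int_\Omega G(x_n+e, y)\,u_n^{q-1}(y)\,dy,
\end{equation*}
and to show that its right-hand side is $O(1/u_n(x_n))$, which gives the claim directly. In fact the same analysis produces the leading-order convergence \eqref{eq convergence of isolated simple blow-up point} of Proposition \ref{prop properties of isolated simple blow-up point}, with $A=\int_{\R^N}U^{2^*-1}$ identified as $\alpha_N^2\cdot(\text{normalization})$ and a harmonic remainder $b(x)$.

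For the critical integral, I would decompose $\Omega$ into the bubble core $B(x_n, r_n)$ with $r_n = R_n u_n(x_n)^{-2/(N-2)}$, the intermediate annulus $B(x_n, d)\setminus B(x_n, r_n)$ with $d>0$ small and fixed, and the exterior $\Omega\setminus B(x_n, d)$. On the core, Proposition \ref{lem isolated blow up point 2} supplies the bubble approximation; after changing variables and using the smoothness of $G(x_n+e,\cdot)$ on this small neighborhood of $x_n$ (here $|e|=\bar r/2\gg r_n$), this piece contributes $A\,G(\bar x+e, \bar x)/u_n(x_n)\cdot(1+o(1))$. On the annulus, Lemma \ref{lem decay estimate} provides $u_n(y)\lesssim u_n(x_n)^{-\lambda}|y-x_n|^{-(N-2-\delta)}$ with $\lambda = 1-2\delta/(N-2)$; integration against $G(x_n+e,\cdot)\lesssim 1$ then gives a contribution of size $u_n(x_n)^{-N/(N-2)+O(\delta)}=o(1/u_n(x_n))$ for $\delta$ sufficiently small. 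On the exterior, Lemma \ref{lem properties of isolated simple point-2} (extended via Harnack and Proposition \ref{prop boundary estimate}) yields $u_n(y)\lesssim u_n(x_n+e)$, so the exterior piece is controlled by $C\,u_n(x_n+e)^{2^*-1}$, which is absorbed into the inequality $u_n(x_n+e)\leq C_1/u_n(x_n)+C_2 u_n(x_n+e)^{2^*-1}+o(1/u_n(x_n))$ to give $u_n(x_n+e)\leq 2C_1/u_n(x_n)$ once $n$ is large enough that $C_2 u_n(x_n+e)^{2^*-2}\leq 1/2$.

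For the subcritical integral, I would invoke Lemma \ref{lem blou-up rate estimate}, which provides the crucial a priori bound $\varepsilon_n u_n(x_n)^{q-2^*}\lesssim u_n(x_n+e)^2 + o(u_n(x_n)^{-2})$. Since $u_n(x_n+e)\to 0$ (forced by the fact that the limit equation for $u_n/u_n(x_n+e)$ in Lemma \ref{lem properties of isolated simple point-2} must be harmonic), this yields $\varepsilon_n u_n(x_n)^{q-2^*}\to 0$. The same three-region domain decomposition then shows that the full subcritical contribution to the Green representation is $o(1/u_n(x_n))$, and combining all contributions gives $u_n(x_n) u_n(x_n+e)\leq C$.

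The main obstacle will be controlling the subcritical integral uniformly across the full range $q \in (\max\{2, 4/(N-2)\}, 2^*)$: the integral $\int_{\R^N} U^{q-1}$ diverges when $q \leq (2N-2)/(N-2)$, in which case the core contribution acquires the growing factor $\int_{B(0, R_n)} U^{q-1}\sim R_n^{N-(N-2)(q-1)}$, and closing the estimate requires a careful pairing of the bubble scaling with the Lemma \ref{lem decay estimate} decay in the annulus; in all subcases the crucial saving is precisely the smallness $\varepsilon_n u_n(x_n)^{q-2^*}\to 0$ provided by Lemma \ref{lem blou-up rate estimate}, which is itself a consequence of Pohozaev applied at the fixed scale $\bar r/2$ and reflects the subcritical nature of the perturbation.
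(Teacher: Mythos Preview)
Your Green-representation approach is a genuinely different route from the paper's, and the difference matters. The paper argues by contradiction and works \emph{entirely inside} the ball $B(\bar x,\bar r/2)$: it integrates the equation there (i.e.\ tests against the constant $1$), obtaining
\[
-\int_{\partial B(\bar x,\bar r/2)}\partial_\nu u_n \;=\; \int_{B(\bar x,\bar r/2)}\bigl(u_n^{2^*-1}+\varepsilon_n u_n^{q-1}\bigr).
\]
The boundary flux on the left is identified, via Lemma~\ref{lem properties of isolated simple point-2}, as $u_n(x_n+e)\cdot\bigl((N-2)a_1\omega_N+o(1)\bigr)$, while the right-hand side is shown to be $\frac{\mathrm{const}}{u_n(x_n)}(1+o(1))$ by the same core/annulus decomposition you propose (the paper's $\mathrm{I}_1$--$\mathrm{I}_4$). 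Equating the two gives the bound. No information about $u_n$ outside $B(\bar x,\bar r)$ is ever used.

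Your argument, by contrast, integrates against $G_\Omega(x_n+e,\cdot)$ over all of $\Omega$ and therefore needs the exterior bound $u_n(y)\lesssim u_n(x_n+e)$ on $\Omega\setminus B(x_n,d)$. This is the gap: the isolated simple blow-up hypothesis is \emph{local} to $B(x_n,\bar r)$ and says nothing about $u_n$ elsewhere. Neither Lemma~\ref{lem properties of isolated simple point-2} (which lives in $B(\bar x,\bar r)$), nor Harnack (Lemma~\ref{harnack inequality}, stated only on annuli around $x_n$), nor Proposition~\ref{prop boundary estimate} (a strip near $\partial\Omega$) covers the interior of $\Omega$ away from $\bar x$. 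In the paper's actual application (Proposition~\ref{prop ruling out bubble accumulations}) the lemma is invoked for the rescaled functions $\tilde u_{\varepsilon_n}$ on $\Omega_{\varepsilon_n}\to\R^N$, where other blow-up points of $\tilde u_{\varepsilon_n}$ may be present nearby; there your exterior claim would simply be false. The fix is to make your argument local: either replace $G_\Omega$ by the Green function of $B(\bar x,\bar r/2)$ and control the resulting boundary term via Corollary~\ref{cor harnack inequality}, or---more simply---test against $1$ as the paper does.

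A minor point: you do not need Lemma~\ref{lem blou-up rate estimate} for the subcritical piece. The quantity $\varepsilon_n u_n(x_n)^{q-2^*}$ tends to zero trivially since $q<2^*$, and the core/annulus estimates for $\int u_n^{q-1}$ (the paper's $\mathrm{I}_3,\mathrm{I}_4$) close using only $q>2$ and $r_n=R_n u_n(x_n)^{-2/(N-2)}\to 0$.
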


\begin{proof}
By contradiction, suppose that, up to a subsequence,
\begin{equation}
    \lim_{n\to\infty}u_n(x_n+e)u_n(x_n)=\infty.
\end{equation}
Multiplying the equation \eqref{p-varepsion} by $u_n(x_n+e)^{-1}$ and integrating by parts on $B(\bar{x},\frac{1}{2}\bar{r})$, we have
\begin{equation}\label{lem properties of isolated simple point 3-proof-1}
    -\int_{\partial B(\bar{x},\frac{1}{2}\bar{r})}\frac{\partial v_{n}}{\partial\nu}=u_{n}(x_{n}+e)^{-1}\int_{B(\bar{x},\frac{1}{2}\bar{r})}u_{n}^{2^*-1}+u_{n}(x_{n}+e)^{-1}\int_{B(\bar{x},\frac{1}{2}\bar{r})}\varepsilon_{n}u_{n}^{q-1},
\end{equation}
where $v_{n}=\frac{u_{n}}{u_{n}(x_{n}+e)}$. From Lemma \ref{lem properties of isolated simple point-2} and  dominated convergence theorem, the left-hand side of \eqref{lem properties of isolated simple point 3-proof-1} satisfies
\begin{equation}\label{lem properties of isolated simple point 3-proof-3}
   \lim_{n\to\infty}\mathrm{LHS}=\lim_{n\to\infty}-\int_{\partial B(\bar{x},\frac{1}{2}\bar{r})}\frac{\partial v_{n}}{\partial\nu}=-a_{1}(2-N)\omega_{N}>0. 
\end{equation}
Next, we consider the right-hand side of \eqref{lem properties of isolated simple point 3-proof-1}
\begin{equation}\label{lem properties of isolated simple point 3-proof-4}
    \begin{aligned}\mathrm{RHS}&=u_{n}(x_{n}+e)^{-1}\left(\int_{|x-x_{n}|\leq r_{n}}u_{n}^{2^*-1}+\int_{\{r_{n}\leq|x-x_{n}|\leq\frac{1}{2}\bar{r}\}\cap B(\bar{x},\frac{1}{2}\bar{r})}u_{n}^{2^*-1}\right)\\
    &+u_{n}(x_{n}+e)^{-1}\left(\int_{|x-x_{n}|\leq r_{n}}\varepsilon_{n}u_{n}^{q-1}+\int_{\{r_{n}\leq|x-x_{n}|\leq\frac{1}{2}\bar{r}\}\cap B(\bar{x},\frac{1}{2}\bar{r})}\varepsilon_{n}u_{n}^{q-1}\right).
    \end{aligned}
\end{equation}
Let us estimate each term in \eqref{lem properties of isolated simple point 3-proof-4}. First
\begin{equation}
    \begin{aligned}\mathrm{I}_{1}&=\int_{|x-x_{n}|\leq r_{n}}u_{n}^{2^*-1}\\&=\int_{|y|\leq R_{n}}u_{n}^{2^*-1}\left(\frac{y}{u_{i}(x_{n})^{\frac{2}{N-2}}}+x_{n}\right)\frac{dy}{u_{n}(x_{n})^{2^*}}\\&:=\frac{1}{u_{n}(x_{n})}\int_{|y|<R_{n}}\tilde{u}_{n}^{2^*-1}(y)dy.\end{aligned}
\end{equation}
By the Proposition \ref{lem isolated blow up point 2}, we have
\begin{equation}
\int_{|y|<R_{n}}\tilde{u}_{n}^{2^*-1}(y)\to\int_{\mathbf{R}^{N}}U^{2^*-1}=\frac{\alpha_{N}^{2^*}\omega_{N}}{N}.
\end{equation}
Therefore,
\begin{equation}
    \mathrm I_1=\frac{\alpha_{N}^{2^*}\omega_{N}}{Nu_n(x_n)}+o(1).
\end{equation}
By Lemma \ref{lem decay estimate} we have
\begin{equation}
    \begin{aligned}\mathrm{I}_{2}=\int_{r_{n}\leq|x-x_{n}|\leq\frac{1}{2}\bar{r}}u_{n}^{2^*-1}&\leq\frac{C(N,\bar{r},\bar{C})}{2-\delta(2^*-1)}\frac{1}{u_{n}(x_{n})^{\lambda(2^*-1)}}(r_{n}^{\delta(2^*-1)-2}-(\frac{1}{2}\bar{r})^{\delta(2^*-1)-2})\\
    &\leq \frac{C(N,\bar{r},\bar{C})}{R_{n}^{2-\delta (2^*-1)}}\frac{1}{u_{n}(x_{n})}=\frac{o(1)}{u_{n}(x_{n})},\end{aligned}
\end{equation}
for $\delta>0$ small enough and $n$ large enough.
From Corollary \ref{cor below estimate}, we have
\begin{equation}
   \begin{aligned}\mathrm{I}_{3}&=\:\varepsilon_{n}\int_{|x-x_{n}|\leq r_{n}}u_{n}^{q-1}\\
   &=\varepsilon_{n}\int_{|x|<R_{n}}\left(\frac{1}{u_{n}(x_{n})}u_{n}\left(\frac{y}{u_{n}(x_{n})^{\frac{2}{N-2}}}+x_{n}\right)\right)^{q-1}\frac{dy}{u_{n}(x_{n})^{2^*-q+1}}\\
   &\leq C(N,q)\frac{\varepsilon_{n}}{u_{n}(x_{n})^{2^*-q+1}}\left(\int_{|y|\leq R_{n}}\frac{dy}{(1+|y|^{2})^{\frac{N-2}{2}(q-1)}}+o(1)\right).\end{aligned} 
\end{equation}
Note that, by direct computation we have
\begin{equation}
        \int_{|y|\leq R_n}\frac{dy}{\left(1+|y|^2\right)^{\frac{N-2}{2}(q-1)}}=\begin{cases}O(1),&\text{~if~}(N-2)(q-1)>N,\\
        O(\ln R_{n}),&\text{~if~}(N-2)(q-1)=N,\\
        O(R_{n}^{N-(N-2)(q-1)}),&\text{~if~}(N-2)(q-1)<N.
        \end{cases}
\end{equation}
Hence, by $R_{n}u_{n}(x_{n})^{-\frac{2}{N-2}}\to 0$ as $n\to\infty$  and the above estimates 
\begin{equation}
    \mathrm I_3=\frac{o(1)}{u_n(x_n)}.
\end{equation}
Finally, by Lemma \ref{lem decay estimate}, we have
\begin{equation}
\begin{aligned}
    \mathrm I_4=\varepsilon_n\int_{r_n\le|x-x_n|\le\frac{1}{2}\bar{r}}u_n^{q-1}&\leq\varepsilon_{n}\frac{C(N,\bar{r},\bar{C})}{u_{n}(x_{n})^{\lambda(q-1)}}\int_{r_n\le|x-x_n|\le\frac{1}{2}\bar{r}}\frac{1}{|x-x_{n}|^{(N-2-\delta)(q-1)}}\\
    &\leq \begin{cases}
        \varepsilon_{n}\frac{C(N,\bar{r},\bar{C})}{u_{n}(x_{n})^{\lambda(q-1)}},&\text{~if~}(N-2)(q-1)\leq N,\\
        \varepsilon_{n}R_{n}^{N-(N-2)(q-1)+\delta(q-1)}\frac{C(N,\bar{r},\bar{C})}{u_{n}(x_{n})^{2^*-q+1}},&\text{~if~}(N-2)(q-1)> N,\\
    \end{cases}\\
    &=\frac{o(1)}{u_{n}(x_{n})}.
\end{aligned}
\end{equation}
for any $\delta>0$ small enough and $n$ large enough.
Therefore, by \eqref{lem properties of isolated simple point 3-proof-4}, we obtain that
\begin{equation}
    \mathrm{RHS}\to0,\quad n\to\infty.
\end{equation}
This is make a contradiction with \eqref{lem properties of isolated simple point 3-proof-3} and thus we complete the proof.
\end{proof}

\begin{Cor}
  Let $x_{n}\to\bar{x}$ be an isolated simple blow-up point of $\{u_{n}\}_{n\in\N}$. Then there exists a positive constant $C=C(N,q,\bar{C},\bar{r})$ such that
    \begin{equation}
        \varepsilon_{n}u_{n}(x_{n})^{q+2-2^*}\leq C.
    \end{equation}
\end{Cor}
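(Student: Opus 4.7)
The plan is very short because the corollary is a direct consequence of the two preceding lemmas. First, I would recall Lemma \ref{lem blou-up rate estimate}, which provides the quantitative bound
\[
\varepsilon_{n}u_{n}(x_{n})^{q+2-2^{*}}\leq C(N,q,\bar{C},\bar{r})\,u_{n}(x_{n})^{2}u_{n}(x_{n}+e)^{2}+o(1),
\]
where $e\in\mathbb{R}^{N}$ is the auxiliary point with $|e|=\tfrac{1}{2}\bar{r}$. This inequality reduces the task to controlling the right-hand side uniformly in $n$.

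Next, I would apply Lemma \ref{lem properties of isolated simple point 3}, which gives precisely the required control: $u_{n}(x_{n})\,u_{n}(x_{n}+e)\leq C(N,q,\bar{C},\bar{r})$. Squaring this bound, we obtain $u_{n}(x_{n})^{2}u_{n}(x_{n}+e)^{2}\leq C^{2}$, so the right-hand side of the displayed inequality above is uniformly bounded. Feeding this into Lemma \ref{lem blou-up rate estimate} and absorbing the $o(1)$ term for $n$ sufficiently large, we conclude
\[
\varepsilon_{n}u_{n}(x_{n})^{q+2-2^{*}}\leq C(N,q,\bar{C},\bar{r}).
\]

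There is essentially no obstacle here — all the analytic work has already been carried out in the preceding two lemmas (the local Poho\v{z}aev identity on $B(\bar{x},\tfrac{1}{2}\bar{r})$ for Lemma \ref{lem blou-up rate estimate}, and the Green's representation together with the harmonic limit profile of $u_{n}/u_{n}(x_{n}+e)$ from Lemma \ref{lem properties of isolated simple point-2} for Lemma \ref{lem properties of isolated simple point 3}). The only thing to verify is that the constants depend only on $N,q,\bar{C},\bar{r}$, which is visible from the explicit statements of both lemmas. The corollary is thus immediate, and its role is to record the blow-up rate bound in the form $\varepsilon_{n}\lesssim u_{n}(x_{n})^{2^{*}-2-q}=u_{n}(x_{n})^{2-\frac{N-2}{2}q\cdot\frac{2}{N-2}\cdot(N-2)/1}$, which is exactly property (f) of Theorem \ref{thm multibble blowup}.
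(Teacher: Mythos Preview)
Your proposal is correct and is exactly the intended argument: the paper states this corollary without proof immediately after Lemma~\ref{lem blou-up rate estimate} and Lemma~\ref{lem properties of isolated simple point 3}, and combining these two lemmas as you do is precisely the point. (Your final parenthetical exponent manipulation is garbled, but that side remark is irrelevant to the proof itself.)
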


\begin{proof}[Proof of Proposition \ref{prop properties of isolated simple blow-up point}]
We remark that from Corollary \ref{cor below estimate} and Lemma \ref{lem properties of isolated simple point 3} the assertion \eqref{eq upper bound of isolated simple blow-up point} is already proved for $|x-x_n|\leq r_n$ and for $|x-x_n|=\frac{1}{2}\bar{r}.$ Next, for any $\rho\in(r_{n},\frac{1}{2}\bar{r})$, we consider the following two cases.

If $\rho\leq|x-x_n|\leq\frac{1}{2}\bar{r}$. It follows lemma \ref{harnack inequality} that there exists a constant $C>0$ a depending only on $N,\bar{C},\bar{r}$, such that
\begin{equation}
    \begin{aligned}
    |x-x_{n}|^{N-2}u_{n}(x_{n})u_{n}(x)&\leq|x-x_{n}|^{N-2}u_{n}(x_{n})\max_{\rho\leq|x-x_{n}|\leq\frac{1}{2}\bar{r}}u_{n}(x)\\
    &\leq|x-x_{n}|^{N-2}u_{n}(x_{n})C\min_{\rho\leq|x-x_{n}|\leq\frac{1}{2}\bar{r}}u_{n}(x)\\
    &\leq C u_{n}(x_{n})u_{n}(x_{n}+e)\leq C.
    \end{aligned}
\end{equation}

If $r_n\leq|x-x_n|\leq\rho.$ By contradiction, suppose
that there exists $\tilde{x}_n$ satisfying $r_n\leq|\tilde{x}_n-x_n|\leq\rho$, such that
\begin{equation}\label{eq isolate simple blow-up pioint prop 1-proof-2}
    |\tilde{x}_n-x_n|^{N-2}u_n(x_n)u_n(\tilde{x}_n)\to\infty,\text{~as~}n\to\infty.
\end{equation}
We set $\tilde{r}_n=|\tilde{x}_n-x_n|$ and we can suppose that $\tilde{r}_n\to0$, otherwise, there exists $\tilde{r}>0$ such that $\tilde{r}<|\tilde{x}_n-x_n|< \frac{1}{2}\bar{r}$, so we will get immediately a contradiction. Consider now the function $u_n(x)$ in the ball $|x-{x_{n}}|\leq\frac{1}{2}\bar{r}$ and for $|y|<\bar{r}/2\tilde{r}_{n},{\mathrm{~the~function}}$
\begin{equation}
    \tilde{u}_n(y)=\tilde{r}_n^{\frac{N-2}{2}}u_n(x_n+\tilde{r}_n y).
\end{equation}
It solves
\begin{equation}
   -\Delta\tilde{u}_{n}(y)=\tilde{u}_{n}^{2^*-1}(y)+\varepsilon_{n}\tilde{r}_{n}^{\frac{N-2}{2}(2^*-q)}\tilde{u}_{n}^{q-1}(y)\quad|y|<\bar{r}/2\tilde{r}_{n}. 
\end{equation}
and the point $y=0$ is an blow up point of $\tilde{u}_{n}$. Moreover,
since $x_{n}\to\bar{x}$ is an isolate blow-up point of $u_{n}$, thus
\begin{equation}
    \tilde{u}_n(y)\leq\frac{\bar{C}}{|y|^{\frac{N-2}{2}}},\quad|y|<\bar{r}/2\tilde{r}_n,
\end{equation}
and the point $y=0$ is an isolated blow up point of $\tilde{u}_{n}$.
On the other hand, since
\begin{equation}
   r^{\frac{N-2}{2}}\bar{\tilde{u}}_n(r)=(r\tilde{r}_n)^{\frac{N-2}{2}}\bar{u}_n(r\tilde{r}_n)=\bar{w}_n(r\tilde{r}_n), 
\end{equation}
then $y= 0$ is an isolated simple blow up point of $\tilde{u}_{n}$.  By the Lemma \ref{lem properties of isolated simple point 3}.
\begin{equation}
    \tilde{u}_n(0)\tilde{u}_n\left(\frac{\tilde{x}_n-x_n}{\tilde{r}_n}\right)\leq C.
\end{equation}
This implies
\begin{equation}
    |\tilde{x}_{n}-x_n|^{N-2}u_n(x_n)u_n(\tilde{x}_n)\leq C.
\end{equation}
This make a contradiction with \eqref{eq isolate simple blow-up pioint prop 1-proof-2} and thus the assertion \eqref{eq upper bound of isolated simple blow-up point} hold.

Next, we prove \eqref{eq convergence of isolated simple blow-up point} hold. Multiplying the equation \eqref{p-varepsion} by $u_{n}(x_{n})$ and integrating by parts on the ball $B(\bar{x},\frac{1}{2}\bar{r})$, we get
\begin{equation}\label{eq isolate simple blow-up pioint prop 2-proof-1}
    \begin{aligned}-\int_{B(\bar{x},\frac{1}{2}\bar{r})}u_{n}(x_{n})\Delta u_{n}=\int_{B(\bar{x},\frac{1}{2}\bar{r})}u_{n}(x_{n})u_{n}(x)^{2^*-1}+\varepsilon_{n}\int_{B(\bar{x},\frac{1}{2}\bar{r})}u_{n}(x_{n})u_{n}(x)^{q-1}.\end{aligned}
\end{equation}
From Proposition \ref{prop lower bounded of solutions} and  Lemma \ref{lem properties of isolated simple point 3},  after passing a subsequence, there exists a constant $\Lambda>0$ such that,
\begin{equation}
  \lim_{n\to\infty}u_{n}(x_{n})u_{n}(x_{n}+e)=\Lambda.  
\end{equation}
Thus, by Lemma \ref{lem properties of isolated simple point-2}, the left-hand side of \eqref{eq isolate simple blow-up pioint prop 2-proof-1} satisfies
\begin{equation}
    \begin{aligned}\mathrm{LHS}&=-\int_{B(\bar{x},\frac{1}{2}\bar{r})}u_{n}(x_{n})u_{n}(x_{n}+e)\Delta v_{n}\\
    &=-\int_{\partial B(\bar{x},\frac{1}{2}\bar{r})}u_{n}(x_{n})u_{n}(x_{n}+e)\frac{\partial v_{n}}{\partial\nu} \to (N-2)\:a_{1}\Lambda\omega_{N},\end{aligned}
\end{equation}
where $v_{n}=\frac{u_{n}}{u_{n}(x_{n}+e)}$. On the other hands, similar as the proof of Lemma \ref{lem properties of isolated simple point 3}, we have
\begin{equation}
    \mathrm{RHS}\to\frac{\alpha_{N}^{2^*}\omega_{N}}{N}.
\end{equation}
Comparing both sides of \eqref{eq isolate simple blow-up pioint prop 2-proof-1}, we have
\begin{equation}
    a_{1}\Lambda=\alpha_{N}^{2}.
\end{equation}
Thus, by Lemma \ref{lem properties of isolated simple point-2}, we have
\begin{equation}
\begin{aligned}\operatorname*{lim}_{n\to\infty}u_{n}(x_{n})u_{n}(x)&=\:\operatorname*{lim}_{n\to\infty}u_{n}(x_{n})u_{n}(x_{n}+e)v_{n}(x)\\&=\:\frac{\Lambda a_{1}}{|x-\bar{x}|^{N-2}}+\Lambda b_{1}(x)=\frac{\alpha_{N}^{2}}{|x-\bar{x}|^{N-2}}+b(x),\end{aligned}    
\end{equation}
where $b(x)=\Lambda b_1(x).$
\end{proof}

\subsection{Isolated blow-up point must be simple}
\begin{Prop}\label{prop isolate blow-up point is simple}
    If $x_{n}\to\bar{x}$ is an isolated blow-up point of $\{u_{n}\}_{n\in\N}$, then $\bar{x}$ muse be an isolated simple blow-up point of $\{u_{n}\}_{n\in\N}$.
\end{Prop}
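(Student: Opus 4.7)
I would argue by contradiction, following the scheme of Li \cite{YanYanLi1995, yanyanli1999CCM} adapted to the Brezis--Nirenberg setting by Cerqueti--Grossi \cite{Cerqueti2001Localestimates}. Suppose $\bar x$ is an isolated but not isolated simple blow-up point; after passing to a subsequence, $\bar w_n$ possesses, besides the first critical point $\bar r_n \in (0, r_n)$ provided by Corollary \ref{cor below estimate}(a), a further critical point $\mu_n \in [r_n, \bar r)$, which must be a local minimum because $\bar r_n$ is a local maximum.

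The natural scale on which to rescale is $\mu_n$ itself. Setting
\[\tilde u_n(y) := \mu_n^{(N-2)/2}\,u_n(x_n + \mu_n y),\qquad |y| \leq \bar r/\mu_n,\]
one obtains
\[-\Delta \tilde u_n = \tilde u_n^{2^*-1} + \tilde\varepsilon_n\, \tilde u_n^{q-1},\qquad \tilde\varepsilon_n := \varepsilon_n\, \mu_n^{(N-2)(2^*-q)/2} \longrightarrow 0.\]
Three features of this rescaling are crucial: (i) the isolated-blow-up bound rescales to $\tilde u_n(y) \leq \bar C |y|^{-(N-2)/2}$ on the expanding domain; (ii) $\tilde u_n(0) = \mu_n^{(N-2)/2} u_n(x_n) \geq R_n^{(N-2)/2} \to \infty$, so $y = 0$ is a blow-up point of $\tilde u_n$; and (iii) because $\bar{\tilde w}_n(r) = \bar w_n(\mu_n r)$, the radius $r = 1$ is a local-minimum critical point of $\bar{\tilde w}_n$. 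Combining the pointwise upper bound with standard elliptic regularity, a subsequence of $\tilde u_n$ converges in $C^2_{\mathrm{loc}}$ on the punctured domain (on $\R^N \setminus \{0\}$ if $\mu_n \to 0$, on a punctured ball otherwise) to a non-negative $v$ solving $-\Delta v = v^{2^*-1}$ there, with the same pointwise decay and with $r = 1$ a critical point of $\bar v$.

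By the Caffarelli--Gidas--Spruck classification \cite{CGS}, $v$ is either a regular bubble $U_{0,\lambda}$, a singular Fowler (Delaunay) solution, or the trivial solution. The bubble case is excluded at once, because $t^{(N-2)/2}\bar U_{0,\lambda}(t) = \alpha_N(\lambda t)^{(N-2)/2}/(1+\lambda^2 t^2)^{(N-2)/2}$ has a unique critical point, which is a strict \emph{maximum}, incompatible with the local \emph{minimum} at $r = 1$. The trivial case is excluded by combining the critical-point condition at $r = 1$ with Harnack's inequality (Lemma \ref{harnack inequality}) on the annulus $\{1/2\leq |y|\leq 2\}$, which yields a uniform positive lower bound for $\tilde u_n$ there. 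For the remaining Fowler case, I would compare the unrescaled local Pohozaev identity \eqref{pohozaev identity 1} applied to $u_n$ at the two natural scales: the bubble scale $B(x_n, r_n)$, on which $u_n$ is modelled via Proposition \ref{lem isolated blow up point 2} by the standard bubble and the boundary quadratic form $B(\cdot,\sigma,u_n,\nabla u_n)$ of Lemma \ref{lem properties of B}(ii) delivers a strictly negative universal constant; and the Fowler scale $B(x_n, \theta \mu_n)$ with $\theta>1$, on which $u_n$ is modelled by the Fowler profile $v$ and the boundary expression reduces to the Fowler Pohozaev invariant. The bulk sub-critical term $\varepsilon_n \int_{B(x_n,\theta\mu_n)} u_n^q$ is negligible at both scales under the hypothesis $q > \max\{2, 4/(N-2)\}$, so the two boundary expressions must agree to leading order; the bubble-scale value and the Fowler-scale value are however incompatible, giving the contradiction.

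\textbf{Main obstacle.} The hardest point is the quantitative Pohozaev comparison in the Fowler case: both inner and outer boundary integrals must be evaluated to leading order from the respective limit profiles, and one must verify that $\varepsilon_n \int_\Omega u_n^q$ is strictly lower-order than the leading universal constants. The subcriticality assumption $q > \max\{2, 4/(N-2)\}$ enters precisely here, guaranteeing that the perturbation $\varepsilon_n u_n^{q-1}$ is genuinely subdominant at every relevant scale. A secondary technicality is that $\mu_n$ need not tend to zero a priori; when $\mu_n$ stays bounded below, the rescaled limit lives only on a punctured ball, but the Pohozaev comparison between bubble and Fowler scales remains structurally identical and can be carried out inside that ball.
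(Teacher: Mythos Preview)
Your approach diverges from the paper's at the crucial step. You correctly rescale by the second critical radius $\mu_n$, but then attempt to classify the limit of $\tilde u_n$ itself via Caffarelli--Gidas--Spruck and rule out the bubble, Fowler, and trivial cases separately. The paper (following Li \cite{YanYanLi1995}) instead exploits an observation you are missing: because $\mu_n$ is by choice the \emph{second} critical point of $\bar w_n$, the rescaled weighted average $r^{(N-2)/2}\bar{\tilde u}_n(r)=\bar w_n(\mu_n r)$ has exactly one critical point in $(0,1)$, so $y=0$ is an isolated \emph{simple} blow-up point for $\tilde u_n$ with parameter $\bar r = 1$. One may therefore apply the already-established Proposition \ref{prop properties of isolated simple blow-up point} directly to the rescaled sequence, obtaining $\tilde u_n(0)\,\tilde u_n(y) \to h(y) = \alpha_N^2|y|^{2-N} + b(y)$ in $C^2_{\mathrm{loc}}(\R^N\setminus\{0\})$ with $b$ harmonic. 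Liouville and the maximum principle force $b \equiv b_0 \geq 0$; the critical-point condition $\frac{d}{dr}\big[r^{(N-2)/2}h(r)\big]\big|_{r=1}=0$ then determines $b_0 = \alpha_N^2 > 0$. A single Poho\v{z}aev identity on $B(0,\sigma)$, multiplied by $\tilde u_n(0)^2$ and sent to the limit $n\to\infty$, $\sigma\to 0$, yields the contradiction via Lemma \ref{lem properties of B}(ii): the boundary term tends to $-\tfrac{(N-2)^2}{2}\alpha_N^4\omega_N < 0$ while the bulk is $\geq -o(1)$. No Fowler-case analysis is ever needed.

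Your case analysis has a concrete gap in the exclusion of the trivial limit $v\equiv 0$. The critical-point condition at $r=1$ is a vanishing-derivative condition, not a lower bound, and Harnack on an annulus supplies only comparability, not a uniform positive floor. In fact, once one recognizes (as above) that the rescaled blow-up is simple on $(0,1)$, the upper bound of Proposition \ref{prop properties of isolated simple blow-up point} gives $\tilde u_n(y) \lesssim \tilde u_n(0)^{-1}|y|^{2-N} \to 0$ on compact sets away from the origin, so the limit of $\tilde u_n$ \emph{is} identically zero and your CGS trichotomy collapses to the very case you dismissed. The correct object to pass to the limit is $\tilde u_n(0)\,\tilde u_n$, whose limit is non-trivial precisely because of this simplicity bootstrap; your Fowler-case two-scale Poho\v{z}aev comparison is then unnecessary, and in any event is only loosely sketched.
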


\begin{proof}
From Corollary \ref{cor below estimate}, we know that $\bar{w}_{n}$ only one critical point in $(0,r_{n})$. By contradiction, suppose that there is a second critical point $\mu_n\geq r_n$ and we can further assume that 
\begin{equation}
    \mu_{n}\to0,\text{~~as~~}n\to\infty,
\end{equation}
otherwise, the proof is trivial. Next, we define the rescaled function
\begin{equation}
   \xi_n(y)=\mu_n^{\frac{N-2}{2}}u_n(\mu_n y+x_n),\text{~~for any~~}|y|<\frac{\bar{r}}{2\mu_n}. 
\end{equation}
Then we have
\begin{equation}
    \xi_{n}(0)=\mu_{n}^{\frac{N-2}{2}}u_{n}(x_{n})\geq r_{n}^{\frac{N-2}{2}}u_{n}(x_{n})=R_{n}^{\frac{N-2}{2}}\to\infty.
\end{equation}
Moreover, we can verify that
\begin{equation}
    -\Delta\xi_{n}=\xi_{n}^{2^*-1}+\varepsilon_{n}\mu_{n}^{\frac{N-2}{2}(2^*-q)}\xi_{n}^{q-1},\quad\mathrm{in}\:|y|<\frac{\bar{r}}{2\mu_{n}},
\end{equation}
and
\begin{equation}
    \xi_{n}(y)\leq\frac{\bar{C}}{|y|^{\frac{N-2}{2}}}\quad\mathrm{in}\quad|y|<\frac{\bar{r}}{2\mu_{n}}.
\end{equation}
Note that $\mu_{n}\to0$ as $n\to\infty$, thus the point $y=0$ is an isolated blow up point for $\xi_n$ in any compact of $\R^N.$
Moreover, from
\begin{equation}
    r^{\frac{N-2}{2}}\bar{\xi}_n(r)=(r\mu_{n})^{\frac{N-2}{2}}\bar{u}_n(r\mu_{n}),
\end{equation}
we can see that $r^{\frac{N-2}{2}}\bar{\xi}_n(r)$ has only one critical point for $0<r<1$ and
\begin{equation}
    \frac{d}{dr}(r^{\frac{N-2}{2}}\bar{\xi}_{n}(r))|_{r=1}=0.
\end{equation}
This implies that $y=0$ is an isolated and simple blow up point for $\xi_n$ with $\bar{r}=1.$ Then from the Proposition \ref{prop properties of isolated simple blow-up point} and Lemma \ref{harnack inequality}, we obtain that there exist a constant $C>0$ independent on $n$ such that for any $\sigma>0$
\begin{equation}
    \xi_{n}(0)\xi_{n}(y)\leq\frac{C}{|y|^{N-2}},\quad\textrm{for any}\:|y|\leq\sigma,
\end{equation}
which together with the Proposition \ref{prop properties of isolated simple blow-up point} (applied on any compact of $\R^N)$ that there exists a function $b=b(y)$, which is harmonic in all ${\R}^N$, such that
\begin{equation}
    \xi_{n}(0)\xi_{n}(y)\to h(y)=\frac{\alpha_{N}^{2}}{|y|^{N-2}}+b(y),\quad\mathrm{in}\:C_{loc}^{2}(\R^{N}\backslash\{0\}).
\end{equation}
Note that $h(y)\geq0$, then by the maximum principle, we have that $b(y)\geq0$ in all $\mathbf{R}^N,$ which together with the Liouville Theorem that $b(y)=b_0$ is constant. Moreover, from
\begin{equation}
    \frac{d}{dr}(r^{\frac{N-2}{2}}\bar{\xi}_{n}(r))|_{r=1}=0=\xi_{n}(0)\frac{d}{dr}(r^{\frac{N-2}{2}}\bar{\xi}_{n}(r))|_{r=1},
\end{equation}
and $\xi_n(0)\bar{\xi}_n(r)\to h(r)=\alpha_{N}^{2}r^{2-N}+b_0$,
\begin{equation}
    \frac{d}{dr}(r^{\frac{N-2}{2}}h(r))|_{r=1}=0,
\end{equation}
and it follows that
\begin{equation}
    b_{0}=\alpha_{N}^{2}.
\end{equation}
Then, we can obtain that
\begin{equation}\label{eq isolate blow-up point is simple-proof-14}
   \xi_{n}(0)\xi_{n}(y)\to h(y)=\frac{\alpha_{N}^{2}}{|y|^{N-2}}+\alpha_{N}^{2},\quad\mathrm{in}\:C_{loc}^{2}(\mathbf{R}^{N}\backslash\{0\}). 
\end{equation}

Finally, let us recall the Poho\v{z}aev identity of function $\xi_n$ on $B(0,\sigma)$ from \eqref{pohozaev identity 1} and
multiply both sides by $\xi_n(0)^2.$ We obtain,
\begin{equation}\label{eq isolate blow-up point is simple-proof-15}
    \begin{aligned}&\varepsilon_{n}\mu_{n}^{\frac{N-2}{2}(2^*-q)}\frac{2N-(N-2)q}{2q}\int_{B(0,\sigma)}\xi_{n}(0)^{2}\xi_{n}^{q}-\frac{\sigma}{2^*}\int_{\partial B(0,\sigma)}\xi_{n}(0)^{2}\xi_{n}^{2^*}\\
    &-\varepsilon_{n}\mu_{n}^{\frac{N-2}{2}(2^*-q)}\frac{\sigma}{q}\int_{\partial B(0,\sigma)}\xi_{n}(0)^{2}\xi_{n}^{q}=\int_{\partial B(0,\sigma)}B(x,\sigma,\xi_{n}(0)\xi_{n},\nabla(\xi_{n}(0)\xi_{n})).\end{aligned}
\end{equation}
where $B(x,\sigma,\xi_{n}(0)\xi_{n},\nabla(\xi_{n}(0)\xi_{n}))$ defined in \eqref{definition of B-x-sigma-u}. Similar as the proof of the Lemma \ref{lem blou-up rate estimate}, we obtain that for any $\sigma>0$, the left-hand side of \eqref{eq isolate blow-up point is simple-proof-15}
\begin{equation}\label{eq isolate blow-up point is simple-proof-16}
\begin{aligned}
    \mathrm{LHS}&\geq\varepsilon_{n}\mu_{n}^{\frac{N-2}{2}(2^*-q)}\xi_n(0)^{q+2-2^*}(C(N,q)-o(1))-o(1).
\end{aligned}    
\end{equation}
where $C(N,q)>0$ is a constant. Now, we estimate the right-hand side of \eqref{eq isolate blow-up point is simple-proof-15}. First, by using that Gradient estimates (with the same argument of the proof of Lemma \ref{lem blou-up rate estimate}) and Lemma \ref{lem properties of isolated simple point 3}. Then, there exist a constant $C>0$, such that
\begin{equation}
    B(y,\sigma,\xi_{n}(0)\xi_{n},\nabla(\xi_{n}(0)\xi_{n}))\leq C.
\end{equation}
Thus by the dominate convergence theorem and \eqref{eq isolate blow-up point is simple-proof-14}, we have
\begin{equation}
   \lim_{n\to0} \int_{\partial B(0,\sigma)}B(y,\sigma,\xi_{n}(0)\xi_{n},\nabla\xi_{n}(0)\xi_{n})=\int_{\partial B(0,\sigma)}B(y,\sigma,h(y),\nabla h(y)).
\end{equation}
Moreover, by lemma \ref{lem properties of B}, we have
\begin{equation}
    \lim_{\sigma\to0}\int_{\partial B(0,\sigma)}B(y,\sigma,h(y),\nabla h(y))=-\frac{(N-2)^{2}\alpha_{N}^{4}\omega_{N}}{2}<0.
\end{equation}
This make a contradiction with \eqref{eq isolate blow-up point is simple-proof-16} and we complete the proof.
\end{proof}
\smallskip

\noindent{\bf Ethics   declaration:}  Not   applicable.

\smallskip

\noindent{\bf Conflict   of   Interest:} The  authors  state  that  there  is  no  conflict  of  interest.

\smallskip

\noindent{\bf Data  availability  statements:} Data  sharing  not  applicable  to  this  article  as  no  datasets  were  generated  or 
 analysed  during  the  current  study.


\end{document}